\let\@wraptoccontribs\wraptoccontribs
\DeclareSymbolFontAlphabet{\mathbb}{AMSb} 
\DeclareSymbolFontAlphabet{\mathbbl}{bbold}
\newcommand{\Prism}{{\mathlarger{\mathbbl{\Delta}}}}
\definecolor{deepjunglegreen}{rgb}{0.0, 0.29, 0.29}
\definecolor{darkspringgreen}{rgb}{0.09, 0.45, 0.27}
\pretocmd\section{\Needspace*{4\baselineskip}}{}{}
\DeclareMathOperator{\Gra}{{Gr}}
\newcommand{\rar}[1]{\stackrel{#1}{\longrightarrow}}
\newcommand{\hooklongrightarrow}{\lhook\joinrel\longrightarrow}
\newcommand{\bA}{{\mathbb A}}
\newcommand{\bF}{{\mathbb F}}
\newcommand{\bG}{{\mathbb G}}
\newcommand{\bN}{{\mathbb N}}
\newcommand{\bP}{{\mathbb P}}
\newcommand{\bQ}{{\mathbb Q}}
\newcommand{\bZ}{{\mathbb Z}}
\newcommand{\DFL}{\cD^b(\mathscr{MF}(W(k)))}
\newcommand{\BFL}{\mathscr{DMF}^{\mathrm{big}}(W(k))}
\newcommand{\Cris}{ \mathfrak{p}_{\mathrm{cris}}}
\newcommand{\Reesparameter}{ t}
\newcommand{\MapCmodGmtoWkNred}{\widetilde{\mathfrak{p}}_{\mathrm{cris}}}
\newcommand{\cA}{{\mathcal A}}
\newcommand{\cC}{{\mathcal C}}
\newcommand{\cD}{{\mathcal D}}
\newcommand{\cE}{{\mathcal E}}
\newcommand{\cF}{{\mathcal F}}
\newcommand{\cG}{{\mathcal G}}
\newcommand{\cH}{{\mathcal H}}
\newcommand{\cI}{{\mathcal I}}
\newcommand{\cL}{{\mathcal L}}
\newcommand{\cM}{{\mathcal M}}
\newcommand{\cN}{{\mathcal N}}
\newcommand{\cO}{{\mathcal O}}
\newcommand{\cP}{{\mathcal P}}
\newcommand{\cX}{{\mathcal X}}
\newcommand{\cY}{{\mathcal Y}}
\newcommand{\fX}{{\mathfrak X}}
\newcommand{\fm}{{\mathfrak m}}
\newcommand{\eff}{\text{eff}}
\newcommand{\colim}{\text{colim}}
\newcommand{\et}{\operatorname{\acute{e}t}}
\newcommand{\nc}{\newcommand}
\nc\wh{\widehat}
\nc\on{\operatorname}
\nc\Gr{\on{Gr}}
\nc\Fl{\on{Fl}}
 \DeclareMathOperator{\Spf}{{Spf}}
\DeclareMathOperator{\Mod}{{Mod}}
\DeclareMathOperator{\coker}{{coker}}
\newcommand{\limfrom}{{\displaystyle\lim_{\longleftarrow}}}
\newcommand{\limto}{{\displaystyle\lim_{\longrightarrow}}}
\newcommand{\rightlim}{\mathop{\limto}}
\newcommand{\leftlim}{\mathop{\displaystyle\lim_{\longleftarrow}}}
\newcommand{\limfromn}{\leftlim\limits_{\raise3pt\hbox{$n$}}}
\newcommand{\limton}{\rightlim\limits_{\raise3pt\hbox{$n$}}}
\newcommand{\rightlimit}[1]{\mathop{\lim\limits_{\longrightarrow}}\limits
                    _{\raise3pt\hbox{$\scriptstyle #1$}}}
\newcommand{\leftlimit}[1]{\mathop{\lim\limits_{\longleftarrow}}\limits
                    _{\raise3pt\hbox{$\scriptstyle #1$}}}
\newcommand{\epi}{\twoheadrightarrow}
\newcommand{\iso}{\buildrel{\sim}\over{\longrightarrow}}
\newcommand{\mono}{\hookrightarrow}
\DeclareMathOperator{\Id}{{Id}}
\DeclareMathOperator{\Coker}{{Coker}}
\DeclareMathOperator{\Cone}{{Cone}}
 \DeclareMathOperator{\Ext}{{Ext}}
\DeclareMathOperator{\Hom}{{Hom}}
\DeclareMathOperator{\Ker}{{Ker}} \DeclareMathOperator{\id}{{id}}
\DeclareMathOperator{\im}{{Im}} 
 \DeclareMathOperator{\op}{{op}}
\DeclareMathOperator{\Perf}{{Perf}}
\DeclareMathOperator{\Spec}{{Spec}}
\DeclareMathOperator{\Tor}{{Tor}} 
\DeclareMathOperator{\Vect}{{Vect}}
\DeclareMathOperator{\Pic}{{Pic}}
\DeclareMathOperator{\Coh}{{Coh}}
\DeclareMathOperator{\Fil}{{Fil}}
\newcommand{\Rmnum}[1]{\expandafter\@slowromancap\romannumeral #1@}
\newtheorem*{theo}{Theorem}
\newtheorem{Th}{Theorem}
\newtheorem{pr}{Proposition}[section]
\newtheorem{lm}[pr]{Lemma}
\newtheorem{claim}[pr]{Claim}
\newtheorem{cor}[pr]{Corollary}
\theoremstyle{definition}
\newtheorem{df}[pr]{Definition}
\newtheorem{rem}[pr]{Remark}
\numberwithin{equation}{section}
\newcommand{\Frac}{\operatorname{Frac}}
\newcommand{\Fun}{\operatorname{Fun}}
\newcommand{\dR}{\mathrm{dR}}
\newcommand{\HT}{\mathrm{HT}}
\DeclareMathOperator{\ad}{{ad}}
\begin{document}

\title[Prismatic $F$-Gauges]
{Prismatic $F$-Gauges and Fontaine--Laffaille modules}
\author{Gleb Terentiuk}
\address{Department of Mathematics, University of Michigan, Ann Arbor, MI, USA}
\email{tgleb@umich.edu}

\author{Vadim Vologodsky}
\address{Department of Mathematics, Princeton University, Princeton, NJ, USA}
\email{vologod@gmail.com}

\author{Yujie Xu}
\address{Department of Mathematics, 
Columbia University, 
New York, NY, USA}
\email{xu.yujie@columbia.edu}

\begin{abstract}
    Let $k$ be a perfect field of characteristic $p$ and $W(k)$ its ring of Witt vectors. We construct an equivalence of categories between the full subcategory of the derived category of quasi-coherent sheaves on the syntomification of $W(k)$ spanned by objects whose Hodge-Tate weights are between $[0,p-2]$ and an appropriate derived category of Fontaine-Laffaille modules. 
\end{abstract}

\maketitle

\section{Introduction}
\subsection{Fontaine--Laffaille Theory}
Fix a perfect field $k$ of characteristic $p$, and let $W(k)$ be the ring of Witt vectors. For a smooth $p$-adic formal scheme\footnote{Recall that a $p$-adic formal scheme over $W(k)$ is a functor from the category of $p$-nilpotent $W(k)$-algebras to sets such that, for every positive integer $n$, its restriction to $W_n(k)$-algebras is represented by a scheme $X_n$ over $W_n(k)$. Thus, $X = \colim_n X_n$, where the colimit is taken in the category of functors from $p$-nilpotent $W(k)$-algebras to sets.} $X$ over $W(k)$ of dimension $\leq p-1$, we have a canonical isomorphism in the derived category of Zariski sheaves on $X$:
\begin{equation}\label{intro-Mazur-isom}
(\Omega_X^{\bullet},p\cdot d_{\dR})\xrightarrow{\sim}(\Omega_X^{\bullet},d_{\dR}). 
\end{equation}
Its composition with the inclusion of complexes $(\Omega_X^{\bullet}, d_{\dR})\hookrightarrow (\Omega_X^{\bullet},p\cdot d_{\dR})$ given by $\Omega_X^i \xrightarrow{p^i\Id}\Omega_X^i$ in degree $i$, is the crystalline Frobenius morphism.~A proof of this fundamental and beautiful result, as well as a more general logarithmic version, appears in \cite[Theorem 6.10]{Ogus-log-dR-Witt}. The idea behind the construction of the map \eqref{intro-Mazur-isom} can be traced back to \cite{Mazur-1973}\footnote{We refer the reader to Remark \ref{remark:Phi-Maz-tilde-isom-PhiMaz-Hsyn} for a sketch of the Mazur-Ogus argument.}.   
One infers from \eqref{intro-Mazur-isom} that the crystalline Frobenius $\varphi$ restricted to the $i$-th term of the ``filtration b\^ete'' is canonically divisible by $p^i$: 
\begin{equation}\label{eqn:intro-stupid-filtration}
    \frac{\varphi}{p^i}: (\Omega_X,d)^{\geq i}\to (\Omega_X,d).
\end{equation}
More generally, in \cite{Mazur-1973},
Mazur proved that, for a smooth $p$-adic  formal scheme $X$ over $W(k)$ of any dimension and any integer $i\geq 0$, the crystalline Frobenius $\varphi$  restricted to  $(\Omega_X,d)^{\geq i}$ is canonically divisible by $p^{[i]}$, where $[i]:=\min\limits_{m\geq i}\mathrm{ord}\frac{p^m}{m!}$; we refer the reader to Remark \ref{remark:Mazur-module} for an interpretation of this result. 
See \cite{Kato-87,Antieau-Mathew-Morrow-Nikolaus} for a construction of \eqref{eqn:intro-stupid-filtration} via quasi-syntomic descent. 

The reduction of \eqref{intro-Mazur-isom} modulo $p$ recovers the Deligne--Illusie isomorphism from \cite{Deligne-Illusie-87}, provided that $X$ has dimension $\leq p-2$.\footnote{In dimension $p-1$, the reduction of \eqref{intro-Mazur-isom} differs from the Deligne--Illusie isomorphism; see Remark \ref{rem:Mazur_vs_Deligne_Illusie}.}

Inspired by \cite{Fontaine-Laffaille}, we describe the linear-algebraic structure on the de Rham cohomology $\mathrm{R}\Gamma_{\dR}(X)$ induced by the isomorphism \eqref{intro-Mazur-isom} as follows: we view the complex $\mathrm{R}\Gamma_{\dR}(X)$, equipped with the Hodge filtration $F^{\cdot}$, 
as an object $\cM$ of the derived category $\cD_{qc}(\bA^1/\bG_m)$ of quasi-coherent sheaves on $\bA^1/\bG_m$, where $\bA^1$ is considered as a $p$-adic formal scheme over $W(k)$ with the standard $\bG_m$-action. The isomorphism \eqref{intro-Mazur-isom} can be rewritten as\footnote{Later in the text, we will write $\bA^1_-$ with coordinate given by $v_-$, instead of simply $v$.} 
\begin{equation}\label{intro-isom-M-fibrewise}
F^*(\cM_{v=p})\xrightarrow[]{\sim}\cM_{v=1}.
\end{equation}
Here $\cM_{v=a}$ denotes the (derived) fibre of $\cM$ at $\Spf W(k)\xrightarrow{a}\bA^1/\bG_m$ and $F:W(k)\to W(k)$ is the Frobenius. 
We define and denote by $\mathscr{DMF}^{\mathrm{big}}(W(k))$ the stable $\infty$-category consisting of $\cM\in \cD_{qc}(\bA^1/\bG_m)$ equipped with an isomorphism \eqref{intro-isom-M-fibrewise}. We refer to it as the category of \textit{big Fontaine-Laffaille modules} (see Definition \ref{Defn-big-DMF-category}). 

For any integer $n\geq 0$, we denote by $\mathscr{DMF}_{[0,n]}^{\mathrm{big}}(W(k))\subset \mathscr{DMF}^{\mathrm{big}}(W(k))$ the full subcategory consisting of objects whose Hodge filtration ranges from $0$ to $n$\footnote{{\it i.e.},~$F^m\to F^{m-1}$ are quasi-isomorphisms for $m\leq 0$ and $F^{\ell}$ are acyclic for $\ell >n$.}. The preceding construction describes a contravariant functor from the category $\widehat{\mathrm{Sm}}^{\leqslant p-1}_{W(k)}$ of smooth formal schemes of dimension $\leq p-1$ to $\mathscr{DMF}^{\mathrm{ big}}_{[0,p-1]}(W(k))$:
\begin{equation}\label{eqn:intro-functor-Sm-to-DMFbig}
    \widetilde{\Phi}_{\mathrm{Maz}}:\widehat{\mathrm{Sm}}^{\leqslant p-1}_{W(k)}\to \mathscr{DMF}^{\mathrm{ big}}_{[0,p-1]}(W(k)).
\end{equation}

The subcategory $\mathscr{MF}(W(k))$ of $\mathscr{DMF}^{\mathrm{big}}(W(k))$ with $\cM\in \Coh(\bA^1/\bG_m)$ is the category of finitely-generated (``small'') Fontaine-Laffaille modules studied in \cite[\S 1.5]{Fontaine-Laffaille} and in \cite[\S 1.3]{wintenberger}. 
In particular, it is proven in {\it loc. cit.} that $\mathscr{MF}(W(k))$ is an abelian category.  
The original work \cite{Fontaine-Laffaille} does not use the language of sheaves on stacks; see Remark \ref{remark:comparing-FL82-with-our-definition} for a comparison of the original definition in \cite{Fontaine-Laffaille} and our geometric reinterpretation. 
In Proposition \ref{FL-lemma} we show that the bounded derived category  $\cD^b(\mathscr{MF}(W(k)))$ of $\mathscr{MF}(W(k))$ is a full subcategory of $\mathscr{DMF}^{\mathrm{big}}(W(k))$ consisting of bounded complexes with coherent cohomology.

Functor \eqref{eqn:intro-functor-Sm-to-DMFbig} is an input for various comparison theorems  in $p$-adic Hodge theory. 
In particular,
for $p>2$, the subcategory $\mathscr{MF}^{\mathrm{tors}}_{[0,1]}(W(k))$ of $p$-power-torsion objects in $\mathscr{MF}_{[0,1]}(W(k))$ is equivalent to the category of commutative finite flat $p$-group schemes over $W(k)$ (see \cite{Fontaine-Laffaille})\footnote{
This statement is not true for $p=2$. In fact, the category of commutative finite flat $2$-group-schemes over $\bZ_2$ is not an abelian category: for example, the nontrivial homomorphism $\bZ/2\bZ\to\mu_2$ has trivial kernel and cokernel, but it is not an isomorphism.}. 
In \cite{Fontaine-Laffaille}, an exact functor $\mathcal{T}_{\et}^{FL}$ was constructed
from $\mathscr{MF}_{[0,p-2]}(W(k))$ to the category of $\bZ_p$-linear representations of the absolute Galois group $G_K$ of $K:=\mathrm{Frac}(W(k))$. A central result of \cite[Theorem 5.3]{Faltings-crystalline} is the following commutative diagram involving the category $\mathrm{SmPr}^{\leqslant p-2}_{W(k)}$ of smooth proper schemes over $W(k)$ of dimension $\leq p-2$, 
\[\begin{tikzcd}
&\cD^b(\mathscr{MF}_{[0,p-2]})\arrow[]{dd}{\mathcal{T}_{\et}^{FL}}\\
    \mathrm{SmPr}^{\leqslant p-2}_{W(k)}\arrow[]{ru}{\eqref{eqn:intro-functor-Sm-to-DMFbig}}\arrow[]{rd}[swap]{X\mapsto \mathrm{R}\Gamma_{\et}(X\otimes\overline{K},\bZ_p)}&\\
    &\cD^b(\mathrm{Rep}_{G_K}(\bZ_p))
\end{tikzcd}\]

\subsection{Prismatic $F$-gauges}
Given a bounded prism $(A,I,\delta)$ equipped with a map $W(k)\to A/I$,  Bhatt and Scholze constructed 
in \cite{Bhatt-Scholze-prisms} a new cohomology theory $X \mapsto R\Gamma_{\Prism}(X/A)$ for $p$-adic formal schemes over $W(k)$
with values in the derived category $\cD(A)$ of $A$-modules.  Bhatt--Lurie \cite{Bhatt-Lurie-2022absolute} and 
Drinfeld \cite{drinfeld2022prismatization} advanced this theory further  by organizing all prismatic cohomology theories, equipped with the prismatic Nygaard filtrations and the Frobenii,
 into a single contravariant functor 
\begin{equation}\label{eqn:intro-functor-SchWk-to-Dqcsyn}
    \mathcal{H}_{syn}: \widehat{\mathrm{Sch}}_{W(k)}\to \cD_{qc}(W(k)^{syn})
\end{equation}
from the category $\widehat{\mathrm{Sch}}_{W(k)}$ of $p$-adic formal schemes over $W(k)$ to a stable $\infty$-category $\cD_{qc}(W(k)^{syn})$. Moreover, the latter category can be interpreted as the derived category of quasi-coherent sheaves on a stack $W(k)^{syn}$, called the \textit{syntomification} of $W(k)$. The prismatic cohomology theory attached to $(A,I,\delta)$ arises from \eqref{eqn:intro-functor-SchWk-to-Dqcsyn} via the pullback along a map $\Spf A\to W(k)^{syn}$ constructed using $(I, \delta)$. The syntomification is defined  in \cite{drinfeld2022prismatization}, \cite{bhatt-lecture-notes} for any (nice) 
$p$-adic formal scheme $X$; the functor \eqref{eqn:intro-functor-SchWk-to-Dqcsyn} carries a $p$-adic formal scheme $X$ over $W(k)$ to the derived direct image of the structure sheaf under the morphism of stacks
\begin{equation}
    X^{syn} \to W(k)^{syn}
\end{equation}
constructed by functoriality. 

In this article, we address the following two questions: (notations as in \eqref{eqn:intro-functor-Sm-to-DMFbig})
\begin{enumerate}
    \item Given a smooth scheme $X$ of dimension $\leq p-1$, can one recover the (big) Fontaine-Laffaille module $\widetilde{\Phi}_{\mathrm{Maz}}(X)$ from $\mathcal{H}_{syn}(X)$?
    \item Does $\mathcal{H}_{syn}(X)$ contain more information than $\widetilde{\Phi}_{\mathrm{Maz}}(X)$?
\end{enumerate}
\subsection{Main results}
For a non-negative integer $n$, we denote by $\cD_{qc,[0,n]}(W(k)^{syn})$ the full subcategory of $\cD_{qc}(W(k)^{syn})$ spanned by objects with Hodge-Tate weights (see Definition \ref{defn:HT-weights}) in the interval $[0,n]$. In particular, the category $\cD_{qc,[0,n]}(W(k)^{syn})$ contains objects of the form $\mathcal{H}_{syn}(X)$ for a smooth $p$-adic formal scheme $X$ of dimension $\leq n$. The main result (Theorem \ref{main-thm}) of our article is the following:
\begin{theo}\label{intro-main-thm}
    There is a functor 
    \begin{equation}\label{eqn:intro-main-thm-PhiMaz}
    \Phi_{\mathrm{Maz}}:\cD_{qc,[0,p-1]}(W(k)^{syn}) \xrightarrow{} \mathscr{DMF}^{\mathrm{big}}_{[0,p-1]}(W(k))\end{equation}
    that induces an equivalence of $\infty$-categories
    \begin{equation}\label{eqn:intro-main-thm-Phi-equiv}
    \cD_{qc,[0,p-2]}(W(k)^{syn}) \xrightarrow{\sim} \mathscr{DMF}^{\mathrm{big}}_{[0,p-2]}(W(k)).\end{equation}
    \end{theo}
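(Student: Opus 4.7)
My strategy is to realize both categories as quasi-coherent modules on explicit formal stacks over $W(k)$, and then to exploit the fact that in low weights the divided-power structure on the Hodge--Tate stack collapses to a plain $\bG_m$-grading. Following Bhatt--Lurie and Drinfeld, $W(k)^{syn}$ is obtained by gluing two copies of the Nygaard-filtered prismatic stack $W(k)^{\mathcal{N}}$ along $W(k)^{\Prism}$ via the canonical projection on one side and its Frobenius twist on the other. The stack $W(k)^{\mathcal{N}}$ carries a natural Rees-parameter projection $W(k)^{\mathcal{N}} \to \bA^1/\bG_m$ whose fiber over $v=0$ is the Hodge--Tate stack $W(k)^{\mathrm{HT}} \simeq B\bG_m^{\sharp}$ (the classifying stack of the divided-power envelope of $\bG_m$ at the identity), and whose fiber over the unit $v=1$ is the underlying unfiltered prismatic point.

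To construct $\Phi_{\mathrm{Maz}}$, I would first pull back a quasi-coherent sheaf $\cF$ on $W(k)^{syn}$ to $W(k)^{\mathcal{N}}$ and then push it forward along the Rees-parameter projection, producing an object $\cM \in \cD_{qc}(\bA^1/\bG_m)$. The Frobenius gluing datum of $W(k)^{syn}$ then supplies the required isomorphism $F^*(\cM_{v=p}) \xrightarrow{\sim} \cM_{v=1}$, so that $\cM$ defines an object of $\mathscr{DMF}^{\mathrm{big}}(W(k))$. A Hodge--Tate weight bound on $\cF$ translates into the corresponding bound on the filtration range of $\cM$, so the construction lands in $\mathscr{DMF}^{\mathrm{big}}_{[0,p-1]}(W(k))$, yielding the functor \eqref{eqn:intro-main-thm-PhiMaz}.

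For the equivalence on $[0,p-2]$, the central input is that a quasi-coherent $\bG_m^{\sharp}$-representation concentrated in weights $[0,p-2]$ is the same as an ordinary $\bG_m$-representation: the divided powers $\gamma_n$ of the PD-hull of $\bG_m$ at the identity produce genuinely new comodule constraints beyond a plain weight grading only once $n \geq p-1$. Consequently the Rees-parameter projection induces an equivalence on Hodge--Tate fibers after restricting to weights in $[0,p-2]$, and the Frobenius is identified with the Fontaine--Laffaille Frobenius via the Mazur isomorphism \eqref{intro-Mazur-isom} and its $p$-divisibility \eqref{eqn:intro-stupid-filtration}. Essential surjectivity and fully-faithfulness for $\Phi_{\mathrm{Maz}}$ on $[0,p-2]$ then follow from this fiberwise equivalence by a devissage along the Nygaard filtration: one filters by weight and reduces to a single graded piece, which lives on the Hodge--Tate locus where the comparison is already established.

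\textbf{Main obstacle.} The most delicate step will be the detailed identification of the PD-thickening structure on $W(k)^{\mathcal{N}}$ with the naive filtered structure on $\bA^1/\bG_m$ throughout the weight range $[0,p-2]$, and the propagation of this identification through the Frobenius gluing. In particular, one must verify compatibility with the full Nygaard filtration (not merely its associated graded), carefully track the Breuil--Kisin/Frobenius twists appearing in the Rees-parameter fibers at $v=1$ versus $v=p$, and confirm that the failure at weight $p-1$ --- where the divided power $\gamma_{p-1}$ first becomes a non-trivial operation --- is precisely what prevents $\Phi_{\mathrm{Maz}}$ from being an equivalence on the full range $[0,p-1]$.
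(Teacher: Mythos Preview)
Your proposal has the right general shape, but there are two genuine gaps, and your diagnosis of the weight bound is off.

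\textbf{Construction of the functor.} The paper does not use pushforward along the Rees map $t_{W(k)}:W(k)^{\cN}\to\bA^1_-/\bG_m$; it uses \emph{pullback} along the section $\mathfrak{p}_{\bar{\dR}}:\bA^1_-/\bG_m\to W(k)^{\cN}$. More seriously, your claim that ``the Frobenius gluing datum of $W(k)^{syn}$ then supplies the required isomorphism $F^*(\cM_{v=p})\simeq\cM_{v=1}$'' hides the whole difficulty. The gluing in $W(k)^{syn}$ identifies two copies of $W(k)^{\Prism}$ sitting inside $W(k)^{\cN}$; it says nothing directly about fibers of $\cM$ at $v_-=p$ and $v_-=1$. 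The bridge is Drinfeld's Lemma (Theorem~\ref{Drinfeld-prop} in the paper): over $\Spf B/\bG_m$, where $B$ is the Rees algebra of the divided-power filtration on $W(k)$, the crystalline map $\Cris\circ a$ and the de Rham map $\mathfrak{p}_{\bar{\dR}}\circ t\circ a$ become canonically isomorphic. Because $A\hookrightarrow B$ is an isomorphism in grading degrees $<p$, the pullback $a^*$ is fully faithful on weights $[0,p-1]$, so $t^*\rho_{\bar{\dR}}^*\simeq\Cris^*$ there. Only then does the syntomic gluing $\bar j_{\cN}\circ j_+\simeq\bar j_{\cN}\circ j_-$ translate into the Fontaine--Laffaille isomorphism. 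You have not identified this step.

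\textbf{Why $[0,p-2]$ and not $[0,p-1]$.} Your explanation---that $\bG_m^\sharp$-representations coincide with $\bG_m$-representations only through weight $p-2$---is not where the bound comes from. In fact, the paper's Lemma~\ref{lemma-equiv-BGm-HT-C2locus} and Proposition~\ref{prop:DI_is_an_equiv} show that $\Phi_{\mathrm{DI}}:\cD_{qc,[0,p-1]}(W(k)^{syn}_{red})\to\mathscr{DMF}^{\mathrm{big}}_{[0,p-1]}(W(k))\otimes\bF_p$ is already an equivalence on the \emph{full} range $[0,p-1]$. The restriction to $[0,p-2]$ enters in a step you do not mention at all: one must show that restriction from $W(k)^{syn}\otimes\bF_p$ to the reduced locus $W(k)^{syn}_{red}$ is an equivalence. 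This is a square-zero deformation problem where the ideal sheaf is $\cO_{W(k)^{syn}_{red}}\{-(i-1)(p-1)\}$, and the obstruction vanishes by an $\Ext$-vanishing (Lemma~\ref{generalized-effectivity-vanishing-lemma}) that requires the source to have weights $\leq p-2$ and the twisted target to have weights $\geq p-1$. At weight $p-1$ this fails: the class $v_1\in H^0(W(k)^{syn}\otimes\bF_p,\cO\{p-1\})$ is nonzero but restricts to zero on the reduced locus, witnessing that $\Phi_{\mathrm{Maz}}$ is not faithful there. Your ``devissage along the Nygaard filtration'' does not address this deformation step, and your identification of the obstacle with $\gamma_{p-1}$ is not the mechanism at work.

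A smaller point: the Hodge--Tate fiber here is not $B\bG_m^\sharp$; the relevant components of $W(k)^{\cN}_{red}$ are $D_{HT}\simeq\bG_a/(\bG_a^\sharp\rtimes\bG_m)$ and $D_{\dR}$, with $B\bG_m^\sharp$ appearing as the open locus $D_{und}$ along which the two copies are glued.
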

In fact, the composition of $\Phi_{\mathrm{Maz}}$ with $\mathcal{H}_{syn}$ is isomorphic to $\widetilde{\Phi}_{\mathrm{Maz}}$ (see Remark \ref{remark:Phi-Maz-tilde-isom-PhiMaz-Hsyn}). 
The functor $\Phi_{\mathrm{Maz}}$ (see \eqref{functor}) restricts to an equivalence 
    \begin{equation}\label{intro-perfect-complexes-PhiMaz}
    \Perf_{[0,p-2]}(W(k)^{syn}) \xrightarrow{\sim} \cD^b\left(\mathscr{MF}_{[0,p-2]}(W(k))\right),\end{equation}
    where the left-hand side in \eqref{intro-perfect-complexes-PhiMaz} is the full subcategory $\Perf_{[0,p-2]}(W(k)^{syn})$ of the category of perfect complexes $\Perf(W(k)^{syn})$ formed of objects of Hodge-Tate weights in $[0,p-2]$. Moreover, the functor \eqref{intro-perfect-complexes-PhiMaz} is $t$-exact, {\it i.e.},~it restricts to an equivalence between abelian subcategories\footnote{By definition, the category $\Coh_{[0,p-2]}(W(k)^{syn})$ is a full subcategory of the category $\Coh(W(k)^{syn})$ of coherent sheaves on $W(k)^{syn}$ that consists of objects lying in $\cD_{qc,[0,p-2]}(W(k)^{syn})$.} 
    \begin{equation}\label{intro-coherent-PhiMaz}
     \Coh_{[0,p-2]}(W(k)^{syn})\xrightarrow{\sim}\mathscr{MF}_{[0,p-2]}(W(k)).  
    \end{equation}
We also show that $p$-torsion objects in $\Coh_{[0,p-2]}(W(k)^{syn})$ are vector bundles on $W(k)^{syn}\otimes\bF_p$ (see Remark \ref{remark:vector-bundles}). 
We remark that $\Phi_{\mathrm{Maz}}$ is not an equivalence (see Remark \ref{remark:Phi-Maz-not-an-equivalence}).

Let us now describe some of the earlier works related to our main theorem. 

Firstly, in \cite{caisliu}, Cais and Liu constructed a functor from the category of $p$-torsionfree Fontaine--Laffaille modules with Hodge-Tate weights 
in $[0,p-2]$ to the category of Breuil--Kisin modules.~The abelian category of coherent prismatic $F$-gauges also admits a functor to the category of Breuil--Kisin modules.~We expect that our functors agree. 

Secondly, using results from \cite{Bhatt_Scholze_Galois}, Bhatt and Lurie established an equivalence of categories 
$$\mathcal{T}_{\et}^{BL}: \Coh^{\text{refl}}(W(k)^{syn}) \iso \mathrm{Rep}_{G_K}^{crys}(\bZ_p)$$
between the category of \textit{reflexive} coherent sheaves on $W(k)^{syn}$ and
the category of $\bZ_p$-lattices in $\bQ_p$-linear crystalline representations of the absolute Galois group $G_K$ of  $K:=\Frac (W(k))$ (see \cite[Theorem 6.6.13]{bhatt-lecture-notes}).
On the other hand, Fontaine and Laffaille (\cite[\S 7.14]{Fontaine-Laffaille}) constructed an equivalence  of categories  $\mathcal{T}_{\et}^{FL}$ between 
the full subcategory $\mathscr{MF}_{[0,p-2]}^f(W(k))\subset  \mathscr{MF}_{[0,p-2]}(W(k))$ of $p$-torsionfree Fontaine--Laffaille modules
and the full subcategory  $\mathrm{Rep}_{G_K, [0, p-2]}^{crys}(\bZ_p)\subset \mathrm{Rep}_{G_K}^{crys}(\bZ_p)$,
whose objects are lattices in crystalline representations with Hodge-Tate weights lying  in $[0,p-2]$\footnote{The essential surjectivity of $\mathcal{T}_{\et}^{FL}$ is not stated explicitly in \cite{Fontaine-Laffaille}. The observation that the results of {\it loc. cit.} imply the essential surjectivity is due to Breuil
\cite[Proposition 3]{breuil}.}.
In Remark \ref{rem:Lattices in crystalline representations} we prove that the Bhatt--Lurie and Fontaine--Laffaille functors agree via \eqref{intro-coherent-PhiMaz}:
\begin{equation}
    \mathcal{T}_{\et}^{FL} \iso \mathcal{T}_{\et}^{BL} \circ \Phi_{\mathrm{Maz}}^{-1}: \mathscr{MF}_{[0,p-2]}^f(W(k)) \to \mathrm{Rep}_{G_K, [0, p-2]}^{crys}(\bZ_p).
\end{equation}
In particular, the Bhatt--Lurie equivalence carries every object of $\mathrm{Rep}_{G_K, [0, p-2]}^{crys}(\bZ_p)$ to a \textit{vector bundle} over 
$W(k)^{syn}$, and not just a reflexive sheaf.

Thirdly, Theorem F (2) from \cite{Antieau-Mathew-Morrow-Nikolaus} suggested that the category of Fontaine--Laffaille
modules with small Hodge-Tate weights is a derived full subcategory of the category of prismatic $F$-gauges.~The latter result implies, in particular, that for a smooth $p$-adic formal 
scheme $X$ of dimension $\leq p-2$, the syntomic cohomology $R\Gamma(W(k)^{syn}, \mathcal{H}_{syn}(X)\{i\})$ is isomorphic, for $i\leq p-2$, to 
$\mathrm{RHom}(W(k), \widetilde{\Phi}_{\mathrm{Maz}}(X)(i))$ computed in the the category of Fontaine--Laffaille  modules.
The original proof from \cite{Antieau-Mathew-Morrow-Nikolaus} uses methods from algebraic topology.~An algebraic proof of the mod $p$ version is explained in \cite[Remark 6.5.15]{bhatt-lecture-notes}.~Our argument is closer in spirit to the latter.  We refer the reader to Remark \ref{rem:Syntomic cohomology in small weights} for further details and a generalization.

Lastly, a version of \eqref{intro-perfect-complexes-PhiMaz} with \textit{rational coefficients} holds in all  Hodge-Tate  weights. Indeed, using the results from \cite[\S 6]{bhatt-lecture-notes}, one can prove \cite{bhatt-private} that the category $\Perf(W(k)^{syn})\otimes \bQ_p$ of perfect prismatic $F$-gauges up to isogeny is equivalent to the bounded derived category of crystalline representations (with arbitrary Hodge-Tate  weights).
On the other hand, combining \cite[Proposition 7.8]{Fontaine-Laffaille} with \cite[Theorem A]{Colmez_Fontaine}, the category of crystalline representations is equivalent to the category of Fontaine--Laffaille modules up to isogeny. This yields an equivalence 
 \begin{equation}\label{intro-perfect-complexes-PhiMaz-isogeny}
    \Perf(W(k)^{syn})\otimes \bQ_p \xrightarrow{\sim} \cD^b\left(\mathscr{MF}(W(k))\right) \otimes \bQ_p.\end{equation}
    
\subsection{Overview of the proof}
Recall from \cite{bhatt-lecture-notes} that the construction of syntomification can be applied to any $p$-adic formal scheme over $\bZ_p$. In particular, we can apply it to $\Spec k$; by functoriality, we have a morphism $\mathfrak{p}_{\mathrm{cris}}: k^{syn}\to W(k)^{syn}$, and thus the pullback functor $\mathfrak{p}_{\mathrm{cris}}^*:\cD_{qc}(W(k)^{syn})\to\cD_{qc}(k^{syn})$. The target category has a concrete linear-algebraic description due to \cite{Ekedahl-diagonal-complexes-Fgauges, Fontaine-Jannsen-F-gauges, bhatt-lecture-notes}. In geometric context, $\mathfrak{p}_{\mathrm{cris}}^*\mathcal{H}_{syn}(X)$ recovers the crystalline cohomology of $X\otimes \bF_p$, equipped with the Nygaard filtration and divided Frobenii. In addition, one has a morphism $\mathfrak{p}_{\bar{\dR}}:\bA^1/\bG_m\to W(k)^{syn}$, which in geometric context recovers the de Rham cohomology equipped with the Hodge filtration. We consider the Cartesian square
\begin{equation}\label{diagram:intro-fibre-product-commdiagram}
\begin{tikzcd}
    &k^{syn}\times_{W(k)^{syn}}\bA^1/\bG_m\arrow[]{ld}{}\arrow[]{rd}{}&\\
    k^{syn}\arrow[]{rd}{}&&\bA^1/\bG_m\arrow[]{ld}{}\\
    &W(k)^{syn}&
\end{tikzcd}
\end{equation}
and the induced functor 
\begin{equation}\label{eqn:intro-map-to-fibre-product-over-Drinfeld-frakD}
\cD_{qc}(W(k)^{syn})\to \cD_{qc}(k^{syn})\times_{\cD_{qc}(\mathfrak{D})} \cD_{qc}(\bA^1/\bG_m),
\end{equation}
where $\mathfrak{D}$ is the fibre product in diagram \eqref{diagram:intro-fibre-product-commdiagram}.~A key input for our construction of the functor $\Phi_{\mathrm{Maz}}$ is an explicit description of the fibre product $\mathfrak{D}$ communicated to us by Drinfeld (see~\S\ref{subsection:description-fibre-product}\footnote{\S \ref{subsection:Drinfeld-lemma} contains a description of a certain closed substack of $\mathfrak{D}$ which is sufficient for all our applications.}).

To describe $\mathfrak{D}$, recall from the end of \S \ref{subsec:preliminary-syntomification-section} that $k^{syn}$ is obtained from\footnote{Note that $A$ is not $p$-complete. By our convention, we write $\Spf A$ for the $p$-adic formal scheme $\colim_n  \Spec A/p^n$.} $k^\cN = \Spf A/ \bG_m$, where $A= W(k)[v_-, v_+]/(v_-v_+-p)$,
by identifying two open points $\begin{tikzcd}j_{\pm}:\Spf W(k) \arrow[shift left]{r}{}\arrow[shift right]{r}[swap]{}&k^{\cN}\end{tikzcd}.$  Let $B^{\flat}$ be the  PD-envelope of the ideal  $(v_+)$ in $A$. In Lemma \ref{lem:3.10} and Proposition \ref{prop:fibre-product-tildefrakT} we show that
 $\mathfrak{D}$ is isomorphic to $\Spf B^\flat /\bG_m$ such that the map $\mathfrak{D} \to k^{syn} \times \bA^1/\bG_m$
 is given by the natural algebra homomorphisms $A\to B^\flat$ and $W(k)[v]\xrightarrow{v\mapsto v_-} B^\flat.$ 
 We also consider a ``baby'' version $B$ of 
 $B^\flat$ defined to be the quotient of $B^\flat$ by the $p$-power-torsion elements (see the beginning of \S \ref{subsection:Drinfeld-lemma} for a more concrete description of $B$).  
 Our main geometric input is the functor 
\begin{equation}\label{eqn:intro-map-to-fibre-product-over-Drinfeld-B}
\cD_{qc, [0, \infty]}(W(k)^{syn})\to \cD_{qc, [0, \infty]}(k^{syn})\times_{\cD_{qc}(\Spf B/\bG_m )} \cD_{qc, [0, \infty]}(\bA^1/\bG_m).
\end{equation}
obtained from  \eqref{eqn:intro-map-to-fibre-product-over-Drinfeld-frakD} by applying $\cD_{qc}(\mathfrak{D}) \to \cD_{qc}(\Spf B/\bG_m )$.
 The target category has a concrete linear-algebraic description.~In geometric context, functor \eqref{eqn:intro-map-to-fibre-product-over-Drinfeld-frakD} extracts from the $F$-gauge $\mathcal{H}_{syn}(X)$ the information about the Nygaard filtered crystalline cohomology with its divided Frobenii, the Hodge filtered de Rham cohomology ({\it i.e}.,~the cohomology theories known before the invention of prismatic cohomology), together with a comparison isomorphism relating Nygaard and Hodge filtrations (that, in particular, recovers \eqref{eqn:intro-stupid-filtration}). See Remark \ref{remark:cris-Hodge-filtration-recovered-from-Nygaard} for more details. 
We check (see Remark \ref{remark-construction-functor-Psi-Mazur}) that the functor from
 $\cD_{qc, [0, p-1]}(k^{syn})\times_{\cD_{qc}(k^\cN)} \cD_{qc, [0, p-1]}(\bA^1/\bG_m) $
 to 
 $ \cD_{qc, [0, p-1]}(k^{syn})\times_{\cD_{qc}(\Spf B/\bG_m)} \cD_{qc, [0, p-1]}(\bA^1/\bG_m)$, induced by
 the map $\Spf B/\bG_m \to k^\cN$,
 is an equivalence and identifies the left-hand side with $\mathscr{DMF}^{\mathrm{big}}_{[0,p-1]}(W(k))$. 
This defines $\Phi_{\mathrm{Maz}}$ from \eqref{eqn:intro-main-thm-PhiMaz}. The proof of equivalence \eqref{eqn:intro-main-thm-Phi-equiv} uses a concrete description of the reduced locus $W(k)^{syn}_{red}$ of $W(k)^{syn}$ from~\cite{drinfeld2022prismatization} that leads to an equivalence $\cD_{qc,[0,p-1]}(W(k)^{syn}_{red}) \xrightarrow{\sim} \mathscr{DMF}^{\mathrm{big}}_{[0,p-1]}(W(k))\otimes \bF_p$.~Finally, via a deformation theory argument (suggested to us by Bhatt), we prove that the pullback along $W(k)^{syn}_{red} \hookrightarrow W(k)^{syn}\otimes \bF_p$ induces an equivalence 
\begin{equation}\label{eq:restriction.intro}
 \cD_{qc,[0,p-2]}(W(k)^{syn}\otimes \bF_p) \iso \cD_{qc,[0,p-2]}(W(k)^{syn}_{red}).   
\end{equation}
The last step involves cohomology computations from \cite[\S 6]{bhatt-lecture-notes}. We infer that \eqref{eqn:intro-main-thm-Phi-equiv} induces an equivalence after taking the tensor product with $\bF_p$, which, by $p$-completeness, completes the proof.

Though functor  \eqref{eqn:intro-map-to-fibre-product-over-Drinfeld-B} is not an equivalence in large weights, it can be used to construct some interesting invariants of $F$-gauges. Following a suggestion of Drinfeld's, we define in 
Remark \ref{remark:Mazur-module} a functor from the right-hand side of \eqref{eqn:intro-map-to-fibre-product-over-Drinfeld-B}
to a category of Mazur modules $ \mathrm{Mod}_{\mathrm{Maz}}(W(k))$, giving  
$$ {\bf \Phi}_{\mathrm{Maz}}:   \cD_{qc,[0, \infty]}(W(k)^{syn})\to \mathrm{Mod}_{\mathrm{Maz}}(W(k)).$$
We refer the reader to Remark \ref{remark:Phi-Maz-tilde-isom-PhiMaz-Hsyn} for an interpretation of this functor in geometric context.

As another application of Drinfeld's description of  $\mathfrak{D}$, we construct in Remark \ref{remark:refined-crystalline-Frob-categories} 
a refinement of the crystalline Frobenius functor on the category of crystals on a smooth $p$-adic formal scheme $X$ over $\Spf W(k)$. This construction is related to the Cartier transform studied in \cite{Ogus-Vologodsky} and especially to its $p$-adic lift constructed in \cite{Xu}.  

\subsection{Further directions} In a sequel we plan to extend our main results to the case of $F$-gauges over a smooth $p$-adic formal scheme $X$ over $\Spf W(k)$.~In \cite[pp 30--38]{Faltings-crystalline}, Faltings defined (at least for $p>2$) an abelian category\footnote{see Theorem 2.3 {\it loc. cit.}. Faltings' notation for this category is $\mathscr{MF}_{[0,p-1]}^{\nabla}(X)$.}   $\mathscr{MF}_{[0,p-1]}(X)$ of Fontaine-Laffaille modules over $X$.~We expect that there is an exact fully faithful functor $\mathscr{MF}_{[0,p-2]}(X) \mono \Coh (X^{syn})$; however, this functor is not {\it derived} fully faithful\footnote{{\it i.e.},~the induced functor $\cD^b\left(\mathscr{MF}_{[0,p-2]}(X)\right) \to \cD_{qc}(X^{syn})$ is not fully faithful.}: the category $\mathscr{MF}_{[0,p-2]}(X)$ is too small to capture the syntomic cohomology even in small weights.~For example, the pullback functor $\mathscr{MF}_{[0,p-2]}(W(k)) \to \mathscr{MF}_{[0,p-2]}(\bP^1)$ is an equivalence.~In Remark \ref{The_category_of_Fontaine_Laffaille_modules_over_a_scheme}, we propose a definition for $\mathscr{DMF}^{\mathrm{big}}_{[0,p-1]}(X)$.~We expect to prove that the corresponding subcategory of the latter is equivalent to $\cD_{qc,[0,p-2]}(X^{syn})$.

\subsection{Notations}
Fix a prime number $p$. For any stack $X$, we denote by $\cD_{qc}(X)$ the stable $\infty$-category of quasi-coherent sheaves on $X$. For a stack $X$ over $W(k)$, denote by $X^{\wedge}$ the corresponding $p$-adic formal stack, {\it i.e.},~for any ring $R$, we have that $X^{\wedge}(R)=X(R)$ if $p$ is nilpotent in $R$, and $X^{\wedge}(R)=\varnothing$ otherwise.~In other words, $X^{\wedge}=X\times\Spf\bZ_p$. 
In particular, $\cD_{qc}(X^{\wedge})=\underset{n}{\varprojlim} \cD_{qc}(X\otimes\bZ/p^n\bZ)$. 

For a commutative ring $R$, we will write $\Spf R$ for the formal spectrum of the $p$-completion of $R$ with respect to the ideal $(p)$, {\it i.e.},~$\Spf R=(\Spec R)^{\wedge}$. 
For a finitely generated ideal $I\subset R$, we shall write $\widehat{\cD}(R)$ for the full subcategory of $\cD(R)$ spanned by $I$-complete objects (see for example \cite[\S 1.9]{Bhatt-Lurie-2022absolute}).

\subsection*{Acknowledgements.} This paper owes its existence to Bhargav Bhatt and Vladimir Drinfeld. Bhatt suggested to us that the restriction functor \eqref{eq:restriction.intro} is an equivalence and gave us numerous hints during the initial stage of the project. Drinfeld explained to us his description of the fiber product $\mathfrak{D}$ in \eqref{diagram:intro-fibre-product-commdiagram}, which is a key input in this work, and we thank him for allowing us to include his result in this paper (see \S\ref{subsection:Drinfeld-lemma} and \S \ref{subsection:description-fibre-product}). 
We would also like to thank Akhil Mathew, Jacob Lurie, Arthur Ogus, and Alexander Petrov for their interest in our work and numerous discussions. In particular, Lurie shared with us his perspective on Theorem \ref{intro-main-thm}. He also suggested an alternative description of $\mathfrak{D}$ explained in \S\ref{subsection:description-fibre-product}.  Mathew supplied a proof of a key assertion in Remark  \ref{rem:Lattices in crystalline representations}.

G.T.~was supported by NSF Grant DMS \#1801689.~V.V.~was supported by a Simons Foundation Investigator Grant No.~622511 through Bhargav Bhatt.~V.V.~would like to thank the University of Chicago and Princeton University where the work has been done.~Y.X.~was supported by the U.S. National Science Foundation under Award No.~2202677.

\section{Preliminaries}

\subsection{Quasi-coherent sheaves on $\bA^1/\bG_m$}\label{subsection:QCoh-on-A1modGm}
We recall the standard dictionary between the derived category of quasi-coherent sheaves on $\bA^1/\bG_m$ and the filtered derived category of graded $W(k)$-modules.~See \cite[\S 2.2]{bhatt-lecture-notes} for additional details. 

Let $z$ denote the coordinate function on $\bG_m$.~Consider the standard action of $\bG_m$ on $\bA^1=\Spec W(k)[v_+]$ given by 
    $\bG_m\times \bA^1\to \bA^1, v_+\mapsto z\otimes v_+$.~We shall denote the $p$-adic formal scheme associated to $\bA^1$ with the above $\bG_m$-action by $\bA^1_+$, {\it i.e.},~$\bA^1_+:=\Spf W(k)[v_+]:= (\Spec W(k)[v_+])^{\wedge}$.~Similarly, we define an action of $\bG_m$ on $\bA^1_-:=\Spf W(k)[v_-]$ given by $v_-\mapsto z^{-1}\otimes v_-$. 
We write $\bA^1_{\pm}/\bG_m$ for the quotient $p$-adic formal stacks over $B\bG_m$. Thus,
for a $p$-nilpotent $W(k)$-algebra $R$, the groupoid of $R$-points of $\bA^1_{+}/\bG_m$  (resp.~$\bA^1_{-}/\bG_m$) classifies pairs $(L, v_+)$ (resp. $(L, v_-)$), where $L$ is a line bundle over $S:=\Spec R$ and $v_+: \cO_S \to L$ is a section of $L$ (resp. $v_-: L \to \cO_S$ is a section of $L^*$). We denote by  $(\cO(1), v_+)$ (resp. $(\cO(1), v_-)$ ) the universal 
pair over $\bA^1_{+}/\bG_m$ (resp. $\bA^1_{-}/\bG_m$). We have a map  $\bA^1_{\pm}/\bG_m \to B\bG_m$ classifying the line bundle $\cO(1)$
\footnote{Note that $\bA^1_+/\bG_m$ is isomorphic to $\bA^1_-/\bG_m$ as abstract stacks, but not as stacks over $B\bG_m$.}. 
We identify the derived category of quasi-coherent sheaves on $\bA^1_{\pm}/\bG_m$ with the derived category of $p$-complete graded modules over $W(k)[v_{\pm}]$\footnote{A graded complex $\oplus_i F^i$ of 
        $W(k)[v_{-}]$-modules is said to be $p$-complete if each $F^i$ is $p$-complete.}: 
\begin{equation}\label{identifying-CohA1modGm-with-modules}
\cD_{qc}(\bA^1_{\pm}/\bG_m)\simeq \widehat{\cD}_{gr}(W(k)[v_{\pm}]).
\end{equation}
The functor $\cD_{qc}(\bA^1_-/\bG_m)\to \widehat{\cD}_{gr}(W(k)[v_-])$ is given by 
\begin{equation}\label{identifying-CohA1modGm-with-modules-minus}
\cM\mapsto (\oplus F^i, F^i\xrightarrow{v_-}F^{i-1}, \deg F^i=i),
\end{equation} 
where $F^i:=\mathrm{R}\Gamma(\bA^1_-/\bG_m,\cM\otimes\cO(i))$ with $\cO(i)$ the line bundle pulled back from $B\bG_m$. The map $v_-: F^i\to F^{i-1}$ comes from the global section $v_-\in \Gamma(\bA^1_-/\bG_m,\cO(-1))$. 
Similarly, the functor $\cD_{qc}(\bA^1_+/\bG_m)\to \widehat{\cD}_{gr}(W(k)[v_+])$ is given by 
\begin{equation}\label{identifying-CohA1modGm-with-modules-plus}
\cM\mapsto (\oplus G_i, G_i\xrightarrow{v_+}G_{i+1},\deg G_i=i)
\end{equation}
where $G_i:=\mathrm{R}\Gamma(\bA^1_+/\bG_m,\cM\otimes\cO(i))$. 
\begin{df}\label{Defn-Dqc-ab-A1modGm-plusminus}
    For $a,b\in\bZ\cup \{+\infty,-\infty\}$ such that $a\leq b$, let $\cD_{qc,[a,b]}(\bA^1_{\pm}/\bG_m)$ be a full subcategory of $\cD_{qc}(\bA^1_{\pm}/\bG_m)$ formed of objects $\cM$ such that, under the identification \eqref{identifying-CohA1modGm-with-modules-plus} (resp.~\eqref{identifying-CohA1modGm-with-modules-minus}), $G_j$ (resp.~$F^j$) is acyclic for $j<a$ (resp.~$j> b$) and $v_+:G_i\to G_{i+1}$ (resp.~$v_-:F^i\to F^{i-1}$) is a quasi-isomorphism for $i\geq b$ (resp.~$i\leq a$). Similarly, one defines a full subcategory $\Coh_{[a,b]}(\bA^1_{\pm}/\bG_m)\subset\Coh(\bA^1_{\pm}/\bG_m)$.  
\end{df}

\begin{rem}
    Consider the map $B\bG_m\to \bA^1_{\pm}/\bG_m$ given by inclusion of the origin into $\bA^1_{\pm}$. We identify $\cD_{qc}(B\bG_m)$ with the derived category of $p$-complete graded complexes of $W(k)$-modules. For any $\cM\in \cD_{qc,[a,b]}(\bA^1_{\pm}/\bG_m)$, its restriction to $B\bG_m$ is acyclic in grading degrees outside of $[a,b]$. Thus we have a well-defined functor 
    \[\cD_{qc,[a,b]}(\bA^1_{\pm}/\bG_m)\to \cD_{qc,[a,b]}(B\bG_m).\] 
\end{rem}

\begin{df}\label{effective-Coh}
A sheaf $\cM\in\cD_{qc}(\bA^1_{\pm}/\bG_m)$ is \textit{effective} if $\cM\in\cD_{qc,[0,+\infty]}(\bA^1_{\pm}/\bG_m)$. 
\end{df}

\subsection{Fontaine-Laffaille modules}\label{subsection:Fontaine-Laffaille-modules}
Denote by $F: W(k)\to W(k)$ the Frobenius endomorphism. For $a\in W(k)$, denote by $i_a: \Spf W(k)\mono \bA^1_- \epi \bA^1_-/\bG_m$ the point given by equation $v_{-}=a$. Given $\cM\in \cD_{qc}(\bA^1_-/\bG_m)$, we denote by $\cM_{v_-=a}:=i_a^* \cM\in \cD_{p\text{-}comp}(W(k))$  the {\it derived} pullback;
for  $\cM\in\Coh(\bA^1_-/\bG_m)$, we write $ L_0 i_a^* \cM \in \Mod(W(k))$ for the {\it non-derived} fiber.  
\begin{df}\label{Defn:FLmodules-as-sheaves}
(cf.~\cite[\S 1.5]{Fontaine-Laffaille}, \cite[\S 1.3]{wintenberger})
    A finitely generated (``small'') Fontaine--Laffaille module over $W(k)$ is $\cM\in\Coh(\bA^1_-/\bG_m)$ together with an isomorphism between the fibres
    \[\varphi: F^*( L_0 i_p^* \cM)\xrightarrow{\sim}  L_0 i_1^* \cM.\] 
Morphisms between Fontaine--Laffaille modules are morphisms between coherent sheaves on $\bA^1_-/\bG_m$ compatible with the isomorphisms $\varphi$. We denote the category of Fontaine--Laffaille modules over $W(k)$ by $\mathscr{MF}(W(k))$. 
\end{df}

\begin{rem}\label{remark:comparing-FL82-with-our-definition}The full subcategory $\mathscr{MF}^{\mathrm{tor}}(W(k))$ of $\mathscr{MF}(W(k))$ spanned by $p$-power-torsion objects is equivalent to the category $\underline{MF}^f_{tor}$ introduced in  \cite[\S 1.5]{Fontaine-Laffaille}.
However, the definition in \cite{Fontaine-Laffaille} does not use the language of stacks. To compare Definition \ref{Defn:FLmodules-as-sheaves} with the one given in \cite{Fontaine-Laffaille}, observe an equivalence between $\Coh(\bA^1_-/\bG_m)$ and the category of finitely generated graded modules $\bigoplus F^i$ over $W(k)[v_-]$ as explained in \S \ref{subsection:QCoh-on-A1modGm}. Under this equivalence, the fibre 
$ L_0 i_1^* \cM$  of   $\cM$ over the point $v_-=1$ is sent to $F^{-\infty}:=\underset{v_-}{\varinjlim}\,\ F^i$. Giving a morphism $\varphi: F^*( L_0 i_p^* \cM)\to  L_0 i_1^* \cM$ is equivalent to giving a collection of $F$-linear maps $\varphi_i: F^i\to F^{-\infty}$ such that $\varphi_{i-1}v_-=p\varphi_i$. Assume that $\cM$ is of $p$-power torsion. Then by ~\cite[Theorem 2.1]{Faltings-crystalline} (see also Proposition \ref{FL-lemma} below for the implication in the forward direction), the fact that $\varphi$ is an isomorphism is equivalent to the combination of the following two statements:
\begin{enumerate}
    \item  Each $v_-: F^i\to F^{i-1}$ is a {\it split} injection; and 
    \item $F^{-\infty}=\sum\limits_i\im (\varphi_i)$. 
\end{enumerate}
These two are precisely the conditions given in \cite[\S 1.5]{Fontaine-Laffaille} except for the adjective {\it split} in (1) which is missing in {\it loc. cit}. However, using that the $W(k)$-module $F^{-\infty}$ has finite length, one shows 
(see \cite[Proposition 1.4.1 (ii)]{wintenberger} or \cite[Theorem 2.1]{Faltings-crystalline}) that the splitting condition for $v_-: F^i\to F^{i-1}$ is  redundant.

The whole category  $\mathscr{MF}(W(k))$ can be  described similarly: an object of  $\mathscr{MF}(W(k))$ consists of a finitely generated 
$W(k)$-module $F^{-\infty}$ equipped with an exhaustive filtration by $W(k)$-submodules $F^{\bullet}$ together with 
$F$-linear maps $\varphi_i: F^i\to F^{-\infty}$, $\varphi_{i-1}v_-=p\varphi_i$, such that the conditions (1) and (2) above hold\footnote{In the  nontorsion case the adjective {\it split} in (1) is essential.}. This category was introduced in  \cite[\S 1.3]{wintenberger}.
The category of strongly divisible lattices that appeared in  \cite[\S 7.11]{Fontaine-Laffaille} is equivalent to the full subcategory of $p$-torsionfree objects of  $\mathscr{MF}(W(k))$. 
\end{rem}

\begin{df}\label{Defn-big-DMF-category}
Let $\mathscr{DMF}^{\mathrm{big}}(W(k))$ be the stable $\bZ_p$-linear $\infty$-category formed of objects $\cM\in \cD_{qc}(\bA^1_-/\bG_m)$ together 
with an isomorphism $\varphi:F^*(\cM_{v_-=p})\xrightarrow{\sim}  \cM_{v_-=1}$, {\it i.e.},~we have 
$$\mathscr{DMF}^{\mathrm{big}}(W(k)):=\mathrm{Eq}\begin{tikzcd}\Big(\cD_{qc}(\bA^1_-/\bG_m)\arrow[shift left]{r}{i_1^*}\arrow[shift right]{r}[swap]{F^*\circ i_p^*}&\cD_{qc}(\Spf W(k))\Big)\end{tikzcd},$$
where $i_1^*$ denotes the pullback to the point $v_-=1$ and $F^*\circ i_p^*$ denotes the pullback to the point $v_-=p$ post-composed with the Frobenius.    

\end{df}
Part (1) of the following Proposition, with  $ \mathscr{MF}(W(k))$ replaced by  $\mathscr{MF}^{\mathrm{tor}}(W(k))$, is proven in \cite[\S 1.8]{Fontaine-Laffaille}, and in \cite[Proposition 1.4]{wintenberger}  in general. Part (2) is proven in \cite[Theorem 2.1]{Faltings-crystalline}. 
\begin{pr}\label{FL-lemma}\noindent
\begin{enumerate}
    \item For every $(\cM, \varphi)\in \mathscr{MF}(W(k))$ and every point $i_a: \Spf W(k) \to \bA_-^1/\bG_m$, the derived pullback $Li^*_a\cM$ is concentrated in cohomological degree $0$.~In particular, for every $j\in\bZ$, the map $F^j\xrightarrow{v_-}F^{j-1}$ is injective\footnote{Indeed, $i^*_0\cM$ is isomorphic to $\bigoplus_j \Cone(F^j\xrightarrow{v_-}F^{j-1})$.}. 
  \item The category $\mathscr{MF}(W(k))$ is abelian. Moreover, the functor $\mathscr{MF}(W(k))\to \Coh(\bA_-^1/\bG_m)$ given by $(\cM,\varphi)\mapsto\cM$ is exact. 
    \item Let  $\cD^b(\mathscr{MF}(W(k)))$ be the bounded derived category  of $\mathscr{MF}(W(k))$.
    Then the functor $\cD^b(\mathscr{MF}(W(k)))\to \mathscr{DMF}^{\mathrm{big}}(W(k))$ is fully faithful, and its essential image is formed of pairs $(\cM,\varphi)$ with $\cM\in \cD^b(\Coh(\bA^1_-/\bG_m))$. 
    \end{enumerate}
\end{pr}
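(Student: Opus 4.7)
The proposition breaks into three parts. My plan is to prove~(1) directly, then deduce~(2) formally from~(1), and reduce~(3) to~(1) via induction on cohomological amplitude.

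For Part~(1), the first observation is that $Li_a^*\cM$ is computed by the Koszul complex $[\cM\xrightarrow{v_--a}\cM]$ in degrees $[-1,0]$, so the claim reduces to injectivity of $v_--a$ on $\cM$. The core case is $a=0$: I would suppose $K:=\ker(v_-)\neq 0$, pick $0\neq x\in K$ of maximal grading, and observe that its image $\bar x\in\cM_{v_-=p}$ satisfies $p\cdot\bar x=0$ (since $v_-\equiv p$ in this fiber). Transporting through $\varphi$ produces a nonzero $p$-torsion element of $\cM_{v_-=1}$, and a Nakayama-style iteration --- cleanest after reducing modulo $p$, where $\varphi$ becomes an isomorphism of $k$-vector spaces $F^*(\cM/v_-\cM)\iso\varinjlim F^i$ and a direct dimension count in the graded pieces closes the argument --- yields a contradiction; this is the mechanism underlying Faltings~\cite[Theorem 2.1]{Faltings-crystalline}. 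For $a\in W(k)^{\times}$, injectivity of $v_--a$ is elementary: a solution $\sum_i x_i$ of $(v_--a)x=0$ satisfies $v_-x_{i+1}=ax_i$, and grading-boundedness from above (a consequence of finite generation) forces $ax_N=0$ at the top degree, which cascades to $x=0$. The case $a=p$ is handled similarly using $v_-$-injectivity and $p$-adic completeness.

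For Part~(2), given $f:(\cM,\varphi)\to(\cN,\psi)$ in $\mathscr{MF}(W(k))$, I would form $\ker f$, $\im f$, and $\coker f$ in $\Coh(\bA^1_-/\bG_m)$. Part~(1) for $\cM$ and $\cN$ gives vanishing of $L^{-1}i_a^*$ on both, and the long exact sequences for $Li_a^*$ applied to $0\to\ker f\to\cM\to\im f\to 0$ and $0\to\im f\to\cN\to\coker f\to 0$ --- combined with the cohomological-amplitude bound that $Li_a^*$ lives in degrees $[-1,0]$, hence $L^{-2}i_a^*$ automatically vanishes --- propagate this vanishing to $\ker f$, $\im f$, and $\coker f$. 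Concretely, the injectivity of $v_--a$ on $\cN$ (Part~(1) for $\cN$) is what forces $\varphi$ to restrict to isomorphisms of fibers on these subquotients, promoting them to FL modules; exactness of the forgetful functor and the abelian structure on $\mathscr{MF}(W(k))$ follow immediately.

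For Part~(3), fully faithfulness of $\cD^b(\mathscr{MF}(W(k)))\to\mathscr{DMF}^{\mathrm{big}}(W(k))$ reduces to an $\Ext$ comparison: the equalizer presentation of $\mathscr{DMF}^{\mathrm{big}}$ expresses $\Hom_{\mathscr{DMF}^{\mathrm{big}}}(\cM,\cN[n])$ as a fiber of $\Hom$-complexes in $\cD_{qc}(\bA^1_-/\bG_m)$ and $\cD_{qc}(\Spf W(k))$, and using Part~(2) together with the finite cohomological dimension of $\Coh(\bA^1_-/\bG_m)$ one matches this with Yoneda Ext's in $\mathscr{MF}(W(k))$. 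For the essential image, I would induct on the cohomological amplitude of $\cM\in\cD^b(\Coh(\bA^1_-/\bG_m))$. In the spectral sequence $L^{-p}i_a^*H^q(\cM)\Rightarrow H^{q-p}(Li_a^*\cM)$ the top cohomology $H^n(\cM)$ satisfies $i_a^*H^n(\cM)=H^n(Li_a^*\cM)$ (it sits at the edge, with no incoming or outgoing nonzero differentials), so $\varphi$ restricts to an isomorphism of underived fibers $F^*(i_p^*H^n(\cM))\iso i_1^*H^n(\cM)$, placing $H^n(\cM)\in\mathscr{MF}(W(k))$; the distinguished triangle $\tau^{<n}\cM\to\cM\to H^n(\cM)[-n]$ then equips $\tau^{<n}\cM$ with a compatible FL structure, completing the induction.

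The main obstacle will be the $v_-$-injectivity step in Part~(1): it is the only place where the finite generation hypothesis and the global structure of $\varphi$ enter essentially, and the Frobenius twist in $\varphi$ must be handled carefully to close the Nakayama loop. The remainder of the argument is categorical bookkeeping, though the essential image computation in Part~(3) requires some care in tracking $\varphi$ through the spectral sequence for $Li_a^*$.
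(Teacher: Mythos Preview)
Your outline for Part~(2) and for the essential-image half of Part~(3) matches the paper's argument. The two places where your sketch falls short are Part~(1) and the fully-faithfulness half of Part~(3).

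\textbf{Part~(1).} The contradiction-by-element framing is incomplete: you assert that the image $\bar x$ of $x\in\ker(v_-)$ in $\cM_{v_-=p}$ is nonzero, but nothing prevents $x\in(v_--p)\cM$; and even granting $\bar x\neq 0$, producing a nonzero $p$-torsion element of $\cM_{v_-=1}$ is not itself a contradiction (torsion FL modules abound). Your parenthetical ``reduce modulo $p$ and count dimensions'' is the correct core and is exactly the paper's argument in the $p$-torsion case: when $p\cM=0$ one has $i_p^*\cM=i_0^*\cM$, so $\varphi$ forces $\dim_k i_0^*\cM=\dim_k i_1^*\cM$, whence by $\bG_m$-equivariance $\cM$ is a vector bundle on $\bA^1_-\otimes k$ and $L_1i_a^*\cM=0$. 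What is missing from your sketch is the passage to general $\cM$: the paper runs an induction on $n$ with $p^n\cM=0$ via the sequence $0\to p\cM\to\cM\to\cM/p\to 0$ (the point being that $L_1i_p^*(\cM/p)=0$ from the base case makes $i_p^*$ exact on this sequence, so $p\cM$ inherits an FL structure and the hypothesis applies), and then passes to the inverse limit over $\cM/p^n$ for arbitrary $\cM$. Without this d\'evissage your argument establishes injectivity of $v_-$ only on $\cM/p$, which does not lift to $\cM$.

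\textbf{Part~(3), fully faithfulness.} ``Match the fiber presentation with Yoneda $\Ext$'s using finite cohomological dimension'' does not explain how to compare $\Ext^n_{\mathscr{MF}}$ with $\pi_{-n}\Hom_{\BFL}$ for $n\geq 2$: there is no evident dimension-shifting mechanism in $\mathscr{MF}(W(k))$ without some input about projectives or effaceability. The paper's route is: first reduce to torsion-free $\cM$, where both sides vanish for $n\geq 2$ --- on the $\BFL$ side this drops out of the fiber description (a torsion-free FL module is locally free on $\bA^1_-/\bG_m$ by Part~(1), so both $\Hom$-complexes in the fiber sequence are concentrated in degree $0$), while on the $\DFL$ side one invokes \cite[Lemma~4.4]{Bloch-Kato} --- and $\Ext^0,\Ext^1$ agree because $\mathscr{MF}(W(k))\hookrightarrow\BFL$ is fully faithful with extension-closed image. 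The general case is reduced to the torsion-free one via $\cM_{\mathrm{tor}}\to\cM\to\cM/\cM_{\mathrm{tor}}$ together with a lifting lemma: every $p$-torsion FL module lifts to a torsion-free one (since vector bundles on $(\bA^1_-/\bG_m)\otimes\bF_p$ lift and any lift of $\varphi$ is automatically an isomorphism). This lifting step is the non-formal ingredient your sketch does not supply.
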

    \begin{proof}     
Though the first two assertions are known, for the reader's convenience, we include a proof.

To prove (1) we first verify, by induction on $n$, that if $(\cM,\varphi)$ is a Fontaine-Laffaille module with $p^n \cM=0$, then $L_1 i^*_a \cM =0$.~For $n=1$, we check that the pullback of $\cM$ to $\bA^1_- \otimes k$ is a vector bundle; this would imply the assertion.~Indeed, by $\bG_m$-equivariance it suffices to show $\dim L_0i^*_p \cM=\dim L_0 i^*_1 \cM$\footnote{Indeed, a coherent sheaf $\cF$
on $\bA^1_- \otimes k$ is a vector bundle if, for every closed point $a$ of  $\bA^1_- \otimes k$, the dimension of its fiber over $a$ is equal to the dimension of its fiber over the generic point. If $\cF$ is $\bG_m$-equivariant, then $\cF$ restricted to 
$(\bA^1_- \otimes k)\backslash\{0\}$ is automatically a vector bundle. Thus $\cF$
is a vector bundle if $\dim L_0i^*_0 \cM=\dim L_0i^*_p \cM$ is equal to $\dim L_0i^*_1 \cM$.}. But this follows from the existence of $\varphi$.~For the induction step, set $\cM'= \Ker (\cM \to \cM/p)$ and consider the commutative diagram 
\begin{equation}
\begin{tikzcd}
0\arrow[]{r}{}&F^*L_0i_p^*\cM'\arrow[]{r}{}\arrow[]{d}{\varphi_{\cM'}}&F^*L_0i_p^*\cM\arrow[]{r}{}\arrow[]{d}{\simeq}&F^*L_0i_p^*(\cM/p)\arrow[]{r}{}\arrow[]{d}{\simeq}&0\\
0\arrow[]{r}{}&L_0i_1^*\cM'\arrow[]{r}{}&L_0i_1^*\cM\arrow[]{r}{}&L_0i_1^*(\cM/p)\arrow[]{r}{}&0
\end{tikzcd}
\end{equation}
  Using that $L_1i^*_p (\cM/p)=0$, the rows are exact.~It follows that the left downward arrow is an isomorphism. Thus, $(\cM', \varphi_{\cM'})$ is a  Fontaine-Laffaille module.~Applying the induction hypothesis to $\cM'$ and $\cM/p$ we infer the assertion for $\cM$.
  
To complete the proof of (1), let us check that $L_1 i^*_a \cM =0$, for {\it any}~Fontaine-Laffaille module.~
Indeed, let $M$ denote the space of global sections of $\cM$ pulled back to $\bA^1_-$. Since $\cM$ is coherent, $M$ is a finitely generated module
over $ W(k)[v_-]^{\wedge}$. In particular, $M$ is $p$-complete in the classical (non-derived) sense, 
$M\iso R^0\,\limfrom \, M/p^n \iso R\,\limfrom \, M/p^n$.
The derived fiber $i^*_a \cM$ is isomorphic
to $\Cone (M\rar{v_-  -a} M)$. We have  $\Cone (M\rar{v_-  -a} M)=  R\,\limfrom \,\Cone (M/p^n \rar{v_-  -a} M/p^n)$. Since $\cM /p^n$ has a structure of Fontaine-Laffaille module, $\Cone (M/p^n \rar{v_-  -a} M/p^n)$ is supported in cohomological degree $0$, as was shown in the previous paragraph. Thus, $L_n i^*_a \cM =0$, for $n>0$.

To prove (2), we start with a preliminary construction. Let $\mathscr{MF}'(W(k))$ be the category formed of objects $\cM\in \Coh(\bA^1_-/\bG_m)$ together with a morphism $\varphi:F^* L_0 i^*_p \cM \to L_0i^*_1\cM $.~Since the functor $L_0i^*_1\colon \Coh(\bA^1_-/\bG_m)\to \Coh (\Spf W(k))$ is exact and $L_0 i^*_p$ is right exact, the category  $\mathscr{MF}'(W(k))$ is abelian, and the forgetful functor $\mathscr{MF}'(W(k)) \to  \Coh(\bA^1_-/\bG_m)$ is exact and conservative. The category  $\mathscr{MF}(W(k))$ is a full subcategory of the abelian category $\mathscr{MF}'(W(k))$.~Thus to show that $\mathscr{MF}(W(k))$ is abelian, it suffices to show that, for any morphism $f$ in $\mathscr{MF}(W(k))$, its kernel and cokernel computed in $\mathscr{MF}'(W(k))$ lie in 
    $\mathscr{MF}(W(k))$. The assertion on cokernel is clear from the right exactness properties of $L_0i^*_p$ and $L_0i^*_1$. Let us check that $\Ker f \in \mathscr{MF}(W(k))$.
  Indeed, using the vanishing of $L_1 i^*_p $ for  Fontaine-Laffaille modules 
  we conclude that $L_0i^*_p \Ker(\cM \rar{f} \cM') \iso \Ker(L_0i^*_p \cM \rar{f} L_0i^*_p\cM')$ which implies that $\phi: F^* L_0i^*_p \Ker(\cM \rar{f} \cM') \to L_0i^*_0 \Ker(\cM \rar{f} \cM')$ is an isomorphism.
  
To prove (3): by part (1), for any $\cM \in \mathscr{MF}(W(k))$, we have a canonical isomorphism $F^*i^*_p \cM\simeq F^* i^*_p M$, where the pullbacks are {\it derived}. This defines an exact functor $\mathscr{MF}(W(k)) \to \BFL$ that extends uniquely to an exact functor 
\begin{equation}\label{eq:functor_from_small_to_big}
    \DFL \to \BFL. 
\end{equation}
Let us check that \eqref{eq:functor_from_small_to_big} is fully faithful. It suffices to show that, for any two objects 
$(\cM, \varphi_\cM),  (\cN, \varphi_\cN) \in \mathscr{MF}(W(k))$,
the map of mapping spectra
\begin{equation}\label{eq:fully_faith}
  \Hom_{\DFL} (\cM, \cN) \to \Hom_{\BFL} (\cM, \cN)  
\end{equation}
is an equivalence. First, we check this for a $p$-torsionfree $\cM$. Using part (1), this condition is equivalent to $\cM$ being a locally free $\cO$-module. By definition of 
the equalizer category, the spectrum 
$ \Hom_{\BFL} (\cM, \cN)$ is given by 
\begin{equation}\label{eq:proof_of_prop_on_FL_modules}
  \mathrm{Fib} \left(\Hom_{\cD_{qc}(\bA^1_-/\bG_m)} (\cM, \cN ) \to \Hom_{\cD_{qc}(W(k))} (F^* i^*_p\cM, i^*_1\cN)\right),  
\end{equation}
where the map carries $f\in \Hom_{\cD_{qc}(\bA^1_-/\bG_m)} (\cM, \cN )$ to $i^*_1(f) \circ \varphi_{\cM}-  \varphi_{\cN} \circ F^* i^*_p(f)$. In particular, if $\cM$ is a locally free $\cO$-module, $ \pi_i \Hom_{\BFL} (\cM, \cN)=0$  unless $i=0$ or $-1$: each
mapping spectrum appearing in \eqref{eq:proof_of_prop_on_FL_modules} is isomorphic to an abelian group.
Using \cite[Lemma~4.4]{Bloch-Kato}, the same is true for 
$ \Hom_{\DFL} (\cM, \cN)$. Since $\mathscr{MF}(W(k)) \to \BFL$ is fully faithful and its  essential image is closed under extensions (that is, for any morphism $\cN \to \cM[1]$ in $\BFL$, its fiber is isomorphic to an object of $\mathscr{MF}(W(k))$) it follows that \eqref{eq:fully_faith} is an equivalence for torsion-free $\cM$.
In general, for any $\cM$, we have a fiber sequence $\cM_{tor} \mono \cM \epi \cM/\cM_{tor}$, where $\cM_{tor}$
is a torsion  Fontaine-Laffaile module and $\cM/\cM_{tor}$ is torsion-free. Thus, it remains to prove that 
\eqref{eq:fully_faith} is an equivalence for $\cM =\cM_{tor} $. By d\'evissage, we may assume that $p \cM=0$. In this case the assertion reduces to the torsion-free case using the following observation.
\begin{lm}
    For any $p$-torsion Fontaine--Laffaille module $(\cM, \varphi)$, there exists a  torsion-free Fontaine--Laffaille module $(\tilde \cM, \tilde \varphi)$ with $(\tilde \cM, \tilde \varphi) \otimes \bF_p \iso (\cM, \varphi)$.
\end{lm}
\begin{proof}
Any vector bundle over $\bA^{1}_-/\bG_{m} \otimes \bF_p$ is  isomorphic to a direct sum of line bundles of the form $\cO(i)$, $i\in \bZ$. In particular, it lifts to a vector bundle over $\bA^1_-/\bG_m$. 
Pick a vector bundle $\tilde \cM$ over $\bA^1_-/\bG_m$ that lifts $\cM$ and then choose any $\tilde \varphi: F^*(\tilde \cM_{v_-=p})\xrightarrow{\sim}\tilde  \cM_{v_-=1}$ that lifts $\varphi: F^*(\tilde \cM_{v_-=p}) \otimes \bF_p\xrightarrow{\sim}\tilde  \cM_{v_-=1}\otimes \bF_p$.
\end{proof}
It remains to prove that the essential image of the functor \eqref{eq:functor_from_small_to_big}
consists of all objects $(\cM, \varphi)$ whose underlying complex $\cM\in \cD_{qc} (\bA^1_-/\bG_m)$
is bounded and has coherent cohomology.
We induct on length of the complex. If $\cM$ is supported in a single cohomological degree the assertion is clear. For the induction step, we assume that $\cM$ is connective and $H^0(\cM) \ne 0$. Note that the isomorphism 
$\varphi: F^*i^*_p \cM \xrightarrow{\sim} i^*_1 \cM$ induces $H^0(\varphi): F^*L_0i^*_p H^0(\cM) \xrightarrow{\sim} L_0 i^*_1 H^0(\cM)$. Thus $(H^0(\cM), H^0(\varphi))\in \mathscr{MF}(W(k))$.  By part (1) of the Proposition $ L_j i^*_p H^0(\cM)=  L_j i^*_1 H^0(\cM)  =0$ for $j>0$. It follows that  the map $\cM \to H^0 (\cM)$ in $\cD_{qc}(\bA^1_-/\bG_m)$ lifts  
to a morphism in  $\BFL$. Applying the induction assumption to its fiber we complete the proof.
 \end{proof}

Let $\mathscr{MF}_{[a,b]}(W(k))$ (resp.~$\mathscr{DMF}^{\mathrm{big}}_{[a,b]}(W(k))$\,) be a full subcategory of $\mathscr{MF}(W(k))$ (resp.~$\mathscr{DMF}^{\mathrm{big}}(W(k))$\,) formed of objects $(\cM,\varphi)\in \mathscr{MF}(W(k))$ (resp.~$\mathscr{DMF}^{\mathrm{big}}(W(k))$\,) such that $\cM\in \cD_{qc,[a,b]}(\bA^1_-/\bG_m)$. 

\subsection{Syntomification via filtered Cartier-Witt divisors}\label{subsec:preliminary-syntomification-section}
Recall from \cite{drinfeld2022prismatization},  \cite[\S 5.3]{bhatt-lecture-notes} definitions of the stacks $W(k)^{\Prism}$, $W(k)^{\cN}$, $W(k)^{syn}$, and $X^{syn}$.  

Throughout this paper we let $W$ be the $p$-typical Witt ring scheme pulled back to  $\Spec \bZ_{(p)}$.
For a $p$-nilpotent $W(k)$-algebra $R$, set $W_R := W \times \Spec R$. A {\it $W_R$-module} $M$ is an affine group scheme over $R$ with an action of the ring scheme $W_R$.
We say that $M$ is invertible if, Zariski locally on $\Spec R$, $M$ is isomorphic  to the free $W_R$-module $W_R$. Denote by  $W(k)^{\Prism}(R)$ the groupoid
whose objects are  pairs $(M, \xi)$, where $M$ is an invertible $W_R$-module and $\xi:M\to W_R$ is a morphism of $W_R$-modules such that  Zariski locally on $\Spec R$ the pair $(M, \xi)$ is isomorphic to $(W_R, w\Id)$, $w=(w_1, w_2, \cdots)\in W(R)$, with $w_1\in R$ nilpotent and $w_2\in R^*$.   

For a $W_R$-module $M$, precomposing the $W_R$-action on $M$ with the  Frobenius $F: W_R\to W_R$ we obtain a new 
$W_R$-module $F_*M$ whose underlying group scheme  is that of $M$. Applying this to the free $W_R$-module $W_R$, we have an exact sequence 
\begin{equation}\label{eq:short_exact_sequence}
    0\to \bG_{a,R}^{\sharp} \rar{} W_R \rar{F} F_*W_R  \to 0
\end{equation}
of  $W_R$-modules\footnote{To justify our notation for the kernel of $F$ recall  a fundamental
observation of Drinfeld \cite[Lemma 3.2.6]{drinfeld2022prismatization} and Bhatt-Lurie \cite{Bhatt-Lurie-2022absolute} that the (nonunital) ring scheme $ W^{(F)}_R:= \ker ( W_R \rar{F} W_R)$ is isomorphic to the PD-hull $\bG_{a,R}^{\sharp}$  of $\bG_{a,R}$ at $0$.}.
Note that the action of $W_R$ on $\bG_{a,R}^{\sharp}$ factors through the ``first coordinate'' homomorphism $W_R\to \bG_{a,R}$.
In particular, we have a natural action of the group $\bG_{m,R}$ on the $W_R$-module $\bG_{a,R}^{\sharp}$. Consequently, given an invertible $R$-module $L$ we can twist  $\bG_{a,R}^{\sharp}$ by $L$; the resulting $W_R$-module is denoted by $ \mathbf{V}(L)^{\sharp}$.
A  $W_R$-module $M$ is called {\it admissible}   if $M$ fits into a sequence of $W_R$-modules of the form
\begin{equation}\label{eq:defofadm}
 0 \to \mathbf{V}(L)^{\sharp} \to M \to F_* M' \to 0,   
\end{equation}
where $L \in \Pic(R)$ and $M'$ is an invertible $W_R$-module. One checks (see \cite[Lemma 3.12.7]{drinfeld2022prismatization}) that $L$, $M'$, and the sequence  \eqref{eq:defofadm} are functorial in $M$. In particular, every morphism $\xi:M\to W_R$ of $W_R$-modules extends uniquely to a morphism between short exact sequences \eqref{eq:defofadm} and \eqref{eq:short_exact_sequence}. 
Consider the category $W(k)^{\cN, c}(R)$ consisting of pairs $(M, \xi)$, called filtered Cartier-Witt divisors, where $M$ is an admissible $W_R$-module and $\xi:M\to W_R$ is a morphism of $W_R$-modules
such that the pair $(M', \bar \xi: M' \to W_R)$, constructed from the morphism of short exact sequences, is an object $W(k)^{\Prism}(R)$.
Morphisms between $(M, \xi)$ and $(M', \xi')$ in $W(k)^{\cN, c}(R)$ are given by morphisms $f: M \to M'$ of $W_R$-modules such that $\xi' \circ f = \xi$.
The groupoid $W(k)^{\cN} (R)$ is obtained from $W(k)^{\cN, c}(R)$ by discarding all morphisms which are not isomorphisms. By definition, the groupoid of $R$-points of the stack   $W(k)^{\cN}$ is  $W(k)^{\cN} (R)$. The functor $W(k)^{\cN, c}$ from the category of  $p$-nilpotent $W(k)$-algebras to the $2$-category of categories that carries an algebra $R$ to $W(k)^{\cN, c}(R)$ is an example of what Drinfeld calls in \cite{drinfeld2022prismatization}  a {\it $c$-stack}. We will not use this notion in the main body of the paper except  for Remark  \ref{rem:Drinfeld-commutativity-interpretation}.

Given an object $(M, \xi)$ of $W(k)^{\cN}(R)$, the morphism of $W_R$-modules $\mathbf{V}(L)^{\sharp}\to \bG_{a,R}^{\sharp}$ derived from $\xi$
lifts uniquely to a morphism of $R$-modules $L\to R$  (see \cite[Lemma 3.12.4, (ii)]{drinfeld2022prismatization}). This defines a morphism of stacks called the Rees map:
\begin{equation}\label{eqn:Rees-map-WkN}
t_{W(k)}: W(k)^{\cN} \to \bA^1_-/\bG_m\,.
\end{equation} Sending $(M, \xi)\in W(k)^{\cN}(R)$ to $(M', \bar \xi: M' \to W_R)\in W(k)^{\Prism}(R)$ determines a morphism $\pi : W(k)^{\cN} \to W(k)^{\Prism}$
called the structure morphism.  
Every invertible $W_R$-module is admissible and, moreover, every point $(M, \xi)\in W(k)^{\Prism}(R)$ is also a point of $W(k)^{\cN}(R)$
(see \cite[\S 5.3]{drinfeld2022prismatization}). This defines a map
$j_+: W(k)^{\Prism} \to W(k)^{\cN}$ which exhibits $W(k)^{\Prism}$ as an open substack of $ W(k)^{\cN}$ (see \cite[Lemma 5.3.1]{drinfeld2022prismatization}). As in  \cite[\S 5.6]{drinfeld2022prismatization}, we define another open embedding 
$j_-: W(k)^{\Prism} \to W(k)^{\cN}$ as follows. For $(M, \theta) \in W(k)^{\Prism} (R)$, define a filtered Cartier-Witt divisor $j_-(M, \theta):= (N, \xi)\in W(k)^{\cN}(R)$ by the pullback diagram 
\begin{equation}\label{dia:pullbackalongF}
\begin{tikzcd}
    N \arrow[]{r}{}\arrow[]{d}{\xi}&F_* M\arrow[]{d}{F_*(\theta)}\\
    W_R \arrow[]{r}{F}&F_*W_R
\end{tikzcd},
\end{equation}
 One can check that open substacks defined by $j_-$ and $j_+$ do not intersect (see \cite[Lemma 5.6.3]{drinfeld2022prismatization}). The stack $W(k)^{syn}$ is obtained from $W(k)^{\cN}$ by gluing two copies of $W(k)^{\Prism}$ using the maps $j_-$ and $j_+$.

For every $(M, \xi)\in W(k)^{\cN}(R)$ one considers the $W(k)$-algebra stack $\Cone(M \xrightarrow[]{\xi} W_R)$ (we refer the reader
to \cite[\S 1.3-1.4]{drinfeld2022prismatization} and the references therein for a discussion of the notion of ring stack). As a group stack,
this is simply the quotient stack $[W_R/M]$. The algebra structure on the quotient comes the natural DG-algebra structure on $M \xrightarrow[]{\xi} W_R$ given by the $W_R$-module structure on $M$  (see \cite[\S 1.3.3]{drinfeld2022prismatization}).
By ``\textit{transmutation}'' (\cite[Remark 2.3.8]{bhatt-lecture-notes},  \cite[\S 1.4.2]{drinfeld2022prismatization}), this defines a \textit{Nygaardization} functor $X \mapsto X^{\cN}$ from the category 
of bounded $p$-adic formal schemes  over $\Spf W(k)$ to stacks: for a $p$-nilpotent $W(k)$-algebra $R$, an $R$-point of the groupoid  $X^{\cN}(R)$ consists 
of $(M, \xi)\in W(k)^{\cN}(R)$ together with a morphism $\Spec (\Cone(M \xrightarrow[]{\xi} W_R)(R)) \to X$ of derived schemes over $W(k)$.
For every $(M, \theta)\in W(k)^{\Prism}(R)$, setting $j_+ (M, \theta)= (N_+, \xi_+)$, $j_- (M, \theta)= (N_-, \xi_-)$, one constructs (see \cite[\S 5.8]{drinfeld2022prismatization})) a natural isomorphism 
$\Cone(N_- \xrightarrow[]{\xi_-} W_R) \iso \Cone(N_+ \xrightarrow[]{\xi_+} W_R)$ of $W(k)$-algebra stacks (the map is given by the rightward arrows in diagram \eqref{dia:pullbackalongF}). Consequently, the fibers of $X^\cN \to W(k)^\cN$
over the two copies of $W(k)^{\Prism}$ inside $ W(k)^{\cN}$ are naturally identified (and denoted by\footnote{Bhatt's notations for
$j_+$ and $j_-$ are $j_{\text{HT}}$ and $j_{\dR}$, respectively.} $j_+, j_-: X^\Prism \mono X^\cN$). The stack $X^{syn}$ is obtained from $X^\cN$ by gluing these two open substacks.  This procedure defines a functor $X \mapsto X^{syn}$ from bounded $p$-adic formal schemes over $\Spf W(k)$ to stacks.  In particular, if $f: X \to \Spf W(k)$ is such a scheme, we have by functoriality a map $f^{syn}: X^{syn} \to W(k)^{syn}$. We refer to $\mathcal{H}_{syn}(X):= Rf_*^{syn}(\cO_{ X^{syn}})\in \cD_{qc}(W(k)^{syn})$ as the F-gauge associated to $X$.

Recall from \cite[Lemma 5.13.4 (i)]{drinfeld2022prismatization} that we have a morphism 
\begin{equation}\label{p-composed-to-Wk}
\mathfrak{p}_{\bar{\dR}}: \bA^1_-/\bG_m
\to W(k)^{\cN}
\end{equation} 
constructed as follows. Recall that an $R$-point of $\bA^1_-/\bG_m$ is a pair $(L ,v_-)$, where $L$ is an invertible $R$-module   
equipped with an $R$-linear  morphism $v_-: L \to R$. 
The map $\mathfrak{p}_{\bar{\dR}}$ is given by 
    $\mathfrak{p}_{\bar{\dR}}(L,v_-)=(M_{\mathfrak{p}_{\bar{\dR}, R}},\xi
    )$, 
where $M_{\mathfrak{p}_{\bar{\dR},R}}= \mathbf{V}(L)^{\sharp} \oplus F_*W_R$, and \begin{equation}\label{eqn:defining-xi-dR}
\xi:M_{\mathfrak{p}_{\bar{\dR},R}}=\mathbf{V}(L)^{\sharp} \oplus F_*W_R\to W_R
\end{equation} 
is given by $\xi:=(v_-^\sharp,V)$. Here  $V$ is the Verschiebung and $v_-^\sharp: \mathbf{V}(L)^{\sharp} \to  W_R$ is the composition of the map  $\mathbf{V}(L)^{\sharp} \to \bG_{a,\,R}^\sharp$ induced by $v_-$ and the embedding $\bG_{a,\,R}^\sharp \mono W_R$
from \eqref{eq:short_exact_sequence}.
One can verify (see \cite[Theorem 2.5.6]{bhatt-lecture-notes}) that, for a smooth $p$-adic formal scheme $X$ over $\Spf W(k)$, the complex 
$\mathfrak{p}^*_{\bar{\dR}}(\mathcal{H}_{syn}(X)) \in \cD_{qc}(\bA^1_-/\bG_m)$ recovers the Hodge filtered de Rham cohomology of $X$.

The stack $k^{\cN}$ can be described explicitly as follows. Set 
\begin{equation}\label{eqn:defining-A}
A:=W(k)[v_+,v_-]/(v_+v_-=p).
\end{equation}
We endow $A$ with a grading such that $\deg v_+=1$ and $\deg v_-=-1$, and consider the corresponding action of $\bG_m$ on $\Spf A$. Then $k^{\cN}$ is identified with $\Spf A/\bG_m$.~We shall just explain a construction of the map $\Spf A/\bG_m \to k^{\cN}$, referring the reader to \cite[\S 5.4]{bhatt-lecture-notes} for a proof of the isomorphism property.
First, we describe the composite map $\Cris:\Spf A/\bG_m\xrightarrow{\sim} k^{\cN}\to W(k)^{\cN}$ explicitly. Here the map $k^{\cN}\to W(k)^{\cN}$ comes from the morphism $\Spec k\to\Spf W(k)$ by functoriality. An $R$-point of $\Spf A/\bG_m$ is given by $R \xrightarrow{v_+}L \xrightarrow{v_-} R$, where $L$ is  an invertible $R$-module and $v_+v_-=p$. The map $\Cris$ takes $(R\xrightarrow{v_+} L \xrightarrow{v_-}R)$ and sends it to $(M_{\mathfrak{p}_{\mathrm{cris}},R},\xi)$, where 
$M_{\mathfrak{p}_{\mathrm{cris}},R}:=\coker(\bG_{a,R}^{\sharp}\xrightarrow{(v_+^{\sharp},-can)} \mathbf{V}(L)^{\sharp} \oplus W_R)$, and $\xi$ is induced by the map 
\begin{equation}\label{eqn:defining-xi-cris}
    \mathbf{V}(L)^{\sharp} \oplus W_R\xrightarrow{(can\,\circ\,v_-^{\sharp},\cdot p)}W_R. 
\end{equation}
The notation ``can'' stands for the canonical embedding $\bG_{a,R}^{\sharp}\hookrightarrow W_R$, and $\cdot p$ stands for multiplication by $p$. This defines the map $\Cris:\Spf A/\bG_m\to W(k)^{\cN}$. To lift this map to $k^{\cN}$ we need to endow $\Cone(M_{\mathfrak{p}_{\mathrm{cris}},R} \xrightarrow[]{\xi} W_R)$ with the structure of a $k$-algebra stack. 
In fact, we have a map of quasi-ideal pairs
$ (W_R \rar{\cdot p} W_R) \to (M_{\mathfrak{p}_{\mathrm{cris}},R} \xrightarrow[]{\xi} W_R)$ given by $ W_R\xrightarrow[]{(0, \Id)} \mathbf{V}(L)^{\sharp} \oplus W_R \to M_{\mathfrak{p}_{\mathrm{cris}},R}$ on the source and by $\Id$ on the target and, for any $W(k)$-algebra $D$, the animated ring $\Cone(D \xrightarrow[]{\cdot p} D)$ receives a map from $k\iso \Cone(W(k) \xrightarrow[]{\cdot p} W(k))$. Under the isomorphism  $\Spf A/\bG_m \iso k^{\cN}$, the open embedding
$j_+: k^\Prism \mono k^\cN$ (resp.~$j_-: k^\Prism \mono k^\cN$) is identified with $\Spf W(k) \rar{F} \Spf W(k) \xrightarrow[]{v_+=1, v_-=p} \Spf A \to \Spf A/\bG_m$  (resp. $\Spf W(k) \xrightarrow[]{v_+=p, v_-=1} \Spf A \to \Spf A/\bG_m$). Let us just construct an isomorphism of
{\it $k$-algebra} stacks obtained by pulling back $\bG_{a, k}^{\cN}$ along $j_{\pm}: \Spf W(k) \to  \Spf A/\bG_m$.
The restriction of $\bG_{a, k}^{\cN}$ to $\Spf W(k) \xrightarrow[]{v_+=1, v_-=p} \Spf A/\bG_m$ is given by $\Cone( W \rar{\cdot p} W)$; its restriction along  $\Spf W(k) \xrightarrow[]{v_+=p, v_-=1}  \Spf A/\bG_m$ is $\Cone(F_* W \rar{\cdot p} F_* W)$. As a $k$-algebra stack the latter is obtained from the former by precomposing the action of $k$ with the Frobenius. Equivalently,
the $k$-algebra stack $\Cone(F_* W \rar{\cdot p} F_* W)$ is isomorphic to the pullback of $\Cone( W \rar{\cdot p} W)$ along the Frobenius on $\Spf W(k)$. We refer the reader to \cite[\S 3.3]{bhatt-lecture-notes} and especially diagram (3.3.2) in {\it loc. cit.} for details.

For a smooth $p$-adic formal scheme $X$ over $\Spf W(k)$,  $\Cris^*(\mathcal{H}_{syn}(X))$ recovers the crystalline cohomology of $X\otimes \bF_p$ equipped with
the Nygaard filtration (see \cite[\S 3.3]{bhatt-lecture-notes}).

\subsection{The reduced locus of $W(k)^{\cN}$}
Let $C_2:=\Spec k[v_+^p,v_-]/(v_+^pv_-)$. We endow $C_2$ with a $\bG_m$-action given by the grading: $\deg v_+^p=p$ and $\deg v_-=-1$. The inclusion of $k[v_+^p,v_-]/(v_+^pv_-)\hookrightarrow k[v_+,v_-]/(v_+v_-)$ gives the morphism $k^{\cN}\otimes\bF_p\to C_2/\bG_m$. 
Recall from \cite[\S~5.16.10]{drinfeld2022prismatization} a factorization of $\mathfrak{p}_{\mathrm{cris}}$:
\begin{equation}
 k^{\cN}\otimes\bF_p\to C_2/\bG_m\xrightarrow{\MapCmodGmtoWkNred} W(k)^{\cN}_{red}\to W(k)^{\cN}\otimes\bF_p.
\end{equation}
Let us just explain a construction of the composite 
\begin{equation}\label{map-C2-to-WkNygaard}
C_2 \epi C_2/\bG_m\xrightarrow{\MapCmodGmtoWkNred} W(k)^{\cN},
\end{equation}
given by a pair $(M_{C_2},\alpha:M_{C_2}\to W_{C_2})$, where $M_{C_2}$ is an admissible $W_{C_2}$-module and $\alpha$ is a $W_{C_2}$-module homomorphism.~Explicitly, $M_{C_2}$ is given by the pullback diagram 
\begin{equation}
\begin{tikzcd}
    M_{C_2}\arrow[]{r}{}\arrow[]{d}{}&F_*W_{C_2}\arrow[]{d}{[v_+^p]}\\
    W_{C_2}\arrow[]{r}{F}&F_*W_{C_2}
\end{tikzcd},
\end{equation}
where the right downward arrow $[v_+^p]$ takes a Witt vector $w\in W_{C_2}$ to the product $[v_+^p]w$.~The morphism $\alpha$ is defined by 
$M_{C_2}=W_{C_2}\times_{F_*W_{C_2}}F_*W_{C_2}\xrightarrow{([v_-],V)}W_{C_2}$. 
The isomorphism between the composition 
\[
C:=\Spec k[v_+,v_-]/(v_+v_-)\twoheadrightarrow C/\bG_m=k^{\cN}\otimes\bF_p\xrightarrow{\mathfrak{p}_{\mathrm{cris}}} W(k)^{\cN}\]
and $C\to C_2\xrightarrow{\eqref{map-C2-to-WkNygaard}} W(k)^{\cN}$ 
is given by an isomorphism of $W_C$-modules 
\begin{equation}\label{eqn:preliminary-isom-admissible-modules-PsiDI}
\coker\left(W_C^{(F)}\xrightarrow{[v_+],-\Id} W_C^{(F)}\times W_C\right)\xrightarrow{\sim}W_C\times_{F_*W_C}F_*W_C=:C\times_{C_2}M_{C_2}
\end{equation}
defined by the matrix 
$\begin{pmatrix}
  \Id &    [v_+]\\
 0&  F
\end{pmatrix}$. 
In \cite[Corollary 7.5.1]{drinfeld2022prismatization}, Drinfeld refines the morphism $\widetilde{\mathfrak{p}}_{\mathrm{cris}}:C_2/\bG_m\to W(k)^{\cN}_{red}$ to an isomorphism 
\begin{equation}\label{eqn:isom-BH-WK-Nygaard-red}
BH\xrightarrow{\sim} W(k)^{\cN}_{red},
\end{equation}
where $H$ is an affine group scheme over $C_2/\bG_m$ constructed as follows. 
Let $\widetilde{H}:=W^{(F)}\times C_2$. For $(w,t), (w',t)\in \widetilde{H}$, the formula 
$(w,t)*(w',t)=(w+w'-[v_-^p(t)]ww',t)$ defines a map $\widetilde{H}\times\widetilde{H}\to \widetilde{H}$, making $\widetilde{H}$ a group scheme over $C_2$. The restriction of $\widetilde{H}$ to the open subscheme of $C_2$ given by $v_-\neq 0$ is $\bG_m^{\sharp}$, and its restriction to the closed subscheme given by $v_-=0$ is $\bG_a^{\sharp}$. Define a $\bG_m$-equivariant structure on $W\times C_2$ by 
\begin{equation}
    \bG_m\times W\times C_2\to W\times C_2, \quad (\lambda,w,t)\mapsto ([\lambda^p]w,\lambda t).
\end{equation}
This restricts to a $\bG_m$-equivariant structure on  $\widetilde{H}\hookrightarrow W\times C_2$ compatible with the group structure. This defines the promised group scheme $H$ over $C_2/\bG_m$. 

We write $(C_2)_{v_+^p=0}$, $(C_2)_{v_-=0}$ for the closed subschemes given by the respective equations. Denote by $D_{\HT}$ (resp.~$D_{\dR}$) the restrictions of $C_2/\bG_m$-stack $BH$ to $(C_2)_{v_-=0}/\bG_m$ (resp. $(C_2)_{v_+^p=0}/\bG_m$). Thus, via \eqref{eqn:isom-BH-WK-Nygaard-red}, one has $W(k)^{\cN}_{red}=D_{\HT}\cup D_{\dR}$. We refer to $D_{\HT}$ (resp.~$D_{\dR}$) as the \textit{Hodge-Tate} (resp.~\textit{de Rham}) component of $W(k)^{\cN}_{red}$. 
We denote by $D_{Hod}$ the fibre of $BH$ over $(C_2)_{v_+^p=v_-=0}/\bG_m=B\bG_m$. 
Geometrically, one pictures $W(k)^{\cN}_{red}$ as a union of two components ({\it i.e.},~$D_{\HT}$ and $D_{\dR}$) meeting \textit{transversally} at $D_{Hod}$. The restriction gives a functor 
\begin{equation}\label{Wk-Nygaard-red-fibre-product-DHT-DdR}
\mathcal{R}^*: \cD_{qc}(W(k)^{\cN}_{red})\to \cD_{qc}(D_{\HT})\times_{\cD_{qc}(D_{Hod})}\cD_{qc}(D_{\dR}). 
\end{equation}
The following result is explained in \cite[\S 6]{bhatt-lecture-notes} as a consequence of a general statement from \cite[Theorem 16.2.0.2]{Lurie-sag}.
\begin{lm}\label{lm:ffembedding}
   Functor \eqref{Wk-Nygaard-red-fibre-product-DHT-DdR} is fully faithful and admits right adjoint $\mathcal{R}_*$ given by \[ (\cF_{\HT},\cF_{\dR},\alpha: \cF_{\HT}|_{D_{Hod}}\simeq \cF_{\dR}|_{D_{Hod}})\mapsto \mathrm{Fib}(i_{HT*}\cF_{\HT}\oplus i_{\dR*}\cF_{\dR}\to i_{Hod*}(\cF_{\HT}|_{D_{Hod}})),\] 
    where $i_{?}$ is the closed embedding of $D_{?}$ into $W(k)^{\cN}_{red}$.
\end{lm}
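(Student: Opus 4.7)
The plan is to deduce this lemma from Theorem 16.2.0.2 of \cite{Lurie-sag} by exhibiting $W(k)^{\cN}_{red}$ as a pushout
\[
W(k)^{\cN}_{red} \;\simeq\; D_{HT} \coprod_{D_{Hod}} D_{\dR}
\]
in the $\infty$-category of stacks, taken along the two closed embeddings $i_{HT}$ and $i_{\dR}$. Once such a pushout description is established, Theorem 16.2.0.2 of \emph{loc.~cit.} directly produces the claimed fully faithfulness and the Mayer--Vietoris-style formula for the right adjoint.

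First I would verify the pushout presentation at the level of the base $C_2/\bG_m$. The scheme $C_2=\Spec k[v_+^p,v_-]/(v_+^p v_-)$ is transparently a union of the two coordinate axes $(C_2)_{v_-=0}=\Spec k[v_+^p]$ and $(C_2)_{v_+^p=0}=\Spec k[v_-]$, meeting transversally at the origin $\Spec k$: on the ring side, $k[v_+^p,v_-]/(v_+^p v_-)$ is the fibre product $k[v_+^p]\times_k k[v_-]$, so $(C_2)_{v_-=0}\coprod_{\Spec k}(C_2)_{v_+^p=0}\xrightarrow{\sim} C_2$ is an isomorphism. This pushout is preserved upon taking the quotient by the $\bG_m$-action, since $\bG_m$ is flat over the base and acts compatibly on all the pieces.

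Next, the identification $W(k)^{\cN}_{red}\simeq BH$ from Drinfeld's isomorphism \eqref{eqn:isom-BH-WK-Nygaard-red}, together with the fact that $H$ is an affine (and flat) group scheme over $C_2/\bG_m$, allows the pushout presentation of the base to be promoted to $BH$: the classifying-stack construction for a flat affine group scheme commutes with arbitrary base change, and in particular with the pushout $(C_2)_{v_-=0}/\bG_m\coprod_{B\bG_m}(C_2)_{v_+^p=0}/\bG_m\xrightarrow{\sim} C_2/\bG_m$. Pulling $BH$ back along these three pieces recovers precisely $D_{HT}$, $D_{\dR}$ and $D_{Hod}$ by definition, yielding the desired pushout square $D_{Hod}\hookrightarrow D_{HT},\, D_{\dR} \Rightarrow W(k)^{\cN}_{red}$.

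Finally I would invoke Theorem 16.2.0.2 of \cite{Lurie-sag}, which gives sufficient conditions (essentially that the inputs are closed immersions of finite presentation between reasonable stacks) for such a pushout square to induce a fully faithful pullback from $\cD_{qc}$ of the pushout into the fibre product of $\cD_{qc}$'s, with right adjoint computed by the evident gluing formula. The closed immersions $D_{Hod}\hookrightarrow D_{HT}$ and $D_{Hod}\hookrightarrow D_{\dR}$ come by base change from closed immersions of finite presentation at the level of $C_2/\bG_m$, so the hypotheses of Theorem 16.2.0.2 are met. Applying the theorem yields the fully faithfulness of $\mathcal{R}^*$ and identifies $\mathcal{R}_*(\cF_{HT},\cF_{\dR},\alpha)$ with the fibre of the difference map $i_{HT,*}\cF_{HT}\oplus i_{\dR,*}\cF_{\dR}\to i_{Hod,*}(\cF_{HT}|_{D_{Hod}})$ determined by $\alpha$, as stated. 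I expect the main subtlety to be the verification that the hypotheses of Theorem 16.2.0.2 pass through the $BH$-construction over a stacky base; this reduces to the flatness of $H$ and the standard descent behaviour of quasi-coherent sheaves along $B(\text{affine flat})$.
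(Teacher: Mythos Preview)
Your approach is correct and matches the paper's: the paper does not give a self-contained argument but simply cites \cite[\S 6]{bhatt-lecture-notes} and \cite[Theorem 16.2.0.2]{Lurie-sag}, exactly as you do. Your proposal supplies more detail (the pushout description of $C_2/\bG_m$ and its promotion to $BH$ via flatness of $H$) than the paper itself provides.
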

The following result describes the essential image of $\mathcal{R}^*$ in \eqref{Wk-Nygaard-red-fibre-product-DHT-DdR}. Let $\cF:=(\cF_{\HT},\cF_{\dR},\alpha: \cF_{\HT}|_{D_{Hod}}\simeq \cF_{\dR}|_{D_{Hod}})$ be an object of the right-hand side of \eqref{Wk-Nygaard-red-fibre-product-DHT-DdR}. 
\begin{lm}\label{lemma:essential-image-restriction-functor-fibre-product}
Then $\cF$ lies in the essential image of $\mathcal{R}^*$ if and only if its pullback to 
\begin{equation}\label{fibre-product-C2-equations}
\cD_{qc}\left((C_2)_{v_-=0}/\bG_m\right){\times}_{\cD_{qc}\left((C_2)_{v_+^p=v_-=0}/\bG_m\right)}\cD_{qc}\left((C_2)_{v_+^p=0}/\bG_m\right)
\end{equation}
comes from an object of $\cD_{qc}(C_2/\bG_m)$. 
\end{lm}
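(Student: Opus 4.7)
Write $X_{HT}, X_{\dR}, X_{Hod}$ for the closed substacks $(C_2)_{v_-=0}/\bG_m$, $(C_2)_{v_+^p=0}/\bG_m$ and $(C_2)_{v_+^p=v_-=0}/\bG_m$ of $C_2/\bG_m$, and note that $\MapCmodGmtoWkNred$ restricts by construction to maps $X_{?}\to D_{?}$ for $?\in\{HT,\dR,Hod\}$. The forward implication is immediate: if $\cF\simeq \mathcal{R}^{\ast}\cG$ for some $\cG\in\cD_{qc}(W(k)^{\cN}_{red})$, then $\MapCmodGmtoWkNred^{\ast}\cG\in\cD_{qc}(C_2/\bG_m)$ has restrictions to $X_{HT}, X_{\dR}$ canonically identified with the pullbacks of $\cF_{HT}, \cF_{\dR}$, compatibly with $\alpha$, so it exhibits the required lift in \eqref{fibre-product-C2-equations}.

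For the converse I would invoke the identification $W(k)^{\cN}_{red}\simeq BH$ from \eqref{eqn:isom-BH-WK-Nygaard-red}. Under it, $\MapCmodGmtoWkNred$ is the classifying map of the trivial $H$-torsor on $C_2/\bG_m$; since $H$ is affine and flat over $C_2/\bG_m$, the map $\MapCmodGmtoWkNred$ is faithfully flat, and flat descent for quasi-coherent sheaves on stacks yields
\[
\cD_{qc}(W(k)^{\cN}_{red})\ \simeq\ \lim_{[n]\in\Delta}\cD_{qc}\bigl(H^{\times n}\times(C_2/\bG_m)\bigr),
\]
with analogous descriptions of $\cD_{qc}(D_{?})$ in terms of the pullback torsors $H_{?}\to X_{?}$. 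Thus an object of $\cD_{qc}(W(k)^{\cN}_{red})$ lifting $\cF$ amounts to upgrading the given $\widetilde{\cG}\in\cD_{qc}(C_2/\bG_m)$ to a cosimplicial system of descent data on $H^{\times\bullet}\times(C_2/\bG_m)$ extending the descent data on $X_{HT}$ and $X_{\dR}$ encoded by the $H_{HT}$- and $H_{\dR}$-equivariances implicit in $\cF_{HT}, \cF_{\dR}$, which agree on $X_{Hod}$ via $\alpha$.

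To produce the lift level by level I would use that $C_2=X_{HT}\cup_{X_{Hod}}X_{\dR}$ is a closed Milnor square: the ideals $(v_-)$ and $(v_+^p)$ in $k[v_+^p,v_-]/(v_+^p v_-)$ satisfy $(v_-)\cap(v_+^p)=0$ and $(v_-)(v_+^p)=0$. This structure is preserved by flat base change and in particular by $H^{\times n}\times(-)$, so the fully-faithful restriction statement of the preceding lemma (itself a case of \cite[Theorem 16.2.0.2]{Lurie-sag}) applies at every cosimplicial level. Feeding the compatible descent data on the three pieces into these fully-faithful embeddings yields a lift to each $H^{\times n}\times(C_2/\bG_m)$; the required cosimplicial compatibilities lift automatically by the same fully faithfulness applied to mapping spaces, giving the desired $\cG$. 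The principal obstacle is exactly this last step — verifying that the Milnor-square hypotheses, and hence the fully-faithful fibre-product embedding, survive at each cosimplicial level, and that the level-wise lifts assemble coherently. Both points reduce to the flatness of $H$ over $C_2/\bG_m$ combined with the pointwise Milnor structure of $C_2$ recorded above.
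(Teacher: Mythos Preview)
Your approach is valid in outline but takes a considerably more elaborate route than the paper, and one step is not properly justified as written.

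The paper's proof is two sentences: $\cF$ lies in the essential image of the fully faithful functor $\mathcal{R}^*$ if and only if the counit $\mathcal{R}^*\mathcal{R}_*\cF\to\cF$ is an isomorphism; since $\MapCmodGmtoWkNred$ is faithfully flat, this can be checked after pullback to $C_2/\bG_m$, where (by flat base change for $\mathcal{R}_*$, using the explicit fiber description in the preceding lemma) the counit becomes the analogous counit for the $C_2/\bG_m$-picture. This avoids entirely the cosimplicial bookkeeping you set up.

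Your descent argument along the \v{C}ech nerve of $C_2/\bG_m\to BH$ can be made to work, but the sentence ``Feeding the compatible descent data on the three pieces into these fully-faithful embeddings yields a lift to each $H^{\times n}\times(C_2/\bG_m)$'' is where the gap sits: fully faithfulness of $F_n\colon \cD_{qc}(H^{\times_{C_2/\bG_m} n})\to\cD_n$ does not \emph{produce} lifts, it only guarantees their essential uniqueness. What actually supplies the level-$n$ lift is pulling back $\widetilde{\cG}$ along the structure map $q_n\colon H^{\times_{C_2/\bG_m} n}\to C_2/\bG_m$; one then checks $F_n(q_n^*\widetilde{\cG})\simeq \cF^{(n)}$ using flat base change together with the hypothesis $F_0(\widetilde{\cG})\simeq\cF^{(0)}$ and the cartesian nature of the cosimplicial object associated to $\cF$. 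With this correction your argument goes through, since levelwise fully faithfulness then transports the cosimplicial compatibilities uniquely. But once you write it this way, you see it is essentially the paper's base-change-for-the-counit argument unwound across the \v{C}ech nerve; the counit formulation is more direct and shorter.
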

\begin{proof}
$\cF$ lies in the essential image if and only if the canonical morphism $\mathcal{R}^*\mathcal{R}_*\cF\to \cF$ is an isomorphism. Since the map $\MapCmodGmtoWkNred: C_2/\bG_m\to W(k)^{\cN}_{red}$ is faithfully flat, it suffices to check this after pulling back along $\MapCmodGmtoWkNred$ to $C_2/\bG_m$. 
\end{proof}
In \cite[\S6.2]{bhatt-lecture-notes}, Bhatt and Lurie give a convenient description of $D_{\HT}$, {\it i.e.},~$D_{\HT}\simeq (\bA^{1,\dR}_+/\bG_m)\otimes\bF_p$.~To reconcile this isomorphism with the above description, we recall that the Frobenius morphism $\bA^1_+\otimes\bF_p\to \Spec k[v_+^p]=(C_2)_{v_-=0}$ factors through $(\bA^1_+\otimes\bF_p)^{\dR}$.~Moreover, a choice of $\delta$-structure on $\bA^1_+$, such that $\delta(v_+)=0$, gives a $\bG_m$-equivariant isomorphism $(\bA^1_+\otimes\bF_p)^{\dR}=B\bG_a^{\sharp}\times (C_2)_{v_-=0}$, where the action of $\bG_m$ on $\bG_a^{\sharp}$ is given by $\lambda*z=\lambda^pz$.~Using the identification of the Cartier dual to $\bG_a^{\sharp}$ and $\widehat{\bG}_a=\Spf k[[D^p]]$, we obtain an equivalence $\cD_{qc}(D_{\HT})\simeq \cD_{gr,D^p\text{-nilp}}(k[v_+^p,D^p])$, where the right hand side is a 
full subcategory of the derived category $\cD_{gr}(k[D^p, v_+^p])$ of graded modules over the polynomial algebra $k[D^p,v_+^p]$ with $\deg v_+^p=-\deg D^p=p$.~Objects of this full subcategory $\cD_{gr,D^p\text{-nilp}}(k[D^p,v_+^p])$ consist of complexes $\cM$ such that the action of $D^p$ on $\bigoplus\limits_iH^i(\cM)$ is locally nilpotent.
\subsection{Coherent sheaves and vector bundles over $X^{syn}$}\label{subsection:coherent_sheaves_and_vector_bundles}
Let $Q$ be a $p$-complete Noetherian regular local ring. In \cite[Remark 5.5.19]{bhatt-lecture-notes}, it is shown that the category $ \Perf(Q^{syn})$ of perfect complexes on $Q^{syn}$ has a unique $t$-structure whose category of connective objects  $\Perf^{\leq 0}(Q^{syn})$ consists of all $\cF\in \Perf(Q^{syn})$ such that, for any $p$-nilpotent ring $R$ and a map $f:\Spec R \to Q^{syn}$,
the pullback $f^*\cF$ is in $D^{\leq 0}(R)$. We refer to the heart of this $t$-structure as the category $\Coh(Q^{syn})$ of coherent sheaves on $Q^{syn}$. The latter has a more concrete description. 
Let  $(\tilde Q, (d), \delta)$ be a $p$-torsion prism with $\tilde Q/(d)=Q$. Then there exists a canonical faithfully flat map
\begin{equation}\label{eq:chart}
    \Spf \tilde{Q}[v_-, v_+]/(v_-v_+-d) \to Q^{syn},
\end{equation}
where $ \Spf \tilde{Q}[v_-, v_+]/(v_-v_+-d)$ is the $(p,d)$-adic formal scheme 
$\colim_N \Spec \tilde{Q}[v_-, v_+]/(v_-v_+-d, d^N, p^N)$; see \cite[Remark 5.5.19]{bhatt-lecture-notes}, {\it cf.} the end of \S \ref{subsec:preliminary-syntomification-section}. Then $\cF \in 
\cD_{qc}(Q^{syn})$ belongs to  $\Coh(Q^{syn})$ if and only of its pullback along \eqref{eq:chart}
is coherent, that is, a finite module over\footnote{Note that since the ring is regular, every finite module over it is perfect.} $\tilde{Q}[v_-, v_+]/(v_-v_+-d)$.

Recall that, for any stack $\cX$, the category $\Vect(\cX)$ of vector bundles over $\cX$ is defined to be
$\lim_{\Spec R \to \cX} \Vect(\Spec R)$, where the limit is taken over the category all affine schemes over $\cX$. An object $\cF \in 
\cD_{qc}(Q^{syn})$ is a vector bundle if and only if its pullback along \eqref{eq:chart} is a finite projective module over $\tilde{Q}[v_-, v_+]/(v_-v_+-d)$.

\subsection{Hodge-Tate weights}\label{subsection:HT-weights}
Recall that $k^{\cN}$ can be identified with $\Spf A/\bG_m$ \cite[\S 5.4]{bhatt-lecture-notes}. Consequently, the category $\cD_{qc}(k^{\cN})$ is equivalent to the subcategory $\widehat{\cD}_{gr}(A)$ of the derived category $\cD_{gr}(A)$ of graded $A$-modules spanned by $p$-complete objects. Given a gauge $\cF\in \cD_{qc}(k^{\cN})$, we will denote by 
\begin{equation}\label{identifying-CohFpN-with-modules}
(\cN^{\cdot}=\bigoplus\limits_i \cN^i, v_-:\cN^i\to\cN^{i-1}, v_+:\cN^i\to \cN^{i+1},v_-v_+=v_+v_-=p)
\end{equation}
the corresponding object in $\widehat{\cD}_{gr}(A)$. 
\begin{df}\label{defn:HT-weights}
We say that a gauge $\cF\in \cD_{qc}(k^{\cN})$ has Hodge-Tate weights $\geq a$ if, under the identification \eqref{identifying-CohFpN-with-modules}, the maps $v_-$ in 
\begin{tikzcd} \cN^a\arrow[shift left]{r}{v_-}&\cN^{a-1}\arrow[shift left]{r}{v_-}&\cN^{a-2}\arrow[shift left]{r}{v_-}&\cdots\end{tikzcd} are all quasi-isomorphisms.

A gauge $\cF$ is said to be \textit{effective} if it has Hodge-Tate weights $\geq 0$. 
We say that a gauge $\cF$ has Hodge-Tate weights $\leq b$ if 
under the identification \eqref{identifying-CohFpN-with-modules}, the maps $v_+$ in 
\begin{tikzcd} \cdots\cN^{b+3}&\cN^{b+2}\arrow[]{l}{v_+}&\cN^{b+1}\arrow[]{l}{v_+}& \cN^b\arrow[]{l}{v_+}\end{tikzcd} are all quasi-isomorphisms.

We say that a gauge $\cF\in \cD_{qc}(W(k)^{\cN})$ has Hodge-Tate weights $\geq a$ (resp. $\leq b$) if its pullback to $k^{\cN}$ has the corresponding property. Likewise, we say that an $F$-gauge $\cF\in \cD_{qc}(W(k)^{syn})$ has Hogde-Tate weights $\geq a$ (resp. $\leq b$) if its pullback to $k^{\cN}$ has the corresponding property. We denote by $\cD_{qc,[a,b]}(k^{\cN})$ the full subcategory of $\cD_{qc}(k^{\cN})$ that consists of objects with Hodge-Tate weights in $[a,b]$. Similarly, we define $\cD_{qc,[a,b]}(W(k)^{\cN})$. 
\end{df}

\begin{rem}\label{HT-weights-kN-remark}
    We observe that $\cF\in\cD_{qc}(k^{\cN})$ has Hodge-Tate weights in $[a,b]$ if and only if the pullbacks of $\cF$ along both composites 
    $\Spec k[v_{\pm}]/\bG_{m,k}\to k^{\cN}\otimes \bF_p\to  k^{\cN}$ lie in $\cD_{qc,[a,b]}((\bA^1_{\pm}/\bG_m)\otimes \bF_p)$ in the sense of Definition \ref{Defn-Dqc-ab-A1modGm-plusminus}. 
\end{rem}

\begin{lm}\label{c*-preserve-weights}
    Suppose $\cF\in \cD_{qc,[a,b]}(\bA^1_-/\bG_m)$, then $t^*\cF\in \cD_{qc,[a,b]}(k^{\cN})$.  
\end{lm}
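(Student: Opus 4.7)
The plan is to invoke Remark~\ref{HT-weights-kN-remark}, which reduces the claim to checking that the pullbacks of $t^*\cF$ along the two embeddings $\bA^1_\pm/\bG_m\otimes\bF_p \hookrightarrow k^{\cN}\otimes\bF_p$ lie in $\cD_{qc,(-\infty,n]}(\bA^1_\pm/\bG_m)$. Write $\cF$ in graded form $(F^{\bullet}, v_-\colon F^i\to F^{i-1})$, so that the hypothesis says $F^j$ is acyclic for $j>n$.

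For the $\bA^1_-$ side, the composite $\bA^1_-/\bG_m\otimes\bF_p \to k^{\cN}\otimes\bF_p\to k^{\cN}\xrightarrow{t}\bA^1_-/\bG_m$ corresponds on rings to $W(k)[v_-]\to A\to k[v_-]$, $v_-\mapsto v_-$, $p\mapsto 0$, i.e.\ it is base change along $W(k)\to k$. Thus the pullback is $\cF\otimes^L_{W(k)}k$, which preserves acyclicity of the graded pieces for $j>n$.

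For the $\bA^1_+$ side, the analogous composite sends both $v_-$ and $p$ to $0$, so it factors as $\bA^1_+/\bG_m\otimes\bF_p \to B\bG_m\otimes\bF_p \hookrightarrow \bA^1_-/\bG_m$, where the first arrow is the structure morphism and the second is the inclusion of the origin. I would first compute $\cG := \cF\otimes^L_{W(k)[v_-]}k$ using the Koszul resolution of $k=W(k)[v_-]/(p,v_-)$: factoring through $W(k)$, each graded piece $\cG^i$ is represented by $(F^{i+1}\xrightarrow{v_-}F^i)\otimes^L_{W(k)}k$, hence is acyclic whenever $i>n$ since both $F^{i+1}$ and $F^i$ are then acyclic. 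Pulling $\cG$ further back to $\bA^1_+/\bG_m$ along the flat map $k\to k[v_+]$ yields $\cG\otimes_k k[v_+]$, whose degree-$i$ graded piece is $\bigoplus_{j\leq i}\cG^j$; the map $v_+\colon (\cG\otimes_k k[v_+])^i\to (\cG\otimes_k k[v_+])^{i+1}$ is the identity on common summands and adjoins the summand $\cG^{i+1}$. For $i\geq n$ we have $i+1>n$, so $\cG^{i+1}$ is acyclic and $v_+$ is a quasi-isomorphism, which is precisely the condition of Definition~\ref{Defn-Dqc-ab-A1modGm-plusminus} for weights $\leq n$ on $\bA^1_+/\bG_m$.

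The main technical point is the Koszul computation of $\cG$: once $\cF$ is pulled back to the origin modulo $p$, the $v_-$-map only couples $F^{i+1}$ with $F^i$, so the bound ``weights $\leq n$'' is preserved up to quasi-isomorphism; the subsequent extension to $\bA^1_+/\bG_m$ is a formal consequence of the flatness of $k[v_+]$ over $k$.
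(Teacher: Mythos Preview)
Your argument is correct. The $\bA^1_-$ half of your proof is exactly the paper's proof: the composite $\Spec k[v_-]/\bG_m \hookrightarrow k^{\cN} \xrightarrow{t} \bA^1_-/\bG_m$ is reduction modulo $p$, so the relevant pullback is $\cF\otimes^L_{W(k)}k$, which inherits acyclicity of graded pieces in degrees $>n$.

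The only difference is that the paper avoids your $\bA^1_+$ computation altogether. Rather than invoking Remark~\ref{HT-weights-kN-remark}, the paper notes directly from Definition~\ref{defn:HT-weights} that having weights $\leq n$ on $k^{\cN}$ (i.e.\ $v_+:\cN^{i}\to\cN^{i+1}$ a quasi-isomorphism for $i\geq n$) is \emph{equivalent} to the restriction to the locus $\{v_+=0\}$ having graded pieces acyclic in degrees $>n$, since that restriction is the cone of $v_+$. Thus only the $\bA^1_-$ side needs to be checked for an upper weight bound. Your Koszul computation on the $\bA^1_+$ side is correct but redundant; the paper's route is shorter for this reason.
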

\begin{proof}
The map $i_{\pm}:(\bA^1_{\pm}/\bG_m)\otimes \bF_p=\Spec k[v_{\pm}]/\bG_{m, k}\mono k^{\cN}$ given by the equation $v_{\mp}=0$ is a closed embedding. Since $v_+$ is a nonzero divisor in $W(k)[v_+,v_-]/(v_+v_--p)$, the restriction functor along the above embedding corresponds algebraically to taking the cone of $v_+$ on the corresponding graded module. Then saying that $\cG\in\cD_{qc}(k^{\cN})$ has weights $\leq b$ is equivalent to saying that the restriction $i^*_{-}\cG$ has weights $\leq b$. The composition $t\circ i$ is the embedding $\Spec k[v_-]/\bG_m\to \Spf W(k)[v_-]/\bG_m$ given by the equation $p=0$. Since, for any $\cF\in \cD_{qc,(-\infty,b]}(\bA^1_-/\bG_m)$,  its restriction to the special fiber $(\bA^1_-/\bG_m)\otimes \bF_p$ has weights $\leq b$, we conclude that $t^*\cF\in \cD_{qc, (-\infty ,b]}(k^{\cN})$. Saying that $\cG\in\cD_{qc}(k^{\cN})$ has weights $\geq a$ is equivalent to saying that the restriction $i^*_{+} \cG$ has weights $\geq a$. The composition $t \circ i_{+}$ is isomorphic to $(\bA^1_{+}/\bG_m) \otimes \bF_p \to B\bG_m \otimes \bF_p \to \bA^1_{-}/\bG_m$ and the lemma follows.
\end{proof}

\begin{lm}\label{rho-dR-preserve-weights}
 Suppose $\cF\in\cD_{qc,(-\infty,b]}(W(k)^{\cN})$, then $\mathfrak{p}_{\bar{\dR}}^*\cF\in \cD_{qc,(-\infty, b]}(\bA^1_-/\bG_m)$.   
\end{lm}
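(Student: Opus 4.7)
The plan is to reduce the claim modulo $p$ by derived Nakayama and then recognize the mod $p$ reduction of $\mathfrak{p}_{\bar{\dR}}$ as a composition through the crystalline map $\mathfrak{p}_{\mathrm{cris}}\colon k^{\cN}\to W(k)^{\cN}$. Under the identification \eqref{identifying-CohA1modGm-with-modules-minus}, having weights $\leq n$ on $\bA^1_-/\bG_m$ amounts to acyclicity of the graded pieces $F^j$ for $j>n$. Since each $F^j$ is a $p$-complete complex of $W(k)$-modules, derived Nakayama reduces the claim to showing that $(\mathfrak{p}_{\bar{\dR}}^{\ast}\cF)\otimes^{L}_{\bZ_p}\bF_p$ lies in $\cD_{qc,(-\infty,n]}(\Spec k[v_-]/\bG_m)$.

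The key geometric input is the commutative square
\begin{equation*}
\begin{tikzcd}
\Spec k[v_-]/\bG_m \arrow[r,hook,"j"] \arrow[d,hook,"i"]& k^{\cN}\arrow[d,"\mathfrak{p}_{\mathrm{cris}}"]\\
\bA^1_-/\bG_m \arrow[r,"\mathfrak{p}_{\bar{\dR}}"] & W(k)^{\cN},
\end{tikzcd}
\end{equation*}
in which $i$ is the closed embedding cut out by $p=0$ and $j$ is the closed embedding cut out by $v_+=0$ under the presentation $k^{\cN}=\Spf A/\bG_m$ of \eqref{eqn:defining-A}. Commutativity is checked pointwise using the explicit formulas from \S\ref{subsec:preliminary-syntomification-section}: after setting $v_+=p=0$ in the definition of $\mathfrak{p}_{\mathrm{cris}}$, the cokernel $\coker(\bG_{a,R}^{\sharp}\xrightarrow{(v_+^{\sharp},-\mathrm{can})}\mathbf{V}(L)^{\sharp}\oplus W_R)$ collapses via the sequence \eqref{eq:short_exact_sequence} to $\mathbf{V}(L)^{\sharp}\oplus F_{\ast}W_R$, while the structure map $(\mathrm{can}\circ v_-^{\sharp},\cdot p)$ descends to $(v_1,V)$ because $p=V\circ F$ on $W_R$. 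This matches the defining formula \eqref{eqn:defining-xi-dR} for $\mathfrak{p}_{\bar{\dR}}$.

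Granted the square, base change identifies
\[(\mathfrak{p}_{\bar{\dR}}^{\ast}\cF)\otimes^{L}_{\bZ_p}\bF_p\simeq i^{\ast}\mathfrak{p}_{\bar{\dR}}^{\ast}\cF\simeq j^{\ast}\mathfrak{p}_{\mathrm{cris}}^{\ast}\cF.\]
By Definition \ref{defn:HT-weights}, the hypothesis on $\cF$ yields $\mathfrak{p}_{\mathrm{cris}}^{\ast}\cF\in\cD_{qc,(-\infty,n]}(k^{\cN})$, and Remark \ref{HT-weights-kN-remark} then places its further pullback $j^{\ast}\mathfrak{p}_{\mathrm{cris}}^{\ast}\cF$ in $\cD_{qc,(-\infty,n]}(\Spec k[v_-]/\bG_m)$, completing the reduction. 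The main obstacle is the explicit verification of the commutative square, which requires unraveling the definitions of both $\mathfrak{p}_{\bar{\dR}}$ and $\mathfrak{p}_{\mathrm{cris}}$ in terms of filtered Cartier--Witt divisors; once the square is in hand, the remaining steps are formal consequences of $p$-completeness and the weight conventions of Remark \ref{HT-weights-kN-remark}.
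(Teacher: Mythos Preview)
Your proof is correct and follows essentially the same approach as the paper: reduce modulo $p$ by completeness, then identify the composition $(\bA^1_-/\bG_m)\otimes k\to\bA^1_-/\bG_m\xrightarrow{\mathfrak{p}_{\bar{\dR}}}W(k)^{\cN}$ with $(\bA^1_-/\bG_m)\otimes k\xrightarrow{j}k^{\cN}\xrightarrow{\mathfrak{p}_{\mathrm{cris}}}W(k)^{\cN}$ and conclude from the definition of weights. You supply more detail than the paper on why the square commutes and on how the weight condition passes to the restriction $j^\ast$; the paper states both as observations (the latter being the content of the preceding Lemma~\ref{c*-preserve-weights}).
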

\begin{proof}
By $p$-completeness, it suffices to show $\mathfrak{p}_{\bar{\dR}}^*\cF\otimes \bF_p\in \cD_{qc,(-\infty,b]}\left((\bA^1_-/\bG_m)\otimes \bF_p\right)$. It remains to observe that the composition $(\bA^1_-/\bG_m)\otimes \bF_p\xrightarrow{}\bA^1_-/\bG_m\xrightarrow{\mathfrak{p}_{\bar{\dR}}}W(k)^{\cN}$ is isomorphic to $(\bA^1_-/\bG_m)\otimes \bF_p\xrightarrow{i_{-}} k^{\cN}\xrightarrow{\Cris}W(k)^{\cN}$, where $i_{-}$ is defined in the first line of the proof of Lemma \ref{c*-preserve-weights}; see, for example, \cite[\S 2.8.3]{bhatt-lecture-notes}.
\end{proof}

\begin{rem}\label{remark-two-notions-effectivity}
If $\cF \in \cD_{qc}(k^{\cN})$ is effective, then its restriction along the map $B\bG_{m,k}\mono k^{\cN}$ is given by $B\bG_{m,k}\overset{v_-=0
}{\hookrightarrow}(\bA^1_-/\bG_m)\otimes \bF_p \overset{v_+=0}{\hookrightarrow}k^{\cN}$, viewed as a $\bZ$-graded object $M^{\bullet}$ of 
$\cD_{qc}(k)$, has weights $\geq 0$, {\it i.e.},~$M^i$ is acyclic for every $i<0$. 
If $\cF$ is perfect, then the converse is true. 
\end{rem}
    
\begin{lm}\label{lm:existenceofrightadjoint}
The embedding $e: \cD_{qc,(-\infty,b]}(k^{\cN}) \mono \cD_{qc}(k^{\cN})$ admits a right adjoint functor 
$w_{\leq b}:  \cD_{qc}(k^{\cN})  \to \cD_{qc,(-\infty,b]}(k^{\cN})$.
\end{lm}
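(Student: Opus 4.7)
The plan is to work under the algebraic equivalence $\cD_{qc}(k^{\cN}) \simeq \widehat{\cD}_{gr}(A)$ from \S\ref{subsection:HT-weights}, where $A = W(k)[v_+,v_-]/(v_+v_- - p)$. Under this identification, $\cD_{qc,(-\infty,b]}(k^{\cN})$ corresponds to the full subcategory of graded $A$-modules $(\cN^{\bullet},v_+,v_-)$ such that $v_+\colon \cN^i \to \cN^{i+1}$ is a quasi-isomorphism for every $i\geq b$. This subcategory is closed under all small colimits in $\cD_{qc}(k^{\cN})$, since colimits of graded $A$-modules are computed grade-wise and quasi-isomorphisms are stable under small colimits. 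Both $\cD_{qc}(k^{\cN})$ and $\cD_{qc,(-\infty,b]}(k^{\cN})$ are presentable stable $\infty$-categories, so the adjoint functor theorem produces the desired right adjoint $w_{\leq b}$.

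For later use I would also give an explicit model. Given $\cF$ corresponding to $(\cN^{\bullet},v_+,v_-)$, I set $w_{\leq b}(\cF) = (\cM^{\bullet}, v_+, v_-)$ with
\[
\cM^i = \cN^i \text{ for } i \leq b, \qquad \cM^i = \cN^b \text{ for } i \geq b,
\]
where $v_+\colon \cM^i \to \cM^{i+1}$ is the original $v_+$ for $i<b$ and is the identity for $i\geq b$, and $v_-\colon \cM^{i+1}\to\cM^i$ is forced by the relation $v_-v_+=p$ (in particular it is multiplication by $p$ in degrees $i>b$). The counit $\alpha_{\cF}\colon w_{\leq b}(\cF)\to \cF$ is the identity in degrees $\leq b$ and $v_+^{i-b}\colon \cN^b \to \cN^i$ in degrees $i>b$; a direct check shows it commutes with $v_\pm$, and by construction $w_{\leq b}(\cF)$ lies in $\cD_{qc,(-\infty,b]}(k^{\cN})$.

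To verify the universal property, I take $\cG=(\cP^{\bullet},v_+,v_-)$ in $\cD_{qc,(-\infty,b]}(k^{\cN})$ and present both $\Hom(\cG, w_{\leq b}\cF)$ and $\Hom(\cG, \cF)$ as homotopy limits over the grading together with the $v_\pm$-compatibility data. Because $v_+\colon \cP^i \to \cP^{i+1}$ is a quasi-isomorphism for $i \geq b$, the components of any morphism out of $\cG$ in degrees $i > b$ are uniquely determined by the component in degree $b$ together with compatibility with $v_+$ on the target; the same elimination applies to both mapping spaces. Comparing the resulting diagrams shows that postcomposition with $\alpha_{\cF}$ induces an equivalence.

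The main obstacle is the $\infty$-categorical bookkeeping: one must confirm that the explicit formula above actually represents the abstract right adjoint and that the pointwise comparison of mapping spaces is functorial and homotopy coherent. This is handled cleanly by appealing to the adjoint functor theorem to guarantee existence and then identifying the resulting object in degree-wise fashion with the formula above, using that the natural transformation $\alpha_{\cF}$ is characterized (up to contractible choice) by being the identity in degrees $\leq b$.
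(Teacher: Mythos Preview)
Your explicit construction of $w_{\leq b}$ and the counit $\alpha_{\cF}$ is identical to the paper's proof, which simply writes down the same formula and the same natural transformation $e\circ w_{\leq b}\to \Id$ (together with the observation that $w_{\leq b}\circ e\simeq \Id$). Your additional appeal to the adjoint functor theorem is not in the paper but is a reasonable alternative justification; just note that the presentability of $\cD_{qc,(-\infty,b]}(k^{\cN})$ is asserted rather than proved, though it follows from standard facts once you know the subcategory is closed under colimits and generated by a set.
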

\begin{proof}
    We define $w_{\leq b}$ by sending an object $(M^{\bullet},v_-,v_+)\in \cD_{qc}(k^{\cN})$ to the object $({M'}^{\bullet},v'_-,v'_+)\in \cD_{qc,(-\infty,b]}(k^{\cN})$, where ${M'}^i=M^i$ for $i<b$ and ${M'}^i=M^b$ otherwise. Here $v'_+: {M'}^i\to {M'}^{i+1}$ is equal to $v_+$ for $i<b$ and equal to $\Id$ otherwise; here $v'_-: {M'}^i\to {M'}^{i-1}$ is equal to $v_-$ for $i\leq b$ and equal to $p\cdot\Id$ otherwise. Indeed, we have a canonical map $f(\bullet)$ sending $({M'}^{\bullet},v'_-,v'_+)$ to $(M^{\bullet},v_-,v_+)$ given by 
    $f(i)=\Id$ for $i\leq b$ and $f(i)=v_+^{i-b}$ otherwise. This defines a natural transformation of functors $e\circ w_{\leq b}\to \Id$. On the other hand, we have an isomorphism $w_{\leq b}\circ e\iso \Id$. 
\end{proof}

\section{The fiber product $k^{\cN}\times_{W(k)^{\cN}} \bA^1_-/\bG_m$}
\subsection{Drinfeld Lemma}\label{subsection:Drinfeld-lemma} 
Recall from \eqref{eqn:defining-A} that $A:=W(k)[v_+,v_-]/(v_+v_-=p)$ with a grading such that $\deg v_+=1$ and $\deg v_-=-1$.~Let $A^i$ be the subgroup of elements of degree $i$.~Let $B\subset A\otimes\mathrm{Frac}(W(k))$ be the $W(k)$-subalgebra generated by $v_-$ and $\frac{v_+^n}{n!}$ for $n\geq 0$.~The grading on $A$ induces a grading on $B$ and we write $B=\bigoplus\limits_iB^i$.

\begin{df}\label{def:Mazur numbers} 
For $n\in\bN_{>0}$, its associated \textit{Mazur number} is $[n]:=\min\limits_{m\geq n}\mathrm{ord}\frac{p^m}{m!}$. 
\end{df}
Note that $[n]=n$ for $n<p$, and $[n+1]-[n]=1$ or $0$ for every $n\in\bN_{>0}$.~For every positive integer $n$, the ideal $(p^{[n]})\subset \bZ_p$ is the $n$-th divided power of the ideal $(p)$.
\begin{lm}\label{Bi-free-module}
   For every $i\geq 0$,  $B^i$ is the free $W(k)$-module on $\frac{p^{[i]}v_+^i}{p^i}$. For every $i\leq 0$, $B^i$ is the free $W(k)$-module generated by $v_-^i$. 
\end{lm}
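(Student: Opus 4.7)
The plan is to work inside the fraction field. Since $v_+v_- = p$, the localization $A \otimes \mathrm{Frac}(W(k))$ is canonically identified with the graded ring $\mathrm{Frac}(W(k))[v_-, v_-^{-1}]$, with $v_+ = p \cdot v_-^{-1}$. Under this identification, the degree $i$ piece of the ambient ring is $\mathrm{Frac}(W(k)) \cdot v_-^{-i}$, so it suffices to compute which $W(k)$-multiples of $v_-^{-i}$ lie in $B$.

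First I would rewrite the generators: $\frac{v_+^n}{n!} = \frac{p^n}{n! \cdot v_-^n}$ for $n \geq 1$. Thus every element of $B$ is a $W(k)$-linear combination of monomials of the form
\[
v_-^a \cdot \prod_j \left(\frac{p^{n_j}}{n_j! \cdot v_-^{n_j}}\right)^{m_j} \;=\; \frac{p^{N}}{\prod_j (n_j!)^{m_j}} \cdot v_-^{a - N},
\]
where $a \geq 0$ and $N := \sum_j n_j m_j \geq 0$. The degree of this monomial equals $-(a-N) = N - a$, so a monomial lies in $B^i$ precisely when $a - N = -i$.

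For $i \leq 0$: here $a = N + |i| \geq N$, so the $v_-$ exponent $a-N = |i| \geq 0$ is nonnegative and the $p$-adic valuation of the coefficient is $N - \sum_j m_j \cdot v_p(n_j!) \geq 0$ (in fact, already $v_p(n_j!) \leq n_j$). Hence every monomial lies in $W(k) \cdot v_-^{|i|}$. Combined with the obvious inclusion $v_-^{|i|} \in B^i$, this gives $B^i = W(k) \cdot v_-^{|i|}$.

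For $i \geq 0$: using $v_-^{-i} = v_+^i/p^i$, each monomial of degree $i$ equals $\frac{p^{N-i}}{\prod_j (n_j!)^{m_j}} \cdot v_+^i$, with $N \geq i$. The key input is that the multinomial coefficient
\[
\binom{N}{\underbrace{n_1,\dots,n_1}_{m_1},\underbrace{n_2,\dots,n_2}_{m_2},\dots} \;=\; \frac{N!}{\prod_j (n_j!)^{m_j}}
\]
is an integer, so $\sum_j m_j v_p(n_j!) \leq v_p(N!)$. Hence the valuation of the coefficient of $v_+^i$ is at least $N - i - v_p(N!) = v_p(p^N/N!) - i \geq [i] - i$ by the very definition of the Mazur number. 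This shows $B^i \subseteq W(k) \cdot \frac{p^{[i]} v_+^i}{p^i}$. For the reverse inclusion, choose $m \geq i$ realizing the minimum $v_p(p^m/m!) = [i]$; then the product of the generator $\frac{v_+^m}{m!} \in B$ with $v_-^{m-i} \in B$ equals a unit of $W(k)$ times $\frac{p^{[i]} v_+^i}{p^i}$, so the latter lies in $B^i$.

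The only nontrivial step is the valuation estimate in the positive-degree case; the rest is bookkeeping inside $\mathrm{Frac}(W(k))[v_-,v_-^{-1}]$. I would expect the main thing to watch is not to mix up which monomials are allowed (all $a, m_j \geq 0$, and one should collect them cleanly before estimating).
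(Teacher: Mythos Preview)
Your proof is correct. The paper states this lemma without proof, treating it as elementary; your argument supplies the missing details cleanly. The key step---bounding $\sum_j m_j\, v_p(n_j!)$ by $v_p(N!)$ via the integrality of the multinomial coefficient, and then invoking $N\geq i$ to get $v_p(p^N/N!)\geq [i]$---is exactly what is needed, and the reverse inclusion by choosing $m$ realizing the minimum in the definition of $[i]$ is the natural complement. One small remark: the paper's statement ``generated by $v_-^i$'' for $i\leq 0$ is a slip of notation (since $\deg v_-=-1$), and you correctly read it as $v_-^{|i|}=v_-^{-i}$.
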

\begin{cor}\label{cor:AtoB}
The embedding $A\hookrightarrow B$ is an isomorphism in degrees $<p$. In particular, $B$ as a graded $W(k)[v_-]$-module is effective in the sense of Definition \ref{effective-Coh}.    
\end{cor}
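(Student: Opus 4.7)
The plan is to deduce both assertions directly from Lemma \ref{Bi-free-module} by comparing generators in each graded degree.

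For the first assertion, I would observe that for $0 \leq i < p$ the Mazur number satisfies $[i] = i$, since for $m < p$ one has $\mathrm{ord}(m!) = 0$ and hence $\mathrm{ord}(p^m/m!) = m$, so the minimum $\min_{m \geq i}\mathrm{ord}(p^m/m!)$ is attained at $m = i$ with value $i$. Consequently, the generator $\frac{p^{[i]} v_+^i}{p^i}$ of $B^i$ supplied by Lemma \ref{Bi-free-module} equals $v_+^i$, which is precisely the free generator of $A^i$. For $i \leq 0$, Lemma \ref{Bi-free-module} identifies $B^i$ with the free $W(k)$-module on $v_-^{-i}$, and the same description holds for $A^i$. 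In both ranges the inclusion $A^i \hookrightarrow B^i$ carries generator to generator and is therefore an isomorphism, which gives the first claim.

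For the second assertion, I would translate effectivity through the dictionary \eqref{identifying-CohA1modGm-with-modules-minus}: by Definitions \ref{Defn-Dqc-ab-A1modGm-plusminus} and \ref{effective-Coh}, the graded $W(k)[v_-]$-module $B = \bigoplus_i B^i$ corresponds to an effective sheaf on $\bA^1_-/\bG_m$ precisely when the map $v_-: B^i \to B^{i-1}$ is a quasi-isomorphism for every $i \leq 0$. For such $i$, Lemma \ref{Bi-free-module} identifies both $B^i$ and $B^{i-1}$ with free $W(k)$-modules of rank one, generated by $v_-^{-i}$ and $v_-^{-i+1}$ respectively, and multiplication by $v_-$ carries the former generator to the latter. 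Hence this map is an isomorphism for all $i \leq 0$, and effectivity follows.

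The corollary is thus a direct bookkeeping consequence of Lemma \ref{Bi-free-module}; there is no genuine obstacle, as all the substance is contained in the rank-one descriptions of the graded pieces $B^i$ already established.
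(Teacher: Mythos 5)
Your proof is correct and is exactly the intended argument: the paper states this corollary without proof as an immediate consequence of Lemma \ref{Bi-free-module}, and your degree-by-degree comparison of generators together with the translation of effectivity through Definitions \ref{Defn-Dqc-ab-A1modGm-plusminus} and \ref{effective-Coh} fills in precisely the expected bookkeeping. The only hair to split is that your verification of $[i]=i$ for $i<p$ inspects only $m<p$, whereas the definition takes a minimum over all $m\geq i$, so one should also note that $\mathrm{ord}(p^m/m!)=m-\mathrm{ord}(m!)\geq p-1\geq i$ for $m\geq p$; in any case this identity is recorded in the paper immediately after Definition \ref{def:Mazur numbers}, so you may simply cite it.
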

\begin{rem} Lemma \ref{Bi-free-module} shows that $B$ is the Rees algebra of the filtration on $W(k)$ given by divided powers of the ideal $(p)\subset W(k)$. 
\end{rem}
\begin{rem}\label{rem:the_category_of_B-modules} 
 Recall from \cite[\S 3.3]{bhatt-lecture-notes} that the stable $\infty$-category $\cD_{qc}(k^\cN)= \widehat{\cD}_{gr}(A)$ of graded $p$-complete $A$-modules can be identified with 
the derived category of filtered $p$-complete modules over the filtered algebra $W(k)$, where the latter is equipped with the filtration by powers of the ideal 
$(p)\subset W(k)$. Likewise, the category $\widehat{\cD}_{gr}(B)$ of graded $p$-complete $B$-modules can be identified with 
the derived category of filtered $p$-complete modules over the filtered algebra $(W(k), (p)^{[\cdot]})$, where $(p)^{[\cdot]}$ denotes the filtration by {\it divided} powers of the ideal 
$(p)$.
\end{rem}
Recall the maps $\mathfrak{p}_{\bar{\dR}}$ and $\Cris$ from \S\ref{subsec:preliminary-syntomification-section}. Consider the following diagram
\begin{equation}\label{Drinfeld-diagram}
\begin{tikzcd}
    &\Spf B/\bG_m\arrow[]{d}[swap]{a}&\\
    &k^{\cN}=\Spf A/\bG_m\arrow[]{ld}[swap]{\Cris}\arrow[]{rd}{\Reesparameter}&\\
    W(k)^{\cN}&& \bA^1_-/\bG_m\arrow[]{ll}[swap]{\mathfrak{p}_{\bar{\dR}}}
\end{tikzcd}\end{equation}
where $a$ and $\Reesparameter$ are induced by homomorphisms of graded $W(k)$-algebras: the map $\Reesparameter$ is induced by the map $W(k)[v_-]\to A$ sending $v_-\mapsto v_-$, and the map $a$ is induced by the obvious embedding $A\hookrightarrow B$. Warning: $\Cris\not\simeq \mathfrak{p}_{\bar{\dR}}\circ \Reesparameter$.  
\begin{Th}[Drinfeld]\label{Drinfeld-prop}
 There exists a unique isomorphism $\Psi_{\mathrm{Maz}}:\Cris\circ a\simeq \mathfrak{p}_{\bar{\dR}}\circ\Reesparameter\circ a$.   
\end{Th}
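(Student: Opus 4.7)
My strategy is to compare the two compositions pointwise on $R$-points of $\Spf B/\bG_m$, use the divided power structure on $B$ to construct the isomorphism, and establish uniqueness via rigidity of the filtered Cartier-Witt divisor.

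An $R$-point of $\Spf B/\bG_m$ consists of an invertible $R$-module $L$, sections $v_+ \in L$ and $v_- \in L^{-1}$ with $v_+v_- = p$, together with divided powers $\gamma_n(v_+) \in L^{\otimes n}$ lifting $v_+^n/n!$ (encoded by the generators $p^{[n]}v_+^n/p^n$ in Lemma~\ref{Bi-free-module}). Unfolding \S\ref{subsec:preliminary-syntomification-section}, the morphism $\Cris \circ a$ produces the filtered Cartier-Witt divisor $(M_1, \xi_1)$ with
\[ M_1 = \coker\!\bigl(\bG_{a,R}^\sharp \xrightarrow{(v_+^\sharp,\, -can)} \mathbf{V}(L)^\sharp \oplus W_R\bigr), \]
while $\mathfrak{p}_{\bar{\dR}} \circ \Reesparameter \circ a$ produces $(M_2, \xi_2)$ with $M_2 = \mathbf{V}(L)^\sharp \oplus F_*W_R$. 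Both sit in short exact sequences $0 \to \mathbf{V}(L)^\sharp \to M_i \to F_*W_R \to 0$ with the same $\mathbf{V}(L)^\sharp$-subobject (namely, that induced by $v_-^\sharp$), and both quotients reduce to the same prism $(W_R, p\cdot)$. The task is to identify these two extensions compatibly with $\xi_i$.

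For the construction of $\Psi_{\mathrm{Maz}}$, I would generalize the matrix formula in \eqref{eqn:preliminary-isom-admissible-modules-PsiDI}, which supplies the analogous isomorphism over $C = \Spec k[v_+,v_-]/(v_+v_-)$ in characteristic $p$. In the $B$-setting, the role played by $v_+v_- = 0$ over $C$ is now taken over by the PD divisibilities encoded in $B$: the Teichm\"uller lift $[v_+] \in W(B)$ combined with $\gamma_n(v_+)$ and the Mazur divisibilities $v_+^{p^n} \in p^{[p^n]} B$ from Lemma~\ref{Bi-free-module} assemble into a canonical $W_R$-linear trivialization of the extension class of $M_1$, producing an explicit isomorphism $\Psi_{\mathrm{Maz}}\colon M_1 \iso M_2$. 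The compatibility $\xi_2 \circ \Psi_{\mathrm{Maz}} = \xi_1$ then reduces to the identities $v_+v_- = p$ and $VF = p$.

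For uniqueness, any two isomorphisms $\Psi, \Psi'$ differ by an automorphism $\sigma$ of $(M_1, \xi_1)$ in the category of filtered Cartier-Witt divisors. The induced automorphism on the prism $(W_R, p)$ is multiplication by a unit $u \in W_R$ with $up = p$; since $B$ is $p$-torsion free (so $p$ is a non-zero-divisor in $W(B)$), this forces $u = 1$. The remaining freedom is an automorphism of the extension of $(W_R, p)$ by $\mathbf{V}(L)^\sharp$ fixing $\xi_1$; by further $p$-torsion-freeness together with the nondegeneracy of $v_-^\sharp$, this forces $\sigma = \Id$. The main obstacle is the precise construction of the canonical trivialization: one must carefully package the PD data into a $W_R$-linear map realizing the splitting of $M_1$'s extension, a computation parallel to the characteristic-$p$ matrix formula \eqref{eqn:preliminary-isom-admissible-modules-PsiDI} but requiring delicate use of the Mazur divisibilities from Lemma~\ref{Bi-free-module} in lieu of the nilpotency that made the $C$-case straightforward.
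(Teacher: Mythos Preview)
Your overall strategy matches the paper's: construct an explicit $W_R$-module isomorphism $M_{\mathfrak{p}_{\mathrm{cris}}} \to M_{\mathfrak{p}_{\bar{\dR}}}$ via a matrix generalizing \eqref{eqn:preliminary-isom-admissible-modules-PsiDI}, then deduce uniqueness from rigidity of the filtered Cartier--Witt divisor. However, the heart of the existence argument is missing, and your proposal does not locate the actual key input.

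The paper's matrix is $\begin{pmatrix} 1 & w \\ 0 & F \end{pmatrix}$, where the off-diagonal entry must be a $W_R$-linear map $W_R \to \mathscr{L}^\sharp$, i.e.\ an element $w \in W^{(F)}$ whose first Witt coordinate is $v_+$. Over $C$ in characteristic $p$ the element $[v_+]$ already lies in $W^{(F)}$ since $F[v_+]=[v_+^p]=0$; over $B$ this fails, and one must correct $[v_+]$ by setting $w := [v_+] - V\bigl(\tfrac{[v_+^p]}{p}\bigr)$. The entire construction therefore hinges on Lemma~\ref{divisibility-W(B)-lemma}: the Teichm\"uller element $[v_+^p]$ is divisible by $p$ \emph{in $W(B)$}. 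This is a nontrivial Witt vector computation (carried out via the big Witt vector identification with $(1+xB[[x]])^*$ and an explicit $\exp/\log$ argument), and it is not the same as the divisibilities $v_+^{p^n} \in p^{[p^n]}B$ in $B$ itself that you invoke via Lemma~\ref{Bi-free-module}. Those divisibilities in $B$ are the input to the Witt vector lemma, but the passage from one to the other is the substantive step you have not supplied. Your phrase ``assemble into a canonical $W_R$-linear trivialization'' is exactly where the content lies, and you should identify that the problem is producing a Frobenius-kernel lift of $v_+$.

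Your uniqueness argument is close in spirit to the paper's but slightly misstated. The paper argues directly that the map $\xi = (v_-^\sharp, V)\colon \bG_a^\sharp \oplus F_*W_B \to W_B$ is injective on $B$-points (using that $B$ is a $p$-torsion-free domain) and then cites \cite[Proposition~5.2.1]{bhatt-lecture-notes} to conclude that the point has no nontrivial automorphisms. Your two-step decomposition (first kill the prism-level automorphism, then the extension-level one) can be made to work, but the phrase ``nondegeneracy of $v_-^\sharp$'' needs to be replaced by the precise statement that $\xi$ is injective on $B$-points; note that $v_-$ alone is not a unit in $B$, so injectivity uses both components. A small correction: the $\mathbf{V}(L)^\sharp$-subobject in both $M_i$ is the first factor inclusion, not ``that induced by $v_-^\sharp$''; the map $v_-^\sharp$ is part of $\xi_i$, not of the filtration.
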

We will need the following lemma in the proof of the Theorem. 
\begin{lm}\label{divisibility-W(B)-lemma}
The element $[v_+^p]$ in $W(B)$ is uniquely divisible by $p$.    
\end{lm}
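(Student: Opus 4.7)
The plan is to separate uniqueness and existence. Uniqueness is immediate: $B$ embeds into $A \otimes \mathrm{Frac}(W(k))$ and is therefore $p$-torsion free, which makes $W(B)$ $p$-torsion free, so multiplication by $p$ on $W(B)$ is injective.

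For existence, I would exploit that the ghost map $\mathrm{gh}: W(B) \hookrightarrow \prod_{n \geq 0} B$ is injective (since $B$ is $p$-torsion free), with image characterized by the property that the inductively-defined Witt components $y_n$, determined by $y_0 = w_0$ and $p^n y_n = w_n - \sum_{i<n} p^i y_i^{p^{n-i}}$, lie in $B$. The ghost components of $[v_+^p]$ are $(v_+^{p^{n+1}})_{n \geq 0}$, so dividing by $p$ I look for $y \in W(B)$ with ghost components $(v_+^{p^{n+1}}/p)_{n \geq 0}$. Each such component does lie in $B$ (since $v_+^p/p \in B$ by Lemma \ref{Bi-free-module}, so $v_+^{p^{n+1}}/p = (v_+^p/p)\cdot v_+^{p^{n+1}-p} \in B$), but this alone is not sufficient; the real issue is the integrality of the Witt components $y_n$.

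By homogeneity, $y_n$ must lie in the graded piece of degree $p^{n+1}$, so $y_n = c_n v_+^{p^{n+1}}$ for some $c_n \in W(k)[1/p]$, and Lemma \ref{Bi-free-module} translates $y_n \in B$ into the valuation bound $v_p(c_n) \geq [p^{n+1}] - p^{n+1}$. The recursion becomes $p^n c_n = 1/p - \sum_{i<n} p^i c_i^{p^{n-i}}$, and I would verify the target inequality by induction on $n$ (base case $c_0 = 1/p$ is immediate, since $[p]-p = -1$).

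The key technical input is the elementary identity
$$p^k f(m) = f(p^k m) + s_p(m) \cdot \frac{p^k - 1}{p-1},$$
where $f(m) := m - v_p(m!)$ and $s_p$ denotes the sum of $p$-adic digits; this follows from Legendre's formula $v_p(m!) = (m - s_p(m))/(p-1)$ together with $s_p(p^k m) = s_p(m)$. Applied at a minimizer $m^*$ realizing $[p^{i+1}] = f(m^*)$, and using that $p^{n-i} m^* \geq p^{n+1}$ implies $f(p^{n-i}m^*) \geq [p^{n+1}]$, this yields $p^{n-i}[p^{i+1}] \geq [p^{n+1}] + (n-i)$ (since $s_p(m^*) \geq 1$ and $(p^{n-i}-1)/(p-1) \geq n-i$), which bounds $v_p$ of each summand $p^i c_i^{p^{n-i}}$ in the recursion. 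Separately, the contribution of the $1/p$ term requires $p^{n+1} - [p^{n+1}] \geq n+1$, which follows from $[p^{n+1}] \leq f(p^{n+1}) = p^{n+1} - (p^{n+1}-1)/(p-1)$ because $(p^{n+1}-1)/(p-1) = 1 + p + \cdots + p^n \geq n+1$. The main obstacle is organizing this Mazur-number bookkeeping cleanly, but once the identity above is in hand the induction is routine.
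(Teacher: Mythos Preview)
Your proof is correct, but the route is quite different from the paper's. The paper passes to the big Witt vectors, identifying their additive group with $(1+xB[[x]])^*$; there the Teichm\"uller lift $[v_+^p]$ corresponds to $1 - v_+^p x$, and divisibility by $p$ becomes the question of extracting a $p$-th root of this power series. The paper writes down the candidate $\exp\bigl(\tfrac{1}{p}\log(1-v_+^p x)\bigr)$ and checks directly, using only that $v_+^{pn}$ is divisible by $(pn)!$ in $B$ together with the elementary integrality of $\tfrac{(pnm)!}{(pn)^m m!}$, that all coefficients land in $B$. Your argument instead stays in the $p$-typical setting, uses the ghost map, and reduces to a valuation estimate on the Witt components governed by the Mazur numbers $[p^{n+1}]$; the key inequality $p^{n-i}[p^{i+1}] \geq [p^{n+1}] + (n-i)$ that you prove via the Legendre identity is the substantive step, and your induction is sound (including at $p=2$, where $[p^{k}]=1$). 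The paper's approach is slicker and avoids the combinatorial bookkeeping, while yours makes the link to the Mazur filtration explicit and is arguably more in keeping with how $[n]$ is used elsewhere in the paper; both are valid.
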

\begin{proof}
Uniqueness is obvious because $p$ is not a zero-divisor in $B$ (hence in $W(B)$). 
It is enough to show that $[v_{+}]^p$ is divisible by $p$ in the ring of big Witt vectors. Recall that the additive group of the big Witt vectors is $(1+xB[[x]])^*$ and $[v_{+}]^p$ corresponds to $1-v_{+}^p x$ and to show that it is divisible by $p$ we need to show that there exists a series $g \in (1+B[[x]])^*$ such that $g^p = 1-v_{+}^p x.$ Explicitly, $g=\exp(\frac{1}{p} \log(1-v_{+}^p x))$. However, we need to explain why this formula makes sense. Note that $\log(1-v_{+}^p x)=-\sum_{n \ge 1} \dfrac{v_{+}^{pn} x^n}{n}$ makes sense since $v_{+}^{pn}$ is divisible by $(pn)!$ and, moreover, this series is divisible by $p$ since all the summands are. To make sense of $\exp(\frac{1}{p} \log(1-v_{+}^p x))=\exp(-\sum_{n \ge 1} \dfrac{v_{+}^{pn} x^n}{pn})$ we need to explain why this element which a priori lives in $(1+x\bQ[[x]])^*$ actually lives in $(1+xB[[x]])^*.$ That is, we need to explain why $\Big(\sum_{n \ge 1} \dfrac{v_{+}^{pn} x^n}{pn}\Big)^m$ is divisible by $m!$ and it is enough to show that $\Big(\dfrac{v_{+}^{pn} x^n}{pn}\Big)^m=\dfrac{v_{+}^{pnm} x^{nm}}{(pn)^m}$ is divisible by $m!$. Recall that $v_{+}^{pnm}$ is divisible by $(pnm)!$ so we are reduced to show that $\dfrac{(pnm)!}{(pn)^m m!}$ is a $p$-adic integer. This follows from a known fact that $\dfrac{(nm)!}{(n)^m m!} \in \bZ$.     
\end{proof}
\begin{rem}\label{rem:another_proof}
   Let us sketch another proof of  Lemma \ref{divisibility-W(B)-lemma}. There  exists a unique element $y' \in \bG_a^\sharp(B)$ whose image in $\bG_a(B)$ is $v_+$. Let $y\in W^{(F)}(B)$ be the image of $y'$ under the isomorphism $\bG_a^\sharp \iso W^{(F)}$ (see \cite[Lemma 3.2.6]{drinfeld2022prismatization}). Then the first ghost 
   coordinate of $[v_+] -y$ is $0$. Thus, there exists a unique $z\in  W(B)$ with $V(z)= [v_+] -y$. We claim that $p\cdot z = [v_+^p]$. Indeed,
   $p\cdot z = FV(z)=F([v_+] -y)= [v_+^p]$.
     \end{rem} 
\begin{rem}\label{rem:G_m_equivariance}
    The Witt vector $\frac{[v_+^p]}{p}\in W(B)$ determines a map $\Spec B \xrightarrow{d} W$. We claim that
    the following diagram is commutative
\begin{equation}\label{dia:G_m_equivariance}
\begin{tikzcd}
\bG_m\times \Spec B \arrow[]{d}{\alpha}\;\arrow[]{r}{\Id\times d}&\bG_m\times W \arrow[]{d}{\beta}\\
    \Spec B\arrow[]{r}{d}&W\,.
    \end{tikzcd}
    \end{equation}
    Here the action $\alpha$ of $\bG_m$ on $\Spec B$ is given by the grading and the action  of $\bG_m$ on $W$ is given by the formula $\beta(\lambda, w)=[\lambda^p] w$, where $\lambda \in \bG_m, w \in W$, and $[\cdot]$ refers to the Teichm\"uller representative.  Indeed, since $\bG_m\times \Spec B $ is flat over $\bZ_p$ it suffices to check 
    that $ d \circ \alpha = \beta \circ (\Id\times d)$ after post-composing with $W\rar{p}W$ which is clear.    

   Using \eqref{dia:G_m_equivariance} the map $d$ descends to a map  
   $$  \bar d: \Spec B/\bG_m \to W/\bG_m $$ 
   of the quotient stacks.
     \end{rem} 
     
\begin{proof}[Proof of Theorem \ref{Drinfeld-prop}] 
Fix an $S$-point $\mathcal{P}$ of $\Spf B/\bG_m$. Using the morphism $a$, we have an $S$-point of $\Spf A/\bG_m$, {\it i.e.},~$\cO_S\xrightarrow{v_+}L\xrightarrow{v_-}\cO_S$, where $L$ is a line bundle over $S$ and $v_+v_-=p$. Define a morphism of $W_S$-modules $w:W_S\to \mathbf{V}(L)^{\sharp}\mono [L]$, where the invertible $W_S$-module $[L]$ is the Teichm\"uller lift of $L$ (see \cite[3.11]{drinfeld2022prismatization}). By Lemma \ref{divisibility-W(B)-lemma}, the homomorphism $[v_+^p]: W_S\to [L^{\otimes p}]$ of $W_S$-modules is canonically divisible by $p$. 
Indeed, we first define 
\begin{equation}\label{eq:divided}
 \frac{[v_+^p]}{p}: W_S\to [L^{\otimes p}]   
\end{equation}
 locally
on $S$ assuming a lift $\tilde{ \mathcal{P}}: S\to \Spf B $ of $\mathcal{P}$.
 The choice of a lift trivializes $L$ and $[L]$. Composing $\tilde {\mathcal{P}}$ with
$\Spec B \rar{d} W$ from Remark \ref{rem:G_m_equivariance} we obtain a Witt vector $\frac{[v_+^p]}{p} \in W(S)$. The multiplication by $\frac{[v_+^p]}{p}$ defines the desired map $\frac{[v_+^p]}{p}: W_S\to W_S= [L^{\otimes p}]$. Diagram 
\eqref{dia:G_m_equivariance}
shows that \eqref{eq:divided}
does not depend on the choice of a trivialization\footnote{Indeed, consider the map  $  \bar d: \Spec B/\bG_m \to W/\bG_m $ from Remark \ref{rem:G_m_equivariance}.
Note that an $S$-point of $W/\bG_m$ is a line bundle $L'$ over $S$ together with a map $W_S$-modules $W_S\to [L^{' \otimes p}]$.
The image
of $\cP$ under $\bar d$ is our \eqref{eq:divided}.} and, thus, it is well-defined globally. 
Applying the Verschiebung, we obtain a homomorphism $V(\frac{[v_+^p]}{p}): W_S\to [L]$ of $W_S$-modules. Set $w:=[v_+]-V(\frac{[v_+^p]}{p}): W_S\to [L]$. We claim that $w$ lands in $\mathbf{V}(L)^{\sharp}$. 
Indeed, the composition $F\circ w: W_S\to [L^{\otimes p}]$ is zero. 

We want to construct an isomorphism $\Cris\circ a(\mathcal{P})\simeq \mathfrak{p}_{\bar{\dR}}\circ\Reesparameter\circ a(\mathcal{P})$. Define a morphism $\Psi_{\mathrm{Maz}}:M_{\mathfrak{p}_{\mathrm{cris}},S}:=\coker\left(\bG_{a,S}^{\sharp}\xrightarrow{(v_+^{\sharp},-can)} \mathbf{V}(L)^{\sharp}\oplus W_S\right)\longrightarrow \mathbf{V}(L)^{\sharp}\oplus F_*W_S:=M_{\mathfrak{p}_{\bar{\dR},S}}$
as follows: let the map $\mathbf{V}(L)^{\sharp}\oplus W_S\to \mathbf{V}(L)^{\sharp}\oplus F_*W_S$ be given by the matrix 
$\begin{pmatrix}
    1& w\\
    0& F
\end{pmatrix}$. 
We claim that $\Psi_{\mathrm{Maz}}\circ(v_+^{\sharp},-can)$ is zero. Indeed, $F\circ can$ is zero. We also have that $v_+^{\sharp}-w\circ can:\bG_{a,S}^{\sharp}\to \mathbf{V}(L)^{\sharp}$ is zero because $V(\frac{[v_+^p]}{p})\circ can =0\footnote{We have that $V(y F(x))= x V(y)$, for any $x,y$ in the ring of Witt vectors. In particular, if $x$ is in Frobenius kernel then $V(y) x=0$.}$. This defines $\Psi_{\mathrm{Maz}}$. The following commutative diagram shows that $\Psi_{\mathrm{Maz}}$ is an isomorphism.  
\[\begin{tikzcd}
    0\arrow[]{r}{}&\mathbf{V}(L)^{\sharp}\arrow[]{r}{}\arrow[equal]{d}[swap]{}&M_{\mathfrak{p}_{\mathrm{cris}},S}\arrow[]{r}{(0,F)}\arrow[]{d}{\Psi_{\mathrm{Maz}}}&F_*W_S\arrow[]{r}{}\arrow[equal]{d}{}&0\\
    0\arrow[]{r}{}&\mathbf{V}(L)^{\sharp}\arrow[]{r}{}
    &M_{\mathfrak{p}_{\bar{\dR}},S}\arrow[]{r}{(0,\Id)}
    &F_*W_S\arrow[]{r}{}
    &0
\end{tikzcd}\]
Finally, one verifies that $\Psi_{\mathrm{Maz}}$ commutes with $\xi$ given by \eqref{eqn:defining-xi-dR} and \eqref{eqn:defining-xi-cris}. This proves the existence of the isomorphism in Theorem \ref{Drinfeld-prop}. 

For the uniqueness, it suffices to show that the point $\mathfrak{p}_{\bar{\dR}}\circ t\circ a: \Spf B/\bG_m\to W(k)^{\cN}$ has no nontrivial automorphisms. Since $\Spf B\to \Spf B/\bG_m$ is a faithfully flat cover, it suffices to show this for the corresponding point $s:\Spf B\to W(k)^{\cN}$. 
Explicitly, $s$ is given by the morphism $\bG_a^{\sharp}\oplus F_*W_B\xrightarrow{(v_-,V)}W_B$ of $W_B$-modules. 
Using  \cite[Proposition 5.2.1]{bhatt-lecture-notes}, any automorphism $\alpha$ of $W_B$-module $\bG_a^{\sharp} \oplus F_* W_B$ has the form $\begin{bmatrix}
t & x \\
0 & y
\end{bmatrix}$ for $t \in \bG_a(B), x \in \ker(\bG_a^{\sharp}(B) \to \bG_a(B)) = 0, y \in F_* W(B)$. The group of automorphisms of $s$ consists of $\alpha$ satisfying $(v_-, V) \circ \alpha = (v_-, V).$ In particular, for any $(a, b) \in \bG_a^{\sharp}(B) \oplus F_* W(B),$ we must have $v_{-} ta + V(by) = v_{-} a + V(b).$ Substitutions $a = 0, b=1$ and $a=[v_{+}], b=0$ give $t=y=1$ {\it i.e.}, $\alpha=\Id$.
\end{proof}
\subsection{A description of $k^{\cN}\times_{W(k)^{\cN}} \bA^1_-/\bG_m$}\label{subsection:description-fibre-product} 

\noindent Consider the Cartesian square
\begin{equation}\label{diagram:fibre-product-commdiagram}
\begin{tikzcd}
    &k^{\cN}\times_{W(k)^{\cN}}\bA^1_-/\bG_m\arrow[]{ld}{}\arrow[]{rd}{}&\\
    k^{\cN}\arrow[]{rd}[swap]{\mathfrak{p}_{\mathrm{cris}}}&&\bA^1_-/\bG_m\arrow[]{ld}{\mathfrak{p}_{\bar{\dR}}}\\
    &W(k)^{\cN}&
\end{tikzcd}
\end{equation}
Using Theorem \ref{Drinfeld-prop}, we have a morphism 
\begin{equation}\label{eqn:fibre-product-cN-version}
(a,t\circ a):\Spf B/\bG_m\to k^{\cN}\times_{W(k)^{\cN}}\bA^1_-/\bG_m:=\mathfrak{D}.\end{equation}

Note that the composition $\bA^1_-/\bG_m\rar{\mathfrak{p}_{\bar{\dR}}} W(k)^{\cN}\rar{t_{W(k)}} \bA^1_-/\bG_m$ is the identity. Indeed, recall that $\mathfrak{p}_{\bar{\dR}}$ takes $x=(L, v_-) \in \bA^1_{-}/\bG_m(R)$ to an admissible module $(v_{-}^{\sharp}, V): \mathbf{V}(L)^{\sharp} \oplus F_*W_R \to W_R$ as in \eqref{eqn:defining-xi-dR}. This gives rise to a morphism of admissible sequences \eqref{eq:defofadm} where the left vertical map is $v_{-}^{\sharp}: \mathbf{V}(L)^{\sharp} \to \bG_{a, R}^{\sharp}.$ In particular, the Rees map sends $\mathfrak{p}_{\bar{\dR}}(x)$ to $x.$ Now, giving a point an $S$-point of $\mathfrak{D}$ is equivalent to giving $y \in \bA^1_{-}/\bG_m(S)$ and $x \in k^{\cN}(S)$ together with $\mathfrak{p}_{\bar{\dR}}(y) \simeq 
 \mathfrak{p}_{cris}(x)$. Applying to it $t_{W(k)}$ we get $t(x) \simeq y.$ Thus, giving an $S$-point of $\mathfrak{D}$ is equivalent to giving an $S$-point $x$ of $k^{\cN}$ together with an isomorphism $\mathfrak{p}_{\bar{\dR}}\circ t(x)\simeq \mathfrak{p}_{\mathrm{cris}}(x)$. 

Let us define a closed substack of $\mathfrak{D}$. Note that both $\mathfrak{p}_{\bar{\dR}}$ and $\mathfrak{p}_{\mathrm{cris}}$, precomposed with the structure map $\pi: W(k)^{\cN} \to W(k)^{\Prism}$, factor through the de Rham point $\mathfrak{p}_{\dR}: \Spf W(k) \to W(k)^{\Prism}$, classifying the Cartier-Witt divisor given by $W \xrightarrow{\cdot p} W.$ This defines a map $\tilde{\pi}: \mathfrak{D} \to \Spf W(k) \times_{W(k)^{\Prism}} \Spf W(k).$ 
The stack on the right has a canonical point given by the diagonal map $\Spf W(k)\to \Spf W(k) \times_{W(k)^{\Prism}} \Spf W(k)$.
Consider the fiber over this point of the morphism $\tilde{\pi}$ and denote it by $\mathfrak{D}_{0}.$

The following remark will not be used in the remaining part of the paper. 
\begin{rem}\label{rem:Drinfeld-commutativity-interpretation}
    Drinfeld suggested to us another interpretation of $\mathfrak{D}$ as well as its substack $\mathfrak{D}_{0}$ that makes use of
    the $c$-stack enhancement of $W(k)^{\cN}$ introduced in \cite{drinfeld2022prismatization} and reviewed in \S \ref{subsec:preliminary-syntomification-section}.
    For a scheme $S$, consider the category $\mathfrak{D'}(S)$ consisting of pairs $(t, f)$, where $t: S \to \bA_{-}^1/\bG_m$ and $f: p_+ \to \mathfrak{p}_{\bar{\dR}}(t)$ is a morphism in the category $W(k)^{\cN, c}(S)$ from \S \ref{subsec:preliminary-syntomification-section}. Here $p_+$ is defined as the composition $S \to k^{\cN} \xrightarrow{\mathfrak{p}_{cris}} W(k)^{\cN}$, where the first morphism is given by $A \to H^0 (S, \mathcal{O}_{S})$ sending $v_-$ to $p$ and $v_+$ to $1$. We claim that the underlying groupoid of this category is equivalent to $\mathfrak{D}(S).$ This follows from the fact (\cite[\S 5.5.3]{drinfeld2022prismatization}) that 
    the Rees maps $t_{W(k)}:  W(k)^{\cN, c} \to (\bA_{-}^1/\bG_m)^c$, $k^{\cN, c} \to (\bA_{-}^1/\bG_m)^c $ are left fibrations of $c$-stacks. Indeed, since $W(k)^{\cN, c} \to (\bA^1_{-}/\bG_m)^c$ is a left fibration, the category $W(k)^{\cN, c}(S)_{p_+/}$ is identified with $(\bA^1_{-}/\bG_m)^c(S)_{p/} \simeq k^{\cN, c}(S).$\footnote{An object of $(\bA^1_{-}/\bG_m)^c(S)_{p/}$ consists of $(L \rar{v_{-}} \cO_{S}) \in (\bA^1_{-}/\bG_m)^c(S)$ together with a morphism $f: (\cO_{S} \rar{p} \cO_{S})\to (L \rar{v_{-}} \cO_{S})$ in $(\bA^1_{-}/\bG_m)^c(S)$. Giving such an $f$ is equivalent to specifying a map $v_{+}: \cO_{S} \to L$ with $v_{-}v_{+}=p.$ } Under this identification, the natural functor $W(k)^{\cN, c}(S)_{p_+/} \to W(k)^{\cN, c}$ is identified with $\mathfrak{p}_{cris}.$ The category $\mathfrak{D}'(S)$ is, by definition, equivalent to $W(k)^{\cN, c}(S)_{p_+/} \times_{W(k)^{\cN, c}(S)} (\bA^1_{-}/\bG_m)^c(S)$. Its underlying groupoid is $\mathfrak{D}(S)$ by the definition of the latter \eqref{diagram:fibre-product-commdiagram}. To describe 
    the substack  $\mathfrak{D}_{0},$ note that any object $(t, f)$ of this category provides us with  a triangle in $W(S)^{\cN, c}$ 
\begin{equation}\label{diagram:drinfeld}
\begin{tikzcd}
	{p_+}\arrow[]{rr}{}\arrow[dotted]{rd}{f} && {p_-} 
	\\
	& {\mathfrak{p}_{\bar{\dR}}(t)}\arrow[]{ru}{}&
\end{tikzcd}\end{equation} 
where $p_-$ is defined  by  $A \to H^0 (S, \mathcal{O}_{S})$,  $v_- \mapsto 1$, $v_+ \mapsto p$, and  
the solid arrows are the canonical morphisms (constructed by observing that, for any $S$, $p_+$ (resp. $p_-$) is the initial  (resp. final) object of  the category $k^{\cN, c}(S)$ and $p_-$ is the final object of  $(\bA_{-}^1/\bG_m)^c(S)$).  
The triangle is not commutative in general;  points that correspond to commutative triangles are classified by a substack of $\mathfrak{D}$; 
this substack is equivalent to $\mathfrak{D}_{0}.$ 

Recall from \cite[\S 1.8]{drinfeld2022prismatization} that every {\it effective} gauge $\cF \in \cD_{qc, [0, +\infty]} (W(k)^{\cN})$ gives
rise to a complex of contravariant $\cO$-modules on  $W(k)^{\cN, c}$. In particular, for every $S$-point of $\mathfrak{D}_{0}$
 the canonical morphism $\cF|_{p_-} \to \cF|_{p_+}$ factors through $\cF|_{\mathfrak{p}_{\bar{\dR}}(t)}$.
We also note that, for an effective $F$-gauge $\cF$, we have an isomorphism $F^*(\cF|_{p_+}) \simeq \cF|_{p_-}$ whose composition with  $\cF|_{p_-} \to \cF|_{p_+}$ is the crystalline Frobenius.
\end{rem}
\vspace{3mm} 

In this subsection, we give an explicit description of $\mathfrak{D}$, which was explained to us by Drinfeld and Lurie. The authors of this paper are responsible for any possible mistakes in the exposition of their results.

We endow $W$ with the $\bG_m$-action given by the formula: $\lambda*w=[\lambda]w$, where $[\cdot]$ refers to the Teichm\"uller representative. This gives an action of $\bG_m$ on $\ker(W\xrightarrow{F}W)=:W^{(F)}\hookrightarrow W$. We consider the fibre product $\Spf A\times_{\bA^1_+}W^{(F)}= :\widetilde{\mathfrak{D}}'$, where $\Spf A$ is viewed as a scheme over $\bA^1_+$ via the natural embedding $W(k)[v_+]\hookrightarrow A$, and $W^{(F)}\to\bA^1_+$ is given by the first Witt vector coordinate\footnote{{\it i.e.},~the map sends a Witt vector $w= \sum_{i\geq 0} V_i[w_i]$ to $w_0$.}. We endow $\Spf A\times_{\bA^1_+} W^{(F)}$ with the diagonal $\bG_m$-action.  
Let $\widetilde{\mathfrak{D}}'_0\hookrightarrow\Spf A\times_{\bA^1_+}W^{(F)}$ be the closed subscheme given by equation\footnote{
Set $p-V(1)= \sum_{i\geq 0} V_i[a_i]$, $w= \sum_{i\geq 0} V_i[w_i]$. Then the equation \eqref{eqn:defining-Spf-Btilde} reads as follows: $v_-^{p^{i}}w_i=a_i$, for every $i\geq 0$.} 
\begin{equation}\label{eqn:defining-Spf-Btilde}
[v_-]w=p-V(1).
\end{equation}
Recall from \cite[Lemma 3.2.6]{drinfeld2022prismatization} the unique isomorphism $W^{(F)}\simeq \bG_a^{\sharp}$ characterized by the commutative diagram 
\begin{equation}
\begin{tikzcd}
    W^{(F)}\arrow[]{r}{\simeq}\arrow[]{d}{}&\bG_a^{\sharp}\arrow[]{ld}{}\\
    \bG_a&
\end{tikzcd}\end{equation}
where the vertical map is induced by the projection $W \to \bG_a$ to the first coordinate. 
Thus under the isomorphism $W^{(F)}(\bZ_p)\simeq \bG_a^{\sharp}(\bZ_p)=p\bZ_p\subset \bZ_p$, the element $p-V(1)$ is sent to the number $p$. Consequently, under the identification $\Spf A\times_{\bA^1_+} W^{(F)}\simeq \Spf A\times_{\bA^1_+} \bG_a^{\sharp}$, the equation \eqref{eqn:defining-Spf-Btilde} has the form 
$v_-\cdot z=p$, 
where $z$ is a point of $\bG_a^{\sharp}$, and the product on the left-hand side refers to the $\bG_a$-module structure on $\bG_a^{\sharp}$. 
This yields a concrete description of  $\widetilde{\mathfrak{D}}'_0$ as the formal spectrum  of the  $W(k)$-algebra generated by $v_-$ and $\gamma_{p^n}(v_+)$, $n\geq 0$,
subject to the following relations:
\begin{equation}\label{eq:equationforPDenvelope}
\frac{p^n!}{(p^{n-1}!)^p} \gamma_{p^n}(v_+)= (\gamma_{p^{n-1}}(v_+))^p,  \quad   v_-^{p^n} \gamma_{p^n}(v_+)= \frac{p^{p^n}}{p^n!}.
\end{equation}
\begin{lm}\label{lem:3.10}
\begin{enumerate}
\noindent\item Let $B^{\flat}$ be the  PD-envelope of the ideal  $(v_+)$ in $A$, and let
$W(k)\langle v_+\rangle[v_-]$ be the  PD-envelope of the ideal  $(v_+)\subset W(k)[v_+,v_-]$. 
Then $B^{\flat} \iso W(k)\langle v_+\rangle[v_-]/{(v_+v_- -p)}$ and   $\widetilde{\mathfrak{D}}' \xrightarrow{\sim} \Spf B^{\flat}$.
\item Let $B^{\dagger}$ be the  PD-envelope of the ideal  $(v_+)$ in $A$ relative to $(\bZ_p,(p))$ \cite[07H7]{stacks-project}.
  Then $\widetilde{\mathfrak{D}}'_0 \xrightarrow{\sim} \Spf B^{\dagger}$.
  \end{enumerate}
\end{lm}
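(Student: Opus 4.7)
The plan is to verify both isomorphisms using the universal property of PD-envelopes, after translating the fiber product into divided-power language via the standard identification $W^{(F)}\simeq\bG_a^{\sharp}$. Under this identification, the first-coordinate map $W^{(F)}\to\bA^1_+$ becomes the canonical PD-hull map $\bG_a^{\sharp}=\Spf\bZ_p\langle v_+\rangle\to\bA^1_+$, so
\[
\widetilde{\mathfrak{D}}' \;=\; \Spf\bigl(A\,\widehat\otimes_{W(k)[v_+]}\, W(k)\langle v_+\rangle\bigr).
\]

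For (1), I will show directly that $T:=A\otimes_{W(k)[v_+]}W(k)\langle v_+\rangle$ represents $B^{\flat}=D_{(v_+)}(A)$. The ring $T$ carries a PD-structure on the ideal generated by $v_+$ (transferred from the factor $W(k)\langle v_+\rangle$), so the universal property of $B^{\flat}$ yields a canonical $A$-algebra map $B^{\flat}\to T$. Conversely, $B^{\flat}$ is a $W(k)[v_+]$-algebra with PD-structure on $(v_+)$; hence the universal property of $W(k)\langle v_+\rangle=D_{(v_+)}(W(k)[v_+])$ produces a unique map $W(k)\langle v_+\rangle\to B^{\flat}$, which combined with $A\to B^{\flat}$ supplies the inverse. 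Both compositions are the identity by universality. Notably, this argument needs no flatness of $A$ over $W(k)[v_+]$, which in fact fails.

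For (2), I use the translation, already given in the excerpt, of the defining equation $[v_-]w=p-V(1)$ into $v_-\cdot z=p$ in $\bG_a^{\sharp}$. This identity is strictly stronger than the relation $v_-v_+=p$ holding in $A$: it equates $v_-\cdot z$ with $p$ \emph{as PD-elements of the coordinate ring}, and therefore forces equality of all divided powers. Applying $\gamma_n$, using $\gamma_n(v_-z)=v_-^n\gamma_n(z)$ (since $v_-$ is a scalar) together with the canonical PD-structure $\gamma_n(p)=p^n/n!$ on $(p)\subset\bZ_p$, one obtains
\[
v_-^n\gamma_n(v_+)\;=\;\frac{p^n}{n!}\qquad(n\geq 0),
\]
which are precisely the relations expressing compatibility between the PD-structure on $(v_+)\subset A$ and the canonical PD-structure on $(p)\subset\bZ_p$ defining $B^{\dagger}$. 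The subfamily $n=p^k$ generates the rest modulo the PD-relations already present in $B^{\flat}$, reproducing the two displayed relations in \eqref{eq:equationforPDenvelope}; by the universal property of $B^{\dagger}$ this gives $\widetilde{\mathfrak{D}}'_0\simeq\Spf B^{\dagger}$. The main subtlety I foresee is the translation $[v_-]w=p-V(1)\rightsquigarrow v_-\cdot z=p$ itself, which rests on unpacking the isomorphism $W^{(F)}\simeq\bG_a^{\sharp}$ and tracing how $V(1)$ and multiplication by the Teichmüller lift $[v_-]$ are converted into divided-power data; since this step is already asserted in the preceding paragraphs, I will cite it rather than reprove it, after which the remainder of the proof is routine bookkeeping with universal properties.
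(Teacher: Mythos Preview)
Your argument for part (1) has a real gap at the step you flag as routine. You assert that $T = A \otimes_{W(k)[v_+]} W(k)\langle v_+\rangle$ carries a PD-structure on the ideal generated by $v_+$, ``transferred from the factor $W(k)\langle v_+\rangle$''. But PD-structures do not extend along arbitrary ring maps; the standard sufficient condition is flatness, which you yourself note fails here. Concretely, write $T = \widetilde{A}^\sharp/I$ with $\widetilde{A}^\sharp = W(k)[v_-]\langle v_+\rangle$ and $I = (v_-v_+ - p)$; the PD-structure on the PD-ideal $J \subset \widetilde{A}^\sharp$ descends to $T$ if and only if $I \cap J$ is a sub-PD-ideal of $J$. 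This is precisely what the paper checks, by showing $I \cap J = I \cdot J$ (a short computation in $\widetilde{A}^\sharp \otimes \bQ$, using that $\widetilde{A}^\sharp$ is $p$-torsion-free and that $v_-v_+ - p$ and $v_+$ are coprime there). Without this verification your map $B^\flat \to T$ cannot be produced via the universal property, and the two-sided inverse argument does not get off the ground. Once this is supplied, the remainder of your part (1) matches the paper.

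Your part (2) is essentially the paper's argument. One imprecision: what you describe as ``applying $\gamma_n$'' to the equation $v_- \cdot z = p$ is not a PD-operation on an equation but rather the observation that equality of two points of $\bG_a^\sharp$ is equality of all their coordinates, giving $v_-^n \gamma_n(v_+) = p^n/n!$ for each $n$. From there your identification with $B^\dagger$ is correct; the paper phrases the same thing as $B^\dagger \simeq B^\flat \otimes_{D_{\bZ_p}((p))} \bZ_p$, with the tensor product imposing exactly these relations.
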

\begin{proof}
  To prove the first part, set  $\widetilde A^\sharp:= W(k)\langle v_+\rangle[v_-]$, $\widetilde A: = W(k)[v_+,v_-]$,
   and let $J\subset \widetilde A^\sharp$ be the PD ideal generated by $\frac{v_+^n}{n!}$, $n\geq 1$. Set $I:= (v_-v_+ -p) \subset \widetilde A^\sharp$. Then\footnote{Indeed, given any $f =(v_-v_+ -p)g 
\in I\cap J$, we conclude, using that $v_-v_+ -p$ and $v_+$  are coprime in $\tilde A \otimes \bQ$, that $g$ is divisible by $v_+$ in $\tilde A \otimes \bQ$. Hence, $g\in J$.} $I\cap J = I \cdot J $. In particular,  $I\cap J\subset J$ is a sub-PD-ideal. Consequently, there exists a unique PD structure on the image $\bar J$ of $J$ under the projection
$\widetilde A^\sharp \to \widetilde A^\sharp/I$. We claim that the map $(A, (v_+)) \to (\widetilde A^\sharp/I, \bar J)$ exhibits the right-hand side as the PD-envelope  $B^\flat$ of $(v_+)\subset A$. Indeed, by the universal property we have a PD-homomorphism $B^\flat \to \widetilde A^\sharp/I$. On the other hand, the projection $\widetilde A \to A$ gives a PD map 
$\widetilde A^\sharp \to B^\flat$ sending $I$ to $0$. By looking at generators, one readily checks that these are mutually inverse homomorphisms. Finally, we note that
$\widetilde A^\sharp/I = A \otimes _{W(k)[v_+]} W(k)\langle v_+\rangle$. Using the isomorphism  $\bG_a^\sharp \xrightarrow{\sim} W^{(F)}$, this implies the claim. 

To prove the second assertion, observe  that $B^{\dagger}$ is identified with  the tensor product 
$B^{\dagger} \otimes_{D_{\bZ_p}((p))} \bZ_p,$ where $D_{\bZ_p}((p))$ is the PD envelope of $(p)\subset \bZ_p$ and the homomorphism $D_{\bZ_p}((p)) \to \bZ_p$ takes each 
$\gamma_n(p) \in D_{\bZ_p}((p))$ to $\frac{p^n}{n!}$. 
Using \eqref{eq:equationforPDenvelope} the claim follows.
\end{proof}
\begin{pr}\label{prop:fibre-product-tildefrakT}

There is a $\bG_m$-equivariant isomorphism 
$\widetilde{\mathfrak{D}}:=\mathfrak{D}\times_{\bA^1_-/\bG_m}\bA^1_- \xrightarrow{\sim} \widetilde{\mathfrak{D}}'$ that identifies the substack $\widetilde{\mathfrak{D}}_{0} := \mathfrak{D}_{0}\times_{\bA^1_-/\bG_m}\bA^1_-$ with $\widetilde{\mathfrak{D}}'_0$.
Here the action of $\bG_m$ on $\widetilde{\mathfrak{D}}$ comes from the $\bG_m$-action on $\bA^1_-$.

\end{pr}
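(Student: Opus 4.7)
The plan is to unpack the functors of points of both sides, describe isomorphisms $\Psi$ via an explicit matrix form, and thereby construct an equivalence that matches the closed substacks.

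An $S$-point of $\widetilde{\mathfrak{D}}$ is a triple $(v_+, v_-, \Psi)$ with $v_\pm \in \cO_S$ satisfying $v_+ v_- = p$, together with an isomorphism $\Psi \colon \mathfrak{p}_{\mathrm{cris}}(v_+, v_-) \xrightarrow{\sim} \mathfrak{p}_{\bar{\dR}}(v_-)$ of filtered Cartier-Witt divisors on $S$. An $S$-point of $\widetilde{\mathfrak{D}}'$ is a triple $(v_+, v_-, w)$ with $v_+ v_- = p$ and $w \in W^{(F)}(S)$ whose first Witt coordinate equals $v_+$. The goal is to produce a natural bijection between these, functorial in $S$, together with the claimed matching of substacks.

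Arguing as in the proof of Theorem \ref{Drinfeld-prop}, I lift $\Psi$ to a $W_S$-linear morphism between the presentations $\mathscr{L}^\sharp \oplus W_S \to \mathscr{L}^\sharp \oplus F_* W_S$. Using that the $W_S$-action on $\mathscr{L}^\sharp \simeq \bG_{a,S}^\sharp \simeq W_S^{(F)}$ factors through the first-coordinate map $W_S \to \cO_S$, the matrix of $\Psi$ is forced to have the shape $\begin{pmatrix} 1 & w \\ 0 & u\cdot F \end{pmatrix}$ for some $w \in W^{(F)}(S)$ and $u \in W_S^*$ (the top-left entry equals $1$ by $\xi$-compatibility on the $\mathscr{L}^\sharp$-component; the bottom-left vanishes by a direct module-structure computation combined with $v_+ v_- = p$; and the bottom-right is determined by its value on $1 \in W_S$). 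The vanishing of this matrix on the image of $(v_+^\sharp, -can) \colon \bG_{a,S}^\sharp \to \mathscr{L}^\sharp \oplus W_S$ is equivalent to asking that the first Witt coordinate of $w$ equal $v_+$, and compatibility with $\xi_{\mathrm{cris}}, \xi_{\bar{\dR}}$ on the $W_S$-component (using the Witt identity $V(F(z) y) = z V(y)$) gives the relation
\[
[v_-] \cdot w + V(u) = p \quad \text{in } W_S.
\]
Define $\Phi \colon \widetilde{\mathfrak{D}} \to \widetilde{\mathfrak{D}}'$ by $(v_+, v_-, \Psi) \mapsto (v_+, v_-, w)$.

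For the inverse, given $(v_+, v_-, w) \in \widetilde{\mathfrak{D}}'(S)$, note that $p - [v_-] w$ has first Witt coordinate $p - v_- v_+ = 0$, so it lies in the image of the injective map $V$, and we may set $u := V^{-1}(p - [v_-] w)$. A direct ghost-coordinate calculation shows $u_0 \equiv 1 \pmod p$, hence $u$ is a unit in $W_S$, and the resulting matrix produces a valid isomorphism $\Psi$, inverting $\Phi$. The substack $\widetilde{\mathfrak{D}}_0$ is the locus where the induced automorphism $\pi\Psi$ of the de Rham point is trivial; in the matrix picture this automorphism is precisely multiplication by $u$ on the quotient $F_* W_S$, so $\widetilde{\mathfrak{D}}_0$ corresponds to $u = 1$. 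Substituting into the displayed equation yields $[v_-] w = p - V(1)$, which is exactly the defining equation of $\widetilde{\mathfrak{D}}'_0$. The $\bG_m$-equivariance is immediate from the first-coordinate map $W^{(F)} \to \bG_a$ sending $[\lambda] w \mapsto \lambda w_0$, which is compatible with the grading on $\Spf A$.

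The main technical obstacle is the matrix classification in the second paragraph: rigorously showing that every such $\Psi$ is captured by the stated form requires a careful analysis of morphisms of admissible $W_S$-modules, in particular the vanishing of the lower-left entry and the precise role of the Frobenius-twist in the lower-right. Once this is in place, the identification of the substack $\widetilde{\mathfrak{D}}_0$ reduces to tracking the unit $u$, and the remaining verifications are standard Witt-ring manipulations together with the injectivity of $V$.
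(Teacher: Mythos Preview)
Your approach matches the paper's: both extract the matrix form $\begin{pmatrix} 1 & w \\ 0 & u \cdot F \end{pmatrix}$ for $\Psi$, construct the inverse via $u = V^{-1}(p - [v_-]w)$, and identify $\widetilde{\mathfrak{D}}_0$ with the locus $u = 1$.

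The gap is in your justification of the first column. The bottom-left entry vanishes not by a computation involving $v_+ v_- = p$, but because the admissible short exact sequence \eqref{eq:defofadm} is functorial in $M$: any $W_R$-module morphism between admissible modules automatically preserves the sub-$\bG_a^\sharp$. The top-left entry equals $1$ not by $\xi$-compatibility (which on the $\bG_a^\sharp$-component only yields $v_- \cdot a = v_-$, insufficient when $v_-$ is a zero-divisor) but because your parametrization ``$v_+ \in \cO_S$'' has already used $\Psi$'s induced isomorphism of line bundles to trivialize $L$; once that trivialization is fixed, $\Psi|_{\bG_{a,R}^\sharp}$ is the identity by construction. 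The paper makes this step precise via a preliminary observation: the Rees map determines a $\bG_m$-torsor $W(k)^{\cN,r} \to W(k)^{\cN}$, and one identifies $\widetilde{\mathfrak{D}} \simeq \Spf A \times_{W(k)^{\cN,r}} \bA^1_-$, which directly imposes the commutative diagram \eqref{eqn:prop-fibre-product-commdiag-beta-bis} forcing the first column to be $(1,0)^t$. Your ghost-coordinate check that $u$ is a unit also needs care, since ghost components are not available over rings with $p$-torsion; the paper instead reduces (by $p$-nilpotence) to the case where $R$ is a field of characteristic $p$, where $W^{(F)}(R)=0$ forces $w=0$ and hence $V(u)=p$, whose second Witt coordinate is a unit.
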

\begin{proof}
We start with a preliminary observation.~The line bundle $\mathcal{O}_{\bA^1_-/\bG_m}(-1)$ determines via the Rees map $t_{W(k)}: W(k)^{\cN} \to \bA_{-}^1/\bG_m$     
a $\bG_m$-torsor $ {W(k)}^{\cN, r} \to W(k)^{\cN}$.~Thus an $S$-point of ${W(k)}^{\cN, r}$ is a pair consisting of an $S$-point of ${W(k)}^{\cN}$ together with a trivialization of the pullback of $\mathcal{O}_{\bA^1_-/\bG_m}(-1)$ to $S$.~Observe $\bG_m$-equivariant isomorphisms $ {W(k)}^{\cN, r} \times_{{W(k)}^{\cN}} k^\cN \xrightarrow{\sim} \Spf A$
and $ {W(k)}^{\cN, r} \times_{{W(k)}^{\cN}} \bA^1_-/\bG_m \xrightarrow{\sim} \bA^1_-$.
 Consequently, we obtain an isomorphism 
$\Spf A \times_{{W(k)}^{\cN, r}} \bA_{-}^{1}  \xrightarrow{\sim} \widetilde{\mathfrak{D}}$ of $\bG_m$-torsors over $\mathfrak{D}$.
In particular, we conclude that the projection $\mathfrak{D} \to \Spf A/\bG_m$ lifts to a map $\widetilde{\mathfrak{D}} \to \Spf A$. 

Now we prove the proposition.
Let $R$ be a $p$-nilpotent ring.~Recall from \eqref{diagram:fibre-product-commdiagram} that a point in $\mathfrak{D}(R)$ consists of a point $x \in k^{\cN}(R)$ together with $\mathfrak{p}_{cris}(x) \simeq \mathfrak{p}_{\bar{\dR}} \circ t(x).$ Since $\widetilde{\mathfrak{D}} := \mathfrak{D} \times_{\bA^1_{-}/\bG_m} \bA^1_{-}$, we see that a point of $\widetilde{\mathfrak{D}}(R)$ consists of 
a homomorphism $g:A\to R$ together with an isomorphism $\beta$ of $W_R$-modules 
\begin{equation}\label{eqn:defining-Mcris-beta-MdR}
 \coker(\bG_{a,R}^{\sharp}\xrightarrow{(g(v_+),-\mathrm{can})} \bG_{a,R}^{\sharp}\oplus W_R):=M_{\mathfrak{p}_{\mathrm{cris},R}}\underset{\simeq}{\xrightarrow{\beta}} M_{\mathfrak{p}_{\bar{\dR},R}}:=\bG_{a,R}^{\sharp}\oplus F_*W_R
\end{equation} 
such that the following two diagrams commute
\begin{equation}\label{eqn:prop-fibre-product-commdiag-beta-bis}
 \begin{tikzcd}
    0 \arrow[]{r}{} & \bG_{a,R}^{\sharp} \arrow[]{r}{(\Id, 0)}[swap]{}\arrow[]{d}[swap]{\Id}& M_{\mathfrak{p}_{\mathrm{cris},R}} \arrow[]{d}{\beta}\arrow[]{r}{(0, F)}[swap]{}& F_*W_R
    \arrow[]{d}[swap]{\nu} \arrow[]{r}{}[swap]{}& 0\\
    0 \arrow[]{r}{}  &\bG_{a,R}^{\sharp} \arrow[]{r}{(\Id, 0)}[swap]{}   &    M_{\mathfrak{p}_{\bar{\dR},R}} \arrow[]{r}{(0, \Id)}[swap]{} &  F_*W_R  \arrow[]{r}{}[swap]{}& 0
 \end{tikzcd}
 \end{equation}
\begin{equation}\label{eqn:prop-fibre-product-commdiag-beta}
 \begin{tikzcd}
     M_{\mathfrak{p}_{\mathrm{cris},R}}\arrow[]{r}{\beta}[swap]{}\arrow[]{d}[swap]{(g(v_-)\circ\ \mathrm{can},p)}&M_{\mathfrak{p}_{\bar{\dR},R}}\arrow[]{d}{(g(v_-)\circ\ \mathrm{can},V)}\\
     W_R\arrow[equal]{r}{\Id}&W_R,
 \end{tikzcd}
 \end{equation}
for some $\nu \in \Hom_{W_R}(W_R,W_R)= W(R)$.
 Explicitly, $\beta': \bG_{a,R}^{\sharp}\oplus W_R\to M_{\mathfrak{p}_{cris, R}} \xrightarrow{\beta} \bG_{a,R}^{\sharp}\oplus F_*W_R$ is given by $\begin{pmatrix}
    1 &w\\
    0&\nu \circ F
\end{pmatrix}$, where $w=(w_1,w_2,\cdots)\in \Hom_{W_R}(W_R,\bG_{a,R}^{\sharp})=W^{(F)}(R)$ is subject to the equation $w_1=g(v_+)$ and $[g(v_-)]w=p-V(\nu)$. 
Sending a point of $\widetilde{\mathfrak{D}}(R)$ to $(g\in (\Spf A)(R), w\in W^{(F)}(R))$  defines a functorial map 
\begin{equation}\label{eq:constuction_of_the_map}
\widetilde{\mathfrak{D}}(R)\to (\Spf A\times_{\bA^1_+} W^{(F)}) (R).
\end{equation} 
We next show that \eqref{eq:constuction_of_the_map} is an isomorphism. To do this we shall construct its inverse.

Given a point $(g,w) \in (\Spf A\times_{\bA^1_+} W^{(F)})(R)$, observe that since $w_1=g(v_+)$ the first coordinate of $p-[g(v_-)]w$ is $0$. Therefore this element can be uniquely written as $V(\nu)$, for some $\nu \in W(R)$.
Then the matrix $\begin{pmatrix}
    1&w\\
    0&\nu \circ F
\end{pmatrix}$ 
defines a homomorphism  $M_{\mathfrak{p}_{\mathrm{cris},R}} {\xrightarrow{\beta}} M_{\mathfrak{p}_{\bar{\dR},R}}$ making the diagrams 
 \eqref{eqn:prop-fibre-product-commdiag-beta-bis} and \eqref{eqn:prop-fibre-product-commdiag-beta} commutative. 
It remains to check that $\beta$ is an isomorphism. Using the diagram \eqref{eqn:prop-fibre-product-commdiag-beta-bis} it is enough to check that $\nu \in W(R)$ is invertible. 
Let $\nu_1$ be the first ghost coordinate of $\nu$. If $\nu$ is not invertible 
we can find a maximal ideal $\mathfrak{m} \subset R$ which contains $\nu_1.$ Since $W^{(F)}(R/\mathfrak{m})=0$ and $w \in W^{(F)}(R)$, we conclude from the equation $[g(v_{-})]w=p-V(\nu)$ that $\nu_1=1$ in $R/\mathfrak{m}$ contradicting to $\nu_1 \in \mathfrak{m}$.~Thus $\beta$ is an isomorphism.~This defines a functorial map 
$$(\Spf A\times_{\bA^1_+} W^{(F)}) (R) \to \widetilde{\mathfrak{D}}(R)$$ which is inverse to \eqref{eq:constuction_of_the_map}.

For the second part, note that $\beta = \begin{pmatrix}
    1&w\\
    0&\nu \circ F
\end{pmatrix}$ $\in \widetilde{\mathfrak{D}}(R)$ lands in $\widetilde{\mathfrak{D}}_{0}$ if and only if  $\nu =1$ and the result follows.
\end{proof}
\begin{rem}
Another way to compute the fiber product $k^{\cN} \times_{W(k)^{\cN}} \bA^1_{-}$ relies on the computation of $p$-adically completed derived de Rham cohomology $\widehat{\dR}(k/W(k))$ by Bhatt \cite[Proposition 8.5]{bhatt2012padic}, namely $\widehat{\dR}(k/W(k)) = \widehat{W(k)\langle x \rangle} /(x-p)$, where $\widehat{W(k)\langle x \rangle}$ stands for the $p$-completed PD-envelope of $(x)\subset W(k)[x]$. Moreover, he defines a descending filtration on $\widehat{W(k)\langle x \rangle}$ whose $n$-th term is topologically generated by elements of the form $\gamma_i (x)$ with $i \ge n$. The induced  filtration $\operatorname{Fil}^{\bullet}$ on $\widehat{W(k)\langle x \rangle} /(x-p)$  matches the derived Hodge filtration $\operatorname{Hod}^{\bullet}$ under this isomorphism. Using the above result we shall construct a $\bG_m$-equivariant isomorphism  between the derived fiber product $k^{\cN} \times_{W(k)^{\cN}}^{\bf L} \bA^1_{-}$ 
and  the derived $p$-adic formal stack represented by the derived $p$-completion of  $B^{\dagger}$.
Recall that, for any derived $p$-adic formal stack $X$ over $W(k)$, the Hodge filtered derived de Rham stack $(X/W(k))^{\dR, +}$ is identified with the fiber product $X^{\cN} \times_{W(k)^{\cN}}^{\bf L} \bA^1_{-}/\bG_m$.  We apply this to $X =\Spec k$.   
To complete the proof observe that  $(k/W(k))^{\dR, +}$ is affine over $\bA^1_{-}/\bG_m$ in the sense of \cite[\S 7.3]{bhatt2022prismatization} and, hence, $(k/W(k))^{\dR, +} \times_{ \bA^1_{-}/\bG_m}^{\bf L} \bA^1_{-}$ is represented by the algebra $R\Gamma \big((k/W(k))^{\dR, +} \times_{ \bA^1_{-}/\bG_m}^{\bf L} \bA^1_{-}, \cO\big )$. The latter is the derived $p$-completion of the 
Rees algebra $\operatorname{Rees}_{\operatorname{Hod}^{\bullet}} \widehat{\dR}(k/W(k))$ which is identified with the derived $p$-completion
of $B^{\dagger}$ using Bhatt's result explained above. 

Note that the derived $p$-completion of $B^{\dagger}$ is the quotient of  the $p$-completion of $W(k)\langle v_+ \rangle [v_-]$
by the ideal $(v_+v_- -p)$. Using \cite[Remark 6.16]{Bhatt2018} it follows that this quotient is not  $p$-adically separated and, in particular, it is not 
$p$-complete. Thus, the derived stack $k^{\cN} \times_{W(k)^{\cN}}^{\bf L} \bA^1_{-}$ is not classical.
\end{rem}

\begin{rem}
    Proposition \ref{prop:fibre-product-tildefrakT} gives another proof of Theorem \ref{Drinfeld-prop}. Indeed, since $B$ is $p$-torsionfree and $v_+$ has divided powers in $B$,
    the embedding $A\hookrightarrow B$ extends uniquely to a homomorphism of graded $W(k)$-algebras $B^{\dagger} \to B$. This defines a  $\bG_m$-equivariant map
    $\Spf B  \to \widetilde{\mathfrak{D}}'_0 \iso \widetilde{\mathfrak{D}}_0$. In particular, we obtain a morphism $\Spf B/\bG_m \to  k^{\cN}\times_{W(k)^{\cN}} \bA^1_-/\bG_m$ whose composition with the projection to $ k^{\cN}$ is given by $A\hookrightarrow B$. This is equivalent to Theorem \ref{Drinfeld-prop}.
    \end{rem}
\begin{rem}\label{rem:k-syn-times-A1modGm}
Observe that the morphism
\begin{equation}\label{eq:nvrsyn}
   \mathfrak{D}=k^{\cN}\times_{W(k)^{\cN}}\bA^1_-/\bG_m \to  k^{syn}\times_{W(k)^{syn}}\bA^1_-/\bG_m
\end{equation}
induced by maps $k^{\cN} \to k^{syn}$ and $W(k)^{\cN} \to W(k)^{syn}$ is an isomorphism. Indeed, since the maps $k^{\cN} \to k^{syn}$, 
$W(k)^{\cN} \to W(k)^{syn}$ are \'etale,  the morphism \eqref{eq:nvrsyn} is also \'etale. Thus it suffices to check that \eqref{eq:nvrsyn} induces an equivalence on points with values in every algebraically closed field $K$ of characteristic $p$. This follows from \cite[Propositions 5.16.5 and 8.10.4 ({\rm iv})]{drinfeld2022prismatization} which assert that the maps $k^\cN \to W(k)^\cN$, $k^{syn} \to W(k)^{syn}$ induce equivalences on $K$-points. 
\end{rem}

\section{Construction of the functor $\cD_{qc,[0,p-1]}(W(k)^{syn}) \to \mathscr{DMF}^{\mathrm{big}}_{[0,p-1]}(W(k))$}

\subsection{Corollary of Drinfeld's Lemma}

The isomorphism $\Cris\circ a\simeq \mathfrak{p}_{\bar{\dR}}\circ\Reesparameter\circ a$ from 
Theorem \ref{Drinfeld-prop} gives an isomorphism 
of functors $\cD_{qc}(W(k)^{\cN})\to \cD_{qc}(\Spf B/\bG_m)$
\begin{equation}\label{cor_of_Drinfeld-prop}
a^*t^*\mathfrak{p}^*_{\bar{\dR}}\simeq a^*\Cris^*.
\end{equation}
We shall see that after restriction to $\cD_{qc,  [0,p-1]}(W(k)^{\cN})$ a stronger statement holds.
\begin{Th}\label{thm-crho-equal-b-pullback} There exists
a unique isomorphism 
$t^*\mathfrak{p}^*_{\bar{\dR}}\simeq \Cris^*$ 
of functors $\cD_{qc,  [0,p-1]}(W(k)^{\cN})\to \cD_{qc, [0,p-1]}(k^\cN)$ 
whose pullbacks  
recover 
\eqref{cor_of_Drinfeld-prop}.
\end{Th}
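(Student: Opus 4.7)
The plan is to descend the isomorphism \eqref{cor_of_Drinfeld-prop}, which is valued in $\cD_{qc}(\Spf B/\bG_m)$, to an isomorphism of functors with target $\cD_{qc,[0,p-1]}(k^\cN)$. The key input is the Corollary following Lemma \ref{Bi-free-module}: the graded ring inclusion $A \hookrightarrow B$ is an equality in grading degrees $<p$. Combined with the weight bound, this has the following crucial consequence. Any object $\cG \in \cD_{qc,[0,p-1]}(k^\cN)$, viewed under the equivalence $\cD_{qc}(k^\cN) \simeq \widehat{\cD}_{gr}(A)$, is determined up to canonical isomorphism by the finite tuple of graded pieces $\cG^0, \cG^1, \ldots, \cG^{p-1}$ together with the $v_\pm$ maps between adjacent pieces. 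Indeed, the remaining $\cG^i$ are recovered by propagating the $v_-$-quasi-iso (for $i \leq 0$) and the $v_+$-quasi-iso (for $i \geq p-1$); the same principle applies to morphisms.

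Using Lemmas \ref{c*-preserve-weights} and \ref{rho-dR-preserve-weights}, I would first confirm that both $\Cris^*$ and $t^*\rho^*_{\overline{\dR}}$ restrict to functors $\cD_{qc,[0,p-1]}(W(k)^\cN) \to \cD_{qc,[0,p-1]}(k^\cN)$. For $\cF$ in the source, Drinfeld's Theorem \ref{Drinfeld-prop} supplies an isomorphism
\[
\Psi_{\mathrm{Maz}}(\cF)\colon a^*\Cris^*\cF \iso a^*t^*\rho^*_{\overline{\dR}}\cF
\]
in $\cD_{qc}(\Spf B/\bG_m) \simeq \widehat{\cD}_{gr}(B)$. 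Restricting $\Psi_{\mathrm{Maz}}(\cF)$ to grading degrees $<p$, where $A=B$ so that $a^*$ acts as the identity on graded pieces, yields a canonical isomorphism of the degree-$<p$ parts of $\Cris^*\cF$ and $t^*\rho^*_{\overline{\dR}}\cF$ as truncated graded $A$-modules, compatible with the $v_\pm$-action in that range. By the observation in the previous paragraph, this low-degree isomorphism extends uniquely to the desired isomorphism $\Phi(\cF)\colon \Cris^*\cF \iso t^*\rho^*_{\overline{\dR}}\cF$ in $\cD_{qc,[0,p-1]}(k^\cN)$, functorially in $\cF$. Uniqueness of $\Phi$ is automatic: any such $\Phi$ is determined by its restriction to grading degrees $<p$ (by the weight condition), and that restriction is in turn determined by $a^*\Phi = \Psi_{\mathrm{Maz}}$ together with the equality $A=B$ in these degrees.

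The main obstacle is the compatibility of the truncated isomorphism with the $v_+$-quasi-iso across the boundary $i=p-1$: the extension to degree $p$ via the quasi-iso $\cG^{p-1} \xrightarrow{\sim} \cG^p$ must agree with the value obtained directly by applying $\Psi_{\mathrm{Maz}}(\cF)$ in degree $p$. This follows from the fact that $\Psi_{\mathrm{Maz}}(\cF)$ is a morphism of graded $B$-modules and hence commutes with multiplication by $v_+ \in B^1 = A^1$, so the square relating degrees $p-1$ and $p$ commutes automatically. Once this compatibility is checked, the uniqueness of the extension finishes the proof.
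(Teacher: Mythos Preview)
Your overall strategy is sound and is essentially what the paper does, but there is a genuine gap at the key step. You assert that ``$a^*$ acts as the identity on graded pieces'' in degrees $<p$ as an immediate consequence of $A^i = B^i$ for $i<p$. This inference is not valid: for a graded $A$-module $\cN^\bullet$, the degree-$i$ piece of $a^*\cN^\bullet = (\cN^\bullet \overset{\mathbb{L}}{\otimes}_A B)^{\wedge}$ receives contributions from \emph{all} graded pieces of $\cN^\bullet$ via the tensor product, not just from $\cN^i$. Concretely, the cofiber of $\cN^i \to (a^*\cN^\bullet)^i$ is $\bigl((\cN^\bullet \overset{\mathbb{L}}{\otimes}_A B/A)^{\wedge}\bigr)^i$, and although $B/A$ lives in degrees $\geq p$, the graded tensor product over $A$ can still land in low degrees because $\cN^\bullet$ has nonzero pieces in negative degrees. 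Showing that this cofiber vanishes for $i<p$ is precisely Lemma~\ref{effectivity-lemma} in the paper, and its proof uses the effectivity hypothesis in an essential way (via the structure of $B/A \otimes k$ as an $A\otimes k$-module supported at the origin, together with a separate computation of $\Tor_1^{W(k)}(B/A,k)$).

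Once Lemma~\ref{effectivity-lemma} is in hand, the paper packages the conclusion as full faithfulness of $a^*$ on $\cD_{qc,[0,p-1]}(k^\cN)$, which immediately gives both existence and uniqueness of the descended isomorphism. Your reconstruction-by-hand approach would then work too, but note that your ``main obstacle'' paragraph is slightly off: you do not need the extended $\Phi$ to agree with $\Psi_{\mathrm{Maz}}$ in degree $p$ on the nose (indeed $(a^*\cG)^p$ need not equal $\cG^p$); what you actually need is $a^*\Phi = \Psi_{\mathrm{Maz}}$ as morphisms in $\widehat{\cD}_{gr}(B)$, and for that the full-faithfulness formulation is cleaner and avoids ad hoc compatibility checks across the boundary.
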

\begin{proof} Observe that indeed the functors $t^*\mathfrak{p}^*_{\bar{\dR}}$ and 
$\Cris^*$ carry $\cD_{qc,  [0,p-1]}(W(k)^{\cN})$ to the subcategory $\cD_{qc,  [0,p-1]}(k^{\cN})\subset \cD_{qc}(k^{\cN})$: the assertion about $t^*\mathfrak{p}^*_{\bar{\dR}}$ follows from 
Lemmas \ref{c*-preserve-weights} and \ref{rho-dR-preserve-weights} combined with Remark \ref{remark-two-notions-effectivity}; the assertion about $\Cris^*$ 
is true by definition of weights.

We will prove the theorem by showing that 
\begin{equation}\label{eq:fully_faithful}
 a^*:\cD_{qc, [0,p-1]}(k^\cN) \to \cD_{qc}(\Spf B/\bG_m) 
\end{equation}
is fully faithful. 
We start with the following.
\begin{lm}\label{effectivity-lemma}
    Let $\begin{tikzcd}\cdots\cN^i\arrow[shift left]{r}{v_-}&\cN^{i-1}\arrow[shift left]{l}{v_+}\arrow[shift left]{r}{v_-}&\cdots\arrow[shift left]{l}{v_+}\end{tikzcd}$ be an effective object of $\cD_{qc}(k^{\cN})$. Then the map\footnote{For a complex $M$ of abelian groups, we denote by $M^{\wedge}$ its derived $p$-adic completion.} $\cN^i\to\left((\cN^{\bullet}\overset{\mathbb{L}}{\otimes}_AB)^{\wedge}\right)^i$ induced by the homomorphism of graded algebras $A\to B$ is an isomorphism for $i<p$. 
\end{lm}
\begin{proof}
Equivalently, it suffices to show that $\left((\cN^{\bullet}\overset{\mathbb{L}}{\otimes}_A B/A)^{\wedge}\right)^i$ is acyclic for $0\leq i\leq p-1$. Moreover, by derived $p$-adic completeness, it suffices to prove that $(\cN^{\bullet}\overset{\mathbb{L}}{\otimes}_A (B/A\overset{\mathbb{L}}{\otimes}_{W(k)}k))^i$ is acyclic for $0\leq i\leq p-1$. The complex $B/A\overset{\mathbb{L}}{\otimes}_{W(k)}k$ is supported in two cohomological degrees: $0$ and $-1$. Using Lemma \ref{Bi-free-module}, the $A \otimes k$-module $B/A\otimes_{W(k)}k$ has grading degrees 
greater than or equal to $p$ and is supported at the origin, {\it i.e.},~every element $x\in B/A \otimes k$ is killed by a power of the maximal ideal $\mathfrak{m}\subset A \otimes k$. Thus, $B/A \otimes k$ admits an exhaustive filtration $L_0 \subset L_1 \subset \cdots \subset B/A \otimes k$ by $A \otimes k$-submodules
with each $L_i/L_{i-1}$ of the form $k\{j\}$ with $j\geq p$. 
That $\cN^{\bullet}$ is effective implies, using Remark \ref{remark-two-notions-effectivity}, that $(\cN^{\bullet}\overset{\mathbb{L}}{\otimes}_Ak)^i$ is acyclic for $i< 0$. Equivalently, $(\cN\overset{\mathbb{L}}{\otimes}k\{j\})^i$ is acyclic for $i<j$. Thus  
 $(\cN^{\bullet}\overset{\mathbb{L}}{\otimes}_A (B/A \otimes _{W(k)}k))^i$ is acyclic for $i< p$.

To compute $\Tor_1^{W(k)}(B/A,k)=\ker(B/A\xrightarrow{p}B/A)$: by Lemma \ref{Bi-free-module} again, one has 
$\ker(B^i/A^i\xrightarrow{p}B^i/A^i)=\frac{1}{p}v_+^ik$ for $i\geq p$ and $0$ otherwise.~Thus $\Tor_1^{W(k)}(B/A,k)= \left(A/(v_-)\right)\{p\}$ as an $A$-module.~We have that
$$(\cN^{\bullet}\overset{\mathbb{L}}{\otimes}_A \Tor_1^{W(k)}(B/A,k))^i\xrightarrow{\sim} \Cone(\cN^{i-p+1} \rar{v_-} \cN^{i-p}).$$  
Using effectivity, the cone above is acyclic for $i<p$.
\end{proof}

\begin{cor}\label{cor-to-truncation}
For every  $M\in  \cD_{qc,[0, +\infty]}(k^{\cN})$,
the natural transformation $\Id \to a_*a^*$ induces an isomorphism $w_{\leq p-1} M\xrightarrow{\sim} w_{\leq p-1}a_*a^* M$. 
\end{cor}
Now we can prove that \eqref{eq:fully_faithful} is fully faithful. For any $M,M'\in \cD_{qc,[0,p-1]}(k^{\cN})$, 
\begin{align*}
\mathrm{RHom}(M,M')&=\mathrm{RHom}(M,w_{\leq p-1}M')\simeq \mathrm{RHom}(M,w_{\leq p-1}a_*a^*M')\\
&\simeq \mathrm{RHom}(M,a_*a^*M')\simeq  \mathrm{RHom}(a^*M,a^*M'),
\end{align*}
where the second and fourth isomorphisms follow by adjunction, and the third isomorphism follows by Corollary \ref{cor-to-truncation}. This proves fully faithfulness and thus completes the proof of the theorem. 
\end{proof}
Let us record a generalization of Theorem \ref{thm-crho-equal-b-pullback}. 
\begin{pr}  There exists
an isomorphism $w_{\leq p-1} t^*\mathfrak{p}^*_{\bar{\dR}}\simeq w_{\leq p-1} \Cris^*$ of functors $\cD_{qc,  [0, + \infty]}(W(k)^{\cN})\to \cD_{qc, [0, p-1]}(k^\cN).$ 
\end{pr}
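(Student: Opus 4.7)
The plan is to bootstrap from the already-established isomorphism \eqref{cor_of_Drinfeld-prop} using the pushforward $a_*$ and the adjunction unit $\Id \to a_* a^*$. The enabling observation is Corollary \ref{cor-to-truncation}: for \emph{effective} $k^{\cN}$-gauges $M$, the truncation $w_{\leq p-1}$ cannot distinguish $M$ from $a_* a^* M$. So if I can first show that both target functors land in effective gauges on $k^\cN$, I can transport the $a^*$-level isomorphism of \eqref{cor_of_Drinfeld-prop} back to a $w_{\leq p-1}$-level isomorphism on $k^\cN$.

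The first step I would carry out is checking that both $t^* \rho^*_{\overline{\dR}}$ and $\Cris^*$ send $\cD_{qc, [0, +\infty]}(W(k)^{\cN})$ into $\cD_{qc, [0, +\infty]}(k^{\cN})$. For $\Cris^*$ this is immediate from Definition \ref{defn:HT-weights}. For $t^* \rho^*_{\overline{\dR}}$ I would mimic the proof of Lemma \ref{rho-dR-preserve-weights}: by $p$-completeness it suffices to work modulo $p$, where the composite $(\bA^1_-/\bG_m) \otimes k \to W(k)^{\cN}$ factors through $k^{\cN}$ via the closed embedding cut out by $v_+ = 0$, and effectivity (the condition that $v_-$ is a quasi-isomorphism in non-positive degrees) is manifestly stable under pullback along this embedding. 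Preservation of effectivity by $w_{\leq p-1}$ itself is built into its explicit construction, since for $i \leq 0 < p-1$ the functor is the identity on the $i$-th graded piece.

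The second step combines these preservations with Corollary \ref{cor-to-truncation}. For any $N \in \cD_{qc, [0, +\infty]}(W(k)^{\cN})$, applying the corollary to the two effective objects $t^* \rho^*_{\overline{\dR}} N$ and $\Cris^* N$ produces natural isomorphisms
\[
w_{\leq p-1}\, t^* \rho^*_{\overline{\dR}} N \xrightarrow{\sim} w_{\leq p-1}\, a_* a^* t^* \rho^*_{\overline{\dR}} N, \qquad w_{\leq p-1}\, \Cris^* N \xrightarrow{\sim} w_{\leq p-1}\, a_* a^* \Cris^* N.
\]
Applying the functor $w_{\leq p-1} \circ a_*$ to the natural isomorphism $a^* t^* \rho^*_{\overline{\dR}} \simeq a^* \Cris^*$ of \eqref{cor_of_Drinfeld-prop} supplies the missing third link
$w_{\leq p-1}\, a_* a^* t^* \rho^*_{\overline{\dR}} N \simeq w_{\leq p-1}\, a_* a^* \Cris^* N$, and chaining the three gives the desired isomorphism. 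Naturality in $N$ is automatic: the outer two links are natural by naturality of the adjunction unit, and the middle link is the image of a natural transformation under the $2$-functor $w_{\leq p-1}\circ a_*$.

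The only real subtlety I anticipate is the effectivity verification for $t^* \rho^*_{\overline{\dR}}$ in step one; the rest is a diagram chase that packages the existing $[0,p-1]$-weight statement (Theorem \ref{thm-crho-equal-b-pullback}) into a slightly weaker form (post-composed with the truncation $w_{\leq p-1}$) on a larger domain (effective gauges rather than gauges with weights in $[0,p-1]$).
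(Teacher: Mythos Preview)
Your proposal is correct and essentially identical to the paper's proof: apply $a_*$ to \eqref{cor_of_Drinfeld-prop}, then $w_{\leq p-1}$, and invoke Corollary \ref{cor-to-truncation} on both sides. You are more explicit than the paper about the effectivity check needed to apply that corollary, though your sketch treats only $\rho^*_{\overline{\dR}}$ and leaves the (easy) preservation of effectivity by $t^*$ itself implicit.
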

\begin{proof} Using Theorem \ref{Drinfeld-prop}, we have $a_*a^*t^*\mathfrak{p}^*_{\bar{\dR}}\cF\simeq a_* a^*\Cris^*\cF$.
Applying $w_{\leq p-1}$ and using Corollary \ref{cor-to-truncation} we obtain the result. 
\end{proof}    
\begin{rem}\label{remark:cris-Hodge-filtration-recovered-from-Nygaard}
 In geometric context, {\it i.e.},~for the prismatic gauge $ \mathcal{H}_{\cN}(X) \in \cD_{qc}(W(k)^\cN)$ associated to a smooth $p$-adic formal scheme $X$ over $W(k)$, \eqref{cor_of_Drinfeld-prop} gives
\begin{equation}\label{cor_of_Drinfeld-prop_geom}
a^*t^*\mathfrak{p}^*_{\bar{\dR}} (\mathcal{H}_{\cN}(X))\simeq a^*\Cris^* (\mathcal{H}_{\cN}(X)),
\end{equation}
which expresses a relation between the Nygaard filtration on $R\Gamma_{\mathrm{cris}}(X_k)$, where $X_k:=X\times\Spec k$, and the Hodge filtration on $R\Gamma_{\dR}(X)$. In fact, $a^*t^*\mathfrak{p}^*_{\bar{{\dR}}} (\mathcal{H}_{\cN}(X))$, viewed as  a complex of filtered modules over the filtered algebra $(W(k), (p)^{[\cdot]})$ (see Remark \ref{rem:the_category_of_B-modules}), can be identified, using \cite[Theorem 7.2]{Berthelot-Ogus} with the crystalline Hodge filtration on $R\Gamma_{\mathrm{cris}}(X_k)$. Thus, \eqref{cor_of_Drinfeld-prop_geom} says that the crystalline Hodge filtration can be obtained from 
the Nygaard filtration by ``downgrading'' using the functor $a^*$.
\end{rem}

\subsection{Construction of the functor}
Define a functor 
\begin{equation}\label{functor}
  \Phi_{\mathrm{Maz}}:\cD_{qc,[0,p-1]}(W(k)^{syn}) \to \mathscr{DMF}^{\mathrm{big}}_{[0,p-1]}(W(k))  
\end{equation}
as follows. Let $\cF\in \cD_{qc,[0,p-1]}(W(k)^{syn})$ be a prismatic $F$-gauge with Hodge-Tate weights in $[0,p-1]$. We have the following diagram (\cite[Remark 1.4.2]{bhatt-lecture-notes} or \S \ref{subsec:preliminary-syntomification-section})
\begin{equation}\label{dia:a_very_important_dia}
\begin{tikzcd}
&&&\bA^1_{-}/\bG_m\arrow[]{d}{\mathfrak{p}_{\bar{\dR}}}\\
\Spf W(k)\arrow[shift left]{r}{j_-}\arrow[shift right]{r}[swap]{j_+ \circ  F^{-1}}&\Spf A/\bG_m\arrow[]{r}{\simeq }&k^{\cN}\arrow[]{d}{\overline{j}_{\cN}}\arrow[]{r}{\Cris}\arrow[]{ru}{t}&W(k)^{\cN}\arrow[]{d}{j_{\cN}}\\
&&k^{syn}\arrow[]{r}{}&W(k)^{syn}\end{tikzcd}
\end{equation}
where $j_-$ (resp.~$j_+ \circ F^{-1}$) is an open embedding onto the substack given by $v_-\neq 0$ (resp.~$v_+\neq 0$), 
$\bar j_\cN: k^{\cN} \to k^{syn}$ exhibits the target as the coequalizer of  $j_{\pm}:
\begin{tikzcd}\Spf W(k)=k^{\Prism} \arrow[shift left]{r}{}\arrow[shift right]{r}[swap]{}&k^{\cN}\end{tikzcd}$ (see, \cite[Def. 6.1.1]{bhatt-lecture-notes}),  
and the square is commutative. The morphism $\Spf W(k) \xrightarrow{v_- = 1} \bA^1_{-}/\bG_m$ is isomorphic to $t \circ j_-$ and $\Spf W(k) \xrightarrow{v_- = p} \bA^1_{-}/\bG_m$ is isomorphic to $t \circ j_{+} \circ F^{-1}$; see the end of \S \ref{subsec:preliminary-syntomification-section}. By definition 
of $\bar j_\cN$
\begin{equation}\label{j+j-same-after-jbar-uptoF}
\overline{j}_{\cN}\circ j_+ \simeq \overline{j}_{\cN}\circ j_-.
\end{equation}
Let $\cM:=\mathfrak{p}_{\bar{\dR}}^*\circ j_{\cN}^*\cF$. Note that $F^* (\cM|_{v_-=p})\simeq j_+^*\circ\Reesparameter^*\cM$  and $\cM|_{v_-=1}\simeq j_-^*\circ\Reesparameter^*\cM$. 
Then using Theorem \ref{thm-crho-equal-b-pullback}, one has that $\Cris^*\circ j_{\cN}^*\cF\simeq\Reesparameter^*\cM$. Hence we obtain  $F^*(\cM|_{v_-=p})\simeq j_+^*\circ\Cris^*\circ j_{\cN}^*\cF$ and $\cM|_{v_-=1}\simeq j_-^*\circ\Cris^*\circ j_{\cN}^*\cF$. 
By \eqref{j+j-same-after-jbar-uptoF}, we get an isomorphism $F^*(\cM_{v_-=p})\simeq \cM_{v_-=1}$, which we denote by $\varphi$. Define the functor $\Phi_{\mathrm{Maz}}$ in (\ref{functor}) by sending $\cF$ to $(\cM, \varphi )$. 
\begin{rem}\label{remark-construction-functor-Psi-Mazur}
   Observe that
    $\Psi_{\mathrm{Maz}}$ from Theorem \ref{Drinfeld-prop} gives a functor 
    \begin{equation}\label{first-displayed-functor-Phi}
    \cD_{qc}(W(k)^{syn})\to \cD_{qc}(k^{syn})\times_{\cD_{qc}(\Spf B/\bG_m)}\cD_{qc}(\bA^1_-/\bG_m)\end{equation}
    sending $\cD_{qc,[a,b]}(W(k)^{syn})$ to $\cD_{qc,[a,b]}(k^{syn})\times_{\cD_{qc}(\Spf B/\bG_m)}\cD_{qc,[a,b]}(\bA^1_-/\bG_m)$.
We assert that there is a functor 
   \begin{equation}\label{second-displayed-functor-Phi}
\cD_{qc,[0,p-1]}(k^{syn})\times_{\cD_{qc}(\Spf B/\bG_m)}\cD_{qc,[0,p-1]}(\bA^1_-/\bG_m)\to \mathscr{DMF}_{[0,p-1]}^{\mathrm{big}}(W(k))
\end{equation}  
whose precomposition with  \eqref{first-displayed-functor-Phi} is $\Phi_{\mathrm{Maz}}$. 
To see this, observe that the functor
\begin{align*}&\cD_{qc,[0,p-1]}(k^{syn})\underset{\cD_{qc, [0,p-1]}(k^\cN)}{\times}\cD_{qc,[0,p-1]}(\bA^1_-/\bG_m)\to \\
&\cD_{qc,[0,p-1]}(k^{syn})\underset{\cD_{qc}(\Spf B/\bG_m)}{\times}\cD_{qc,[0,p-1]}(\bA^1_-/\bG_m)
\end{align*}
induced by the fully faithful embedding \eqref{eq:fully_faithful} is an equivalence.
We define 
$$ \cD_{qc,[0,p-1]}(k^{syn})\underset{\cD_{qc, [0,p-1]}(k^\cN)}{\times}\cD_{qc,[0,p-1]}(\bA^1_-/\bG_m)\to  \mathscr{DMF}_{[0,p-1]}^{\mathrm{big}}(W(k))$$
 sending an object $(\cF_k,\cM, \upsilon: \bar j^*_\cN \cF_k \iso t^* \cM )$ of the fiber product to
 $(\cM, \varphi )$, where $\varphi: F^*(\cM_{v_-=p})\simeq \cM_{v_-=1}$ is constructed from diagram 
\eqref{dia:a_very_important_dia} and equation \eqref{j+j-same-after-jbar-uptoF}. This defines \eqref{second-displayed-functor-Phi}. The fact that its precomposition with \eqref{first-displayed-functor-Phi} is $\Phi_{\mathrm{Maz}}$ is straightforward. 
\end{rem}
The following remarks will not be used in the remainder of the paper. However, we believe that they are important in their own right.

\begin{rem}[A refinement of the crystalline Frobenius]\label{remark:Mazur-module}
For every object $N\in  \cD_{qc,[0, +\infty]}(k^{\cN}) \subset \widehat{\cD}_{gr}(A)$, we have a canonical morphism 
\begin{equation}\label{c-stack_map}
   N\to  \pi ^* \circ  (j_+\circ F^{-1})^* N,
   \end{equation}
where $\pi: k^{\cN} \to \Spf W(k)$ is the structure map.  
One can define \eqref{c-stack_map} by observing 
that $\pi^*$ is the right adjoint to $(j_+\circ F^{-1})^*\colon \cD_{qc,[0, +\infty]}(k^{\cN}) \to \cD_{qc}(\Spf W(k))$.
Alternatively, \eqref{c-stack_map} can be constructed using Drinfeld's observation that every effective gauge $\cF$ extends to a contravariant $\cO$-module on the $c$-stack $k^{\cN, c}$ and that, for any scheme $S$, the category $k^{\cN, c}(S)$ has initial object given by $p_+$ (see
Remark \ref{rem:Drinfeld-commutativity-interpretation}).

Given an effective $F$-gauge  $\cF\in  \cD_{qc,[0, +\infty]}(W(k)^{syn})$, we have functorial morphisms 
\begin{equation}\label{eq:mazur_prelim}
    a^*t^*{\mathfrak{p}}^*_{\bar{{\dR}}}(\cF) \underset{\eqref{cor_of_Drinfeld-prop}}{\iso} a^*\Cris^* (\cF) \underset{\eqref{c-stack_map}}{\to}  a^* \pi ^* (j_+\circ F^{-1})^*  \Cris^* (\cF) \underset{\eqref{j+j-same-after-jbar-uptoF}}{\xrightarrow{\sim}} a^* \pi ^*   F_*  j^*_- \Cris^* (\cF).
\end{equation}
To rewrite it more suggestively, let $F^{\bullet}= \oplus F^i\in \widehat {\cD}_{gr}(W(k)[v_-])$ be the complex of graded  $W(k)[v_-]$-modules corresponding to $\mathfrak{p}^*_{\bar{{\dR}}}(\cF)$ under the equivalence \eqref{identifying-CohA1modGm-with-modules}. Then the composition \eqref{eq:mazur_prelim}
yields a homomorphism in $\widehat {\cD}_{gr}(W(k)[v_-])$ 
\begin{equation}\label{eq:mazur}
   \varphi_{\bullet}:  F^{\bullet}   \to F_* M \otimes_{W(k)} B,
    \end{equation}
 where $M:=F^0$ and the action of   $W(k)[v_-]$ on $ F_* F^0 \otimes_{W(k)} B$ comes from the homomorphism $W(k)[v_-] \mono B$. Explicitly, \eqref{eq:mazur} amounts to specifying $W(k)$-linear maps
 $\varphi_i:F^i\to F_* M$ for each $i\geq 0$ such that $p^{[i+1]-[i]}\varphi_{i+1}=\varphi_iv_-$ (see Definition \ref{def:Mazur numbers}). We claim that $\varphi_0: M \to F_* M$ coincides with the crystalline Frobenius. Indeed, taking $S=\Spf B/\bG_m$ in Remark \ref{rem:Drinfeld-commutativity-interpretation}, the composition \eqref{eq:mazur_prelim} is obtained by taking fibers of $\cF$ along $p_+ \to \mathfrak{p}_{\bar{\dR}}(t)$ and using $\eqref{j+j-same-after-jbar-uptoF}.$  The map $\Spf B / \bG_m \to \mathfrak{D}$ from \eqref{eqn:fibre-product-cN-version} is induced by the natural map $B^{\flat} \to B$ from the PD envelope of $(v_+)\subset A$
 to $B$; see Lemma \ref{lem:3.10}.  
 Since $B$ has no $p$-torsion, this map factors through the PD envelope $B^{\dagger}$ of $(v_+)\subset A$
 {\it relative} to  $(\bZ_p, (p))$\footnote{Indeed, the map from the PD-envelope of $(p)\subset \bZ_p$ to $\bZ_p$ is an isomorphism after inverting $p$.}.
 It follows that the map $\Spf B / \bG_m \to \mathfrak{D}$ factors through the closed substack $\mathfrak{D}_{0}$.
 Now our claim follows from the commutativity of diagram \eqref{diagram:drinfeld}\footnote{More generally, we infer that $\varphi_{\bullet}$ lifts to 
 a homomorphism of $W(k)[v_-]$-modules  $F^{\bullet}   \to (F_* M \otimes_{W(k)} B^{\flat})^{\wedge}$, where the superscript $\wedge$ stands for the graded $p$-completion.
 In degree zero, the composition $F^{\bullet}   \to (F_* M \otimes_{W(k)} B^{\dagger})^{\wedge}$ of the latter 
  with $(F_* M \otimes_{W(k)} B^{\flat})^{\wedge} \to (F_* M \otimes_{W(k)} B^{\dagger})^{\wedge}$ is homotopic to  the crystalline Frobenius.}.

 Denote by $\mathrm{Mod}_{\mathrm{Maz}}(W(k))$ the $\infty$-category formed of pairs $(F^{\bullet}, \varphi)$, where $F^{\bullet} \in \widehat {\cD}_{gr}(W(k)[v_-])$ is effective and $\varphi: F^{\bullet} \to F_* F^0 \otimes_{W(k)} B$ is a map in $\widehat {\cD}_{gr}^{\eff}(W(k)[v_-])$. More formally, it is defined as the lax equalizer $\operatorname{LEq}(\Id, F_*F^0 \otimes B)$ as in (\cite[Def. II.1.4]{MR3904731}). Here by $F_*F^0 \otimes_{W(k)} B$ we mean the endofunctor of $\widehat \cD_{gr}^{\eff} (W(k)[v_{-}])$ taking $F^{\bullet}$ to $F_* F^0 \otimes_{W(k)} B.$ We refer to objects of $\mathrm{Mod}_{\mathrm{Maz}}(W(k))$ as Mazur modules.
 The above construction determines a functor
 \begin{equation}\label{eq:functor_to_Mazur_modules}
   {\bf \Phi}_{\mathrm{Maz}}:   \cD_{qc,[0, \infty]}(W(k)^{syn})\to \mathrm{Mod}_{\mathrm{Maz}}(W(k)), \quad \cF \mapsto (\mathfrak{p}^*_{\bar{\dR}}(\cF), \varphi_{\bullet}).
 \end{equation}
 Next, we shall construct a functor 
 \begin{equation}\label{eq:FL_embeds_to_MM}
     \mathscr{DMF}^{\mathrm{big}}_{[0, \infty]}(W(k)) \to \mathrm{Mod}_{\text{Maz}}(W(k)),
 \end{equation}
 whose precomposition with \eqref{functor} is isomorphic to
\eqref{eq:functor_to_Mazur_modules}. To do this,
 observe that the $\infty$-category $\cC_1$ formed of pairs $(F^{\bullet}, \varphi ')$, where $F^{\bullet} \in \widehat {\cD}_{gr}^{\eff}(W(k)[v_-])$ and $\varphi': F^{\bullet} \to F_* F^0 \otimes_{W(k)} A$ is a map in $\widehat {\cD}_{gr}^{\eff}(W(k)[v_-])$, is equivalent to the lax equalizer $$\cC_2 := \mathrm{LEq}\begin{tikzcd}\Big(\cD_{qc, [0, \infty]}(\bA^1_-/\bG_m)\arrow[shift left]{r}{i_1^*}\arrow[shift right]{r}[swap]{F^*\circ i_p^*}&\cD_{qc}(\Spf W(k))\Big)\end{tikzcd}.$$ The latter category consists of $\cM\in \cD_{qc,[0, \infty]}(\bA^1_{-}/\bG_m)$ together with a morphism $\varphi': F^*(i^*_{p}\cM)\to i^*_{1}\cM$
 in $\cD_{qc}(\Spf W(k))$ ({\it cf.} Definition \ref{Defn-big-DMF-category}). Note that, for every $\cM\in \cD_{qc,[0, \infty]}(\bA^1_{-}/\bG_m)$, we have
 $$ \Hom (F^* i^*_{p}\cM, i^*_{1}\cM) \iso  \Hom (\cM, i_{p\, *}F_* i^*_{1}\cM) \iso \Hom (\cM, w_{\geq 0} i_{p\, *}F_* i^*_{1}\cM),$$
 where $w_{\geq 0}$ is the right adjoint to the embedding   $ \cD_{qc,[0, \infty]}(\bA^1_{-}/\bG_m)\mono  \cD_{qc}(\bA^1_{-}/\bG_m)$ ({\it cf.} Lemma \ref{lm:existenceofrightadjoint}). The right-hand side of the above is naturally isomorphic to $\Hom_{\widehat {\cD}_{gr}^{\eff}(W(k)[v_-])}(F^{\bullet}, F_* F^0 \otimes_{W(k)} A)$, where $F^{\bullet}$ is the graded module corresponding to $\cM$\footnote{Indeed, for every $\mathcal{V} \in \cD_{qc}(\Spf W(k))$, the sheaf $w_{\geq 0} i_{p\, *} \mathcal{V}$ corresponds, under the equivalence $\cD_{qc,[0, \infty]}(\bA^1_{-}/\bG_m) \iso \widehat {\cD}_{gr}^{\eff}(W(k)[v_-])$, to $\mathcal{V} \otimes A$.}. This gives 
 $\cC_1 \iso \cC_2$.

 The category $\cC_2$ receives an obvious functor $(\cM, \varphi')\mapsto (\cM, \varphi')$ from  $\mathscr{DMF}^{\mathrm{big}}_{[0, \infty]}(W(k))$, see Definition \ref{Defn-big-DMF-category}. This functor is fully faithful by \cite[Proposition II.1.5]{MR3904731}. To complete the construction of \eqref{eq:FL_embeds_to_MM}, note that the morphism $A \to B$ induces a functor $\cC_1 \to \mathrm{Mod}_{\text{Maz}}(W(k))$ sending $(F^\bullet, \varphi')$ to $(F^\bullet, \varphi)$, where $\varphi$ is the composition $ F^{\bullet} \xrightarrow{\varphi'} F_* F^0 \otimes A \to F_* F^0 \otimes B.$ Thus, in the notation of \S \ref{subsection:Fontaine-Laffaille-modules}, $\varphi _i = p^{i-[i]}\varphi_i'$. 
 
 Note that the restriction 
 of \eqref{eq:FL_embeds_to_MM} to 
  $\mathscr{DMF}^{\mathrm{big}}_{[0, p-1]}(W(k)) \to \mathrm{Mod}_{\text{Maz}}(W(k))$ is fully faithful. Indeed, using \cite[Proposition II.1.5]{MR3904731}, it is enough to show that, for $M^{\bullet}, N^{\bullet} \in {\cD_{qc, [0, p-1]}}(\bA^1_{-}/\bG_m)$, the natural map $\Hom_{\widehat {\cD}_{gr}^{\eff}(W(k)[v_-])}(M^{\bullet}, F_*N_0 \otimes A) \to \Hom_{\widehat {\cD}_{gr}^{\eff}(W(k)[v_-])}(N^{\bullet}, F_*N_0 \otimes B)$, is an equivalence. This follows since $\cD_{qc, (-\infty, p-1]}(\bA^1_{-}/\bG_m) \mono \cD_{qc}(\bA^1_{-}/\bG_m)$ admits a right adjoint $w_{\le p-1}$ ({\it cf.} Lemma \ref{lm:existenceofrightadjoint}) and $F_*N^0 \otimes A \to F_*N^0 \otimes B$ is an equivalence after applying $w_{\le p-1}$, see Corollary \ref{cor:AtoB}.  
\end{rem}
\begin{rem}[Relation to the Mazur-Ogus construction]\label{remark:Phi-Maz-tilde-isom-PhiMaz-Hsyn}
In the geometric context, \eqref{eq:mazur} encodes divisibility properties of the crystalline Frobenius discovered by Mazur. 
Namely, following the idea of Mazur \cite{Mazur-1973}, we define a functor 
\begin{equation}\label{eq:sm_var_maz_mod}
    {\bf \widetilde \Phi}_{\mathrm{Maz}}:   \widehat{\mathrm{Sm}}_{W(k)}^{\op}\to \mathrm{Mod}_{\mathrm{Maz}}(W(k))
   \end{equation} 
   as follows.  Let $X$ be a smooth $p$-adic formal scheme over $W(k)$. First, assume that there exists a closed 
   embedding $X\mono Y$, where $Y$ is a smooth $p$-adic formal scheme over $W(k)$ equipped with a Frobenius lift $\tilde F_Y$ (that is  not required to preserve  $X$). 
    Denote by $\cD_X(Y)$  the $p$-completed PD-hull of $X$ in $Y$ and by $J$ the sheaf of ideals of the closed embedding $X \mono \cD_X(Y)$.
     Note  $\cD_X(Y)$ can be also viewed as the $p$-completed PD-hull of $X_k$ in $Y$; in particular, it follows that 
      $\tilde F_Y$ extends uniquely to an endomorphism $\cD_X(Y)$. 
    Set 
    $\Omega_{\cD_X(Y)}^j$ to be the tensor product $\cO_{\cD_X(Y)} \otimes_{\cO_Y} \Omega_{Y}^j$. For each $i$, define a subcomplex $\Fil^i (\Omega_{\cD_X(Y)}^\bullet, d_{\dR})$ of $ (\Omega_{\cD_X(Y)}^\bullet, d_{\dR})$ to be
    $$J^{[i]} \to J^{[i-1]}\Omega_{\cD_X(Y)}^1 \to \cdots \to \Omega_{\cD_X(Y)}^i \to  \Omega_{\cD_X(Y)}^{i+1} \to \cdots ,$$
    where $J^{[m]}$ stands for the $m$-th divided power of $J$. The restriction map $(\Omega_{\cD_X(Y)}^\bullet, d_{\dR}) \to
   (\Omega_X^\bullet, d_{\dR})$ extends to a map of filtered complexes 
   \begin{equation}\label{eq:filt.Poincare}
     \Fil^\cdot (\Omega_{\cD_X(Y)}^\bullet, d_{\dR}) \to
   (\Omega_X^\bullet, d_{\dR})^{\geq \cdot}  
   \end{equation}
    and the filtered PD Poincar\'e Lemma of Berthelot  (\cite[p.~61]{Mazur-1973}) asserts this is an isomorphism in the filtered derived category. A key observation (\cite[p.~63]{Mazur-1973}) is that, for every $i$, 
   the chain map   
   $(\Omega_{\cD_X(Y)}^\bullet, d_{\dR})\rar{\tilde F_Y^*}  (\Omega_{\cD_X(Y)}^\bullet, d_{\dR})$
 restricted to $\Fil^i (\Omega_{\cD_X(Y)}^\bullet, d_{\dR})$
 is (uniquely)  divisible by $p^{[i]}$. 
 Combining this with \eqref{eq:filt.Poincare},  we obtain $W(k)$-linear maps $\varphi_i: F^i R\Gamma_{\dR}(X) \to  F_*  R\Gamma_{\dR}(X)$
 together with homotopies  $p^{[i+1]-[i]}\varphi_{i+1}\simeq \varphi_i  v_-$, where $v_-:  F^{i+1} R\Gamma_{\dR}(X) \to F^i R\Gamma_{\dR}(X)$
 denotes the usual ``inclusion'' map. Note that, by construction, the map $\varphi_{0}$ is the crystalline Frobenius.
 
The above construction determines a contravariant functor from the category $\cC$ of tuples $(s: X \mono Y, \tilde F_Y)$ to $\mathrm{Mod}_{\mathrm{Maz}}(W(k))$ that carries
every morphism in $\cC$, which is an isomorphism on $X$'s, to an isomorphism in $\mathrm{Mod}_{\mathrm{Maz}}(W(k))$.~A standard argument shows that $\cC^{\op} \to \mathrm{Mod}_{\mathrm{Maz}}(W(k))$ descends uniquely to 
\eqref{eq:sm_var_maz_mod}\footnote{Here is the argument: the fiber of $\cC^{\op} \to  \widehat{\mathrm{Sm}}_{W(k)}^{\op}$ over each affine $X$ is non-empty and admits finite products. Therefore, the nerve of the fiber is contractible. This proves the independence of the Mazur module structure on 
$R\Gamma_{\dR}(X)$ on the choice of $(s: X \mono Y, \tilde F_Y)$.}
({\it cf.}~\cite[\S 8, p.~23]{Berthelot-Ogus}). 

For the sake of completeness, let us remark that, for every $n<p$, the maps $\varphi_{i}$, 
($0\leq i \leq n$), can be assembled to a morphism in the derived category of sheaves
$$\frac{\varphi}{p^n}: (p^n\cO_X \rar{d_{\dR}} p^{n-1} \Omega_X^1  \rar{d_{\dR}} \cdots \rar{d_{\dR}} \Omega_X^n  \rar{d_{\dR}} \cdots) \rar{}  (\Omega_X^\bullet, d_{\dR}),$$
such that $p^n \frac{\varphi}{p^n}$ is the crystalline Frobenius precomposed with the embedding of the source into $(\Omega_X^\bullet, d_{\dR})$.
In particular, we obtain morphism \eqref{intro-Mazur-isom}.
The above construction of \eqref{intro-Mazur-isom} is due to Ogus;
we refer the reader to \cite[Theorem 6.8]{Ogus-log-dR-Witt} for 
a detailed exposition and a proof of the isomorphism property of \eqref{intro-Mazur-isom}. 

We claim that 
\begin{equation}\label{uniqueness_of_functor}
     {\bf \widetilde \Phi}_{\mathrm{Maz}} \iso  {\bf \Phi}_{\mathrm{Maz}} \circ \cH_{syn}.
\end{equation}
In fact, any functor $ \widehat{\mathrm{Sm}}_{W(k)}^{\op}\to \mathrm{Mod}_{\mathrm{Maz}}(W(k))$ equipped with an isomorphism 
between its post-composition with the forgetful functor $\mathrm{Mod}_{\mathrm{Maz}}(W(k)) \to \widehat {\cD}_{gr}(W(k)[v_-])$  and the 
 Hodge filtered de Rham functor such that $\varphi_0$ is homotopic to the crystalline Frobenius is isomorphic to 
 ${\bf \Phi}_{\mathrm{Maz}} \circ \cH_{syn}$.
To see this, consider the functor $F^i: \widehat{\mathrm{Sm}}_{W(k)}^{\op} \to \hat \cD(W(k))$
sending a smooth $p$-adic formal scheme $X$ to the $i$-th term of the Hodge filtration on $\mathrm{R}\Gamma_{\dR}(X)$. It suffices to show that, for every
$i$, the space $\operatorname{Map}(F^i, F^0)$ of morphisms between the functors is discrete,  $\pi_0 \operatorname{Map}(F^i, F^0)$
is $p$-torsionfree,  and the map $\pi_0 \operatorname{Map}(F^i, F^0)\to \pi_0 \operatorname{Map}(F^{i+1}, F^0)$ induced by the canonical morphism $v_-: F^{i+1} \to F^i$ is injective. Using that $F^i$ is a sheaf for the Zariski topology, the space  $\operatorname{Map}(F^i, F^0)$
does not change if $\widehat{\mathrm{Sm}}_{W(k)}$ is replaced by the subcategory $\widehat{\mathrm{SmAff}}_{W(k)}$ of smooth affine formal 
schemes. Let $\operatorname{QSyn}_{W(k)}$ be the category of quasi-syntomic affine formal schemes over $W(k)$, and let $LF^i: \operatorname{QSyn}_{W(k)}^{\op} \to  \hat \cD(W(k))$  be the $p$-complete left Kan extension along the embedding $\mathrm{SmAlg}_{W(k)}^{\op} \hookrightarrow \operatorname{QSyn}_{W(k)}^{\op}$. Then by the universal property of the 
 left Kan extension, we have a homotopy equivalence
$\operatorname{Map}(F^i, F^0)\xrightarrow{\sim} \operatorname{Map}(LF^i, LF^0)$ \cite[Proposition 4.3.2.17.]{HHT}. Our claim follows from the following facts:
 $LF^i$ is a sheaf for the quasi-syntomic topology on $\operatorname{QSyn}_{W(k)}$ (\cite[Theorem 3.1]{bhatt2019topological}); the inclusion 
 $\operatorname{QRSPerf}_{W(k)} \mono \operatorname{QSyn}_{W(k)}$ of the category of quasi-regular semi-perfectoid affine formal schemes into the category of all quasi-syntomic affine formal schemes induces an equivalence between the corresponding categories of sheaves (\cite[Proposition 4.31]{bhatt2019topological}); for any $X \in \operatorname{QRSPerf}_{W(k)}$, the complex $F^i(X)$ is concentrated in degree zero, $H^0(F^i(X))$ is  $p$-torsionfree and the map $v_-: H^0(F^{i+1}(X)) \to H^0(F^i(X))$ is injective (\cite[Construction 6.6]{Antieau-Mathew-Morrow-Nikolaus}).

Using \eqref{eq:FL_embeds_to_MM}, the category $\mathscr{DMF}^{\mathrm{big}}_{[0,p-1]}(W(k))$ is a full subcategory of $\mathrm{Mod}_{\mathrm{Maz}}(W(k))$. In particular, 
 we derive from \eqref{uniqueness_of_functor} that the 
 functors $$\widetilde{\Phi}_{\mathrm{Maz}}, \Phi_{\mathrm{Maz}} \circ \cH_{syn}: \widehat{\mathrm{Sm}}^{\le p-1}_{W(k)} \to \mathscr{DMF}^{\mathrm{big}}_{[0,p-1]}(W(k))$$ are isomorphic.
 \end{rem}
\begin{rem}[A ``non-abelian'' refinement of the crystalline Frobenius]\label{remark:refined-crystalline-Frob-categories}
Let $X$ be a $p$-adic formal scheme over $W(k)$. Let $X_k:=X\times_{\Spf W(k)} \Spec k$ be its special fibre. 
By Theorem \ref{Drinfeld-prop}, we obtain a canonical isomorphism 
\begin{equation}\label{remark:after-Drinfeld-prop-can-morphism}
X_k^{\cN}\times_{k^{\cN}}\Spf B/\bG_m\simeq X^{\cN}\times_{W(k)^{\cN}}\Spf B/\bG_m\simeq X^{\dR,+}\times_{\bA^1_-/\bG_m}\Spf B/\bG_m
\end{equation}
of stacks over $\Spf B/\bG_m$. Recall from \cite[Remark 3.3.4]{bhatt-lecture-notes}  a canonical morphism of stacks over $k^{\cN}$:
\begin{equation}\label{remark:after-Drinfeld-prop-can-morphism-2}
X_k^{\mathrm{cris}}\times k^{\cN}=X^{\dR}\times k^{\cN}\to X_k^{\cN},
\end{equation}
which is an isomorphism over the open substack $\Spf W(k)\overset{j_+}{\hooklongrightarrow}k^{\cN}$ given by $v_+\neq 0$. Likewise, 
we have a map of stacks over $\bA^1_-/\bG_m$
\begin{equation}\label{remark:after-Drinfeld-prop-can-morphism-3}
X^{\dR,+}\to X^{\dR}\times \bA^1_-/\bG_m,
\end{equation}
which is an isomorphism for $v_-\neq 0$. The map can be constructed by restricting the structure map $X^{\cN} \to X^\Prism \times W(k)^\cN$ (see \cite[Definition 5.3.10 (3)]{bhatt-lecture-notes})
to $\bA^1_{-}/\bG_m \rar{\mathfrak{p}_{\bar{\dR}}} W(k)^\cN$.
Composing \eqref{remark:after-Drinfeld-prop-can-morphism} with \eqref{remark:after-Drinfeld-prop-can-morphism-2}, we obtain  
\begin{equation}\label{eqn:refinement-of-crystalline-Frobenius}
F_{\mathrm{ref}}: X^{\dR}\times\Spf B/\bG_m\to X^{\dR,+}\times_{\bA^1_-/\bG_m}\Spf B/\bG_m,
\end{equation}
whose postcomposition with the map \eqref{remark:after-Drinfeld-prop-can-morphism-3}, pulled back to $\Spf B/\bG_m$, is the crystalline Frobenius. 
\end{rem}
\begin{rem}[The category of Fontaine-Laffaille modules over a scheme]\label{The_category_of_Fontaine_Laffaille_modules_over_a_scheme}
Let $X$ be a smooth $p$-adic formal scheme over $W(k)$. We shall explain how the refined Frobenius morphism $F_{\mathrm{ref}}$ in \eqref{eqn:refinement-of-crystalline-Frobenius} can be used to define a certain stable $\infty$-category $\mathscr{DMF}^{\mathrm{big}}_{[0,p-1]}(X)$. By definition, an object of $\mathscr{DMF}^{\mathrm{big}}_{[0,p-1]}(X)$ is a pair $(\cM,\varphi)$, where $\cM\in\cD_{qc,[0,p-1]}(X^{\dR,+})$ and $\varphi:(F_{\mathrm{ref}}^*c^*\cM_{})^{p-1}\xrightarrow{\sim}\cM_{v_-=1}$ is an isomorphism in $\cD_{qc}(X^{\dR})$. Here $c: X^{\dR,+}\times_{\bA^1_-/\bG_m}\Spf B/\bG_m\to X^{\dR,+}$ is the projection to the first component, and $F_{\mathrm{ref}}^*$ is the pullback along \eqref{eqn:refinement-of-crystalline-Frobenius}. The superscript $(\cdot)^i:\cM\mapsto \cM^i$ refers to the functor $\cD_{qc}(X^{\dR}\times \Spf B/\bG_m)\to \cD_{qc}(X^{\dR})$ given by $\cM\mapsto \mathrm{pr}_{X^{\dR}*}(\cM\otimes\cO(i))$, where $\mathrm{pr}_{X^{\dR}}: X^{\dR}\times \Spf B/\bG_m\to X^{\dR}$.     
\end{rem}

\section{The equivalence $\cD_{qc,[0,p-2]}(W(k)^{syn}) \to \mathscr{DMF}^{\mathrm{big}}_{[0,p-2]}(W(k))$} 
\subsection{The main result}
\begin{Th}\label{main-thm}
    The functor \eqref{functor} induces an equivalence of categories 
    \begin{equation}\label{Phi-equiv}
     \Phi_{\mathrm{Maz}}\colon \cD_{qc,[0,p-2]}(W(k)^{syn}) \xrightarrow{\sim} \mathscr{DMF}^{\mathrm{big}}_{[0,p-2]}(W(k)).\end{equation}
    Moreover, it restricts to an equivalence on perfect complexes 
    \begin{equation}\label{Phi-equiv-perfect}
    \Perf_{[0,p-2]}(W(k)^{syn}) \xrightarrow{\sim} \cD^b\left(\mathscr{MF}_{[0,p-2]}(W(k))\right)\end{equation}
    that identifies the subcategories\footnote{By definition, $\Coh_{[0,p-2]}(W(k)^{syn})$ is a full subcategory of $\Coh(W(k)^{syn})$ that consists of objects lying in $\cD_{qc,[0,p-2]}(W(k)^{syn})$.} $\Coh_{[0,p-2]}(W(k)^{syn})\xrightarrow{\sim}\mathscr{MF}_{[0,p-2]}(W(k))$.
    \end{Th}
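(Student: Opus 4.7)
The plan is to prove Theorem~\ref{main-thm} in three reduction steps following the overview in the introduction. First, both sides of \eqref{Phi-equiv} are derived $p$-complete (the left via $\cD_{qc}(W(k)^{syn})\simeq\varprojlim_n\cD_{qc}(W(k)^{syn}\otimes\bZ/p^n)$, the right by construction as an equalizer of $p$-complete categories) and $\Phi_{\mathrm{Maz}}$ evidently commutes with $p$-completion, so it suffices to establish the equivalence after $-\otimes\bF_p$. Second, I would analyze the reduced locus: Drinfeld's isomorphism \eqref{eqn:isom-BH-WK-Nygaard-red} together with Lemma~\ref{lemma:essential-image-restriction-functor-fibre-product} identifies $\cD_{qc}(W(k)^{\cN}_{red})$ with a fibre product of $\cD_{qc}(D_{HT})$ and $\cD_{qc}(D_{\dR})$ over $\cD_{qc}(D_{Hod})$. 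Combining the Cartier-dual description $\cD_{qc}(D_{HT})\simeq\cD_{gr,D^p\text{-nilp}}(k[v_+^p,D^p])$, its de~Rham analogue, and the Frobenius gluing producing $W(k)^{syn}_{red}$, one obtains a direct equivalence $\cD_{qc,[0,p-1]}(W(k)^{syn}_{red})\xrightarrow{\sim}\mathscr{DMF}^{\mathrm{big}}_{[0,p-1]}(W(k))\otimes\bF_p$: the Cartier-dual variable $D^p$ on $D_{HT}$ encodes precisely the Frobenius-semilinear comparison that matches the data $\varphi$ on the Fontaine--Laffaille side.

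The main obstacle---and the only place where the restriction from $[0,p-1]$ to $[0,p-2]$ enters---is showing that pullback along the closed embedding $i\colon W(k)^{syn}_{red}\hookrightarrow W(k)^{syn}\otimes\bF_p$ induces an equivalence
\begin{equation}\label{plan:key-equivalence}
i^*\colon\cD_{qc,[0,p-2]}(W(k)^{syn}\otimes\bF_p)\xrightarrow{\sim}\cD_{qc,[0,p-2]}(W(k)^{syn}_{red}).
\end{equation}
I would prove \eqref{plan:key-equivalence} by a deformation-theoretic argument: equip $W(k)^{syn}\otimes\bF_p$ with its filtration by powers of the nilpotent ideal sheaf $\cJ$ cutting out $W(k)^{syn}_{red}$, and reduce both fully faithfulness and essential surjectivity of $i^*$ to vanishing of the obstruction and deformation spaces controlled by the conormal complex $\cJ/\cJ^2$ and its symmetric powers. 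The cohomology computations in~\cite[\S6]{bhatt-lecture-notes} pin down these invariants and show that the relevant $\mathrm{Ext}$-groups acquire a Hodge-Tate weight shift that pushes all obstructions into weight $\geq p-1$; hence they vanish identically when tested against objects of $\cD_{qc,[0,p-2]}$. The sharp threshold $p-2$ is forced because a nontrivial class appears in weight exactly $p-1$---the same numerology as in Mazur's theorem and the Deligne--Illusie bound.

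Composing the three steps gives \eqref{Phi-equiv}. The perfect-complex statement \eqref{Phi-equiv-perfect} follows from \eqref{Phi-equiv} combined with Proposition~\ref{FL-lemma}(3), which characterizes $\cD^b(\mathscr{MF}_{[0,p-2]}(W(k)))$ inside $\mathscr{DMF}^{\mathrm{big}}_{[0,p-2]}(W(k))$ precisely as the objects whose underlying sheaf on $\bA^1_-/\bG_m$ is bounded coherent; this matches perfectness on $W(k)^{syn}$ under the equivalence since $\Phi_{\mathrm{Maz}}$ is built from pullbacks along the affine maps in~\eqref{functor}. The $t$-exactness yielding the identification $\Coh_{[0,p-2]}(W(k)^{syn})\xrightarrow{\sim}\mathscr{MF}_{[0,p-2]}(W(k))$ of abelian subcategories is then immediate from Proposition~\ref{FL-lemma}(1)(2). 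I expect step~\eqref{plan:key-equivalence}---the deformation-theoretic vanishing based on Bhatt's computations---to be where all the real work lies; the other two steps are essentially bookkeeping once Drinfeld's descriptions and the preceding construction of $\Phi_{\mathrm{Maz}}$ are in hand.
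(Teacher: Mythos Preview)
Your three-step outline matches the paper's proof architecture, and your identification of the deformation step as the crux is correct. However, there is a genuine gap in your treatment of the reduced locus.

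The ``direct equivalence'' you sketch,
\[\cD_{qc,[0,p-1]}(W(k)^{syn}_{red})\xrightarrow{\sim}\mathscr{DMF}^{\mathrm{big}}_{[0,p-1]}(W(k))\otimes\bF_p,\]
does exist---the paper calls it $\Phi_{\mathrm{DI}}$---but it is \emph{not} a priori the same functor as $\Phi^{\mathrm{Maz}}_{red}$, the restriction of $\Phi_{\mathrm{Maz}}\otimes\bF_p$ to the reduced locus. Both are built from an isomorphism $\Cris\circ a\simeq\mathfrak{p}_{\bar{\dR}}\circ t\circ a$ over $\Spec(B\otimes\bF_p)/\bG_m$, but they use \emph{different} such isomorphisms: $\Phi^{\mathrm{Maz}}_{red}$ uses $\Psi_{\mathrm{Maz}}\otimes\bF_p$ from Theorem~\ref{Drinfeld-prop}, while $\Phi_{\mathrm{DI}}$ uses a second isomorphism $\Psi_{\mathrm{DI}}$ coming from the factorization through $C_2/\bG_m$. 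The paper computes explicitly (Lemma~\ref{DI-Maz}) that the two differ by the automorphism $\begin{pmatrix}\Id&V(\{[v_+^p]/p\})\\0&\Id\end{pmatrix}$, whose off-diagonal entry vanishes only after restricting to $v_+^{p-1}=0$. Hence $\Phi^{\mathrm{Maz}}_{red}\simeq\Phi_{\mathrm{DI}}$ only on $\cD_{qc,[0,p-2]}$; on the full range $[0,p-1]$ the functor $\Phi^{\mathrm{Maz}}_{red}$ is \emph{not} an equivalence (cf.\ Remark~\ref{rem:Mazur_vs_Deligne_Illusie}). So the bound $p-2$ enters twice, not once: in the deformation step, as you note, and again in this comparison. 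Your claim that ``$D^p$ encodes the Frobenius-semilinear comparison $\varphi$'' is also off: $\varphi$ arises from the gluing of the two copies of $D_{und}\simeq B\bG_m^\sharp$ inside $W(k)^{syn}_{red}$, not from the $D_{HT}$-side.

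A smaller issue: your derivation of \eqref{Phi-equiv-perfect} and the coherent statement only addresses one direction. That $\Phi_{\mathrm{Maz}}$ sends perfect objects to bounded coherent ones is clear from pullbacks, but the converse---that $\Phi_{\mathrm{Maz}}^{-1}$ of an object of $\mathscr{MF}_{[0,p-2]}(W(k))$ lies in $\Coh(W(k)^{syn})$---requires work. The paper checks that $p$-torsion Fontaine--Laffaille modules yield vector bundles on $W(k)^{syn}\otimes\bF_p$ after faithfully flat pullback to $C_2/\bG_m$, using Proposition~\ref{FL-lemma}(1); this is not immediate from Proposition~\ref{FL-lemma} alone.
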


We now give the outline for the proof of Theorem \ref{main-thm}.  

\textit{Step 1:} we reduce \eqref{Phi-equiv} to proving the equivalence 
\begin{equation}\label{Phi-equiv_mod_p}
 \Phi_{\mathrm{Maz}} \otimes \bF_p:\cD_{qc,[0,p-2]}(W(k)^{syn})\otimes\bF_p \xrightarrow{\sim}\mathscr{DMF}^{\mathrm{big}}_{[0,p-2]}(W(k))\otimes\bF_p.\end{equation} 
We say that $\cF\in\cD_{qc}(W(k)^{syn}\otimes\bF_p)$ has weights in $[a,b]$ if and only if the pullbacks of $\cF$ along both composites $\Spec k[v_{\pm}]/\bG_{m,k}\to k^{\cN}\otimes\bF_p\to W(k)^{syn}\otimes\bF_p$ lie in $\cD_{qc,[a,b]}((\bA^1_{\pm}/\bG_m)\otimes \bF_p)$ in the sense of Definition \ref{Defn-Dqc-ab-A1modGm-plusminus}. 

We show that the restriction along $W(k)^{syn}\otimes\bF_p\to W(k)^{syn}$ induces an equivalence 
\begin{equation}\label{step-1-equiv-modp}
\cD_{qc,[0,p-2]}(W(k)^{syn})\otimes\bF_p\simeq \cD_{qc,[0,p-2]}\left(W(k)^{syn}\otimes\bF_p\right).\end{equation}

\textit{Step 2:} Let $W(k)^{syn}_{red}$ be the reduced locus of $W(k)^{syn}\otimes\bF_p$. Let $\cD_{qc,[a,b]}(W(k)^{syn}_{red})$ be the full subcategory of $\cD_{qc}(W(k)^{syn}_{red})$ consising of objects with Hodge-Tate weights in $[a,b]$ defined via the restriction along
$(\bA^1_{\pm}/\bG_m)\otimes \bF_p\to W(k)^{syn}_{red}$ as in Step 1. 
We show in Proposition \ref{prop:step-2} that the restriction functor 
\begin{equation}\label{restriction-functor-r}
r_{[0,p-2]}:\cD_{qc,[0,p-2]}(W(k)^{syn}\otimes\bF_p)\to\cD_{qc,[0,p-2]}(W(k)^{syn}_{red})    
\end{equation}
is an equivalence of categories.

\textit{Step 3:} Use description of $W(k)^{syn}_{red}$ from \cite[\S 7]{drinfeld2022prismatization} to complete the proof.

\begin{rem}
    We show in Step 3 that the functor $ \Phi_{\mathrm{Maz}}\otimes\bF_p$ factors through the reduced locus 
  \[\begin{tikzcd}
      \cD_{qc,[0,p-1]}(W(k)^{syn}\otimes\bF_p)\arrow[]{rr}{ \Phi_{\mathrm{Maz}}\otimes\bF_p}\arrow[]{rd}[swap]{r_{[0, p-1]}}&&\mathscr{DMF}^{\mathrm{big}}_{[0,p-1]}(W(k))\otimes\bF_p\\
      &\cD_{qc,[0,p-1]}(W(k)^{syn}_{red})\arrow[]{ru}[swap]{\Phi^{\mathrm{Maz}}_{red}}&
  \end{tikzcd} \] 
 Moreover, we construct an equivalence 
 \begin{equation}\label{reduced-locus-equiv}
 \Phi_{\mathrm{DI}}: \cD_{qc,[0,p-1]}(W(k)^{syn}_{red})\simeq\mathscr{D}\mathscr{MF}^{\mathrm{big}}_{[0,p-1]}(W(k))\otimes\bF_p.
 \end{equation}
Thus, via \eqref{reduced-locus-equiv}, $ \Phi^{\mathrm{Maz}}_{red}$ can be viewed as an endo-functor on $\mathscr{DMF}^{\mathrm{big}}_{[0,p-1]}(W(k))\otimes\bF_p$.    
We shall see that $\Phi^{\mathrm{Maz}}_{red}$ is not an equivalence of categories, but becomes an equivalence when restricted to $\mathscr{DMF}^{\mathrm{big}}_{[0,p-2]}(W(k))\otimes\bF_p$. 
\end{rem}

\subsection{Proof of the Main Theorem}
We now prove Theorem \ref{main-thm}.
\subsubsection{Step 1:}
For a $\bZ$-linear presentable stable $\infty$-category $\cC$ and a commutative ring $A$ we will denote by $\cC \otimes A$ the category $\cC \otimes_{\cD(\bZ)} \cD(A)$ defined via Lurie's tensor product in $\Pr^L_{st}$; see \cite[\S 4.8]{Lurie-Higher-Algebra}. 
Informally, an object of $\cC \otimes_{\cD(\bZ)} \cD(A)$ is merely an $A$-module in $\cC$.
We say that such $\cC$ is $p$-complete if the natural functor $\cC \to \lim \cC \otimes \bZ/p^n$ is an equivalence.
\begin{lm}
    Let $f:\cC_1 \rightarrow \cC_2$ be a map between stable $\bZ$-linear presentable $p$-complete $\infty$-categories. If the induced functor $f\otimes\bF_p:\cC_1\otimes \bF_p \to \cC_2\otimes\bF_p$ is an equivalence, then $f$ is an equivalence.
\end{lm}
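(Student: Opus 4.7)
The approach is to bootstrap from the mod-$p$ equivalence to a mod-$p^n$ equivalence for every $n \ge 1$, and then invoke the $p$-completeness of $\cC_1$ and $\cC_2$ to conclude. The argument is formal once the recalled equivalent characterization of $p$-completeness for compactly assembled categories is in hand.

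The first step is to show by induction on $n$ that $f \otimes_{\bZ_p} \bZ/p^n$ is an equivalence. The case $n=1$ is the hypothesis. For the inductive step, the cofiber sequence
\[
\bZ/p^{n-1} \xrightarrow{p} \bZ/p^n \to \bF_p
\]
in $\cD(\bZ_p)$ is preserved by the exact functors $(-)\otimes_{\bZ_p}\cC_i$, and naturality of base-change along $f$ produces a map of fiber sequences of colimit-preserving functors $\cC_1 \to \cC_2$. The outer maps $f \otimes_{\bZ_p} \bZ/p^{n-1}$ and $f \otimes_{\bZ_p} \bF_p$ are equivalences by the induction hypothesis and the assumption, respectively, so two-out-of-three for fiber sequences in the stable $\infty$-category of such functors forces the middle map $f \otimes_{\bZ_p} \bZ/p^n$ to be an equivalence.

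For the second step, $p$-completeness of $\cC_i$ together with the compactly assembled hypothesis yields the equivalent characterization $\cC_i \iso \lim_n (\cC_i \otimes_{\bZ_p} \bZ/p^n)$ recalled in the preamble. Since the formation of this limit is natural in the input, $f$ is identified with $\lim_n (f \otimes_{\bZ_p} \bZ/p^n)$, which is an equivalence because each term is. The main delicate point, invoked here as a black box, is precisely the limit formulation of $p$-completeness for compactly assembled $\infty$-categories, as developed by Efimov and recorded in \cite[Appendix A]{antieau2023prismatic}; once this is granted the rest of the argument is purely formal.
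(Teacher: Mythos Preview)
Your overall strategy—prove $f \otimes \bZ/p^n$ is an equivalence for all $n$, then invoke $\cC_i \simeq \lim_n \cC_i \otimes_{\bZ_p} \bZ/p^n$—is correct and is essentially the paper's approach (the paper proves full faithfulness of $f$ directly and uses the mod $p^n$ picture only for essential surjectivity, but the content is the same). Your Step 2 is fine as written.

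The gap is in the inductive step. The ``two-out-of-three for fiber sequences'' argument does not work: the functors $f \otimes \bZ/p^m : \cC_1 \otimes \bZ/p^m \to \cC_2 \otimes \bZ/p^m$ live between \emph{different} pairs of categories and do not form a fiber sequence in any stable $\infty$-category ($\Pr^L$ is not stable, and $\bZ/p^{n-1} \xrightarrow{p} \bZ/p^n$ is not a ring map, so there is no induced functor between the tensored categories). If instead you read ``$f \otimes M$'' as the functor $\cC_1 \to \cC_2$, $X \mapsto f(X) \otimes M$, you do get a fiber sequence in the stable category $\Fun^L(\cC_1,\cC_2)$, but now the terms are \emph{objects}, and ``is an equivalence of categories'' is not a property that two-out-of-three for fiber sequences of objects can detect.

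To repair Step 1 you must treat full faithfulness and essential surjectivity separately, which is exactly what the paper does. Full faithfulness of $f \otimes \bZ/p^n$ holds because the mapping spectra are $p$-nilpotent, so checking mod $p$ suffices. Essential surjectivity then follows by tensoring a given object of $\cC_2 \otimes \bZ/p^n$ with the fiber sequence $\bF_p \to \bZ/p^n \to \bZ/p^{n-1}$, lifting the outer pieces by the hypothesis and induction, and using the already-established full faithfulness to lift the connecting map.
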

\begin{proof}
Observe that, for any $X,Y\in \cC_1$, we have
\begin{equation}\label{pf:mod_p^n}
  \Hom_{\cC_1\otimes \bZ/p^n} (X\otimes \bZ/p^n, Y\otimes \bZ/p^n)\iso 
  \Hom_{\cC_1} (X, Y\otimes \bZ/p^n) \iso \Hom_{\cC_1} (X, Y)\otimes \bZ/p^n.
\end{equation}
Indeed, each mapping spectrum in \eqref{pf:mod_p^n} is naturally identified with the cone of $\Hom_{\cC_1} (X, Y)\rar{p^n}\Hom_{\cC_1} (X, Y)$. 
In particular, $ \Hom_{\cC_1} (X, Y) \iso \lim \Hom_{\cC_1} (X, Y) \otimes \bZ/p^n$ {\it i.e.}, $\Hom_{\cC_1} (X, Y)$ is $p$-complete. The same applies to
$\Hom_{\cC_2} (X', Y')$, for $X', Y' \in \cC_2$.

Let us check that $f$ is fully faithful {\it i.e.}, the map $\Hom_{\cC_1} (X, Y) \to \Hom_{\cC_2} (f(X), f(Y))$ is an equivalence. 
By $p$-completeness, it suffices to check that $\Hom_{\cC_1} (X, Y)\otimes \bF_p \to \Hom_{\cC_2} (f(X), f(Y))\otimes \bF_p$ is an equivalence. Using \eqref{pf:mod_p^n}, this is equivalent to showing that $ \Hom_{\cC_1\otimes \bF_p} (X\otimes \bF_p, Y\otimes \bF_p) \iso \Hom_{\cC_2\otimes \bF_p} (f(X)\otimes \bF_p, f(Y)\otimes \bF_p)$. Consequently, it suffices to show that, for any $X$ in $\cC_1$ the natural map $f(X) \otimes \bF_p \to f_1(X \otimes \bF_p)$,
where $f_n: =f\otimes\bZ/p^n: \cC_1 \otimes \bZ/p^n \to \cC_2\otimes\bZ/p^n $, is an equivalence in $\cC_2 \otimes \bF_p$. To this end, observe that the functor 
$\cC_2 \otimes \bF_p \to \cC_2$ is conservative. Indeed, $\cC_2 \otimes \bF_p$ is identified with the category $\Fun_{\operatorname{Mod}_{\bZ}(\Pr^L_{st})}(\cD(\bF_p), \cC_2) \in \Pr^L_{st}$ of colimit-preserving $\bZ$-linear functors. 
This equivalence carries the functor $\cC_2 \otimes \bF_p \to \cC_2$ to the evaluation-at-$\bF_p$ functor
$\Fun_{\operatorname{Mod}_{\bZ}(\Pr^L_{st})}(\cD(\bF_p), \cC_2)\to \cC_2$. It remains to observe that $\cD(\bF_p)$ is generated by
$\bF_p$ under colimits and finite limits.  By conservativity, it is enough to show that $f(X) \otimes \bF_p \to f_1(X \otimes \bF_p)$ is an isomorphism in $\cC_2$. But this is clear: each side is naturally identified with the cone of $f(X)\rar{p} f(X)$. 

We claim that $f_n$ is fully faithful,  for every $n$: representing $\cC_i \otimes \bZ/p^n$ as the inner $\Hom$ from $\cD(\bZ/p^n)$ to $\cC_i$ the claim follows
from a general fact that a fully faithful $f$ induces a  fully faithful functor on inner $\Hom$'s, see \cite[TAG 04Q6]{kerodon}.

It remains to check that $f_n$ is essentially surjective. We check it by induction on $n$. 
For any $Z\in \cC_2 \otimes \bZ/p^n$, we have a fiber sequence $Z \otimes_{\bZ/p^n} \bZ/p^{n-1} \to Z \to Z
 \otimes_{\bZ/p^n} \bF_p$. By induction assumption the boundary terms are in the essential image. Since $f_n$ is fully faithful it follows that $Z$ is in the  essential image.
\end{proof}
In particular, the lemma reduces  proving equivalence \eqref{Phi-equiv} to \eqref{Phi-equiv_mod_p}. 

\begin{pr}\label{pr:res-Dqc-syn-special-fib}
The restriction along $W(k)^{syn}\otimes\bF_p\to W(k)^{syn}$ induces an equivalence $\cD_{qc} (W(k)^{syn} \otimes \bF_p) \simeq \cD_{qc} (W(k)^{syn}) \otimes \bF_p$, where the right-hand side stands for the category of $\bF_p$-linear objects in $\cD_{qc}(W(k)^{syn})$. Moreover, this equivalence respects Hodge-Tate weights. Thus, it induces $\cD_{qc, [0, p-2]} (W(k)^{syn} \otimes \bF_p) \simeq \cD_{qc, [0, p-2]} (W(k)^{syn}) \otimes \bF_p$. 
\end{pr}
The proof of this proposition is given at the end of \textit{Step 1}. We start with some preliminary results.

\begin{lm}\label{lm:lemma_on_rings}
    Let $A$ be a ring, $(p, f_1, \cdots, f_n)$ is a regular sequence in $A$, $I\subset A$ the ideal generated by this sequence, and let $\bar{I}\subset A/p$ be its image. Assume that $A$ is $I$-complete. Denote by $\Spf A$ (resp. $\Spf A/p$)  the $I$-adic (resp. $\bar{I}$-adic) formal scheme. 
    Then the restriction along $i: \Spf(A) \otimes \bF_p \to \Spf(A)$ induces $\cD_{qc}(\Spf(A)) \otimes \bF_p \simeq \cD_{qc}(\Spf(A/p)).$
\end{lm}
\begin{proof}
    Identify $\cD_{qc}(\Spf(A))$ with $\cD_{I-comp}(A) \subset \cD(A)$ and $\cD_{qc}(\Spf(A/p))$ with $\cD_{\bar{I}-comp}(A/p) \subset \cD(A/p)$. Under this equivalence, the functor $\Phi: \cD_{qc}(\Spf(A)) \otimes \bF_p \to \cD_{qc}(\Spf(A/p))$ is compatible with the functor $\cD(A) \otimes \bF_p \to \cD(A/p)$ induced by $\Spec(A/p) \to \Spec(A),$ which is an equivalence by \cite[Theorems 4.8.5.11 and 4.8.5.16]{Lurie-Higher-Algebra}. 
    Since $- \otimes \cD(\bF_p) \simeq \Fun_{\operatorname{Mod}_{\bZ}(\Pr^L_{st})}(\cD(\bF_p), -)$ as functors, and the latter preserves fully faithful embeddings, we see that $\Phi$ is fully faithful. Let us show that it is essentially surjective. Observe that $A/p$ is in the image since $\Phi(A)=A/p.$ Thus, it suffices to check that  $A/p$ generates $\cD_{\bar{I}-comp}(A/p).$ Note that the $\bar{I}$-completion functor $\cD(A/p) \to \cD_{\bar{I}-comp}(A/p)$ is essentially surjective and it commutes with colimits since it is left adjoint to the embedding. Since $A/p$  generates $\cD(A/p)$ the claim follows.
    \end{proof}

\begin{lm}\label{lm:lemma_on_stacks}
    Let $\fX$ be a stack which admits a faithfully flat cover by an affine formal scheme $\Spf(A)$. Denote by $\Spf(A)^{\bullet/\fX}$ the Cech nerve of $\Spf(A) \to \fX$ and assume that $\Spf(A)^{\bullet/\fX}$ is represented by an $I^\bullet$-adic affine simplicial formal scheme $\Spf(A^{\bullet})$
    with each $I^\bullet \subset A^\bullet$ satisfying the assumption of Lemma \ref{lm:lemma_on_rings}.
    Then the natural map $\cD_{qc} (\fX) \otimes \bF_p \to \cD_{qc}(\fX \otimes \bF_p)$ is an equivalence.
\end{lm}

\begin{proof}\label{Dqc-tensor-Fp-for-stacks}
    The assumption implies that the functor $\cD_{qc} (\fX) \to \lim \cD_{qc} (\Spf(A^{\bullet}))$ is an equivalence. The natural map $\cX \otimes \bF_p \to \cX \otimes^L \bF_p$ is an isomorphism since $A$ is flat over $\bZ_p$ and flatness is fpqc-local. Thus, by the base change stability of Cech nerves we get the following diagram 
\[\begin{tikzcd}
	\cD_{qc}(\cX) \otimes \bF_p &&& (\lim \cD_{qc} (\Spf A^\bullet)) \otimes \bF_p \\
	\\
	\cD_{qc}(\cX \otimes \bF_p) &&& \lim \cD_{qc} (\Spf(A^\bullet \otimes_{\bZ_p} \bF_p))
	\arrow[from=1-1, to=1-4]
	\arrow[from=1-1, to=3-1]
	\arrow[from=1-4, to=3-4]
	\arrow[from=3-1, to=3-4].
\end{tikzcd}\]
Note that horizontal maps are equivalences by descent. The right vertical map is an equivalence by combining Lemma \ref{lm:lemma_on_rings} with (self)dualizability of $\cD(\bF_p)$ in $\Pr^L_{st}$ (the latter implies that, for any diagram $\cC_i$ in $\Pr^L_{st}$, the natural map $(\lim \cC_i) \otimes \bF_p \to \lim \cC_i \otimes \bF_p$ is an equivalence since, for every $\cC \in \Pr^L_{st}$, we have $\cC \otimes \bF_p \iso \Fun(\cD(\bF_p), \cC)$). 
   Thus, all maps in the diagram are equivalences. 
\end{proof}

\textit{Proof of Proposition \ref{pr:res-Dqc-syn-special-fib}.}
    Denote by $C$ the $p$-adic completion of an algebraic closure of $\Frac W(k)$ and by $\cO_C\subset C$ the subring of integral elements. Then $W(k) \to \cO_C$ induces a faithfully flat cover $\cO_C^{\cN} \to W(k)^{\cN}$ by \cite[Remark 6.6.12]{bhatt-lecture-notes}. If $\cO_C^{\cN} \to \bA^1_-/\bG_m$ is the Rees map, then  $\cO_C^{\cN} \times_{\bA^1_-/\bG_m} \bA^1_-$ is a formal affine scheme $\Spf A$
    such that the Cech nerve  $\Spf(A)^{\bullet/W(k)^{\cN} }$ satisfies the assumption of Lemma \ref{lm:lemma_on_stacks}; see \cite[Corollary 5.5.11]{bhatt-lecture-notes}.
    Thus, Lemma  \ref{lm:lemma_on_stacks} gives $\cD_{qc}((W(k)^{\cN}) \otimes \bF_p \simeq \cD_{qc} (W(k)^{\cN} \otimes \bF_p)$. Similarly, 
    $\cD_{qc}((W(k)^{\Prism}) \otimes \bF_p \simeq \cD_{qc} (W(k)^{\Prism} \otimes \bF_p)$.
    This implies the first statement since $\cD(W(k)^{syn}) \simeq \operatorname{Eq} (\cD(W(k)^{\cN}) \rightrightarrows \cD(W(k)^{\Prism}).$ To show that it preserves Hodge-Tate weights it is enough to observe that the equivalence $\cD(\bA^1_{-}/\bG_m) \otimes \bF_p \to \cD(\bA^1_{-}/\bG_m \otimes \bF_p)$ restricts to an equivalence $\cD_{qc, [a,b]} (\bA^1_{-}/\bG_m) \otimes \bF_p \to \cD_{qc, [a, b]} (\bA^1_{-}/\bG_m \otimes \bF_p)$ which is clear.

\subsubsection{Step 2:} Recall from \cite[\S 5.10.8 and Prop.~8.11.2]{drinfeld2022prismatization} that $W(k)^{syn}_{red}$ is a Cartier divisor of $W(k)^{syn}\otimes\bF_p$, given by a nonzero section $v_1\in H^0(W(k)^{syn}\otimes\bF_p,\cO\{p-1\})$. Let $(W(k)^{syn}\otimes\bF_p)_{v_1^i=0}$ denote the zero locus of $v_1^i$ inside $W(k)^{syn}\otimes\bF_p$.  
\begin{pr}\label{prop:step-2}
For each $i>1$, the restriction map
\begin{equation}\label{restriction-map-i-to-i-1}
    \cD_{qc,[0,p-2]}((W(k)^{syn}\otimes\bF_p)_{v_1^i=0})\to\cD_{qc,[0,p-2]}((W(k)^{syn}\otimes\bF_p)_{v_1^{i-1}=0})
\end{equation}
is an equivalence. In particular, the functor $r_{[0,p-2]}$ from \eqref{restriction-functor-r} is an equivalence. 
\end{pr}
\begin{proof}
Observe that the closed embedding 
\begin{equation}\label{closed-embedding-iota-i}
\alpha_i: (W(k)^{syn}\otimes\bF_p)_{v_1^{i-1}=0}\hookrightarrow (W(k)^{syn}\otimes\bF_p)_{v_1^i=0}
\end{equation}
is a square-zero extension with sheaf of ideals given by 
\begin{equation}\label{ideal-sheaf-iota-i}
    \mathcal{J}_i=\cO_{W(k)_{red}^{syn}}\{-(i-1)(p-1)\}. 
\end{equation}
We shall use the following generalization of a fact from \cite[Example 6.5.13]{bhatt-lecture-notes}. 
\begin{lm}\label{generalized-effectivity-vanishing-lemma}
Let $n$ be an arbitrary integer. For any $\cE\in\cD_{qc,[n,\infty]}(W(k)^{syn}_{red})$ and any $\cE'\in \cD_{qc,(-\infty,n-1]}(W(k)^{syn}_{red})$, we have $\mathrm{RHom}(\cE',\cE)=0$. 
\end{lm}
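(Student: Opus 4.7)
The plan is to first reduce to the case $n=0$ by a twist, and then pass to an explicit chart of $W(k)^{syn}_{red}$ where Hodge--Tate weights become $\bG_m$-gradings and the desired vanishing becomes a disjoint-support statement. Concretely, tensoring with the Breuil--Kisin line bundle $\cO\{-n\}$ is an autoequivalence of $\cD_{qc}(W(k)^{syn}_{red})$ that shifts all Hodge--Tate weights by $-n$; since $RHom$ is invariant under simultaneous twists of both arguments, I may assume $n=0$. Thus the claim reduces to: for $\cE$ with weights $\geq 0$ and $\cE'$ with weights $\leq -1$, $RHom(\cE',\cE)=0$.

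Next, I would use Drinfeld's isomorphism $BH\simeq W(k)^{\cN}_{red}$ from \eqref{eqn:isom-BH-WK-Nygaard-red} together with the faithfully flat cover $C_2/\bG_m\to BH$ (the trivial $H$-torsor) and the \'etale relation between $W(k)^{\cN}_{red}$ and $W(k)^{syn}_{red}$ to reduce the computation to graded $k[v_+^p,v_-]/(v_+^p v_-)$-modules (with $\deg v_+^p=p$, $\deg v_-=-1$) carrying extra $H$- and Frobenius-equivariance. Via the standard dictionary, $\cD_{qc}(C_2/\bG_m)$ is the $p$-complete derived category of such graded modules, and the $\bG_m$-grading recovers Hodge--Tate weights in the sense of Definition \ref{defn:HT-weights} (checked after further pulling back to $C/\bG_m\subset k^{\cN}\otimes\bF_p$, as in Remark \ref{HT-weights-kN-remark}). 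Hence the pullback of $\cE$ is supported in grading degrees $\geq 0$, while the pullback of $\cE'$ lives in degrees $\leq -1$.

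Since morphisms in the graded derived category preserve the grading, any map $\cE'\to\cE$ on $C_2/\bG_m$ vanishes degree by degree. This conclusion persists after imposing $H$-equivariance (the $H$-action on $C_2$ is compatible with the ambient $\bG_m$-action) and after the identification coming from the syntomic gluing, which again preserves weights. Therefore $RHom(\cE',\cE)=0$ on $W(k)^{syn}_{red}$, as desired.

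The main obstacle I expect is bookkeeping: making precise that Hodge--Tate weights on $W(k)^{syn}_{red}$ do translate, through the chain $W(k)^{syn}_{red}\leftarrow W(k)^{\cN}_{red}\simeq BH\leftarrow C_2/\bG_m$, into concrete support conditions on the $\bG_m$-graded side, and verifying that the descent data (the $H$-action and the Frobenius identifications) do not mix weights. Once these compatibilities are nailed down, the vanishing is formal. A variant avoiding the explicit chart is to use the fibre-product decomposition from \eqref{Wk-Nygaard-red-fibre-product-DHT-DdR} and prove the analogous vanishing separately on $D_{HT}$, $D_{Hod}$, $D_{\dR}$ using their explicit descriptions (e.g.\ $\cD_{qc}(D_{HT})$ as $D^p$-nilpotent graded $k[v_+^p,D^p]$-modules), then glue via the fibre sequence associated to the fully faithful functor $\mathcal{R}^*$.
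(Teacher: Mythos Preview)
Your main argument has a genuine error: the claim that after pulling back to $C_2/\bG_m$ the object $\cE$ is supported in grading degrees $\geq 0$ and $\cE'$ in degrees $\leq -1$ is false. Hodge--Tate weight $\geq 0$ (Definition \ref{defn:HT-weights}) means the maps $v_-:\cN^i\to\cN^{i-1}$ are quasi-isomorphisms for $i\leq 0$, \emph{not} that these graded pieces vanish. Concretely, take $\cE=\cO_{W(k)^{syn}_{red}}$, which has weight $0$; its pullback to $C_2/\bG_m$ is the structure sheaf, i.e.\ the graded ring $k[v_+^p,v_-]/(v_+^pv_-)$ itself, which is nonzero in every negative degree (powers of $v_-$). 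Dually, weight $\leq -1$ means $v_+$ is an isomorphism above degree $-1$, not that those pieces vanish. So there is no disjoint-support phenomenon to exploit, and the ``degree by degree vanishing'' step does not go through.

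Your variant at the end is the right idea and is essentially the route the paper takes, but your description of it is still off in two respects. First, the decomposition \eqref{Wk-Nygaard-red-fibre-product-DHT-DdR} is for $W(k)^{\cN}_{red}$; passing to $W(k)^{syn}_{red}$ introduces a second gluing along the open substack $D_{und}$ (via the arithmetic Frobenius), so the $RHom$ on $W(k)^{syn}_{red}$ is a fiber involving $D_{HT}$, $D_{\dR}$, $D_{Hod}$, \emph{and} $D_{und}$. Second, the argument is not ``vanishing separately on each piece'': the paper shows $RHom$ vanishes on $D_{HT}$ and on $D_{Hod}$, but on $D_{\dR}$ and $D_{und}$ the $RHom$'s need not vanish individually; instead restriction gives an \emph{isomorphism} $RHom_{D_{\dR}}\iso RHom_{D_{und}}$, and these cancel in the fiber. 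Each of these three facts is proved by translating into the explicit graded-module descriptions of $D_{HT}$, $D_{\dR}$, $D_{Hod}$ (Weyl-type algebras in $v_+,D$ or $v_-,\Theta$) and doing a short computation that genuinely uses the asymmetry: on each component only \emph{one} of $\cE,\cE'$ has a support condition, while the other has an ``eventually constant via $v_\pm$'' condition.
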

\begin{proof}
Recall from \cite[\S 7]{drinfeld2022prismatization} and \cite[\S 6.2]{bhatt-lecture-notes} the geometry of the reduced locus $W(k)^{\cN}_{red}$, which has two irreducible components given by the Hodge-Tate component $D_{\HT}$ and the de Rham component $D_{\dR}$ meeting transversely along the closed substack $D_{Hod}$.~The Hodge-Tate component $D_{\HT}$ is identified with $\bA^{1,\dR}_+/\bG_{m, k}\simeq \bG_{a,k}/(\bG_{a,k}^{\sharp}\rtimes\bG_{m,k})$.~The de Rham component $D_{\dR}$ is identified with the classifying stack of a commutative group scheme $\cG$ over $(\bA^1_-/\bG_m)\otimes \bF_p$.~The closed substack $D_{Hod}=B(F_*\bG_{a,k}^{\sharp}\rtimes \bG_{m,k})$ is embedded into $D_{\dR}$ as the fibre over $B\bG_{m,k}\subset(\bA^1_-/\bG_m)\otimes \bF_p$, and it is embedded into $D_{\HT}$ as $\alpha_p/(\bG_{a,k}^{\sharp}\rtimes\bG_{m,k})$. Consequently, $\cD_{qc}(W(k)^{\cN}_{red})$ is a full subcategory of the fibre product 
$\cD_{qc}(D_{\HT})\times_{\cD_{qc}(D_{Hod})}\cD_{qc}(D_{\dR})$.~We refer the reader to \cite[Theorem 16.2.0.2]{Lurie-sag} for a much more general result the above assertion follows from. 

Set $D_{und}:=B\bG_{m,k}^{\sharp}$.~The stack $W(k)^{\cN}_{red}$ contains two disjoint open substacks isomorphic to $D_{und}$.~The embedding of $D_{und}\subset D_{\HT}=\bG_{a,k}/(\bG_{a,k}^{\sharp}\rtimes \bG_{m,k})$ identifies $D_{und}$ with $(\bG_{a,k}\setminus\{0\})/(\bG_{a,k}^{\sharp}\rtimes \bG_{m,k})$.~The embedding of $D_{und}\subset D_{\dR}$ identifies $D_{und}$ with the complement to $D_{Hod}$ inside $D_{\dR}$.~The stack $W(k)^{syn}_{red}$ is obtained from $W(k)^{\cN}_{red}$ by identifying the two copies of $D_{und}$ inside $D_{\HT}$ and $D_{\dR}$ via the arithmetic Frobenius automorphism $\Id \times \mathrm{Frob}$ of $B\bG_{m,k}^{\sharp}= B\bG^{\sharp}_{m,\bF_p}\times\Spec k$.
 In particular, by this pushout presentation of $W(k)^{syn}_{red}$, we have: 
\begin{align}
\begin{split}
\mathrm{RHom}(\cE',\cE)&=\mathrm{Fib}\Big(\mathrm{RHom}(\cE'|_{D_{\HT}},\cE|_{D_{\HT}})\oplus \mathrm{RHom}(\cE'|_{D_{\dR}},\cE|_{D_{\dR}})\\
&\to \mathrm{RHom}(\cE'|_{D_{und}},\cE|_{D_{und}})\oplus \mathrm{RHom}(\cE'|_{D_{Hod}},\cE|_{D_{Hod}})\Big).
\end{split}
\end{align}
Now we claim that under our assumption on Hodge-Tate weights, the following hold:
\begin{enumerate}
    \item[(i)] $\mathrm{RHom}(\cE'|_{D_{\HT}},\cE|_{D_{\HT}})=0$.
    \item[(ii)] The restriction map induces an isomorphism $\mathrm{RHom}(\cE'|_{D_{\dR}},\cE|_{D_{\dR}})\xrightarrow{\sim}\mathrm{RHom}(\cE'|_{D_{und}},\cE|_{D_{und}})$.\footnote{The argument below will prove this under weaker assumptions on the weights of $\cE$, namely, it has weights $\geq n-1$ instead of $\geq n$.}
    \item[(iii)] $\mathrm{RHom}(\cE'|_{D_{Hod}},\cE|_{D_{Hod}})=0$.
\end{enumerate}
To prove (i), note $\cD_{qc}(D_{\HT})\simeq \cD_{qc}(\bA^{1,\dR}_+/\bG_{m,k})$ is a full subcategory of the derived category $\cD_{gr}(\mathcal{A}_1)$ of graded $\cA_1$-modules,~where $\mathcal{A}_1:=k\{v_+,D\}/(Dv_+-v_+D-1)$ is the algebra of differential operators on the affine line; see 
\cite[\S 6.5.4]{bhatt-lecture-notes}. Let $G'= \bigoplus  G'_i$ (resp.~$G=\bigoplus G_i$) be the graded module corresponding to $\cE'|_{D_{\HT}}$ (resp.~$\cE|_{D_{\HT}}$) under the above inclusion. Using the assumption on weights and Remark \ref{HT-weights-kN-remark}, we conclude that $G_i$ is acyclic for $i\leq n-1$ and $G_i'\xrightarrow{v_+}G_{i+1}'$ is an isomorphism for $i+1\geq n$. 

We claim that $\mathrm{RHom}_{\cD_{gr}(k[v_+])}(G',G(j))=0$, $j\leq 0$, where $G(j)_i:=G_{i+j}$. Indeed, let $G''$ be the free graded $k[v_+]$-module on $G'_{n-1}$, {\it i.e.}, $G''_i=G'_i$ for $i\geq n-1$ and $0$ otherwise.~Then $\mathrm{RHom}_{\cD_{gr}(k[v_+])}(G'',G(j))\xrightarrow{\sim} 
\mathrm{RHom}_{\cD_{gr}(k)}(G'_{n-1},G_{n-1+j})$, which is $0$ for $j\leq 0$ since $G_{n-1+j}=0$. 
It remains to check that for $G''':=\mathrm{cone}(G'' \to G')$, 
we have $\mathrm{RHom}_{\cD_{gr}(k[v_+])}(G''',G(j))=0$ for $j\leq 0$.~Indeed, $G'''_i$ is acyclic for $i>n-2$.~Then we have
$$\mathrm{RHom}_{\cD_{gr}(k[v_+])}(G''',G(j))= \mathrm{Fib}\Big(\mathrm{RHom}_{\cD_{gr}(k)}(G''',G(j)) \rar{\ad_{v_+}}
\mathrm{RHom}_{\cD_{gr}(k)}(G''',G(j+1))\Big),$$
\begin{equation}\label{product-formula-RHom-grk}
\mathrm{RHom}_{\cD_{gr}(k)}(G''',G(j)) =  \prod_i \mathrm{RHom}_k(G'''_i, G_{i+j})=0, \quad\text{for } j\leq 1,\end{equation}
since for every $i$, either $G'''_i$ or $G_{i+j}$ is acyclic (for $j\leq 1$). 
Next, we have
\begin{equation}\label{RHom-DA1-fibre}
\mathrm{RHom}_{\cD_{gr}(\cA_1)}(G',G)=\mathrm{Fib}(\mathrm{RHom}_{\cD_{gr}(k[v_+])}(G',G) \rar{\ad_D }\mathrm{RHom}_{\cD_{gr}(k[v_+])}(G',G(-1))).
\end{equation}
By the vanishing of $RHom_{\cD_{gr}(k[v_+])}(G',G(j))=0$ for $j=0, -1$, we deduce vanishing of the left-hand side in \eqref{RHom-DA1-fibre}.~Thus $RHom(\cE'|_{D_{\HT}},\cE|_{D_{\HT}})=0$ and (i) holds.

To prove (ii), recall from \cite[\S 6.5.3]{bhatt-lecture-notes} that $\cD_{qc}(D_{\dR})$ can be viewed as a full subcategory of $\cD_{gr}(k[v_-,\Theta])$, where $\deg(v_-)=-1$ and $\deg(\Theta)=-p$. Denote by $F'=\bigoplus {F'}^i$ (resp.~$F=\bigoplus F^i$) the module corresponding to $\cE'$ (resp.~$\cE$). Using the assumption on weights and Remark \ref{HT-weights-kN-remark}, we have that ${F'}^i$ is acyclic for $i\geq n$, and $F^i\xrightarrow{v_-}F^{i-1}$ is an isomorphism for $i\leq n$. 

Likewise, $\cD_{qc}(D_{und})$ is a full subcategory of $\cD_{gr}(k[v_-,v_-^{-1},\Theta])=\cD(k[\Theta])$. 
Consider the embedding $j:D_{und}\hookrightarrow D_{\dR}$ lying over $\overline{j}: (\bA_-^1\setminus\{0\})/\bG_m \otimes \bF_p\hookrightarrow \bA_-^1/\bG_m \otimes \bF_p$. Under the identifications above, the pullback along $j$ corresponds to the functor $j^*: \cD_{gr}(k[v_-,\Theta]) \to \cD_{gr}(k[v_-,v_-^{-1},\Theta])$ given by the tensor product with $k[v_-,v_-^{-1},\Theta]$ over $k[v_-,\Theta]$.
We shall prove that under the above assumption on $F$ and $F'$, the map 
\begin{equation}\label{j*-on-RHom-Dund-to-DdR}
j^*: \mathrm{RHom}_{\cD_{gr}(k[v_-,\Theta])}(F',F)\to \mathrm{RHom}_{\cD_{gr}(k[v_-,v_-^{-1},\Theta])}(j^*F',j^*F)
\end{equation}
is an isomorphism. The left-hand side of \eqref{j*-on-RHom-Dund-to-DdR} can be computed as 
\begin{equation*}
    \mathrm{RHom}_{\cD_{gr}(k[v_-,\Theta])}(F',F)=\mathrm{Fib}\Big(\mathrm{RHom}_{\cD_{gr}(k[v_-])}(F',F)\xrightarrow{\ad_{\Theta}}\mathrm{RHom}_{\cD_{gr}(k[v_-])}(F',F(-p))\Big),
\end{equation*}
and the right-hand side of \eqref{j*-on-RHom-Dund-to-DdR} can be computed as 
\begin{align*}
&\mathrm{RHom}_{\cD_{gr}(k[v_-,v_-^{-1},\Theta])}(j^*F',j^*F)\\
=&\mathrm{Fib}\Big(\mathrm{RHom}_{\cD_{gr}(k[v_-,v_-^{-1}])}(j^*F',j^*F)\xrightarrow{\ad_{\Theta}}\mathrm{RHom}_{\cD_{gr}(k[v_-,v_-^{-1}])}(j^*F',j^*F(-p))\Big).
\end{align*}
We will show the stronger statement that, for any $F'\in \cD_{gr}(k[v_-])$ with ${F'}^i$ being acyclic for $i\geq n$, and any $F\in \cD_{gr}(k[v_-])$ with $F^i\xrightarrow{v_-}F^{i-1}$ being an isomorphism for $i\leq n-1$, we have an isomorphism
\begin{equation}\label{RHom-RHomj*}
\overline{j}^*: RHom_{\cD_{gr}(k[v_-])}(F',F)\xrightarrow{\sim} RHom_{\cD_{gr}(k[v_-,v_-^{-1}])}(\overline{j}^*F',\overline{j}^*F).
\end{equation}
This will imply the desired isomorphism in \eqref{j*-on-RHom-Dund-to-DdR}. 

Now, the functor $\overline{j}^*$ admits a right adjoint $\overline{j}_*$ given by the restriction along $k[v_-]\hookrightarrow k[v_-,v_-^{-1}]$.~By adjunction, $\mathrm{RHom}_{\cD_{gr}(k[v_-,v_-^{-1}])}(\overline{j}^*F',\overline{j}^*F)\simeq \mathrm{RHom}_{\cD_{gr}(k[v_-])}(F',\overline{j}_*\overline{j}^*F)$.~Let $F'':=\mathrm{cone}(F\to \overline{j}_*\overline{j}^*F)$.~To prove \eqref{RHom-RHomj*}, it suffices to show $\mathrm{RHom}(F',F'')=0$.~We have ${F''}^i=0$ for $i< n$.~To show the desired vanishing, we use the equivalence $\cD_{gr}(k[v_-])\xrightarrow{\sim} \cD_{gr}(k[v_+])$ given by $F\mapsto G$ with $G_i=F^{-i}$.~Now the vanishing of $\mathrm{RHom}(F',F'')\simeq \mathrm{RHom}(G',G'')$ follows from \eqref{product-formula-RHom-grk}. 

To prove (iii), recall from \cite[\S 6.5.2]{bhatt-lecture-notes} that $\cD_{qc}(D_{Hod})$ can be embedded as a full subcategory of $\cD_{gr}(k[\Theta])$. Denote by $V'=\bigoplus V'_i$ (resp.~$V=\bigoplus V_i$) the corresponding graded modules for $\cE'|_{D_{Hod}}$ (resp.~$\cE|_{D_{Hod}}$). Observe that 
\begin{equation}\label{RHom-step-iii}
    \mathrm{RHom}_{\cD_{gr}(k[\Theta])}(V',V)=\mathrm{Fib}\Big(\mathrm{RHom}_{\cD_{gr}(k)}(V',V)\xrightarrow{\ad_{\Theta}}\mathrm{RHom}(V',V(-p))\Big).
\end{equation}
Using the assumption on weights, $V_i$ is acyclic for $i<n$ and $V_i'$ is acyclic for $i\geq n$. Thus both the source and the target of $\ad_{\Theta}$ on the right-hand side of \eqref{RHom-step-iii} are zero. Therefore, $\mathrm{RHom}_{\cD_{qc}(D_{Hod})}(\cE'|_{D_{Hod}},\cE|_{D_{Hod}})=\mathrm{RHom}_{\cD_{gr}(k[\Theta])}(V',V)=0$. 
\end{proof}
We now check that \eqref{restriction-map-i-to-i-1} is fully faithful.~Let $\cF, \cF'\in \cD_{qc,[0,p-2]}((W(k)^{syn}\otimes\bF_p)_{v_1^i=0})$. 
The morphism 
$\mathrm{RHom}(\cF,\cF')\to \mathrm{RHom}(\alpha_i^*\cF,\alpha_i^*\cF')\simeq \mathrm{RHom}(\cF,\alpha_{i*}\alpha_i^*\cF')$ is induced by 
the map $\cF'\to\alpha_{i*}\alpha_i^*\cF'$. Let $\cG$ be its fibre. By \eqref{ideal-sheaf-iota-i}, we have $\cG:=\cF'\otimes\mathcal{J}_i=\cF'\{-(i-1)(p-1)\}|_{W(k)^{syn}_{red}}$. We need to show that \[0=\mathrm{RHom}(\cF,\cG)=\mathrm{RHom}_{\cD_{qc}(W(k)^{syn}_{red})}\left(\cF|_{W(k)^{syn}_{red}}\ ,\ \cF'\{-(i-1)(p-1)\}|_{W(k)^{syn}_{red}}\right).\]
Note that $\cF|_{W(k)^{syn}_{red}}$ has weights $\leq p-2$, and $\cF'\{-(i-1)(p-1)\}|_{W(k)^{syn}_{red}}$ has weights $\geq (i-1)(p-1)$. Thus the desired vanishing follows by Lemma \ref{generalized-effectivity-vanishing-lemma}. This finishes the proof of fully faithfulness of functor \eqref{restriction-map-i-to-i-1}. 

To show that \eqref{restriction-map-i-to-i-1} is essentially surjective we shall use the following result. 
\begin{lm}\label{deformation-theory-thm}
    Let $i:\mathcal{X}\hookrightarrow \mathcal{Y}$ be a square-zero extension of algebraic stacks over a field $k$. Let $\mathcal{I}$ be the corresponding sheaf of ideals. Let $\cF\in\cD_{qc}(\cX)$. Assume that $\Ext^2_{\cD_{qc}(\cX)}(\cF,\cF\otimes\cI)=0$. Then there exists an $\widetilde{\cF}\in\cD_{qc}(\cY)$ such that $\widetilde{\cF}|_{\cX}=\cF$. 
\end{lm}
\begin{proof}
    We first consider the case where $\cY$ is an affine scheme. Let $R$ be any associative algebra over $k$, and $I\subset R$ a two-sided ideal with $I^2=0$. Set $\overline{R}:=R/I$. Define an object $M\in \cD(\overline{R}\otimes \overline{R}^{\op})$ as follows: $M:=\tau_{\leq 1}(\overline{R}\overset{\mathbb{L}}{\otimes}_R \overline{R})$. We have $H_0(M)=\bar{R}$, $H_1(M)=I$ and $H_i(M)=0$ for all $i\neq 0,1$. For the duration of this proof, we denote by $\Psi$ the functor $\cD(\bar{R})\to \cD(\bar{R})$ given by $\Psi(N):=M\otimes_{\bar{R}} N$. Then we have a fibre sequence of functors
    \begin{equation}\label{eqn:deformation-lemma-proof-fibre-sequence}
    I\otimes_{\bar{R}} N[1]\to \Psi(N)\to N\xrightarrow{\mathfrak{o}(N)} I\otimes_{\bar{R}} N[2].
    \end{equation}
    We refer to $\mathfrak{o}(N)$ as the obstruction class. A choice of a null-homotopy for $\mathfrak{o}(N)$ determines a map $\delta: \Psi(N)\to I\otimes_{\bar{R}} N[1]$ splitting the fibre sequence \eqref{eqn:deformation-lemma-proof-fibre-sequence}. Composing $\delta$ with the map $\bar{R}\otimes_R N\simeq (\overline{R}\overset{\mathbb{L}}{\otimes}_R \overline{R})\otimes_{\bar{R}}N \to \Psi(N)$, we obtain a map $\bar{R}\otimes_R N\to I\otimes_{\bar{R}}N[1]$, which yields a map 
    $\bar{\delta}: N\to I\otimes_{\bar{R}}N[1]$ in $\cD(R)$.
    Let 
    \begin{equation}
    \widetilde{N}:=\mathrm{Fib}(N\xrightarrow{\bar{\delta}}I\otimes_{\bar{R}}N[1])\in \cD(R).
    \end{equation}
The morphism $\widetilde{N}\to N$ in $\cD(R)$ induces a map in $\cD(\bar{R})$:
\begin{equation}\label{eqn:barR-otimes-Ntilde-to-N}
\bar{R}\otimes_R\widetilde{N}\to  N.
\end{equation} 
We claim that the map \eqref{eqn:barR-otimes-Ntilde-to-N} is an isomorphism. To see this, we compute the obstruction class $\mathfrak{o}(N)$ of $N$ as follows. By \cite[B.3 Lemma]{MR2028075} we can assume that $N$ is represented by a semi-free complex of $(N^\cdot, d)$ of $\bar{R}$-modules,  {\it i.e.}, there exists an increasing exhaustive filtration  
$0= F_{0} N^{\cdot} \subset F_{1} N^{\cdot}\subset \cdots \subset (N^\cdot, d)$ such that each quotient $F_{i+1}N^\cdot/F_{i}N^\cdot$ is isomorphic to a direct sum of DG-modules of the form $\bar{R}[n]$.
For each $i$,  pick a lift of the filtered $\bar {R}$-module  $F_{\bullet} N^{i}$ to a filtered $R$-module $F_{\bullet} \widetilde N^{i}$
such that $F_{j+1}\widetilde N^{i}/F_{j}\widetilde N^{i}$ is a free $R$-module, for every $j$.
 We can lift $d$ to a homomorphism of filtered $R$-modules 
$\widetilde d: \widetilde N^i \to  \widetilde N^{i+1}$ such that $\widetilde d$ takes $\widetilde{F}_{\bullet} \widetilde{N}^{\cdot}$ to $\widetilde{F}_{\bullet-1} \widetilde{N}^{\cdot+1}$. In general, $\widetilde d^2\ne 0$.  
Note that $\widetilde d^2$ factors through the projection $\widetilde N^i \to N^i$ and its image lies in  $I \otimes_{\bar{R}} N^{i+2}$. Moreover, since $\widetilde d^2$ commutes with $\widetilde d$, we get a morphism of complexes 
$\widetilde d^2: (N^\cdot, d)  \to  I \otimes_{\bar{R}} (N^\cdot, d)[2]$.
\begin{claim}
    The above morphism $\widetilde{d}^2: N \to I \otimes_{\bar{R}} N[2]$ is equal to $\mathfrak{o}(N)$.
\end{claim}
\begin{proof}
    
Consider the DG-algebra $\bar{R}^{dg} := (I \to R)$ coming from the resolution $0 \to I \to R \to \bar{R} \to 0$. Let us define a DG-$\bar{R}^{dg}$-module $N'$ whose underlying graded $R$-module is $\widetilde{N} \oplus I \otimes_{\bar{R}} N[1]$ and the differential in degree $i$ is given by $D:= \begin{pmatrix}
\widetilde{d} & (-1)^{i} \beta \\
(-1)^i \widetilde{d^2} & d
\end{pmatrix}$, where $\beta: I \otimes_{\bar{R}} N^i \mono \widetilde{N}^i$ is the kernel of $\widetilde{N}^i \to N^i.$ The action 
$I[1]$ on $N'$ is given by the lower triangular matrix whose only non-zero entry is given by $I[1] \otimes_{R} \widetilde{N} \simeq I \otimes_{\bar{R}} N[1]$.
 Note that by construction $N'$ is semi-free over $\bar{R}^{dg}.$ Consider the map $N' \to N$ which takes $(n, x) \in \widetilde{N}^i \oplus I \otimes_{\bar{R}} N^{i+1}$ to the image of $n$ under the map $\widetilde{N}^i \to N^i$. It is a map of DG-$\bar{R}^{dg}$-modules and, moreover, it is a quasi-isomorphism. Under the equivalence $\cD(\bar{R}) \simeq \cD(\bar{R}^{dg})$, the functor $\Psi: \cD(\bar{R}) \to \cD(\bar{R})$ corresponds to the functor $\Psi': \cD(\bar{R}^{dg}) \to \cD(\bar{R})$ given by $\Psi'(N) = \tau_{\le 1} (\bar{R} \otimes_{R} \bar{R}^{dg}) \otimes^{\mathbb L}_{\bar{R}^{dg}} N.$ Note that the natural map $\tau_{\le 1} (\bar{R} \otimes^{\mathbb{L}}_{R} \bar{R}^{dg}) \to \tau_{\le 1} (\bar{R} \otimes_{R} \bar{R}^{dg})$ in $\cD(\bar{R} \otimes (\bar{R}^{dg})^{op})$ is an isomorphism, and $\tau_{\le 1} (\bar{R} \otimes_{R} \bar{R}^{dg})$ is represented by $\bar{R} \oplus I[1]$ with the zero differential. Observe that the right action of $\bar{R}^{dg}$ has the property that $\bar{R} \otimes_{R} (\bar{R}^{dg})^{-1} \to I$ is an isomorphism.
Thus, $\Psi'(N)$ is identified with $C:= (N \oplus I \otimes_{\bar{R}} N[1], \bar{D})$, where $\bar{D}$ in degree $i$ is given by $\begin{pmatrix}
d & 0 \\
(-1)^i \widetilde{d^2} & d
\end{pmatrix}$. Denote $u: I \otimes_{\bar{R}} N[1] \to C$  the inclusion and by $\alpha: C \to N$ the projection. To complete the proof, it is enough to construct a quasi-isomorphism $f: I \otimes_{\bar{R}} N[2] \to \operatorname{Cone}(\alpha)$ such that the two morphisms $\mathfrak{o}(N)$ and $\widetilde{d^2} \circ f: N \to \operatorname{Cone}(\alpha)$ are equal. Consider the following diagram 
\[\begin{tikzcd}
	N && {\operatorname{Cone}(\alpha)} && {C[1]} && {N[1]} \\
	\\
	N && {I \otimes_{\bar{R}} N[2]}
	\arrow["{\mathfrak{o}(N)}", from=1-1, to=1-3]
	\arrow[Rightarrow, no head, from=1-1, to=3-1]
	\arrow[from=1-3, to=1-5]
	\arrow["{\alpha[1]}", from=1-5, to=1-7]
	\arrow["{\widetilde{d}^2}", from=3-1, to=3-3]
	\arrow["f", from=3-3, to=1-3]
	\arrow["{u[1]}", from=3-3, to=1-5]
	\arrow["0", from=3-3, to=1-7].
\end{tikzcd}\]
The composition $\alpha[1] \circ u[1]: I \otimes_{\bar{R}} N[2] \to N[1]$ is zero. Thus, there exists a map $f: I \otimes_{\bar{R}} N[2] \to \operatorname{Cone}(\alpha)$ satisfying $f \circ \widetilde{d}^2 = \mathfrak{o}(N)$. Moreover, $f$ is a quasi-isomorphism.     
\end{proof}
Let us now return to the proof of the isomorphism property of \eqref{eqn:barR-otimes-Ntilde-to-N}. 
Since $(N^{\cdot},d)$ is semi-free, the null-homotopy for  $\mathfrak{o}(N)$ determines a collection $h: N^i \to I \otimes_{\bar{R}} N^{i+1}$ with $h d +d h = \widetilde d^2$. Setting $\widetilde d' = \widetilde d - h:  \widetilde N^i \to  \widetilde N^{i+1}$ we have that $(\widetilde d')^2=0$. Then $\widetilde{N}$ is represented by the complex $(\widetilde N^\cdot, \widetilde d' )$. The isomorphism property of \eqref{eqn:barR-otimes-Ntilde-to-N} follows. 

This implies the lemma in the affine case. Indeed, if $\Ext^2_{\bar{R}}(N,N\otimes I)=0$, then $\mathfrak{o}(N)$ is homotopic to zero. Therefore, there exists $\widetilde{N}\in \cD(R)$ together with a map \eqref{eqn:barR-otimes-Ntilde-to-N} which is an isomorphism.

For the general case, observe that $\cD_{qc}(\mathcal{Y})=\lim \cD_{qc}(R)$ where the limit is taken over all flat maps $\Spec R\to \mathcal{Y}$. Thus applying the previous construction to each $R$, we obtain an obstruction class $\mathfrak{o}(\cF): \cF\to\cF\otimes\cI[2]$. Moreover, a choice of null-homotopy for $\mathfrak{o}(\cF)$ determines an object $\widetilde{\cF}\in\cD_{qc}(\cY)$ together with a map $\widetilde{\cF}|_{\cX}\to \cF$. By the local computations in the affine case, this map is an isomorphism. This completes the proof of the lemma. 
\end{proof}
Let us check that \eqref{restriction-map-i-to-i-1} is essentially surjective. Applying Lemma \ref{deformation-theory-thm} to $\alpha_i$ in \eqref{closed-embedding-iota-i}, it suffices to verify that for every $\cF\in\cD_{qc,[0,p-2]}\left((W(k)^{syn}\otimes\bF_p)_{v_1^{i-1}=0}\right)$, the group $\Ext^2(\cF,\cF\otimes\cI_i)=0$. Using \eqref{ideal-sheaf-iota-i}, we have that $\Ext^2(\cF,\cF\otimes\cI_i)=\Ext^2(\cF|_{W(k)^{syn}_{red}},\cF|_{W(k)^{syn}_{red}}\{-(i-1)(p-1)\})$. Note that $\cF|_{W(k)^{syn}_{red}}$ has weights $\leq p-2$, and $\cF|_{W(k)^{syn}_{red}}\{-(i-1)(p-1)\}$ has weights $\geq (i-1)(p-1)\geq p-1$ (since $i-1\geq 1$). Thus the vanishing for $\Ext^2$ follows from Lemma \ref{generalized-effectivity-vanishing-lemma}. 

For the last assertion of the proposition, it suffices to 
observe that the functor
\begin{equation}\label{restriction-map-r-lim}
    \cD_{qc,[0,p-2]}(W(k)^{syn}\otimes\bF_p)\to \limfrom\cD_{qc,[0,p-2]}((W(k)^{syn}\otimes\bF_p)_{v_1^{i}=0})
\end{equation}
is an equivalence by the definition of reduced locus. 
\end{proof}

\subsubsection{Step 3:}
We shall prove that the functor $(\Phi_{\mathrm{Maz}}\otimes \bF_p) \circ r^{-1}_{[0,p-2]}: \cD_{qc,[0,p-2]}(W(k)^{syn}_{red})  \to \mathscr{DMF}^{\mathrm{big}}_{[0,p-2]}(W(k))\otimes\bF_p$ is an equivalence. Although this result does not hold with $r_{[0,p-2]}$ replaced by $r_{[0,p-1]}$, some intermediate results hold in the larger range. Aimed at some geometric applications (see Remark \ref{rem:Mazur_vs_Deligne_Illusie}), we explain the results in full generality.

We start by showing that the functor $\Phi_{\mathrm{Maz}}\otimes\bF_p: \cD_{qc,[0,p-1]}(W(k)^{syn}\otimes\bF_p)\to  \mathscr{DMF}^{\mathrm{big}}_{[0,p-1]}(W(k))\otimes\bF_p$ factors through the restriction $r_{[0,p-1]}$ to the reduced locus $W(k)^{syn}_{red}$.\footnote{This assertion would be obvious if $[0,p-1]$ is replaced by $[0,p-2]$ because $r_{[0,p-2]}$ is an equivalence by \textit{Step 2}.} 
Since $k^{\cN}\otimes\bF_p$ (resp.~$\bA^1_-/\bG_m \otimes \bF_p$) is reduced, the morphism $\Cris: k^{\cN}\otimes\bF_p\to W(k)^{\cN}\otimes\bF_p$ (resp.~$\mathfrak{p}_{\bar{\dR}} \otimes \bF_p:\bA^1_-/\bG_m \otimes \bF_p \to W(k)^{\cN}\otimes\bF_p$) factors through $W(k)^{\cN}_{red}$. Thus we have the following diagram obtained from \eqref{Drinfeld-diagram} by tensoring with $\bF_p$:
\begin{equation}\label{Drinfeld-diagram-tensor-Fp}
    \begin{tikzcd}
    &\Spec (B\otimes\bF_p)/\bG_{m,k}\arrow[]{ld}[swap]{a}\arrow[]{rd}{\Reesparameter\circ a}&\\
    k^{\cN}\otimes\bF_p\arrow[dotted]{rr}{\Reesparameter}\arrow[]{rd}[swap]{\Cris}&& (\bA^1_-/\bG_m)\otimes\bF_p\arrow[]{ld}{\mathfrak{p}_{\bar{\dR}}}\\
    &W(k)^{\cN}_{red}&
\end{tikzcd}
\end{equation}
By Theorem \ref{Drinfeld-prop}, we have an isomorphism $\Psi_{\mathrm{Maz}}\otimes\bF_p: \Cris\circ a\xrightarrow{\sim} \mathfrak{p}_{\bar{\dR}}\circ t\circ a$ in diagram \eqref{Drinfeld-diagram-tensor-Fp} above. 
Consequently, this gives a functor 
\[
    \cD_{qc}(W(k)^{syn}_{red})\to \cD_{qc}(k^{syn}\otimes\bF_p)\times_{\cD_{qc}(\Spec (B\otimes\bF_p)/\bG_{m, k})}\cD_{qc}((\bA^1_-/\bG_m)\otimes\bF_p)\]
    sending $\cD_{qc,[0,p-1]}(W(k)^{syn}_{red})$ to the corresponding subcategory of the right-hand side (as in Remark \ref{remark-construction-functor-Psi-Mazur}). Combining this with \eqref{second-displayed-functor-Phi}, we obtain 
\begin{equation}\label{eqn:Phi-Maz-reduced}
\Phi^{\mathrm{Maz}}_{red}: \cD_{qc,[0,p-1]}(W(k)^{syn}_{red})\to\mathscr{D}\mathscr{MF}^{\mathrm{big}}_{[0,p-1]}(W(k))\otimes\bF_p
\end{equation}
together with an isomorphism $\Phi_{\mathrm{Maz}}\otimes\bF_p\simeq \Phi^{\mathrm{Maz}}_{red}\circ r_{[0,p-1]}$. Eventually in this step, we shall prove that $\Phi_{red}^{\mathrm{Maz}}$ is an equivalence in the range $[0,p-2]$.

Note that the construction of $\Phi^{\mathrm{Maz}}_{red}$ makes use of a particular choice of an isomorphism $ \Cris\circ a\simeq \mathfrak{p}_{\bar{\dR}}\circ t\circ a$  provided by $\Psi_{\mathrm{Maz}}\otimes\bF_p$.
We will make use of another isomorphism $\Psi_{\mathrm{DI}}: \Cris\circ a\xrightarrow{\sim} \mathfrak{p}_{\bar{\dR}}\circ t\circ a$ constructed as follows.\footnote{Here ``DI'' refers to Deligne--Illusie.} 
Let $C_2$ be the spectrum of the subalgebra of $k[v_+,v_-]/(v_+v_-)$ generated by $v_+^p$ and $v_-$, {\it i.e.},~$C_2:=\Spec k[v_+^p,v_-]/(v_+^pv_-)$.  
We endow the affine curve $C_2$ with a natural action of $\bG_{m,k}$ given by $\deg v_+^p=p$ and $\deg v_-=-1$. Recall from \cite[\S 5.16.10]{drinfeld2022prismatization}
 the morphism $C_2/\bG_{m,k}\to W(k)^{\cN}_{red}$, whose precomposition with $ k^{\cN}\otimes\bF_p\to C_2/\bG_{m,k}$ is $\Cris\otimes\bF_p$ (see also \eqref{map-C2-to-WkNygaard}). Let us recall how the map $C_2/\bG_{m,k} \to W(k)_{red}^{\cN}$ is constructed. Note $C_2/\bG_{m,k}(R)$ is the groupoid of diagrams 
 \begin{equation}\label{points-of-C_2/Gm}
     L \xrightarrow{v_{-}} R \xrightarrow{v_+^p} L^{\otimes p}
 \end{equation}  where $L$ is an invertible  $R$-module and $v_+^pv_{-}=0.$ To any such diagram we associate an admissible Cartier-Witt divisor $M \to W$ over $R$. 
 Here $M$ is defined to be the fiber product of $F_* W_R \xrightarrow{F_* [v_+^p]} F_* [L^{\otimes p}]$, where $[L] \in \Pic(W(R))$ is the Teichm\"uller representative of $L$ as in \cite[\S 3.11]{drinfeld2022prismatization}, and the Frobenius $[L] \to F_* [L^{\otimes p}]$. The map $M \to W$ is the restriction of $[L] \times_{R} F_* W_R \xrightarrow{([v_-], V)} W_R.$     We have a commutative diagram (the commutativity data are explained below)
 \begin{equation}\label{DI-commutative-diagram}
 \begin{tikzcd}
    &\Spec (B\otimes\bF_p)/\bG_{m,k}\arrow[]{ld}[swap]{a}\arrow[]{rd}{\Reesparameter\circ a}&\\
    k^{\cN}\otimes\bF_p\arrow[dotted]{rr}{\Reesparameter}\arrow[]{rd}{\widetilde{\mathfrak{p}}_{\mathrm{cris}}}\arrow[shift right]{rdd}[swap]{\Cris}&& (\bA^1_-/\bG_m)\otimes\bF_p\arrow[]{ld}[swap]{\widetilde{\mathfrak{p}}_{\bar{\dR}}}\arrow[]{ldd}{\mathfrak{p}_{\bar{\dR}}}\\
    &C_2/\bG_{m,k}\arrow[]{d}{}&\\
    &W(k)^{\cN}_{red}&
\end{tikzcd}
\end{equation}
The map $\widetilde{\mathfrak{p}}_{\mathrm{cris}}$ is given by the embedding $k[v_+^p,v_-]/(v_+^pv_-)\hookrightarrow k[v_+,v_-]/(v_+v_-)$, and the map $\widetilde{\mathfrak{p}}_{\bar{\dR}}$ is given by the map $k[v_+^p,v_-]/(v_+^pv_-)\to k[v_-]$. Since $v_+^p=p\cdot \frac{v_+^p}{p}$ is zero in $B\otimes\bF_p$, we obtain an isomorphism $\widetilde{\mathfrak{p}}_{\mathrm{cris}}\circ a\simeq \widetilde{\mathfrak{p}}_{\bar{\dR}}\circ t\circ a$. To see the commutativity of the bottom right triangle, note $\tilde{\mathfrak{p}}_{\bar{\dR}}: ((\bA^1_{-}/\bG_m)\otimes\bF_p) (R) \to (C_2/\bG_{m,k})(R)$ is given by taking $(L, v_-: L \to R) \in ((\bA^1_{-}/\bG_m) \otimes\bF_p)(R)$ to a diagram $L \xrightarrow{v_-} R \xrightarrow{0} L^{\otimes p}.$ Using the commutativity datum for the lower left triangle in \eqref{DI-commutative-diagram} from \cite[\S 7.8]{drinfeld2022prismatization} (which gives rise to a commutativity datum of the lower right triangle), we obtain an isomorphism $\Psi_{\mathrm{DI}}: \Cris\circ a\xrightarrow{\sim} \mathfrak{p}_{\bar{\dR}}\circ t\circ a$. 

The composition $\psi:=\Psi_{\mathrm{DI}}\circ(\Psi_{\mathrm{Maz}}\otimes\bF_p)^{-1}$ is an automorphism of the point $\mathfrak{p}_{\bar{\dR}}\circ t\circ a$. We now describe it explicitly. Recall that the composition of $\mathfrak{p}_{\bar{\dR}}\circ t\circ a$ with the projection $\Spec (B\otimes\bF_p)\to \Spec (B\otimes\bF_p)/\bG_{m,k}$ is given by the morphism $W_{B\otimes\bF_p}^{(F)}\oplus F_*W_{B\otimes\bF_p}\xrightarrow{(v_-,V)}W_{B\otimes\bF_p}$ of admissible $W_B$-modules. 
Recall from \cite[Proposition 5.2.1 (2)]{bhatt-lecture-notes} an isomorphism 
\begin{equation}\label{isom-Hom-FWB-Ker}
\Hom_{W_{B\otimes\bF_p}}(F_*W_{B\otimes\bF_p},W_{B\otimes\bF_p}^{(F)})\xrightarrow{\sim}\ker\left(W^{(F)}(B\otimes\bF_p)\xrightarrow{w_1} B\otimes\bF_p\right),
\end{equation}
where the map $w_1$ is the first Witt coordinate $w_1: W\to \bG_a$. The map in \eqref{isom-Hom-FWB-Ker} is the precomposition with Frobenius $F: W\to F_*W$, {\it i.e.},
\[\Hom_{W_{B\otimes\bF_p}}(F_*W_{B\otimes\bF_p},W_{B\otimes\bF_p}^{(F)})\xrightarrow{-\circ F} \Hom_{W_{B\otimes\bF_p}}(W_{B\otimes\bF_p},W_{B\otimes\bF_p}^{(F)})=W^{(F)}(B\otimes\bF_p).\]
Let $\{\frac{[v_+^p]}{p}\}$ be the image of $\frac{[v_+^p]}{p}\in W(B)$ inside $W(B\otimes\bF_p)$. Recall that by Lemma \ref{divisibility-W(B)-lemma}, the element $[v_+^p]$ is indeed divisible by $p$ inside $W(B)$. 
\begin{lm}\label{DI-Maz}
The map $\psi$, viewed as an automorphism of $W_{B\otimes\bF_p}^{(F)}\oplus F_*W_{B\otimes\bF_p}$, is given by the matrix $\begin{pmatrix}
    \Id&V(\{\frac{[v_+^p]}{p}\})\\
    0& \Id
\end{pmatrix}$, where the entry $V(\{\frac{[v_+^p]}{p}\})$ in the matrix is viewed as an element of $\Hom_{W_{B\otimes\bF_p}}(F_*W_{B\otimes\bF_p},W_{B\otimes\bF_p}^{(F)})$ via the isomorphism \eqref{isom-Hom-FWB-Ker}. 
\end{lm}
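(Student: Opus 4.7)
The strategy is to compute $\psi = \Psi_{\mathrm{DI}} \circ (\Psi_{\mathrm{Maz}} \otimes \bF_p)^{-1}$ by lifting both isomorphisms to explicit $W_{B\otimes\bF_p}$-linear maps between the presentations of $M_{\mathfrak{p}_{\mathrm{cris}},B\otimes\bF_p}$ and $M_{\mathfrak{p}_{\bar{\dR}},B\otimes\bF_p}$. By the proof of Theorem \ref{Drinfeld-prop}, $\Psi_{\mathrm{Maz}} \otimes \bF_p$ is induced from the matrix
\[
\alpha := \begin{pmatrix} \Id & w \\ 0 & F \end{pmatrix} : \mathscr{L}^{\sharp} \oplus W_{B\otimes\bF_p} \longrightarrow \mathscr{L}^{\sharp} \oplus F_*W_{B\otimes\bF_p},
\]
with $w = [v_+] - V(\{\tfrac{[v_+^p]}{p}\})$, where $\mathscr{L}$ denotes the invertible module over $\Spec(B\otimes\bF_p)$ corresponding to the canonical $\bG_m$-torsor. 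Tracing through the two commutativity data in diagram \eqref{DI-commutative-diagram} and pulling back the isomorphism \eqref{eqn:preliminary-isom-admissible-modules-PsiDI} along $a$ yields an analogous presentation-level lift of $\Psi_{\mathrm{DI}}$ given by
\[
\beta := \begin{pmatrix} \Id & [v_+] \\ 0 & F \end{pmatrix};
\]
here $[v_+]$ is interpreted as a $W$-linear map $W_{B\otimes\bF_p} \to \mathscr{L}^{\sharp}$ using that $v_+^p = 0$ in $B\otimes\bF_p$, so $[v_+] \in \ker F = W^{(F)}$.

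Since $\alpha$ and $\beta$ share their bottom row and their top-left entry, the induced maps on the subobject $\mathscr{L}^{\sharp} \subset M_{\mathfrak{p}_{\bar{\dR}}, B\otimes\bF_p}$ and on the quotient $F_*W_{B\otimes\bF_p}$ coincide for the two. Therefore $\psi$ is a unipotent upper-triangular automorphism of $\mathscr{L}^{\sharp} \oplus F_*W_{B\otimes\bF_p}$, determined by a single element $f \in \Hom_{W_{B\otimes\bF_p}}(F_*W_{B\otimes\bF_p}, \mathscr{L}^{\sharp})$.

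To compute $f$, pick $(\ell, y) \in \mathscr{L}^{\sharp} \oplus F_*W_{B\otimes\bF_p}$ and (working fpqc-locally) $b \in W_{B\otimes\bF_p}$ with $F(b) = y$. Then $(\Psi_{\mathrm{Maz}}\otimes\bF_p)^{-1}(\ell,y)$ is represented by $(\ell - wb, b)$ in the presentation, so
\[
\psi(\ell, y) = \beta(\ell - wb, b) = \bigl(\ell + ([v_+] - w)\cdot b,\ y\bigr) = \bigl(\ell + V(\{\tfrac{[v_+^p]}{p}\}) \cdot b,\ y\bigr).
\]
This is independent of the choice of $b$ by the projection formula $V(x) \cdot c = V(x \cdot F(c))$ applied to $c = b_1 - b_2 \in \ker F$. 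Under the identification \eqref{isom-Hom-FWB-Ker}, which sends a $W$-linear map $F_*W \to W^{(F)}$ to its precomposition with $F$ evaluated at $1$, our $f$ corresponds to $V(\{\tfrac{[v_+^p]}{p}\}) \in W^{(F)}(B\otimes\bF_p)$. The fact that $V(\{\tfrac{[v_+^p]}{p}\})$ lies in $\ker w_1 \cap \ker F$ is immediate: $V(-)$ has vanishing first Witt coordinate, and $FV = p = 0$ in characteristic $p$.

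The main technical subtlety lies in justifying the explicit form $\beta$ for $\Psi_{\mathrm{DI}}$: this requires carefully assembling the two commutativity data of diagram \eqref{DI-commutative-diagram} (from \cite[\S 7.8]{drinfeld2022prismatization}) and tracking how the isomorphism \eqref{eqn:preliminary-isom-admissible-modules-PsiDI} descends along the natural map $\Spec(B\otimes\bF_p) \to C$, using the canonical identification of $M_{C_2}|_{\widetilde{\mathfrak{p}}_{\bar{\dR}}}$ with $M_{\mathfrak{p}_{\bar{\dR}}}$ arising from the vanishing of $[v_+^p]$ on $\bA^1_-/\bG_m \otimes \bF_p$.
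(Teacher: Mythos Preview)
Your proposal is correct and follows essentially the same approach as the paper: you identify $\Psi_{\mathrm{Maz}}\otimes\bF_p$ and $\Psi_{\mathrm{DI}}$ with the same presentation-level matrices (the paper calls them $Y$ and $X$ respectively) and then compute $\psi$ from the relation $\psi\circ\Psi_{\mathrm{Maz}}=\Psi_{\mathrm{DI}}$. The only difference is cosmetic: the paper phrases the endgame as ``$\psi Y = X$ is immediate from the construction of \eqref{isom-Hom-FWB-Ker}'', whereas you unwind this identity element-wise by choosing a lift $b$ with $F(b)=y$ and invoking the projection formula for $V$.
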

\begin{proof}
Recall from \eqref{eqn:preliminary-isom-admissible-modules-PsiDI} that the isomorphism $\Psi_{\mathrm{DI}}$: 
\[\coker\left(W_{B\otimes\bF_p}^{(F)}\to W_{B\otimes\bF_p}^{(F)}\oplus W_{B\otimes\bF_p}\right)\xrightarrow{\sim} W_{B\otimes\bF_p}\underset{F_*W_{B\otimes\bF_p}}{\times}F_*W_{B\otimes\bF_p}=W_{B\otimes\bF_p}^{(F)}\oplus F_*W_{B\otimes\bF_p}\]
is given by the matrix $X:=\begin{pmatrix}
  \Id &    [v_+]\\
 0&  F
\end{pmatrix}$. 
Recall from the proof of Theorem \ref{Drinfeld-prop} that $\Psi_{\mathrm{Maz}}\otimes\bF_p$ is given by 
$Y:=\begin{pmatrix}
    1& f\\
    0&F
\end{pmatrix}$, 
where $f:=[v_+]-V(\{\frac{[v_+^p]}{p}\})$. Our desired lemma amounts to the statement that $\psi Y=X$. This is immediate from our construction of the isomorphism \eqref{isom-Hom-FWB-Ker}.
\end{proof}
\begin{cor}
The restrictions of $\Psi_{\mathrm{DI}}$ and $\Psi_{\mathrm{Maz}}\otimes\bF_p$ to the closed subscheme $\left(\Spec k[v_+,v_-]/(v_+^{p-1},v_+v_-)\right)/\bG_{m,k}\hookrightarrow (\Spec B\otimes\bF_p)/\bG_{m,k}$ are equal. 
\end{cor}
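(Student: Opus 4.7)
The plan is to invoke Lemma \ref{DI-Maz}, according to which the discrepancy $\psi=\Psi_{\mathrm{DI}}\circ(\Psi_{\mathrm{Maz}}\otimes\bF_p)^{-1}$ is an automorphism of $W_{B\otimes\bF_p}^{(F)}\oplus F_*W_{B\otimes\bF_p}$ represented by the strictly upper-triangular matrix whose only off-diagonal entry is $V(\{\tfrac{[v_+^p]}{p}\})$. The corollary thus reduces to showing that this matrix entry vanishes after restriction along the closed embedding $(\Spec k[v_+,v_-]/(v_+^{p-1},v_+v_-))/\bG_m\hookrightarrow(\Spec B\otimes\bF_p)/\bG_m$.

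Via the identification \eqref{isom-Hom-FWB-Ker}, the matrix entry is encoded by the Witt vector $V(\{\tfrac{[v_+^p]}{p}\})\in W^{(F)}(B\otimes\bF_p)$; since the Verschiebung $V$ is injective on Witt vectors of any ring, it suffices to prove that $\tfrac{[v_+^p]}{p}\in W(B)$ maps to zero in $W(k[v_+,v_-]/(v_+^{p-1},v_+v_-))$. Equivalently, I must show that each of its Witt coordinates $z_n\in B$ maps to zero in the target ring $k[v_+,v_-]/(v_+^{p-1},v_+v_-)$.

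The crux will be a grading argument combined with Lemma \ref{Bi-free-module}. The Teichm\"uller $[v_+^p]$, and hence $\tfrac{[v_+^p]}{p}$, is $\bG_m$-homogeneous inside $W(B)$ for the grading inherited from the one on $B$ (with $\deg v_+=1$): its ghost components are $g_n=v_+^{p^{n+1}}/p$, each of degree $p^{n+1}$, which forces the $n$-th Witt coordinate $z_n$ to be a homogeneous element of $B$ of degree $p^{n+1}$. By Lemma \ref{Bi-free-module}, the graded piece $B^{p^{n+1}}$ is the free $W(k)$-module of rank one generated by $\tfrac{p^{[p^{n+1}]}v_+^{p^{n+1}}}{p^{p^{n+1}}}$. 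The $\bG_m$-equivariant surjection $B\otimes\bF_p\twoheadrightarrow k[v_+,v_-]/(v_+^{p-1},v_+v_-)$ realizing the closed embedding is a graded $k$-algebra homomorphism whose target vanishes in every positive degree $\geq p-1$ (as $v_+^i=0$ there for $i\geq p-1$); since $p^{n+1}\geq p>p-1$ for every $n\geq 0$, each such generator -- and therefore each $z_n$ -- maps to zero, as required.

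I do not anticipate a serious obstacle. The only point deserving a brief verification is that the closed embedding in question corresponds to the unique graded $k$-algebra surjection annihilating the generators of $B^i$ for $i\geq p-1$, which is forced by $\bG_m$-equivariance together with the vanishing of the target in those degrees.
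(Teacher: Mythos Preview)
Your proposal is correct and takes essentially the same approach as the paper: both reduce, via Lemma \ref{DI-Maz}, to the vanishing of the image of $\tfrac{[v_+^p]}{p}\in W(B)$ in $W\bigl(k[v_+,v_-]/(v_+^{p-1},v_+v_-)\bigr)$. The paper states this vanishing as a fact; your grading argument (showing each Witt coordinate $z_n$ is homogeneous of degree $p^{n+1}\geq p>p-1$ and hence lands in a zero graded piece of the target) is a clean way to supply the omitted justification.
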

\begin{proof}
This follows from the fact that the image of $\frac{[v_+^p]}{p}\in W(B)$ inside $W\left(k[v_+,v_-]/(v_+^{p-1},v_+v_-)\right)$ is zero.    
\end{proof}
Using the construction from Remark \ref{remark-construction-functor-Psi-Mazur} once again, we obtain a functor 
\begin{equation}\label{eq:Deligne_Illusie_functor}
  \Phi_{\mathrm{DI}}: \cD_{qc,[0,p-1]}(W(k)^{syn}_{red})\to\mathscr{D}\mathscr{MF}^{\mathrm{big}}_{[0,p-1]}(W(k))\otimes\bF_p.  
\end{equation}
\begin{cor}\label{DI=Mar}
The restrictions of $\Phi^{\mathrm{Maz}}_{red}$ and $\Phi_{\mathrm{DI}}$ to the subcategory $\cD_{qc,[0,p-2]}(W(k)^{syn}_{red})$ are isomorphic.     
\end{cor}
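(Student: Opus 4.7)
Plan. By construction, both functors $\Phi^{\mathrm{Maz}}_{red}$ and $\Phi_{\mathrm{DI}}$ arise from the recipe of Remark \ref{remark-construction-functor-Psi-Mazur} applied to the two isomorphisms $\Psi_{\mathrm{Maz}}\otimes\bF_p$ and $\Psi_{\mathrm{DI}}$ between $\Cris\circ a$ and $\mathfrak{p}_{\bar{\dR}}\circ t\circ a$, which differ by the explicit automorphism $\psi$ from Lemma \ref{DI-Maz}. Consequently, for every $\cF\in\cD_{qc,[0,p-1]}(W(k)^{syn}_{red})$, the images $\Phi^{\mathrm{Maz}}_{red}(\cF)$ and $\Phi_{\mathrm{DI}}(\cF)$ share the same underlying complex $\cM=\rho_{\bar{\dR}}^*j_\cN^*\cF$, while their Frobenius isomorphisms $\varphi_{\mathrm{Maz}},\varphi_{\mathrm{DI}}\colon F^*(\cM|_{v_-=p})\xrightarrow{\sim}\cM|_{v_-=1}$ differ precisely by the automorphism of $\cM|_{v_-=1}$ induced by the action of $\psi$ on the pullback $(\mathfrak{p}_{\bar{\dR}}\circ t\circ a)^*\cF\in\cD_{qc}(\Spec B\otimes\bF_p/\bG_m)$.

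It therefore suffices to show that, for $\cF\in\cD_{qc,[0,p-2]}(W(k)^{syn}_{red})$, the induced automorphism of $(\mathfrak{p}_{\bar{\dR}}\circ t\circ a)^*\cF$ equals the identity. The Corollary preceding the statement gives $\psi\equiv\Id$ after restriction along the closed embedding $\iota\colon\Spec k[v_+,v_-]/(v_+^{p-1},v_+v_-)/\bG_m\hookrightarrow\Spec B\otimes\bF_p/\bG_m$; equivalently, the Witt coordinates of the off-diagonal entry $V(\{\frac{[v_+^p]}{p}\})$ of $\psi-\Id$ all lie in the graded ideal of $\iota$, which is concentrated in grading degrees $\geq p-1$ and generated by $\gamma_n(v_+)$ for $n\geq p-1$. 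The core task is to combine this vanishing modulo the ideal of $\iota$ with the Hodge--Tate weight restriction on $\cF$ to conclude that the full automorphism on the pullback is trivial.

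The expected mechanism is a grading-shift argument: multiplication by the Witt vector $V(\{\frac{[v_+^p]}{p}\})$ through the admissible $W$-module structure raises the internal $\bG_m$-grading of the pullback by at least $p-1$, and the mod $p$ analog of Lemma \ref{effectivity-lemma} then forces the shifted image of a weight-$[0,p-2]$ pullback to vanish in the relevant range. The main obstacle is to rigorously justify this shift in the derived setting, carefully tracking the $\bG_m$-equivariance through the admissible-module formalism of \S\ref{subsec:preliminary-syntomification-section}. An alternative route avoids the explicit grading analysis by invoking an enhanced full-faithfulness property: if one shows that on $\cD_{qc,[0,p-2]}(k^\cN\otimes\bF_p)$ the composite pullback $(\iota\circ a)^*$ is faithful enough to detect natural isomorphisms between $\Cris^*$ and $t^*\rho_{\bar{\dR}}^*$, then the equality of the two $\Psi$'s along $\iota$ from the preceding corollary forces $\Phi^{\mathrm{Maz}}_{red}\simeq\Phi_{\mathrm{DI}}$ on $\cD_{qc,[0,p-2]}$ and concludes the proof.
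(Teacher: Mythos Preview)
Your outline is correct and matches what the paper intends: both functors share the same underlying $\cM=\rho_{\bar\dR}^*\cF$, and the two Frobenius structures differ by the action of the automorphism $\psi=\Psi_{\mathrm{DI}}\circ(\Psi_{\mathrm{Maz}}\otimes\bF_p)^{-1}$; the preceding corollary gives $\psi|_{\Spec k[v_+,v_-]/(v_+^{p-1},v_+v_-)}=\Id$, and one must deduce that the resulting automorphism of $\Cris^*\cF\simeq t^*\cM$ in $\cD_{qc,[0,p-2]}(k^\cN\otimes\bF_p)$ is the identity. The paper gives no explicit proof, treating it as immediate; your route (b) is exactly the intended mechanism.

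The gap in your proposal is that you do not carry out route (b). Here is the missing step. Write $\bar A=k[v_+,v_-]/(v_+v_-)$ and let $a\iota$ be the closed immersion $\Spec\bar A/(v_+^{p-1})/\bG_m\hookrightarrow k^\cN\otimes\bF_p$. One shows, exactly as in the proof of Theorem~\ref{thm-crho-equal-b-pullback}, that $(a\iota)^*$ is fully faithful on $\cD_{qc,[0,p-2]}(k^\cN\otimes\bF_p)$. Indeed, for effective $\cN$ the fiber of $\cN\to(a\iota)_*(a\iota)^*\cN$ is $\cN\otimes^L_{\bar A}J$ with $J=(v_+^{p-1})\subset\bar A$. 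Since $v_-$ annihilates $J$ and $J$ is free of rank one over $k[v_+]$ with generator in grading degree $p-1$, the projection formula gives $\cN\otimes^L_{\bar A}J\simeq i_{+*}\bigl((i_+^*\cN)\text{ shifted by }p{-}1\bigr)$, where $i_+\colon\bA^1_+\otimes\bF_p\hookrightarrow k^\cN\otimes\bF_p$. By Remark~\ref{HT-weights-kN-remark} the pullback $i_+^*\cN$ lies in $\cD_{qc,[0,\infty)}(\bA^1_+/\bG_m\otimes\bF_p)$, so $(\cN\otimes^L_{\bar A}J)^i=0$ for $i\leq p-2$. Hence $w_{\leq p-2}\cN\xrightarrow{\sim}w_{\leq p-2}(a\iota)_*(a\iota)^*\cN$, and the adjunction argument from the proof of Theorem~\ref{thm-crho-equal-b-pullback} gives full faithfulness of $(a\iota)^*$ on $\cD_{qc,[0,p-2]}$. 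Since the automorphism $\beta$ of $t^*\cM$ descended from $\psi$ via full faithfulness of $a^*$ satisfies $(a\iota)^*\beta=\iota^*(a^*\beta)=\iota^*(\text{action of }\psi)=\Id$, full faithfulness of $(a\iota)^*$ forces $\beta=\Id$, and the two functors agree.

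One small imprecision in your write-up: you phrase the goal as showing that the automorphism of $(\mathfrak p_{\bar\dR}\circ t\circ a)^*\cF$ itself is the identity. That is stronger than needed and may well be false; what you actually need (and what the argument above proves) is that the \emph{descended} automorphism of $t^*\cM\in\cD_{qc,[0,p-2]}(k^\cN\otimes\bF_p)$ is the identity. Your route (a), the direct grading-shift analysis through the admissible-module formalism, is morally the same computation but is harder to make precise; route (b) is the clean way to package it.
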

The following proposition will complete the proof of the main Theorem \ref{main-thm}.
\begin{pr}\label{prop:DI_is_an_equiv}
$\Phi_{\mathrm{DI}}$ is an equivalence of categories.     
\end{pr}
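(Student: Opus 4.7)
My plan is to prove that $\Phi_{\mathrm{DI}}$ is an equivalence by combining the explicit geometric description of $W(k)^{syn}_{red}$ from \cite[\S 7]{drinfeld2022prismatization} with the concrete linear-algebraic model of $\mathscr{DMF}^{\mathrm{big}}_{[0,p-1]}(W(k))\otimes\bF_p$, and then constructing an explicit quasi-inverse $\Psi$ to $\Phi_{\mathrm{DI}}$ and verifying compatibility.

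First, I would unpack the source. As recalled in the proof of Lemma \ref{generalized-effectivity-vanishing-lemma}, $W(k)^{\cN}_{red}$ is the transversal union of the Hodge--Tate component $D_{HT}$ and the de Rham component $D_{\dR}$ along $D_{Hod}$, and $W(k)^{syn}_{red}$ is obtained by further identifying two open copies of $D_{und}=B\bG_m^{\sharp}$ via the arithmetic Frobenius. Using Lemma \ref{lemma:essential-image-restriction-functor-fibre-product} together with an analogous essential-image description for the syntomic gluing, this identifies $\cD_{qc,[0,p-1]}(W(k)^{syn}_{red})$ with the equalizer of the two natural functors from the fiber product $\cD_{qc,[0,p-1]}(D_{HT}) \times_{\cD_{qc}(D_{Hod})} \cD_{qc,[0,p-1]}(D_{\dR})$ to $\cD_{qc}(D_{und})$ given by direct restriction and by arithmetic Frobenius pullback. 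The weight conditions can be read off via the embeddings $\cD_{qc}(D_{HT}) \subset \cD_{gr}(\cA_1)$, $\cA_1 = k\{v_+, D\}/(Dv_+ - v_+D - 1)$, and $\cD_{qc}(D_{\dR}) \subset \cD_{gr}(k[v_-, \Theta])$ from \cite[\S 6.5]{bhatt-lecture-notes}.

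Next, I would unpack the target: an object of $\mathscr{DMF}^{\mathrm{big}}_{[0,p-1]}(W(k))\otimes\bF_p$ is a graded $k[v_-]$-module $F^{\bullet}$ with $F^i$ acyclic for $i>p-1$ and $v_-$ a quasi-isomorphism in degrees $\le 0$, together with an isomorphism $\varphi: F^*(F^{\bullet}|_{v_-=0}) \iso F^{\bullet}|_{v_-=1}$. This is exactly the Deligne--Illusie datum of a $(p-1)$-step filtration with Frobenius descent. I would then construct an explicit $\Psi$: given $(F^{\bullet}, \varphi)$, the filtered object $F^{\bullet}$ yields a sheaf on $D_{\dR}$; its associated graded, pulled back via Frobenius and using $\varphi$, yields a sheaf on $D_{HT}$; the restrictions to $D_{Hod}$ agree via the comparison between Hodge and conjugate filtrations in characteristic $p$; and $\varphi$ provides the arithmetic Frobenius gluing on $D_{und}$, which descends the data to $W(k)^{syn}_{red}$.

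Finally, I would verify $\Psi \circ \Phi_{\mathrm{DI}} \simeq \mathrm{id}$ and $\Phi_{\mathrm{DI}} \circ \Psi \simeq \mathrm{id}$ by directly tracing through the construction of $\Phi_{\mathrm{DI}}$ from the commutativity datum for diagram \eqref{DI-commutative-diagram}, using the factorization through $C_2/\bG_m$. The main obstacle will be matching the Frobenius twist inherent in the arithmetic Frobenius identification of the two copies of $D_{und}$ with the Frobenius pullback appearing in the definition of $\mathscr{DMF}^{\mathrm{big}}$. The weight restriction $[0,p-1]$ is critical here: it ensures that the only relevant degrees of the graded $B\otimes\bF_p$-module structure are $<p$, where $B^i \otimes \bF_p = A^i\otimes \bF_p$ by Lemma \ref{Bi-free-module}, so no new data coming from the divided powers $v_+^p/p$ intervenes and the passage between the two sides is an isomorphism rather than merely a map. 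This also explains, in view of Corollary \ref{DI=Mar}, why $\Phi^{\mathrm{Maz}}_{red}$ and $\Phi_{\mathrm{DI}}$ agree only in weights $[0,p-2]$ and why the overall functor $\Phi_{\mathrm{Maz}}$ fails to be an equivalence in weight $p-1$.
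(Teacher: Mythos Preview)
Your high-level strategy matches the paper's: describe $\cD_{qc,[0,p-1]}(W(k)^{syn}_{red})$ via the components $D_{HT}$, $D_{\dR}$, $D_{Hod}$, $D_{und}$ and match against the target. The gap is in your construction of $\Psi$. You assert that ``the filtered object $F^{\bullet}$ yields a sheaf on $D_{\dR}$'' and that the associated graded ``yields a sheaf on $D_{HT}$'', but objects of $\cD_{qc}(D_{\dR}) \subset \cD_{gr}(k[v_-,\Theta])$ carry a Sen operator $\Theta$, and objects of $\cD_{qc}(D_{HT}) \subset \cD_{gr}(\cA_1)$ carry a $D$-module structure; a bare filtration or its associated graded does not determine these, and you have not explained how to supply them. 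The actual content of the paper's proof is precisely that in the weight range $[0,p-1]$ this extra structure is forced: Lemma~\ref{lemma-equiv-BGm-HT-C2locus} and Corollary~\ref{Coro-Dqc-DHT-equiv-DHod} give $\cD_{qc,[0,p-1]}(D_{HT}) \simeq \cD_{qc,[0,p-1]}(B\bG_m)$, so the $D$-action is trivial, and Lemma~\ref{Dqc-DdR-as-a-fibre-product} gives $\cD_{qc,[0,p-1]}(D_{\dR}) \simeq \cD_{qc,[0,p-1]}(\bA^1_-/\bG_m) \times_{\cD(k)} \cD_{qc}(D_{und})$, so $\Theta$ is recovered from the filtration together with the restriction to $D_{und}$. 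These two collapses, combined via the abstract Lemma~\ref{category-fibre-product-coequalizer-lemma}, are what reduce the equalizer description of $\cD_{qc,[0,p-1]}(W(k)^{syn}_{red})$ to exactly $\mathscr{DMF}^{\mathrm{big}}_{[0,p-1]}(W(k))\otimes\bF_p$. Without them your $\Psi$ is not defined.

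Your explanation of the role of the weight bound is also misplaced. The equality $B^i\otimes\bF_p = A^i\otimes\bF_p$ for $i<p$ from Lemma~\ref{Bi-free-module} is used for $\Phi_{\mathrm{Maz}}$ (via Lemma~\ref{effectivity-lemma}), not for $\Phi_{\mathrm{DI}}$, which is built through $C_2/\bG_m$ (diagram~\eqref{DI-commutative-diagram}) rather than $\Spf B/\bG_m$. For $\Phi_{\mathrm{DI}}$ the restriction to weights in $[0,p-1]$ enters exactly through the two structural collapses above: the operators $D^p$ and $\Theta$ both shift grading by $p$, so on objects concentrated in a window of width at most $p$ they have no room to act nontrivially.
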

\begin{proof}
Let us first outline the argument. Recall from the proof of Lemma \ref{generalized-effectivity-vanishing-lemma} a fully faithful embedding $$\cD_{qc} (W(k)^{syn}_{red}) \mono \cD_{qc}(D_{\HT})\underset{\cD_{qc}(D_{und})\times \cD_{qc}(D_{Hod})}{\times}\cD_{qc}(D_{\dR}).$$ The first part of the proof consists of showing that the restriction $\cD_{qc, [0, p-1]} (D_{\HT}) \to \cD_{qc, [0, p-1]} (D_{Hod})$ is an equivalence (see Corollary \ref{Coro-Dqc-DHT-equiv-DHod}). This yields a fully faithful embedding
\begin{equation}\label{embedding-sheaves-red-locus}
\cD_{qc, [0, p-1]} (W(k)^{syn}_{red})) \mono \operatorname{Eq}
\begin{tikzcd}\Big(\cD_{qc,[0,p-1]}(D_{\dR})\arrow[shift left]{r}{\widetilde{i}_1^*}\arrow[shift right]{r}[swap]{T^*\circ \mathfrak{p}_{\bar{\dR}}^*}&\cD_{qc}(D_{und})\Big)\end{tikzcd},
\end{equation}  
where $\widetilde{i}_1^*$ is the restriction and $T$ is the composition $B\bG_{m, \bF_p}^{\sharp} \otimes k \xrightarrow{\id \times \operatorname{Frob}} B\bG_{m, \bF_p}^{\sharp} \otimes k \to B\bG_{m, k} \to (\bA^1_{-}/\bG_m) \otimes \bF_p.$ 
Next, we show that pullbacks along $\mathfrak{p}_{\bar{\dR}}$ and the inclusion $D_{und} \mono D_{\dR}$ induce an equivalence $\cD_{qc, [0, p-1]} (D_{\dR}) \iso \cD_{qc, [0, p-1]} ((\bA^1_{-}/\bG_m) \otimes \bF_p) \times_{\cD(k)} \cD_{qc} (D_{und})$, see Lemma \ref{Dqc-DdR-as-a-fibre-product}.
This, together with a category theory result, Lemma \ref{category-fibre-product-coequalizer-lemma}, 
identifies  \eqref{embedding-sheaves-red-locus} with 
\begin{equation}\label{embedding-sheaves-red-locus-bis}
\cD_{qc, [0, p-1]} (W(k)^{syn}_{red})) \mono \mathrm{Eq}\begin{tikzcd}\Big(\cD_{qc,[0,p-1]}((\bA^1_-/\bG_m) \otimes \bF_p)\arrow[shift left]{r}{{i}_1^*}\arrow[shift right]{r}[swap]{F^*\circ i_0^*}&\cD(k)\Big)\end{tikzcd}.
\end{equation}  
The functor $\Phi_{DI}$ is isomorphic  to \eqref{embedding-sheaves-red-locus-bis}. This proves fully faithfulness of   $\Phi_{DI}$.
For the essential surjectivity we use Lemma \ref{lemma:essential-image-restriction-functor-fibre-product}. 

Let us explain the details. 
Consider the maps $ (C_2)_{v_-=0}/\bG_{m,k}\overset{f'}{\twoheadrightarrow}  D_{\HT}=(\bA^{1,\dR}_+/\bG_m)\otimes\bF_p\xrightarrow{f} B\bG_{m, k}$ and a section $B\bG_{m, k}=(C_2)_{v_+^p=v_-=0}/\bG_{m,k}\overset{g}{\hookrightarrow} (C_2)_{v_-=0}/\bG_{m,k}$. We first show the following.  
\begin{lm}\label{lemma-equiv-BGm-HT-C2locus}
    The pullback functors along $f$ and $f'$ induce equivalences\footnote{By definition, $\cD_{qc,[0,p-1]}\left((C_2)_{v_-=0}/\bG_m\right)$ is a full subcategory of $\cD_{qc}\left((C_2)_{v_-=0}/\bG_m\right)$ formed of objects whose pullbacks to $\bA^1_+/\bG_m\otimes\bF_p$ have weights in $[0,p-1]$.}
$$\cD_{qc,[0,p-1]}(B\bG_{m, k})\xrightarrow{\sim} \cD_{qc,[0,p-1]}(D_{\HT})\xrightarrow{\sim} \cD_{qc,[0,p-1]}\left((C_2)_{v_-=0}/\bG_{m,k}\right).$$ 
The inverse to the composite is given by $g^*$. 
\end{lm}
\begin{proof}
Since $f\circ f' \circ g=\Id$, 
we have that $g^*\circ {f'}^*\circ  f^*\simeq \Id$. Thus, it suffices to check that $f^*$ and ${f'}^* \circ f^*$ are equivalences. 

Let us show that $f^*$ is fully faithful. Let us recall from \cite[\S 6.5.4]{bhatt2022prismatization} that the category $\cD_{qc}(D_{\HT})=
\cD_{qc}(\bG_a^{dR}/\bG_{m,k})$ is identified with the full subcategory $\cD_{gr,D^p\text{-nilp}}(k[D^p,v_+^p])$ of the derived category $\cD_{gr}(k[D^p, v_+^p])$ of graded modules over the polynomial algebra $k[D^p,v_+^p]$ with $\deg v_+^p=-\deg D^p=p$. Objects of  $\cD_{gr,D^p\text{-nilp}}(k[D^p,v_+^p])$ consist of complexes $\cM$ such that the action of $D^p$ on $\bigoplus\limits_iH^i(\cM)$ is locally nilpotent.  The natural map $\bG_a^{dR} := \bG_{a, k}/\bG_{a, k}^{\sharp} \to \bG_{a, k}/\alpha_p = F_* \bG_{a, k}$ coming from the factorization $\bG_{a, k}^{\sharp} \to \alpha_p \to \bG_{a, k}$ exhibits the source as the $BF_* \bG_{a, k}^{\sharp}$-torsor over the target. A flat lift of $\bG_{a, k}$ to $W(k)$ together with a lift of the Frobenius
 splits the torsor; \cite[Remark 5.13]{bhatt2022prismatization}. The standard choice of the Frobenius lift gives a $\bG_{m, k}$-equivariant equivalence $\bG_a^{dR} \simeq F_*\bG_{a, k} \oplus BF_*\bG_{a, k}^{\sharp}$ and $\cD_{qc}((F_*\bG_{a, k} \oplus BF_*\bG_{a, k}^{\sharp})/\bG_{m, k}) \simeq \cD_{gr,D^p\text{-nilp}}(k[D^p,v_+^p])$.  Under the equivalence above, $\cD_{qc,[0,p-1]}(D_{\HT})$ is identified with the subcategory $\cD_{gr,[0,p-1]}(k[v_+^p,D^p])$ of $\cD_{gr,D^p\text{-nilp}}(k[D^p,v_+^p])$ that consists of objects $\cM=\bigoplus\limits_i\cM_i$ with $\cM_i$ acyclic for $i\leq 0$ and $\mathrm{cone}(\cM_{i-p}\xrightarrow{v_+^p}\cM_i)$ acyclic for $i\geq p$. 

The category $\cD_{qc,[0,p-1]}(B\bG_{m, k})$ is identified with the full subcategory of the derived category $\cD_{gr}(k)$ of graded complexes $C_{\bullet}=\bigoplus\limits_iC_i$ such that for any $i\notin[0,p-1]$, the grading degree $i$ component $C_i$ is acyclic. Under the above identifications, the functor $f^*$ sends $C_i$ to $C_i[v_+^p]:= C_i \otimes_k k[v_{+}^p]$ with the trivial action of $D^p$. The complex $\mathrm{RHom}_{\cD_{gr}(k[D^p,v_+^p])}(C_i[v_+^p],C'_j[v_+^p])$ is computed by \begin{equation}\label{eq:133}
\mathrm{Fib}\left(\mathrm{RHom}_{\cD_{gr}(k[v_+^p])}(C_i[v_+^p],C'_j[v_+^p])
\xrightarrow{\ad_{D^p}}\mathrm{RHom}_{\cD_{gr}(k[v_+^p])}(C_i[v_+^p],C'_j[v_+^p](-p))\right),
\end{equation}
where $(-p)$ stands for the degree shift, {\it i.e.},~$\cM(-p)_i=\cM_{i-p}$. 

If $i,j\in [0,p-1]$, then $\mathrm{RHom}_{\cD_{gr}(k[v_+^p])}(C_i[v_+^p],C'_j[v_+^p](-p))=0$, and therefore by \eqref{eq:133} we get
\begin{equation}\label{eq:12}
\mathrm{RHom}_{\cD_{gr}(k)}(C_i,C'_j)\xrightarrow{\sim} \mathrm{RHom}_{\cD_{gr}(k[v_+^p])}(C_i[v_+^p],C'_j[v_+^p]).
\end{equation}
This proves fully faithfulness of $f^*$. Formula \eqref{eq:12} shows fully faithfulness of ${f'}^* \circ f^*$.

Now we show essential surjectivity. It is enough to show that, for every  $i\in [0,p-1]$ and $\cM\in \cD_{gr,[0,p-1]}(k[D^p,v_+^p])$ such that 
the grading degree $i'$ part $\cM_{i'}\in \cD(k)$ of $\cM$ is acyclic for $i'\not\equiv i\mod p$, 
$\cM$ is quasi-isomorphic to $\cM_i[v_+^p]$.
Let $\tilde \cM$ be a graded complex  of $k[D^p,v_+^p])$-modules, representing $\cM$, such that each term $\tilde \cM^j$ of $\tilde \cM$ (we use cohomological notation) is a free graded module over $k[D^p,v_+^p]$ and the grading degree $i'$ part $\tilde \cM_{i'}^j\in \cD(k)$ of $\tilde \cM_j$ vanishes for $i'\not\equiv i\mod p$.\footnote{Here we make use of the following general fact: for a graded ring $R$, every complex of graded $R$-modules is quasi-isomorphic to a complex of free graded $R$-modules. See, for instance, \cite[Lemma 13.3]{MR2028075} for a proof of the analogous statement for ordinary rings and \cite[\S 14.8]{MR2028075} for a very general assertion that applies to the graded setting.}
In particular, the map $v_+^p:\tilde \cM_j\to \tilde \cM_{j+p}$ is a term-wise injective morphism of complexes. Let $\tilde \cM'$ be a graded complex of $k[D^p,v_+^p])$-modules
 defined as follows: in each cohomological degree, the graded $k[D^p,v_+^p]$-module $\tilde {\cM}^{\prime n}$ is a quotient of $\tilde \cM^n$ by the submodule generated by $\tilde \cM^n_j$ for $j\leq 0$. Then the canonical map $\tilde \cM\to \tilde \cM'$ is a quasi-isomorphism. Since $\tilde \cM'_j$ is a term-wise zero complex for $i\geq j$, the operator $D^p$ acts trivially on $\tilde \cM'_i$. Hence, there exists a unique morphism of graded complexes of  $k[D^p,v_+^p]$-modules
 $\tilde \cM_i[v_+^p]\to \tilde \cM'$, which is the canonical projection $\tilde \cM_i\to \tilde\cM_i/v_+^p\tilde \cM_{i-p}=\tilde \cM'_i$ in degree $i$. By the weight assumption on $\cM$, this morphism is a quasi-isomorphism. Thus we have essential surjectivity of $f^*$. 

Since the map $f': (C_2)_{v_-=0}/\bG_{m,k} \to D_{\HT}= B H|_{(C_2)_{v_-=0}/\bG_{m,k}}$ has a left inverse\footnote{Recall that 
$H$ is a group stack over $C_2/\bG_m$. The left inverse $f''$ is defined to be the restriction of the structure map $BH \to C_2/\bG_m$ to the fiber over 
$(C_2)_{v_-=0}/\bG_{m,k}$.} $f''$, {\it i.e.},~
$f'' \circ f' =\Id$, we have the essential surjectivity of $f'^*$ as desired. Since $f^*$ is an equivalence and $f'^* \circ f^*$ is fully faithful, it implies that $f'^* \circ f^*$ is an equivalence.
\end{proof}

\begin{cor}\label{coro-t*-equiv}
The pullback along the projection $t:C_2/\bG_{m,k}\to \bA^1_-/\bG_m\otimes\bF_p$ induces an equivalence $t^*: \cD_{qc,[0,p-1]}(\bA^1_-/\bG_m\otimes\bF_p)\xrightarrow{\sim}  \cD_{qc,[0,p-1]}(C_2/\bG_{m,k})$.     
\end{cor}
\begin{proof}
Using \cite[Theorem 16.2.0.2]{Lurie-sag}, the category $\cD_{qc,[0,p-1]}(C_2/\bG_{m,k})$ is a full subcategory of the fibre product
\[\cD_{qc,[0,p-1]}\left((C_2)_{v_-=0}/\bG_{m,k}\right){\times}_{{\cD_{qc,[0,p-1]}\left((C_2)_{v_+^p=v_-=0}/\bG_{m,k}\right)}}\cD_{qc,[0,p-1]}\left((C_2)_{v_+^p=0}/\bG_{m,k}\right).\]
By Lemma \ref{lemma-equiv-BGm-HT-C2locus}, composition of $t^*$ with the above embedding is an equivalence. 
\end{proof}

\begin{lm}
The pullback along the composition $D_{Hod}\hookrightarrow D_{\HT}\to B\bG_{m, k}$ induces an equivalence of categories 
    $\cD_{qc,[0,p-1]}(B\bG_{m, k})\xrightarrow{\sim} \cD_{qc,[0,p-1]}(D_{Hod})$.
\end{lm}
\begin{proof}
Recall from \cite[\S 6.5]{bhatt-lecture-notes} that $\cD_{qc}(D_{Hod})$ is identified with the full subcategory $\cD_{gr,\Theta\text{-nilp}}(k[\Theta])$ of $\cD_{gr}(k[\Theta])$ that consists of objects $\cM$ such that the action of $\Theta$ on $\bigoplus\limits_i H^i(\cM)$ is locally nilpotent. 
Under this identification, the pullback functor $\cD_{qc}(B\bG_{m, k})\to \cD_{qc}(D_{Hod})$ sends the graded complex $C_{\cdot}=\oplus C_i$ to $C_{\cdot}$ with a trivial action of $\Theta$. The rest is straightforward.

To see fully faithfulness: we have 
\[\mathrm{RHom}_{\cD_{gr}(k[\Theta])}(C_i,C_j)=\mathrm{Fib}\left(\mathrm{RHom}_{\cD_{gr}(k)}(C_i,C_j)\xrightarrow{\ad_{\Theta}=0} \mathrm{RHom}_{\cD_{gr}(k)}(C_i,C_j(-p))\right).\]
For $i,j\in [0,p-1]$, we have $\mathrm{RHom}_{\cD_{gr}(k)}(C_i,C_j(-p))=0$. Fully faithfulness follows. 

To see essential surjectivity: observe that every $\cM\in\cD_{qc,[0,p-1]}(D_{Hod})$ can be represented by a complex of graded $k[\Theta]$-modules with the trivial action of $\Theta$. 
\end{proof}
Combining the previous two lemmas, we obtain the following:
\begin{cor}\label{Coro-Dqc-DHT-equiv-DHod}
The pullback along the closed embedding $D_{Hod}\hookrightarrow D_{\HT}$ induces an equivalence $\cD_{qc,[0,p-1]}(D_{\HT})\xrightarrow{\sim}\cD_{qc,[0,p-1]}(D_{Hod})$. The restriction to the open substack $\cD_{qc,[0,p-1]}(D_{\HT})\to \cD_{qc}(D_{und})$ is isomorphic to the composition 
\[\cD_{qc,[0,p-1]}(D_{\HT})\to\cD_{qc}(D_{Hod})\to\cD_{qc}(B\bG_{m, k})\to \cD_{qc}(D_{und}),\]
which is induced by pullback along 
$D_{und}=B\bG_{m, k}^{\sharp}\to B\bG_{m, k}\to D_{Hod}\hookrightarrow D_{\HT}$.\footnote{For the reader's convenience, let us recall the definitions of the displayed morphisms of stacks. The identification $D_{und}=B\bG_{m, k}^{\sharp}$
is that from \cite[Remark 6.2.5]{bhatt-lecture-notes}; the map $B\bG_{m, k}^{\sharp}\to B\bG_{m, k}$ is induced by the natural homomorphism of group schemes; 
$B\bG_{m, k}\to D_{Hod}= B(F_*\bG_{a,k}^{\sharp}\rtimes \bG_{m,k})$ is given by 
$\bG_{m, k}\rar{(0, \Id)}F_*\bG_{a,k}^{\sharp}\rtimes \bG_{m,k}$, and the last map realizes $D_{Hod}$ as a closed substack of $D_{\HT}$.
}   
\end{cor}
\begin{proof}
    We just need to explain the second assertion. Note that the post-composition of $D_{und} \to B\bG_{m, k} \to D_{Hod} \mono D_{\HT}$ with $D_{\HT} \to B\bG_{m, k}$ is isomorphic to the standard map $B\bG_{m, k}^{\sharp} \to B\bG_{m, k}.$ The same is true for the open immersion $D_{und} \to D_{\HT}$. This is enough since $f^*: \cD_{qc, [0, p-1]}(B\bG_{m, k}) \to \cD_{qc, [0, p-1]}(D_{\HT})$ is an equivalence by Lemma \ref{lemma-equiv-BGm-HT-C2locus}.
\end{proof}

\begin{lm}\label{Dqc-DdR-as-a-fibre-product}
    The commutative diagram of stacks 
\begin{equation}
    \begin{tikzcd}
        ((\bA^1_-\setminus \{0\} )/\bG_m) \otimes \bF_p \arrow[]{r}{}\arrow[]{d}{}&(\bA^1_-/\bG_m) \otimes \bF_p \arrow[hook]{d}{}\\
        D_{und}\arrow[hook]{r}{}&D_{\dR}
    \end{tikzcd}
    \end{equation}
    induces an equivalence of categories 
    \begin{equation}\label{Dqc-DdR-fibre-product-equiv}
   \mathcal{L}: \cD_{qc,[0,p-1]}(D_{\dR})\xrightarrow{\sim}\cD_{qc,[0,p-1]}((\bA^1_-/\bG_m) \otimes \bF_p) \times_{\cD(k)}\cD_{qc}(D_{und}),
    \end{equation}
    where $\cD(k)$ is identified with $\cD_{qc}(((\bA^1_-\setminus \{0\})/\bG_m) \otimes \bF_p)$. 
\end{lm}
\begin{proof}
To show fully faithfulness of the functor \eqref{Dqc-DdR-fibre-product-equiv}: we shall use the embedding $\cD_{qc}(D_{\dR})\hookrightarrow\cD_{gr}(k[v_-,\Theta])$ as in the proof of Lemma \ref{generalized-effectivity-vanishing-lemma} and the references therein. For $\cF,\cF'\in \cD_{qc,[0,p-1]}(D_{\dR})$, let $F,F'\in 
\cD_{gr,[0,p-1]}(k[v_-,\Theta])$ be the corresponding graded modules. We have
\[\mathrm{RHom}(F',F)=\mathrm{Fib}\left(\mathrm{RHom}_{\cD_{gr}(k[v_-])}(F',F)\xrightarrow{\ad_{\Theta}}\mathrm{RHom}_{\cD_{gr}(k[v_-])}(F',F(-p))\right).\]
On the other hand, 
\[\mathrm{RHom}(\mathcal{L}F',\mathcal{L}F)=\mathrm{Fib}\left(\mathrm{RHom}_{\cD_{gr}(k[v_-])}(F',F)\xrightarrow{\ad_{\Theta}}\mathrm{RHom}_{\cD_{gr}(k[v_-])}(\overline{j}^*F',\overline{j}^*F(-p))\right).\]
Using \eqref{RHom-RHomj*} and the assumption on weights, the morphism $$\overline{j}^*:\mathrm{RHom}_{\cD_{gr}(k[v_-])}(F',F(-p)) \to \mathrm{RHom}_{\cD_{gr}(k[v_-])}(\overline{j}^*F',\overline{j}^*F(-p))$$ is an isomorphism. 
This proves fully faithfulness of $\cL$. 

To show essential surjectivity, observe that any object of the right-hand side of \eqref{Dqc-DdR-fibre-product-equiv} can be represented by a complex $F=\bigoplus F^i$ of graded $k[v_-]$-modules, where $F^i$ is term-wise zero for $i\geq p$, and the map $v_-: F^i\to F^{i-1}$ is a term-wise isomorphism for $i\leq 0$, together with a $\theta: F^0\to F^0$. Then there exists a unique $\Theta:F^{\bullet}\to F^{\bullet-p}$ that commutes with $v_-$ and such that $\Theta:F^0\to F^{-p}$ is precisely $v_-^p\theta$. This proves essential surjectivity as desired. 
\end{proof}
Let us return to the proof of the proposition. Consider the maps 
\begin{equation}\label{big-3row-equalizer-diagram}
    \begin{tikzcd}\mathrm{Eq}\Big(\cD_{qc,[0,p-1]}(\bA^1_-/\bG_m\otimes\bF_p)\arrow[]{d}{t^*}[swap]{\simeq}\arrow[shift left]{r}{i_1^*}\arrow[shift right]{r}[swap]{F^*\circ i_p^*}&\cD_{qc}(\Spec k)\arrow[]{d}{\Id}[swap]{\simeq}\Big)\\
    \mathrm{Eq}\Big(\cD_{qc,[0,p-1]}(C_2/\bG_{m,k})\arrow[shift left]{r}{}\arrow[shift right]{r}[swap]{}&\cD_{qc}(\Spec k)\Big)\\
    \mathrm{Eq}\Big(\cD_{qc,[0,p-1]}(W(k)^{\cN}_{red})\arrow[]{u}{}\arrow[shift left]{r}{}\arrow[shift right]{r}[swap]{}&\cD_{qc}(D_{und})\Big)\arrow[]{u}{}\end{tikzcd}
\end{equation}
The equalizer in the first row is precisely the category $\mathscr{DMF}^{\mathrm{big}}_{[0,p-1]}(W(k))\otimes\bF_p$ from Definition \ref{Defn-big-DMF-category}. The two horizontal arrows in the second row are given by the restriction to the point $(v_+^p=0, v_-=1)$, and the restriction to the point $(v_+^p=1,v_-=0)$ post-composed with the Frobenius.~The downward arrows induce an equivalence of equalizers by Corollary \ref{coro-t*-equiv}.~The equalizer at the bottom arrow is $\cD_{qc,[0,p-1]}(W(k)^{syn}_{red})$. The functor $\Phi_{\mathrm{DI}}$ can be interpreted as the composition of the upward map on equalizers composed with the inverse of the downward equivalence on equalizers. 
Thus it remains to check that the upward map on equalizers is an equivalence. 

Recall ({\it cf.} Lemma \ref{lm:ffembedding})  that the category 
$\cD_{qc,[0,p-1]}(W(k)^{syn}_{red})$ is a full subcategory of the fiber product
\begin{equation}\label{eqn:prop-Phi-DeligneIllusie-fibre-product}
\cC:= \cD_{qc,[0,p-1]}(D_{\HT})\underset{\cD_{qc}(D_{und})\times \cD_{qc}(D_{Hod})}{\times}\cD_{qc,[0,p-1]}(D_{\dR}).
\end{equation}
where the functors $\cD_{qc}(D_{\HT}),\; \cD_{qc}(D_{\dR}) \to \cD_{qc}(D_{Hod})$ and
 $\cD_{qc}(D_{\dR})\to \cD_{qc}(D_{und})$ are given by the 
{\it $W(k)$-linear} embeddings of underlying stacks; the functor $\cD_{qc}(D_{\HT})\to \cD_{qc}(D_{und})$  
 is the pullback along the morphism ({\it cf.} the end of \S \ref{subsec:preliminary-syntomification-section})
 $$D_{und}=  B\bG_{m, \bF_p}^{\sharp} \otimes k \xrightarrow{\id \times \operatorname{Frob}}B\bG_{m, \bF_p}^{\sharp}\otimes k= D_{und}\mono
 D_{\HT}.$$ 
 In Corollary \ref{Coro-Dqc-DHT-equiv-DHod}, we showed that the restriction functor $\cD_{qc, [0, p-1]}(D_{\HT})\to \cD_{qc, [0, p-1]}(D_{Hod})$ is an equivalence and described explicitly its inverse 
 composed with the other restriction $\cD_{qc, [0, p-1]}(D_{\HT})\to \cD_{qc, [0, p-1]}(D_{und})$.
Using this result,  we obtain the following description of $\cC$
\begin{equation}\label{eq:fiber_product_made_explicit}
 \cD_{qc,[0,p-1]}(W(k)^{syn}_{red})\hookrightarrow \cC \iso \mathrm{Eq}\begin{tikzcd}\Big(\cD_{qc,[0,p-1]}(D_{\dR})\arrow[shift left]{r}{\widetilde{i}_1^*}\arrow[shift right]{r}[swap]{T^*\circ \mathfrak{p}_{\bar{\dR}}^*}&\cD_{qc}(D_{und})\Big)\end{tikzcd}.   
\end{equation}
Here $\widetilde{i}_1^*$ is the restriction along the open embedding $\widetilde{i}_1: D_{und}\hookrightarrow D_{\dR}$, and $\mathfrak{p}_{\bar{\dR}}^*$ is the pullback along $ \mathfrak{p}_{\bar{\dR}}:(\bA^1_-/\bG_m)\otimes\bF_p\hookrightarrow D_{\dR}$, and $T^*$ is the pullback along the composition $T: B\bG_{m, \bF_p}^{\sharp} \otimes k \xrightarrow{\id \times \operatorname{Frob}}B\bG_{m,\bF_p}^{\sharp} \otimes k \to B\bG_{m,k}\to (\bA^1_-/\bG_m)\otimes\bF_p$. 
 
 The functor $\Phi_{\mathrm{DI}}$ factors through \eqref{eq:fiber_product_made_explicit} and the pullback functor 
\begin{equation}\label{eqn: functor-between-equalizers}
\mathrm{Eq}\begin{tikzcd}\Big(\cD_{qc,[0,p-1]}(D_{\dR})\arrow[shift left]{r}{\widetilde{i}_1^*}\arrow[shift right]{r}[swap]{T^*\circ \mathfrak{p}_{\bar{\dR}}^*}&\cD_{qc}(D_{und})\Big)\end{tikzcd}\to \mathrm{Eq}\begin{tikzcd}\Big(\cD_{qc,[0,p-1]}((\bA^1_-/\bG_m) \otimes \bF_p)\arrow[shift left]{r}{{i}_1^*}\arrow[shift right]{r}[swap]{q^*T^*}&\cD(k)\Big)\end{tikzcd}
\end{equation}
induced by $\mathfrak{p}_{\bar{\dR}}$ and $q: \Spec k\to B\bG_{m, k}^{\sharp}$ is the map classifying the trivial $\bG_{m, k}^\sharp$-torsor. Note that $T \circ q = i_0 \circ F$, and thus the equalizer on the right-hand side of \eqref{eqn: functor-between-equalizers} is precisely the category $\mathscr{DMF}^{\mathrm{big}}_{[0,p-1]}(W(k))\otimes\bF_p$. 
Lemma \ref{Dqc-DdR-as-a-fibre-product}, together with the following easy Lemma \ref{category-fibre-product-coequalizer-lemma}, shows that the functor 
\eqref{eqn: functor-between-equalizers} is an equivalence. 
\begin{lm}\label{category-fibre-product-coequalizer-lemma}
Let 
    \begin{equation}
    \begin{tikzcd}
        &\cC_1\times_{\cC_3}\cC_2\arrow[]{ld}[swap]{\pi_1}\arrow[]{rd}{\pi_2} &\\
        \cC_1\arrow[]{rd}[swap]{p_1}&&\cC_2\arrow[]{ld}{p_2}\\
        &\cC_3&
    \end{tikzcd}\end{equation}
    be a pullback square of $\infty$-categories, and let $F:\cC_1\to \cC_2$ be a functor (not necessarily commuting with $p_1$ and $p_2$). Then the equalizers are equivalent:    \[\mathrm{Eq}\begin{tikzcd}(\cC_1\times_{\cC_3}\cC_2\arrow[shift left]{r}{\pi_2}\arrow[shift right]{r}[swap]{F\circ \pi_1}&\cC_2)\end{tikzcd}\simeq \mathrm{Eq}\begin{tikzcd}(\cC_1\arrow[shift left]{r}{p_1}\arrow[shift right]{r}[swap]{p_2\circ F}&\cC_3)\end{tikzcd}.\]
The functor from left to right is given by sending 
$(X\in \cC_1, Y\in \cC_2, \alpha:p_1 X\simeq p_2Y,\beta: Y\simeq FX)\in \mathrm{Eq}\begin{tikzcd}(\cC_1\times_{\cC_3}\cC_2\arrow[shift left]{r}{\pi_2}\arrow[shift right]{r}[swap]{F\circ \pi_1}&\cC_2)\end{tikzcd}$ 
to $(X, \gamma:=(p_2\beta)\circ \alpha: p_1X\simeq p_2FX)\in \mathrm{Eq}\begin{tikzcd}(\cC_1\arrow[shift left]{r}{p_1}\arrow[shift right]{r}[swap]{p_2\circ F}&\cC_3)\end{tikzcd}$.
~The functor from right to left is given by sending 
$(X,\gamma: p_1X\simeq p_2FX)\in \mathrm{Eq}\begin{tikzcd}(\cC_1\arrow[shift left]{r}{p_1}\arrow[shift right]{r}[swap]{p_2\circ F}&\cC_3)\end{tikzcd}$ to $(X,FX,\gamma,\Id)\in \mathrm{Eq}\begin{tikzcd}(\cC_1\times_{\cC_3}\cC_2\arrow[shift left]{r}{\pi_2}\arrow[shift right]{r}[swap]{F\circ \pi_1}&\cC_2)\end{tikzcd}$. 
\end{lm}
\begin{proof}
    This is immediate. 
\end{proof}
Combining \eqref{eq:fiber_product_made_explicit} and equivalence \eqref{eqn: functor-between-equalizers}, we conclude that $\Phi_{\mathrm{DI}}$ is the composition
of a fully faithful embedding into the fiber product category and an equivalence: 
\begin{equation}\label{eqn: functor-between-equalizers_bis}
 \cD_{qc,[0,p-1]}(W(k)^{syn}_{red})\hookrightarrow \cC \iso  \mathscr{DMF}^{\mathrm{big}}_{[0,p-1]}(W(k))\otimes\bF_p.
\end{equation}
It remains to prove that the composition is essentially surjective. 
Given any $(\cM, \varphi) \in \mathscr{DMF}^{\mathrm{big}}_{[0,p-1]}(W(k))\otimes\bF_p$, let $\cF$ be the corresponding object of $\cC$. By  Lemma \ref{lemma:essential-image-restriction-functor-fibre-product}, it is enough to check that the pullback  of $\cF$ to $\cD_{qc}\left((C_2)_{v_-=0}/\bG_m\right){\times}_{\cD_{qc}\left((C_2)_{v_+^p=v_-=0}/\bG_m\right)}\cD_{qc}\left((C_2)_{v_+^p=0}/\bG_m\right)$
lies in $\cD_{qc}(C_2/\bG_m)$. But by diagram \eqref{big-3row-equalizer-diagram}, the pullback of $\cF$ is isomorphic to $t^* \cM \in \cD_{qc}(C_2/\bG_m)$. This completes the proof of Proposition \ref{prop:DI_is_an_equiv} as well as the proof of equivalence  \eqref{Phi-equiv}. \qed

\begin{rem}
    Note that in \textit{Step 3}, the only result that uses the fact that weights are in $[0, p-2]$ and not merely in $[0, p-1]$, is Corollary \ref{DI=Mar}. Moreover, the corollary is not true without this assumption on weights; see Remark \ref{rem:Mazur_vs_Deligne_Illusie} for an explanation. We also note that this assumption is essential for \eqref{restriction-map-i-to-i-1}, \textit{i.e.}, to show that the restriction $\cD_{qc, [0, p-2]}(W(k)^{syn} \otimes \bF_p) \to \cD_{qc, [0, p-2]} (W(k)^{syn}_{red})$ is an equivalence. However, it is only needed in the first step of the deformation argument, \textit{i.e.}, to deform from $W(k)_{red}^{syn}$ to $(W(k)^{syn} \otimes \bF_p)_{v_{1}^2 =0}$. In particular, the restriction $\cD_{qc, [0, p-1]} ((W(k)^{syn} \otimes \bF_p)_{v_1^3 = 0}) \to \cD_{qc, [0, p-1]} ((W(k)^{syn} \otimes \bF_p)_{v_1^2 = 0})$ is an equivalence.
\end{rem}

\textit{End of proof of Theorem 3.}
Since perfectness is preserved under pullbacks the functor $\Phi_{\mathrm{Maz}}$ restricts to 
$$\Perf_{[0,p-1]}(W(k)^{syn}) \to \cD^b\left(\mathscr{MF}_{[0,p-1]}(W(k))\right).$$
To prove \eqref{Phi-equiv-perfect} we have to show that, for every $\cF \in \Perf_{[0,p-2]}(W(k)^{syn})$,  the object $\cM:= \Phi_{\mathrm{Maz}}(\cF)\in  \cD^b\left(\mathscr{MF}_{[0,p-2]}(W(k))\right)$
is perfect. It suffices to check that the pullback of  $\Phi_{\mathrm{Maz}}(\cF)$ to $W(k)^{\cN}_{red}$ is perfect. Moreover, since the map $C_2/\bG_{m,k} \to W(k)^{\cN}_{red}$ is faithfully flat (see \cite[\S 7.7.1]{drinfeld2022prismatization}),
it suffices to check that the pullback of $\Phi_{\mathrm{Maz}}(\cF)$  to $C_2/\bG_{m, k}$ is perfect. But the latter is isomorphic to the pullback of $\cM$ along the composition 
$C_2/\bG_{m,k} \to \bA_-^1/\bG_m \otimes \bF_p \mono  \bA_-^1/\bG_m$. Indeed, both these maps $C_2/\bG_{m,k} \to \bA^1_{-}/\bG_m$ send a triple $(L, v_{-}: L \to R, u: R \to L^{\otimes p}) \in C_2/\bG_{m,k}(R)$ as in \eqref{points-of-C_2/Gm} to $(v_{-}: L \to R) \in \bA^1_{-}/\bG_m(R)$. This proves \eqref{Phi-equiv-perfect}.

Let us prove the last assertion of Theorem \ref{main-thm}. Recall from \cite{bhatt-lecture-notes} that every coherent sheaf on $W(k)^{syn}$ is perfect.  In particular, $\Phi_{\mathrm{Maz}}$ carries $ \Coh_{[0,p-1]}(W(k)^{syn}) $ into $\cD^b\left(\mathscr{MF}_{[0,p-1]}(W(k))\right)$. 
Let $\cM \in \mathscr{MF}_{[0,p-2]}(W(k))$ with $p \cM=0$. We claim that  $E:=\Phi_{\mathrm{Maz}}^{-1}(\cM)$ is a vector bundle over $W(k)^{syn}\otimes \bF_p$. 
Since $W(k)^{\cN}\otimes \bF_p  \to W(k)^{syn}\otimes \bF_p$ is an \'etale cover it suffices to verify that the pullback of  $E$ to $W(k)^{\cN}\otimes \bF_p$ is a vector bundle. 
An object of $\cD_{qc}(W(k)^{\cN}\otimes \bF_p)$ is a vector bundle if and only if its pullback to $W(k)^{\cN}_{red}$ is a vector bundle. Since 
the  morphism $C_2/\bG_m \to W(k)^{\cN}_{red}$ is faithfully flat, it is enough to show the assertion for the pullback of $E$ to   $C_2/\bG_m$. But the latter is the pullback $\cM$ under $C_2/\bG_m \to \bA_-^1/\bG_m \otimes \bF_p$. Now the claim follows from Propiosition \ref{FL-lemma} (2).

By d\'evissage, we infer that $\Phi_{\mathrm{Maz}}^{-1}(\cM)\in \Coh_{[0,p-2]}(W(k)^{syn})$ for every $\cM \in \mathscr{MF}_{[0,p-2]}(W(k))$.  
Thus the functor
$\Phi_{\mathrm{Maz}}^{-1}: \cD^b(\mathscr{MF}_{[0,p-2]}(W(k))) \to \cD_{qc}(W(k)^{syn})$ is $t$-exact.~Moreover, as we observed above, its essential image contains $ \Coh_{[0,p-2]}(W(k)^{syn})$.
This completes the proof.
\end{proof}
\begin{rem}\label{remark:Phi-Maz-not-an-equivalence}
    The functor $\Phi_{\mathrm{Maz}}$ is not an equivalence. Indeed, there exists a non-zero morphism 
    $v_1: \cO_{W(k)^{syn}}\otimes\bF_p  \to \cO_{W(k)^{syn}}\{p-1\}\otimes\bF_p$ whose restriction to $W(k)^{syn}_{red}$ is equal to $0$.
 Since the functor $\Phi_{\mathrm{Maz}}\otimes\bF_p$ factors through the restriction to the reduced locus (see \eqref{eqn:Phi-Maz-reduced}) we conclude that 
 $\Phi_{\mathrm{Maz}}(v_1)=0.$
 \end{rem}
 \begin{rem}\label{remark:vector-bundles}
 The last part of the proof of Theorem \ref{main-thm} shows that every $\cF \in  \Coh_{[0,p-2]}(W(k)^{syn}\otimes \bF_p)$ is a vector bundle over $W(k)^{syn}\otimes \bF_p$. Consequently, every  $p$-torsionfree $\cG \in  \Coh_{[0,p-2]}(W(k)^{syn})$ is a vector bundle over\footnote{Indeed, recall from \S \ref{subsection:coherent_sheaves_and_vector_bundles} that $W(k)^{syn}$ admits a faithfully flat cover $\Spf R \to W(k)^{syn}$, where $R$ is a  {\it Noetherian} ring complete with respect to an ideal $p\in J\subset R$, and  $\Spf R$ is the corresponding $J$-adic formal scheme. A quasi-coherent sheaf $\cG$ on $W(k)^{syn}$ is a vector bundle (resp.~a coherent sheaf) if and only if its pullback 
 to $\Spf R$ is a finite projective $R$-module (resp.~a finite $R$-module). Thus, our claim follows from the following elementary general statement: a $p$-torsion-free finite module $M$ over a $p$-complete Noetherian ring, 
 $p\in J$, is projective if and only if $M/p$ is a projective $R/p$-module. Indeed, it is enough to check that, for every maximal ideal $\fm\in R$, one has $\Tor_1^R(M, R/\fm)=0$. Since $R$ is $p$-complete, $p\in \fm$. Thus  $\Tor_1^R(M, R/\fm)= \Tor_1^{R/p}(M/p, R/\fm)=0$.}
 $W(k)^{syn}$. The assumption on Hodge-Tate weights cannot be weakened: for example, the sheaf
 $\cO_{W(k)^{syn}_{red}}\{p-1\} = \Coker ( \cO_{W(k)^{syn}}\otimes\bF_p\rar{v_1} 
 \cO_{W(k)^{syn}}\{p-1\}\otimes\bF_p)$ has Hodge-Tate weights $0$ and $p-1$, and it is not a vector bundle over   $W(k)^{syn}\otimes \bF_p$. Also, we have  
\begin{align*}
 \Phi_{\mathrm{Maz}}( \cO_{W(k)^{syn}_{red}}\{p-1\} ) &\xrightarrow{\sim} \Phi_{\mathrm{Maz}}(\cO_{W(k)^{syn}}\otimes\bF_p)[1] \oplus  \Phi_{\mathrm{Maz}}(\cO_{W(k)^{syn}}\{p-1\}\otimes\bF_p) \\
 &\xrightarrow{\sim} k[1] \oplus k(p-1).
 \end{align*}
In particular, $\Phi_{\mathrm{Maz}}(\cO_{W(k)^{syn}_{red}}\{p-1\})$ is not concentrated in cohomological degree zero. Thus, the Hodge-Tate weight assumption in the last assertion of Theorem \ref{main-thm} cannot be weakened. 
\end{rem}
\begin{rem}[The Mazur {\it versus} Deligne-Illusie decompositions]\label{rem:Mazur_vs_Deligne_Illusie}
Recall from the proof of  Theorem \ref{main-thm} the functors
\[\Phi_{\mathrm{DI}}, \Phi^{\mathrm{Maz}}_{red} : \cD_{qc,[0,p-1]}(W(k)^{syn}_{red})\to\mathscr{D}\mathscr{MF}^{\mathrm{big}}_{[0,p-1]}(W(k))\otimes\bF_p.\]
By Proposition \ref{prop:DI_is_an_equiv} $\Phi_{\mathrm{DI}}$ is an equivalence of categories. Consequently, we can consider 
\begin{equation}\label{eq:endofunctor}
\Phi^{\mathrm{Maz}}_{red} \circ \Phi_{\mathrm{DI}}^{-1}:  \mathscr{D}\mathscr{MF}^{\mathrm{big}}_{[0,p-1]}(W(k))\otimes\bF_p \to \mathscr{D}\mathscr{MF}^{\mathrm{big}}_{[0,p-1]}(W(k))\otimes\bF_p.   
\end{equation}
By Corollary \ref{DI=Mar} the restriction of this endofunctor to the subcategory  $\mathscr{D}\mathscr{MF}^{\mathrm{big}}_{[0,p-2]}(W(k))\otimes\bF_p$ is
isomorphic to $\Id$.  However, the functor \eqref{eq:endofunctor} is not even an auto-equivalence. In fact, using  Lemma \ref{DI-Maz}  one can describe 
$\Phi^{\mathrm{Maz}}_{red} \circ \Phi_{\mathrm{DI}}^{-1}$ explicitly.  We shall just state the answer. Recall that an object of
$\mathscr{D}\mathscr{MF}^{\mathrm{big}}_{[0,p-1]}(W(k))\otimes\bF_p$ consists of an effective object $F^\bullet \in \cD_{gr}(k[v_-])$ with $F^p=0$ together with 
a homotopy equivalence 
$$\varphi: \bigoplus_{i=0}^{p-1} F^* \Gra ^i_F F^{-\infty} \iso  F^{-\infty}.$$
Consider the endomorphism 
$$\Theta'= \sum _{i=0}^{p-1} (- i)\Id_{F^* \Gra ^i_F F^{-\infty}}$$ of the left-hand side and  let $\Theta= \varphi \circ \Theta' \circ \varphi ^{-1}$ be the corresponding endomorphism of $F^{-\infty}$. The endomorphism $\Theta$  is called the Sen operator (see \cite[\S 6.5]{bhatt-lecture-notes}).
Consider the composition $\bar \Theta$
\begin{equation}
    F^{p-1} \rar{v_-^{p-1}} F^0\iso F^{-\infty} \rar{\Theta} F^{-\infty} \iso F^0 \to F^{0}/F^1.
\end{equation}
Here we use that the canonical map $F^0 \to \underset{v_-}{\varinjlim}\,\ F^i =  F^{-\infty}$ is an equivalence, which follows from the effectivity of $F^\bullet$.
Define a nilpotent  endomorphism $\alpha$ of $\bigoplus_{i=0}^{p-1} F^* \Gra ^i_F F^{-\infty}$ to be  zero on all 
$F^* \Gra ^i_F F^{-\infty}$,  with $i\ne p-1$,  and $\alpha|_{F^* \Gra ^{p-1}_F F^{-\infty}}$ being the composition 
$$F^* \Gra ^{p-1}_F F^{-\infty}\iso F^* F^{p-1}\rar{F^* \bar \Theta} F^*  \Gra ^{0}_F F^{-\infty} \mono \bigoplus_{i=0}^{p-1} F^* \Gra ^i_F F^{-\infty}.$$
We claim that the functor \eqref{eq:endofunctor} carries each $(F^\bullet, \varphi)$ to  $(F^\bullet, \varphi \circ (\Id - \alpha))$. 

For example, if  $(F^\bullet, \varphi) \in \mathscr{MF}(\bZ_p)$ is a $p$-torsion Fontaine-Laffaille module, which is an extension of $\bF_p(1-p)$ by $\bF_p$ then 
the functor \eqref{eq:endofunctor} carries $(F^\bullet, \varphi)$ to $\bF_p(1-p) \oplus \bF_p$, that is, the endomorphism of  
$\Ext^1_{\mathscr{MF}(\bZ_p)\otimes \bF_p}(\bF_p(1-p), \bF_p)\iso \bF_p$ induced by \eqref{eq:endofunctor} is equal to $0$.

Let $X$ be a smooth $p$-adic formal scheme over $W(k)$ of dimension $\leq p-1$. By Remark \ref{remark:Phi-Maz-tilde-isom-PhiMaz-Hsyn}, we have
$\widetilde{\Phi}_{\mathrm{Maz}}(X) \iso  \Phi_{\mathrm{Maz}} \circ \cH_{syn}(X)$. On the other hand, 
by \cite[Theorem 5.10]{li2022endomorphisms} 
the $p$-torsion Fontaine-Laffaille module $\Phi_{\mathrm{DI}} ( \cH_{syn}(X)|_{W(k)^{syn}_{red}})$ is given by the Deligne-Illusie decomposition (\cite{Deligne-Illusie-87}).
Thus, in the geometric context, the above formula for $\Phi^{\mathrm{Maz}}_{red} \circ \Phi_{\mathrm{DI}}^{-1}$ describes the relation between the modulo $p$ reduction of \eqref{intro-Mazur-isom} and the Deligne-Illusie decomposition. For $\dim X <p-1$ the two decompositions coincide, but in dimension $p-1$ they are different. For example, if $X$ is an elliptic curve over $\bZ_2$ with  ordinary reduction then Fontaine-Laffaille module $H^1_{\dR}(X)\otimes \bF_2$ constructed 
using  \eqref{intro-Mazur-isom} ({\it i.e.}, $H^1(\Phi_{\mathrm{Maz}} \circ \cH_{syn}(X)) \otimes \bF_2$) is isomorphic to $\bF_2 \oplus \bF_2(-1)$ whereas 
the  Fontaine-Laffaille structure on $H^1_{\dR}(X)\otimes \bF_2$ induced by the Deligne-Illusie decomposition is a nonsplit extension of $\bF_2(-1)$ by $\bF_2$ unless $X\times \Spec \bZ/4$ is the {\it canonical} lift of the ordinary elliptic curve $X \times \Spec \bF_2$. 
\end{rem}
\begin{rem}[Syntomic cohomology in small weights]\label{rem:Syntomic cohomology in small weights}
Given an $F$-gauge $\cF \in \cD_{qc,[0,p-2]}(W(k)^{syn})$ one can use Theorem \ref{main-thm}  to compute the syntomic cohomology 
$R\Gamma(W(k)^{syn}, \cF\{i\})$, for $i\leq p-2$. Let us briefly explain how, using the functor 
\begin{equation}\label{eq:functor_to_Mazur_modules_bis}
   {\bf \Phi}_{\mathrm{Maz}}:   \cD_{qc,[0, +\infty]}(W(k)^{syn})\to \mathrm{Mod}_{\mathrm{Maz}}(W(k)), \quad \cF \mapsto (\mathfrak{p}^*_{\bar{\dR}}(\cF), \varphi_{\bullet})
 \end{equation}
constructed in Remark \ref{remark:Mazur-module},  the computation can be generalized to all effective $F$-gauges. Let $\cF \in \cD_{qc,[0, \infty]}(W(k)^{syn})$ be an effective $F$-gauge, ${\bf \Phi}_{\mathrm{Maz}}(\cF)= (F^\bullet, \varphi_{\bullet})$. We claim that, for  $i\leq p-2$, the functor ${\bf \Phi}_{\mathrm{Maz}}$
 induces equivalences 
\begin{equation}\label{eq:syn.coh}
   R\Gamma(W(k)^{syn}, \cF\{i \})  \iso \Hom_{\mathrm{Mod}_{\mathrm{Maz}}(W(k))} ({\bf \Phi}_{\mathrm{Maz}}(\cO_{W(k)^{syn}}\{-i\}), {\bf \Phi}_{\mathrm{Maz}}(\cF)))  
     \simeq
\mathrm{Eq}\begin{tikzcd}\Big(F^i \arrow[shift left]{r}{\varphi_i}\arrow[shift right]{r}[swap]{v_-^i}& F^{0}\Big)\end{tikzcd}.  
\end{equation}
To see the last isomorphism, note that by \cite[Proposition II.1.5]{MR3904731}, the mapping spectrum $\Hom_{\Mod_{\text{Maz}}(W(k))} (W(k)(-i), F^{\bullet})$ is identified with the fiber of $\Hom(W(k)(-i), F^{\bullet}) \xrightarrow{t} \Hom(W(k)(-i), F_* F^0 \otimes B)$, where mapping spectra are computed in $\widehat \cD_{gr}^{\eff}(W(k)[v_{-}])$ and $t$ takes $f: W(k)(-i) \to F^{\bullet}$ to the difference $\varphi_{\bullet} \circ f - f \circ \varphi_{\bullet}.$ Note that the source of $t$ can be identified with $F^i$ and the target with $F^0$.  Under this identification one always has $\varphi_{\bullet} \circ f = \varphi_i(f)$ and, provided $i<p$, one has $f \circ \varphi_{\bullet} = v_{-}^i(f)$.  For a proof,  it suffices to check that the map is an equivalence after reducing modulo $p$. The proof of the mod $p$ statement is explained\footnote{Bhatt and Lurie do not use the functor ${\bf \Phi}_{\mathrm{Maz}}$; instead they construct $\varphi _i$, $i< p-1$, with $\bZ/p$-coefficients, using the Sen operator.} in 
\cite[Remark 6.5.15]{bhatt-lecture-notes}. 

For $\cF$ of the form $\cH_{syn}(X)$, the formula \eqref{eq:syn.coh}  recovers \cite[Theorem F (2)]{Antieau-Mathew-Morrow-Nikolaus}.
\end{rem}
\begin{rem}[Lattices in crystalline representations]\label{rem:Lattices in crystalline representations}
Let $\mathscr{MF}_{[0,p-2]}^f(W(k))\subset  \mathscr{MF}_{[0,p-2]}(W(k))$ be the full subcategory, whose objects
are $p$-torsionfree Fontaine--Laffaille modules. In (\cite[\S 7.14]{Fontaine-Laffaille}), Fontaine and Laffaille constructed an exact  fully faithful functor  
\begin{equation}\label{eq:Fontaine-Laffaille_functor_to_Galois}
    \mathcal{T}_{\et}^{FL}: \mathscr{MF}_{[0,p-2]}^f(W(k)) \mono \mathrm{Rep}_{G_K}(\bZ_p)
\end{equation}
to the category of continuous $\bZ_p$-linear representations 
of the absolute Galois group $G_K$ of  $K:=\Frac (W(k))$. 
Moreover, Breuil identified, in  \cite[Proposition 3]{breuil}, the essential image of \eqref{eq:Fontaine-Laffaille_functor_to_Galois}
with the full subcategory $\mathrm{Rep}_{G_K, [0, p-2]}^{crys,f}(\bZ_p)\subset \mathrm{Rep}_{G_K}(\bZ_p)$, formed by all $\Lambda\in  \mathrm{Rep}_{G_K}(\bZ_p) $, such that $\Lambda$ is a finite free as a $\bZ_p$-module and   $\Lambda \otimes \bQ_p$ is a crystalline representation with Hodge--Tate weights lying  in $[0,p-2]$. 

On the other hand,  using results from \cite{Bhatt_Scholze_Galois}, Bhatt and Lurie established in  \cite[Theorem 6.6.13]{bhatt-lecture-notes} an equivalence of categories 
$$\mathcal{T}_{\et}^{BL}: \Coh^{\text{refl}}(W(k)^{syn}) \iso \mathrm{Rep}_{G_K}^{crys,f}(\bZ_p)$$
between the category \textit{reflexive} coherent sheaves on $W(k)^{syn}$ and
the category of lattices in crystalline representations of $G_K$ with  arbitrary Hodge--Tate weights.

By Remark \ref{remark:vector-bundles} the functor \eqref{Phi-equiv-perfect} restricts to an equivalence of categories
\begin{equation}
   \Phi_{\mathrm{Maz}}: \Vect_{[0,p-2]}(W(k)^{syn}) \iso \mathscr{MF}_{[0,p-2]}^f(W(k))
\end{equation}
between the subcategory  $\Vect_{[0,p-2]}(W(k)^{syn})\subset  \Coh_{[0,p-2]}(W(k)^{syn})$ of vector bundles and the subcategory 
$\mathscr{MF}_{[0,p-2]}^f(W(k))\subset \mathscr{MF}_{[0,p-2]}(W(k))$ of $p$-torsionfree Fontaine-Laffaille modules.

We claim that 
\begin{equation}\label{eq:main_result_on_Galois_rep}
  \mathcal{T}_{\et}^{FL} \iso \mathcal{T}_{\et}^{BL} \circ \Phi_{\mathrm{Maz}}^{-1}: \mathscr{MF}_{[0,p-2]}^f(W(k)) \to \mathrm{Rep}_{G_K, [0, p-2]}^{crys,f}(\bZ_p).
\end{equation}
The argument below was communicated to us by Akhil Mathew. Recall from \cite[Construction 6.3.1 and Corollary 6.6.5]{bhatt-lecture-notes} and \cite[Theorem 5.2]{Bhatt_Scholze_Galois} that, for any vector bundle $E\in  \Vect_{(-\infty, 0]}(W(k)^{syn})$ with non-positive weights, the corresponding Galois representation  is computed by the formula
\begin{align} \label{eq:applying_projection_formula_1}
  \mathcal{T}_{\et}^{BL}(E)= &H^0(\cO_C^{syn}, f^*_{syn}E) \simeq H^0(W(k)^{syn},  f_{syn \, *} f^*_{syn}E )\simeq \\
  \label{eq:applying_projection_formula_2}
&\simeq H^0(W(k)^{syn},  E \otimes  f_{syn\, *}\cO_{ \cO_C^{syn}}), 
\end{align}
where $\cO_C$ stands for the ring of integers of completed algebraic closure of $K$ and $f_{syn}: \cO_C^{syn} \to W(k)^{syn}$ is induced  by $\Spf \cO_C \to \Spf W(k)$. Note that $f_{syn\, *}\cO_{ \cO_C^{syn}}$ is an effective $F$-gauge, $f_{syn\, *}\cO_{ \cO_C^{syn}}\in \cD_{qc, [0, \infty ]}(W(k)^{syn})$. Thus, if $E\in  \Vect_{[2-p, 0]}(W(k)^{syn})$, we have   $E \otimes  f_{syn\, *}\cO_{ \cO_C^{syn}} \in  \cD_{qc, [2-p, \infty ]}(W(k)^{syn})$.
Consequently, \eqref{eq:applying_projection_formula_2} can be computed using formula \eqref{eq:syn.coh}. 
Note that by \cite[Proposition 9.9]{bhatt2012padic} the  $p$-completed derived de Rham cohomology of $\Spf \cO_C$ is identified with Fontaine's period ring $A_{crys}$ matching the Hodge filtration on the derived de Rham cohomology and the filtration by divided powers of $I:=\ker(A_{crys} \epi \cO_C)$. Now, let $(\cM, \varphi)\in \mathscr{MF}_{[0,p-2]}^f(W(k))$ and let $M:= \cM_{v_-=1}$ be the associated $W(k)$-module equipped with the Hodge filtration (see \eqref{identifying-CohA1modGm-with-modules-minus}).
 We apply the above computation to $E =  \big(\Phi_{\mathrm{Maz}}^{-1}(\cM,\varphi)\big)^*$.
By the construction of $\Phi_{\mathrm{Maz}}$, the de Rham realization  $\mathfrak{p}_{\bar{\dR}}(E)$, regarded as a filtered $W(k)$-module, is $M^*$ with its Hodge filtration.
Using \eqref{eq:syn.coh}, we have 
$$  \mathcal{T}_{\et}^{BL}(\Phi_{\mathrm{Maz}}^{-1}(\cM,\varphi)) \iso   \big(\mathcal{T}_{\et}^{BL}(E)\big)^* \iso 
\Big (\ker\big(F^0(M^* \otimes A_{crys}) \rar{\varphi -\Id} M^* \otimes A_{crys}\big)\Big) ^*.$$
Here $M^* \otimes A_{crys}$ is equipped with the tensor product filtration, $F^0(M^* \otimes A_{crys})=\sum_{i\geq 0} F^{-i}M^* \otimes I^{[i]}$,
and $\varphi = \varphi_{M^*} \otimes \varphi_{ A_{crys}}.$ The right-hand side of the above formula is $ \mathcal{T}_{\et}^{FL}(\cM, \varphi)$. This proves \eqref{eq:main_result_on_Galois_rep}.

As a consequence of \eqref{eq:main_result_on_Galois_rep}, we conclude that the Bhatt--Lurie equivalence carries every object of  $\mathrm{Rep}_{G_K, [0, p-2]}^{crys,f}(\bZ_p)$ to a \textit{vector bundle} over $W(k)^{syn}$.
\end{rem}

\bibliographystyle{amsalpha}
\bibliography{bibfile}

\end{document}